\def\piff{\eta}
\theoremstyle{plain}
\newtheorem*{lemma*}{Lemma}
\newtheorem{lemma}[subsection]{Lemma}
\newtheorem*{theorem*}{Theorem}
\newtheorem{theorem}{Theorem}
\newtheorem*{proposition*}{Proposition}
\newtheorem{proposition}[subsection]{Proposition}
\newtheorem*{corollary*}{Corollary}
\newtheorem{corollary}[subsection]{Corollary}
\theoremstyle{definition}
\newtheorem*{definition*}{Definition}
\newtheorem{definition}[subsection]{Definition}
\newtheorem*{example*}{Example}
\newtheorem{example}[subsection]{Example}
\theoremstyle{remark}
\newtheorem*{remark*}{Remark}\newtheorem*{proof*}{Proof}
\newtheorem{remark}[subsection]{Remark}
\def\e{{\varepsilon}}
\def\del{{\delta}}
\def\A{{\mathcal A}}
\def\Z{{\mathbb Z}}\def\Q{{\mathbb Q}}
\def\R{{\mathbb R}}
\def\C{{\mathbb C}}
\def\ome{{\omega}}
\def\N{{\mathbb N}}
\newcommand{\T}{\mathbb{T}}
\newcommand{\ii}{{\rm i}}
\begin{document}

 \title{  The energy graph of   the non linear Schr\"odinger  equation}
\author{ M.  Procesi*,
 C.  Procesi{**},   \and Nguyen Bich Van{***}.  }
 \thanks{ *Universit\`a di Roma,   La Sapienza,  supported by ERC grant HamPDEs under FP7,   {**} {***} Universit\`a di Roma,   La Sapienza }\maketitle
\begin{abstract}
We discuss the stability of a class of normal forms of the completely resonant non--linear Schr\"odinger  equation on a torus described in \cite{PP}.   The discussion is essentially combinatorial and algebraic in nature.
 \end{abstract}

\tableofcontents
\section{Introduction} 
In this paper we study the completely resonant cubic Nonlinear Schr\"odinger equation (NLS):
\begin{equation}\label{nonn}
\ii u_t -\Delta u= |u|^2 u
\end{equation}
on the $n$ dimensional torus $\T^n$.
More precisely we analize the quadratic Normal form Hamiltonian,  introduced in \cite{PP},  of the NLS equation \eqref{nonn}, with the purpose of proving 
{\em non-degeneracy} and stability results for its dynamics.  Our dynamical results are summarized in Propositions \ref{teoM} and \ref{ello} which in turn follow from our main Theorem \ref{Lase} . This theorem, whose lenghtly proof occupies  most of the paper, is of  algebraic, combinatorial and geometric  nature, and can in principle be formulated with no previous knowledge of the NLS. In the first ten pages we recall,  for convenience of the reader,  the  results on the NLS normal form  proved in \cite{PP}, and we show how to deduce our dynamical results from Theorem \ref{Lase}. 
Let us briefly- and somewhat na\"ively-- recall the theory 
 of {\em Poincar\'e-Birkhoff Normal Form}. The Birkhoff normal form reduction was  developed in order to study the long-time behaviour of the solutions of a dynamical system  close to an equilibrium and represents a non-linear analog to the {\em canonical form} for matrices. For a classical introduction see \cite{ArGm}, \cite{Bir}, \cite{PoH}, \cite{HZ}; for the application to PDEs see for instance \cite{BG}.
    
  At a purely formal level, consider a non-linear Hamitonian dynamical system  with an elliptic fixed point:
  $$ 
  H(p,q)=\sum_{j \in I} \lambda_j (p_j^2+q_j^2)+ H^{>2}(p,q)\,,\quad \lambda_j\in \R
  $$
  here the index  set  $I$ is finite or possibly denumberable while $H^{>2}(p,q)$ is some polynomial with minimal degree $>2$.
  By definition   the {\em normal form reduction} at order $N$  is a symplectic change of variables $\Psi_N$  which  reduces $H$ to its resonant terms:
    $$ 
    H(p,q)\circ \Psi_N = \sum_j \lambda_j (p_j^2+q_j^2)+ H^{>2}_{Res}(p,q) + H^N(p,q)
  $$   
where $H^{>2}_{Res}$   Poisson commutes with $ \sum_j \lambda_j (p_j^2+q_j^2)$ while $H^N(p,q)$ is a formal power series of minimal degree $>N+1$.

There are two classes of problems in this scheme:

(i) Even though $H^N$ is of minimal order $N+1$ its norm may diverge  as $N\to \infty$, due to the presence of small divisors.

(ii) If $I$ is an infinite set it is not trivial, even when $N=1$, to show that $\Psi_N$ is an analytic change of variables.

Note that if the $\lambda_j$ are rationally independent then the normal form  $H_{Birk}=\sum_j \lambda_j (p_j^2+q_j^2)+ H^{>2}_{Res}(p,q)$ is integrable, a feature which is used in proving for instance long time stability results.

If the $\lambda_j$ are resonant then $H_{Birk}$ may not be integrable but it is possible that its dynamics is simpler than the one of the original Hamiltonian.

In particular in many examples, including the NLS, one can see that  $H_{Birk}$ has  invariant tori of the form
\begin{equation}
\label{pollo} p_i^2+q_i^2= \xi_i \,,\quad i\in S\subset I;\quad   p_j=q_j=0\,,\quad j\in S^c:=I\setminus S
\end{equation}
on which the dynamics is of the form $\psi\to \psi+\omega(\xi)t$ with $\omega(\xi)$ a diffeomorphism.

One wishes to obtain information on solutions of the complete Hamiltonian  close to these tori. As is well known in order to obtain results one needs to study the Hamilton equations of $H$ linearized 
at this family of invariant  tori.  That is one needs to study the dynamics induced on the normal bundle to these tori.   This is described by a family of linear  operators (between normal spaces) parametrized by the family and the points on the tori.

In terms of equations this is described by a quadratic Hamiltonian with coefficients depending on the parameters $\xi$ and on the angle variables of the tori.  The matrix obtained by linearizing $H_{Birk}$ at the solutions \eqref{pollo} is referred to as the normal form matrix (or normal form).  One of the main results of \cite{PP}  exhibits, for the NLS and for generic choices of $S$, a symplectic change of variables which removes the dependence from the angles, this decouples the dynamics into the one on the tori and one on the normal space.  Moreover in our infinite dimensional case  the matrices of the   normal form  are block diagonal with blocks uniformly bounded.  Thus one has a reduction to an infinite list of decoupled linear equations (depending on the parameters $\xi$).\smallskip

In order to perform perturbation theory algorithms, to obtain informations on the solutions of $H$, one generally uses {\em non-degeneracy } conditions. One of the strongest   requirements is that the matrix  of the normal  form  has non-zero and distinct eigenvalues.
This property is an instance of {\em structural stability}.  In this paper we prove that this condition is satisfied for the   normal forms of the NLS previously introduced provided the parameters $\xi$ are taken outside a countable union of  real hypersurfaces.

\subsection{Structural Stability}\quad Structural stability, for an orbit of a dynamical system or a solution of a differential equation is  a basic, and delicate, question both  for theoretical and practical reasons. It essentially  means that the qualitative behavior of the trajectories, close to the given solutions, is unaffected by small perturbations both of the initial data and of the system itself. 

In the simplest case of the class of  linear differential equation $\dot x=Ax$, where $A$ is a real $n\times n$ matrix, the nature of the orbits depends upon the Jordan canonical form of $A$.  
In particular the discriminant of $A$  is an hypersurface (in the space of all matrices) which contains all special normal forms; its complement is the set of matrices with distinct eigenvalues which  decomposes into connected components. On each such component the number of real eigenvalues is constant, thus these regions are the regions of structural stability. Of course if the matrix $A$ is subject to some restrictions (as being symmetric, symplectic etc.)  the normal forms  are further constrained \cite{A}.

\subsubsection{Stability for the NLS} 

 The normal form of the NLS is described  by  an infinite dimensional Hamiltonian which determines a  linear operator $ad  (N)$,    depending on a finite number of parameters $\xi_i$     (the actions of certain excited frequencies), and acting   on a certain infinite dimensional vector space $F^{  (0,  1)}$    (see \ref{spaceF} ) of functions.

  Stability for this infinite dimensional operator will be interpreted in the same way as it appears for finite dimensional linear systems,   that is  the property that the linear operator is semisimple with distinct eigenvalues.

  This will be shown to be true outside a zero measure set of parameters,   further on  a smaller   set of positive measure we shall show that the dynamic is elliptic.     This condition in a more precise quantitative form (which will be discussed elsewhere) in the Theory of dynamical systems is referred to as the {\em second Melnikov condition}.
We shall apply this  in \cite{PP2} in order to prove,   by a KAM
algorithm,    the existence and stability of quasi--periodic
solutions for the NLS   (not just the normal form).\smallskip

  The fact that this non-degeneracy condition makes at all sense depends on the fact that the normal form matrix decomposes into an infinite direct sum  of finite dimensional blocks. Furthermore, these finite dimensional blocks  are described by translating,  with suitable scalars, a finite number of combinatorially defined matrices,   constructed from certain combinatorial objects called {\em marked colored graphs} (cf. Definition  \ref{mcg} and Remark \ref{mcg1}). Thus the matrices appearing as blocks of the normal form matrix can be combinatorially classified and, in principle, computed. Indeed given a specific graph computing the associated matrix block  is quite simple, so that the question is essentially that of classifying the possible graphs which describe blocks of the normal form. 
  
The characteristic polynomials $\det  (t-ad  (N)_\Gamma)$ of  the normal form operator $ad  (N)$ restricted to the infinitely many blocks $\Gamma$  are all polynomials in the variables $\xi_i$ and $t$ with integer coefficients.  The issue is thus to prove that a  rather complicated infinite list of polynomials in a variable $t$, of degree  increasing with the space dimension,  and with coefficients polynomials in the parameters $\xi_i$  have distinct roots  for generic values of the parameters.

In general, in order to prove that a single polynomial has distinct roots, one has to prove the non--vanishing of its discriminant, for two polynomials to have different roots the condition is the  non--vanishing of the resultant.  In our case we can consider all the characteristic polynomials as having coefficients  in the field of rational functions  in the parameters $\xi_i$, its algebraic closure is a {\em field of algebraic functions}.   Thus if  the discriminant $D(\xi)$ of a given polynomial and the resultant $R(\xi)$ of two distinct    polynomials in $\Q  (\xi_1,  \ldots,  \xi_m)[t]$ are  non--zero as  
polynomials in the $\xi$ we have that   outside the real hypersurfaces $R(\xi)=0,\, D(\xi)=0$ the
two polynomials have distinct roots. Although  both the discriminant and the resultant can be computed by explicit formulas   a  proof of their non--vanishing for the infinite list of complicated polynomials appearing seems to be  a hopeless task.

We thus followed a different approach. Remark that, if we have a  list of different polynomials in one variable $t$, with coefficients in a field $F$ of characteristic 0,  a sufficient condition  that all their roots (in the algebraic closure $\overline F$ of $F$) be distinct is that they are all {\em irreducible} (over $F$) and distinct. This follows immediately from the fact that an irreducible polynomial  $f(t)$ is uniquely determined as the minimal polynomial of each of its roots (cf. \cite{MAr}) and, in characteristic 0, its derivative $f'(t)$ is non--zero. By the irreducibility of $f(t)$ the greatest common divisor between $f(t),f'(t)$ is 1 so all the roots of $f(t)$  are distinct.\smallskip

 Therefore by a rather complex    induction (setting some variables $\xi_i$ equal to zero) we prove:
   \begin{theorem}[Separation and Irreducibility Theorem]\label{Lase}
The characteristic polynomials of the possible graphs giving blocks of the normal form of the NLS are all {\em distinct},  and {\em irreducible}  as polynomials with integer coefficients, that is  in $\Z[\xi_1,\ldots,\xi_m,t]\subset \Q  (\xi_1,  \ldots,  \xi_m)[t]$.
\end{theorem}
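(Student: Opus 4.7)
My plan is to proceed by induction on the combinatorial complexity of the marked colored graph $\Gamma$, using specialization of parameters $\xi_i\mapsto 0$ as the main reductive tool. First I would set up the bookkeeping: each graph $\Gamma$ yields a finite-dimensional block $ad(N)_\Gamma$ whose entries are $\Z$-linear expressions in $\xi_1,\ldots,\xi_m$ (plus integer constants), so that $P_\Gamma(\xi,t):=\det(t-ad(N)_\Gamma)$ lies in $\Z[\xi_1,\ldots,\xi_m,t]$ and is monic in $t$ of degree equal to the size of the block. The two claims -- irreducibility over $\Z[\xi]$ and pairwise distinctness -- will be handled in parallel by the same induction.

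The central technical tool will be a lifting lemma for factorizations under specialization: if a monic $f(\xi,t)\in\Z[\xi,t]$ satisfies $f(\xi,t)|_{\xi_k=0}=g_1 g_2$ with $g_1,g_2$ coprime, then $f$ admits a compatible factorization $f=h_1h_2$ with $h_j|_{\xi_k=0}=g_j$ (a Hensel-style statement for the local ring $\Z[\xi_1,\ldots,\hat\xi_k,\ldots,\xi_m,t]_{(\xi_k)}$, using that the coefficient of the top power of $t$ is $1$). In the inductive step I would show that putting $\xi_k=0$ corresponds geometrically to erasing the vertex or edge carrying the color $k$, so that $\Gamma$ collapses into smaller marked colored subgraphs $\Gamma_1,\ldots,\Gamma_r$ already covered by the induction, with $P_\Gamma|_{\xi_k=0}=\prod_i P_{\Gamma_i}$. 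By inductive hypothesis the factors $P_{\Gamma_i}$ are irreducible and pairwise distinct, so the factorization pattern of $P_\Gamma|_{\xi_k=0}$ is completely determined.

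To conclude irreducibility of $P_\Gamma$, any hypothetical factorization $P_\Gamma=AB$ specializes, via the lifting lemma, to a partition of $\{P_{\Gamma_1},\ldots,P_{\Gamma_r}\}$ into two groups. Running the same argument along several independent specializations $\xi_k\mapsto 0$ forces \emph{incompatible} partitions, forcing $A$ or $B$ to be a unit. Distinctness of $P_\Gamma$ from $P_{\Gamma'}$ for $\Gamma\ne\Gamma'$ will be handled by the same machine: either one reads off a combinatorial invariant directly (the total degree, or the linear term in some $\xi_k$ in the coefficient of $t^{d-1}$, which encodes the trace of the block), or, when the invariants coincide, one specializes and invokes inductive distinctness to conclude that the two underlying graphs must already coincide in their reductions, and then lifts this rigidity back.

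The hardest part, and where essentially all the labor will go, is organizing the induction so that the combinatorial reductions under $\xi_k=0$ genuinely exhaust all possible factorization patterns. Rigid graphs (strongly connected with few colors) will require combining several different specializations; irreducibility in base cases -- graphs too small for any reduction to bite -- has to be checked by hand, which in turn rests on the concrete classification of marked colored graphs (Definition \ref{mcg} and Remark \ref{mcg1}). The interplay between that combinatorial classification and the algebraic lifting lemma is where I expect the proof to become genuinely delicate.
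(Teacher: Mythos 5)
Your overall strategy is the one the paper actually follows: induct on the number of parameters, use the fact that setting $\xi_k=0$ splits the block into smaller blocks of the same type (Corollary \ref{laffa}), invoke inductive irreducibility and separation to pin down the factorization pattern of each specialization, and play two specializations against each other to rule out a nontrivial factorization. One small correction: the Hensel-style lifting lemma is neither needed nor available in the form you state. The argument only uses the trivial direction — a hypothetical factorization $P_\Gamma=AB$ with $A,B$ monic in $t$ \emph{specializes} to a factorization of $P_\Gamma|_{\xi_k=0}$, which by unique factorization in $\Z[\xi,t]$ must group the irreducible factors $P_{\Gamma_i}$. Lifting a factorization from the fiber $\xi_k=0$ back to the polynomial ring (rather than to a completion) is false in general and is never required.

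The genuine gap is that your induction cannot close without a structural restriction on the graphs that the proposal never identifies: the vertices of each admissible block, viewed as integer vectors, must be affinely independent (the graph is non-degenerate) and the graph must be allowable. The theorem is literally false without this — the paper exhibits the degenerate graph with vertices $0,\ e_2-e_1,\ 2e_2-2e_1$ whose characteristic polynomial is reducible — and the whole of Part 1 (Proposition \ref{affind}, Theorem \ref{MM}) exists to guarantee that the graphs actually occurring are non-degenerate. This input is what makes the "incompatible partitions" step work: the workhorse Lemma \ref{lem0} deduces from $\chi_{\bar A}\equiv\chi_{\bar B}$ modulo $\xi_i=\xi_j=0$ that $|A|=|B|=1$ and that the two vertices are joined by an extra edge, contradicting the tree structure, and its proof is nothing but affine independence. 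The same independence (at most three vertices can project to the same point when two coordinates are deleted), together with conservation of mass and the parity test (Lemma \ref{parita}), drives the separation lemma \ref{seplem}, where your phrase "lift this rigidity back" conceals a real ambiguity: the paper's two $2\times 2$ blocks in \eqref{22} have identical specializations under both $\xi_1=0$ and $\xi_2=0$ yet distinct polynomials, so the reconstruction must at some point compare full polynomials and invoke allowability. Finally, even granting all of this, the two specializations do not automatically conflict; the conflict has to be manufactured by a case analysis on how the multiplicities of the indices distribute over a maximal tree (Lemma \ref{le4p} and the sections following it), with a few irreducible base configurations verified by computer algebra. Your proposal correctly anticipates that this is where the labor lies, but without the non-degeneracy and allowability hypotheses the labor cannot succeed.
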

In general proving that a polynomial in several variables is irreducible is not an easy task, few general methods are available and none of these seems to apply to our case.  For a given polynomial  with integer coefficients  there exist reasonable computer algebra algorithms to test irreducibility  but this is not a practical method in our case  where the polynomials are infinite and their degrees also tend to infinity.  Fortunately the combinatorics comes to our help as follows.  We start from one of the matrices  describing the Hamiltonian for a block  associated to a given graph $\Gamma$.  If we set one of the parameters $\xi_i=0$ it is easy to verify that the matrix specializes to a direct sum of smaller blocks  of  the same type  for less parameters (cf. Corollary \ref{laffa}).  This remark gives a powerful tool for induction. The characteristic polynomial specializes to the product of the characteristic polynomials of the blocks and, by induction, we may assume that these factors are irreducible. We thus obtain a factorization for the specialized polynomial.

We repeat the argument with a different variable obtaining a different specialization and a different factorization.  It is possible  that these two factorizations cannot arise  from the same factorization of the given polynomial. If this happens we are sure that the polynomial we started with is irreducible.  This is the method we follow in order to prove Theorem  \ref{Lase} and it is the content of Part 2.

Unfortunately this still  
requires a rather tedious and lengthy case analysis and a reduction to some basic cases which we treat by computer algebra algorithms.

The fact that the polynomials are distinct (cf. lemma \ref{seplem})  is based by induction on the irreducibility theorem and it is   relatively easy  to prove.\smallskip

There is another delicate point in this proof, in order for the induction   to work we need to have a complete control  on the graphs that may appear, which is not proved in \cite{PP} and which we do not know for $q>1$. We need to know  that the possible  graphs  satisfy a {\em geometric non-degeneracy} or {\em non resonance}  restriction, given by Proposition \ref{affind}. Precisely one of the presentations of our graphs  is by describing the vertices as integral vectors (in $\Z^m$), then the non degeneracy condition is that these vectors are affinely independent.    The possible graphs are obtained by associating to the combinatorial graphs  a system of  $d$ linear and quadratic equations, in $n$ variables,  which depend on the tangential sites in a quadratic way, where $d+1$ is the number of vertices.  
The graph is thus admissible if and only if these equations have solutions in $\Z^n\setminus S$, this arithmetic analysis is too difficult to perform and we study wether   they have solutions in $\R^n\setminus S$.
The idea is that if these equations are independent then they can be at most $n$. In fact for a   geometrically non degenerate graph the condition of independence is fulfilled when $d\leq n$, the case $d>n$ has been treated completely by methods of algebraic geometry in \cite{PP}, in the same paper we proved only a partial result  on degenerate graphs. Here, by restricting to the case $q=1$, we are able to show that, for generic choices of $S$,  a resonant graph gives a system which has no solutions in  $\R^n\setminus S$.
Note that a {\em resonance}, namely a relation between the vertices of the graph, implies a linear relation among the linear terms of the system of equations.  Such a relation may correspond either to a relation on the equations or an incompatibility condition for the system. So first  we  reduce to    minimal cases (only one resonance), and then we study   those graphs for which the equations are generically compatible.  This produces two cases, either the system has only solutions in $S$  or only in $\C^n\setminus \R^n$, this concludes the proof.

The strategy follows these steps: First we reduce to the case of trees and describe the resonance in terms of edges (instead of vertices). Next we analyze in a combinatorial way all the possible minimal resonances (in this analysis the hypothesis  $q=1$ is essential).  Then we prove that we can essentially reduce to those trees in which all the edges contribute to the resonance.
Finally we show that such trees have at most two trivalent vertices (that is a vertex from which 3 edges originate), the other vertices have valency $1,2$.  At this point one can deduce from the system a simple equation which has   only solutions in $S$  or only in $\C^n\setminus \R^n$ by inspection.\smallskip

The proof of Proposition \ref{affind}  is the content of   Part  1, the proof we found is rather complex  and takes a good 20 pages of detailed combinatorial analysis. 

\subsubsection{Dynamical consequences}
From the fact that the characteristic  polynomials of the matrix blocks are described through finitely many graphs we shall be able to show the existence of a {\em discriminant variety} also in the infinite dimensional setting and show:
  \begin{corollary} \label{teoM}There exists   an algebraic hypersurface $\A$, in the space $\R^m$ of the parameters $\xi$, and a finite number of algebraic functions $\theta_i  (\xi)$ homogeneous of degree 1 on the region $\R^m\setminus \A$,   so that  the eigenvalues of $Q:=-\frac 12\ii ad  (N)$ on $F^{0,  1}$ are of the form $n+\theta_j  (\xi)+a  (\xi)$,    $a  (\xi)=\sum_jn_j\xi_j,  \ n_j\in\Z,   \ \sum_jn_j=-1,  \ n\in\N$.  In particular the eigenvalues are   all distinct and non--zero   outside the countable union of hypersurfaces $\theta_i  (\xi)-\theta_j  (\xi)-a  (\xi)\neq 0$ for all $i\neq j$ and $a  (\xi)$.

\end{corollary}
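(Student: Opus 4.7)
By the normal-form reduction of \cite{PP}, the operator $ad(N)$ on $F^{0,1}$ decomposes as a direct sum of finite-dimensional blocks indexed by the admissible marked coloured graphs $\Gamma$. The $\Z^m$-translation action on graphs has finitely many orbits, and any two graphs in the same orbit yield matrices differing by a diagonal scalar shift of the form $n+a(\xi)$ with $n\in\N$, $a(\xi)=\sum_j n_j\xi_j$, $n_j\in\Z$, $\sum_j n_j=-1$ (the last condition reflecting the mass/weight conservation of the NLS). Picking one representative per orbit produces a \emph{finite} list of characteristic polynomials $P_1(\xi,t),\dots,P_k(\xi,t)\in\Z[\xi_1,\dots,\xi_m,t]$, and every eigenvalue of $Q=-\tfrac12\ii\,ad(N)$ has the form $n+a(\xi)+\theta$, where $\theta$ is a root of some $P_j$.

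By Theorem \ref{Lase} the polynomials $P_j$ are pairwise distinct and irreducible. Let $\theta_1(\xi),\dots,\theta_M(\xi)$ enumerate all branches of the algebraic functions implicitly defined by $P_j(\xi,\cdot)=0$; each $\theta_i$ is homogeneous of degree $1$ in $\xi$, because the entries of $ad(N)_\Gamma$ are linear homogeneous in the actions $\xi_i$. Set
\[
\A=\bigcup_{j=1}^k\{\mathrm{disc}_t P_j=0\}\;\cup\;\bigcup_{j\ne j'}\{\mathrm{Res}_t(P_j,P_{j'})=0\}\subset\R^m.
\]
This is a proper algebraic hypersurface: each discriminant is non-zero by irreducibility of $P_j$ in characteristic $0$, and each resultant is non-zero because the $P_j$ are pairwise distinct irreducibles and hence coprime. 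Off $\A$ the branches $\theta_i(\xi)$ are well-defined algebraic functions and no two of them coincide.

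For the simple-spectrum statement it suffices to show that for every choice of indices $i\neq j$ and every $\Z$-affine $a(\xi)$ the function $\theta_i(\xi)-\theta_j(\xi)-a(\xi)$ is not identically zero, and similarly for $n+\theta_i(\xi)+a(\xi)$. If $\theta_i,\theta_j$ were two roots of the \emph{same} irreducible $P_r$ differing by a $\Z$-affine function $a(\xi)\in\Q(\xi)$, then comparing coefficients shows (in characteristic $0$) that $P_r(\xi,t)=P_r(\xi,t-a(\xi))$, forcing $a\equiv 0$, i.e.\ $\theta_i=\theta_j$. If instead $\theta_i,\theta_j$ are roots of \emph{distinct} $P_r,P_{r'}$, such an identity would give $P_r(\xi,t)=c\,P_{r'}(\xi,t-a(\xi))$; but a $\Z$-affine shift of $t$ corresponds precisely to a $\Z^m$-translation of the underlying graph, so $P_r$ and $P_{r'}$ would represent the same orbit, contradicting our choice of representatives (hence the distinctness given by Theorem \ref{Lase}). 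The argument for $n+\theta_i+a\not\equiv 0$ is the same, using that $P_j$ is irreducible of positive degree in $t$. Each such non-trivial algebraic relation cuts out a proper hypersurface, and the countable union over all $(n,a,i,j)$ gives the claimed exceptional set outside which the eigenvalues of $Q$ are distinct and non-zero.

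\textbf{Main obstacle.} The deep input has already been absorbed into Theorem \ref{Lase}; within the corollary itself, the one delicate point is the rigidity used in the third paragraph, namely that distinct orbit representatives cannot be related by a $\Z$-affine translation of $t$. This in turn relies on the classification of admissible marked coloured graphs up to the $\Z^m$-translation action, which is underwritten by the geometric non-degeneracy of Proposition \ref{affind}; without it, the packaging of eigenvalues as $n+\theta_i(\xi)+a(\xi)$ with unique data $(n,i,a)$ (modulo an already-controlled ambiguity) could not be guaranteed.
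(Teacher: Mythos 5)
Your proposal is correct and follows essentially the same route as the paper: decompose $ad(N)$ into blocks indexed by components of $\Lambda_S$, reduce to a finite list of combinatorial matrices modulo scalar shifts $(n+\sum_i n_i\xi_i)I$, and use the irreducibility and distinctness from Theorem \ref{Lase} to get non-vanishing discriminants and resultants, hence the hypersurface $\A$ and the countable family of exceptional hypersurfaces. Your third paragraph merely spells out in more detail (via the minimal-polynomial/translation argument and the separation lemma) why each condition $\theta_i-\theta_j-a\not\equiv 0$ is genuinely non-trivial, a point the paper's proof states more tersely.
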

\begin{proof}
We know that $F^{0,  1}$ decomposes into the direct sum of infinitely many  blocks
corresponding to the connected components of the graph
$\Lambda_S$ defined in \ref{gliegg}.

From Theorem  \ref{Lase} we have   that the
characteristic polynomials of the matrices $ad  (N)$  in the
various blocks are irreducible and distinct.      In our case we have seen that,   for two distinct blocks,   this
produces a non zero polynomial  whose non vanishing is equivalent
to the condition that the two blocks have distinct eigenvalues. In
principle this gives countably many hypersurfaces.   Since we know
that our infinite list of matrices is obtained from a finite list
by adding a scalar matrix of the form $  (n+\sum_in_i\xi_i)I$ we
obtain a finite number of distinct algebraic function $\theta_i
(\xi)$,   outside an algebraic hypersurface $\A$,   which are the
eigenvalues of all the combinatorial blocks.  The condition is
$\theta_i  (\xi)-\theta_j  (\xi)-a  (\xi)\neq 0$ for all $i\neq j$
and $a  (\xi)=\sum_jn_j\xi_j,  \ n_j\in\Z,   \ \sum_jn_j=0$.   
\end{proof}
In \cite{PP2}  we shall refine this Theorem by exhibiting a region of positive measure where the eigenvalues are explicitly bounded away from 0.\smallskip

By construction of the matrix $Q$, real eigenvalues of $Q$ correspond to imaginary eigenvalues of $ad(N)$.
We  have seen that outside a real hypersurface the eigenvalues of
all the combinatorial blocks are distinct.   Thus outside this
hypersurface the cone of the $\xi_i$ decomposes into open regions
where the number of real roots is constant.  We can furthermore  show (see \S \ref{EO})that
\begin{proposition}\label{ello}
The open region  where all the eigenvalues of $Q$ are real is non empty.
\end{proposition}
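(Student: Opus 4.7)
The plan is to reduce Proposition \ref{ello} to exhibiting a single parameter $\xi_0\in\R^m\setminus\A$ at which $ad(N)$ is elliptic on every block, and then to invoke openness. The eigenvalues of $Q=-\tfrac12\ii\,ad(N)$ are real precisely when the spectrum of $ad(N)$ is purely imaginary, and since each block of $ad(N)$ is infinitesimally symplectic on a finite-dimensional space its spectrum is symmetric under $\lambda\mapsto-\lambda$ and under complex conjugation; in particular \emph{all eigenvalues purely imaginary and simple} is an open condition in $\xi$. By Corollary \ref{teoM} the full spectrum of $Q$ is parametrized by finitely many homogeneous algebraic functions $\theta_j(\xi)$ shifted by real linear forms in $\xi$, so reality on the finite list of combinatorial templates forces reality on every block, and by Theorem \ref{Lase} simplicity is automatic off the discriminant $\A$. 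The task therefore collapses to producing one $\xi_0$ at which each of the finitely many template blocks has purely imaginary spectrum.

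To construct such a $\xi_0$, I would try to verify the stronger property that $N$ is \emph{definite} (positive or negative) as a real quadratic form on each template block; a definite quadratic Hamiltonian generates a rotational, hence elliptic, linear flow, so its Hamilton operator has purely imaginary spectrum. The restriction of $N$ to a block has the schematic form $D(\xi)+R(\xi)$, where $D(\xi)$ is the diagonal of normal frequencies $\omega_j(\xi)$ read off from the graph, and $R(\xi)$ is the off-diagonal resonant coupling; both are linear in $\xi$. If $\xi$ lies in a wedge on which every frequency $\omega_j(\xi)$ occurring across the finite list of templates is positive and uniformly larger than the corresponding coupling entries, then $D(\xi)+R(\xi)$ is strictly diagonally dominant and hence positive definite, giving ellipticity on every block at once.

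The main obstacle is therefore purely combinatorial: one must certify that the finitely many linear functionals $\omega_j(\xi)$ carried by the graphs admit a common positive chamber in $\R^m$ and that on some direction in this chamber the diagonal terms dominate the coupling. I would approach this using the explicit description of the normal frequencies in terms of the tangential sites $S$ given in \cite{PP}, checking for the finite list of templates that a carefully chosen direction --- for instance one in which a single tangential action $\xi_{i_0}$ is taken dominant, or a positive combination dictated by the graph data --- makes every $\omega_j(\xi)$ positive and the coupling small by comparison. Should strict definiteness fail on some exceptional template, one can still fall back on the weaker criterion that $N$ is definite on a symplectically complementary Lagrangian, or verify ellipticity directly by computer algebra, since only finitely many templates are involved. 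Once such a $\xi_0$ is produced, openness of ellipticity together with openness of simplicity off $\A$ yields the desired nonempty open region.
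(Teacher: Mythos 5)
Your reduction --- find one parameter value where every combinatorial block has real (purely imaginary for $ad(N)$) and simple spectrum, then invoke openness of ``real and distinct'' off the discriminant --- is sound and is in spirit what the paper does. The genuine gap is in the mechanism you propose for producing that base point. Diagonal dominance and positive definiteness are the wrong tools here for two reasons. First, the blocks containing red edges are \emph{not} symmetric matrices (see \eqref{maent1}, \eqref{maent2}: the couplings $\pm2\sqrt{\xi_i\xi_j}$ occur with opposite signs in transposed positions), and the corresponding piece of the Hamiltonian contains hyperbolic terms $z_hz_k+\bar z_h\bar z_k$, so the quadratic form is not definite; for a non-symmetric real matrix, strict diagonal dominance localizes the spectrum in Gershgorin disks but does not force it onto the real axis. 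Second, there is no regime in which the couplings are uniformly small relative to the diagonal: both are homogeneous of degree $1$ in $\xi$, the root vertex contributes a diagonal entry $0$ (or $-u(\xi)$ after translation), and already the $2\times 2$ red block $C_{G_2}$ of \eqref{22} has characteristic polynomial $t^2+(\xi_1+\xi_2)t+4\xi_1\xi_2$, whose discriminant $(\xi_1+\xi_2)^2-16\xi_1\xi_2$ is \emph{negative} whenever $\xi_1/\xi_2$ is of order $1$. So no ``positive chamber where the diagonal dominates'' exists; reality forces a hierarchical regime $\xi_1\gg\xi_2$ (or the reverse). Your computer-algebra fallback also cannot close the argument, because the finite list of templates depends on $m$ and $m$ is arbitrary.

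The paper produces the good region exactly by exploiting that hierarchy, via induction on $m$: setting $\xi_m=0$ splits each combinatorial block into blocks in $m-1$ variables (Corollary \ref{laffa}); by the inductive hypothesis there is an open set $\mathcal B_{m-1}$ where each of these has real simple spectrum, and the separation lemma \ref{seplem} together with allowability guarantees that distinct sub-blocks do not share eigenvalues, so the union of their spectra is still simple; then ``real and distinct'' being open, the property persists for small $\xi_m>0$, uniformly over the finitely many templates in $m$ variables. If you replace your definiteness step by this specialization-and-perturbation step (which also needs the separation statement you did not invoke), your argument becomes the paper's.
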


As a consequence of Proposition \ref{teoM}  one easily sees that one can perform a symplectic coordinate change so that the Hamiltonian is in {\em diagonal canonical form}, that is we have an infinite sum  $\sum_k\theta_k|z_k|^2$  corresponding to the real eigenvalues, plus a (possibly empty, depending on the connected region of $O_\del\setminus \A$ where $O_\del$ is a small hypercube), finite sum of hyperbolic terms corresponding to the complex eigenvalues. Then Proposition \ref{ello} ensures that on an open region of parameters the Hamiltonian is diagonal and elliptic.
\smallskip

 \begin{remark}
 No  knowledge of the NLS  is
necessary in order to understand the Theorems of this paper which may be formulated as  purely geometric questions. 
\end{remark}\begin{remark}
 We should remark that  only finitely many
of the infinite blocks   are not self adjoint matrices.  If one restricts
the analysis to the self adjoint blocks the proofs simplify
drastically,   in particular this is true for the first part which
admits a far reaching generalization   (cf.  Theorem \ref{ridma}).
\end{remark}
\begin{remark}
The restriction to $q=1$  plays a major role in both parts of the paper. However for any $q$ and dimension $n=1$  all the results of this paper have been proved in the Ph. D. Thesis of Nguyen Bich Van.
\end{remark}
\begin{remark}
In general ($q>1,n>1$) although we do not know that the eigenvalues are distinct we can use a {\em Fitting decomposition}  with blocks corresponding to distinct eigenvalues. It turns out that these blocks are uniformly bounded for generic $S$.
\end{remark}
\begin{remark}
In Proposition \ref{ello} we have pointed out the existence of an elliptic region.  It is easy to exhibit large regions where there are complex eigenvalues, which however can be at most a finite number bounded by a function of $n,m$.
\end{remark}
      
\section{Preliminaries}
{\em We start by presenting an elementary geometric problem which originates from the NLS  but can be explained and treated in a completely independent way. Then we briefly describe the NLS normal form and show the origin and importance of the geometric problem in this context.}
\subsection{An elementary geometric problem}  Given a point $p$ in a sphere in Euclidean space  $\R^n$ we can consider its {\em antipode} or {\em mirror}  point $p'$.  A  similar construction holds in  the case of two parallel hyperplanes $H_1,  H_2$.  Given a point $p$ in one of them,   say for instance $H_1$,   we can construct a {\em mirror point} $p'\in H_2$ by drawing the line $r$ perpendicular to $H_1$ through $p$  and taking as $p'$  the point of intersection between $r$ and $H_2$.   If we have several spheres $S_1,  \ldots,  S_a$  and pairs of parallel hyperplanes $  (H_1^1,  H_2^1),  \ldots,     (H_1^b,  H_2^b)$  we have, for a point in the intersection of $h$ such hypersurfaces, $h$  mirror points.   Each of them in turn could  have several mirror points.   The combinatorics resulting is encoded by a 2-colored graph,   having as vertices the points of $\R^n$  and two types of edges; the edges  colored black represent mirror pairs in parallel hyperplanes while edges colored red represent  antipode points in one of the spheres.  The edges are understood as purely combinatorial and not as segments of $\R^n$.  The combinatorics of this graph can be extremely complicated and reflects partially the complex relative positions of all the given hypersurfaces.  

In our case a configuration of previous type is
  associated to a set $S$ (the tangential sites) as follows:   given
two distinct elements $v_i,  v_j\in S$  construct the sphere
$S_{i,  j}$  having  the two vectors as opposite points of a
diameter  and the two Hyperplanes,   $H_{i,  j},  \ H_{j,  i}$,
passing through $v_i$  and $v_j$ respectively,   and perpendicular
to the line though the two vectors  $v_i,  v_j. $
\smallskip

From this configuration of spheres and pairs of parallel
hyperplanes  we deduce, by the previous rules, a {\em combinatorial colored graph},
denoted by $\Gamma_S$,   with vertices  the points in $\R^n$ and
two types of edges,   which we call {\em black} and {\em red}.

\begin{itemize}\item A black edge connects  two points $p\in H_{i,  j},  \ q\in H_{j,  i}$,   such that the line $p,  q$ is orthogonal to the two hyperplanes,   or in other words $q=p+v_j-v_i$.

\item A red edge connects  two points $p,  q\in S_{i,  j} $ which are opposite points of a diameter   ($p+q=v_i+v_j$).

\end{itemize}
{\bf The Problem}\quad The problem consists in the study of the connected components of this  graph.   Of course  the nature of the graph depends upon the choice of $S$  but one expects a relatively simple behavior for $S$  {\em generic}.

\smallskip

It is immediate by the definitions  that the points in $S$ are all
pairwise connected by black and red edges  and it is not hard to
see  that,   for generic values of $S$,    the set $S$ is itself a
connected component which we call the {\em special component}.

 What we expect to have,   as explained in \S\ref{gra} and proved in Part 1,   is:

\begin{proposition}\label{affind}
For generic choices of $S$  the  connected components of this
graph,   different from the special component,   are formed by
affinely independent points.

In particular each component has at most $n+1$ points.
\end{proposition}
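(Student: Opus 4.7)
The plan is to follow the strategy sketched in the introduction: show that for generic tangential sites $S$, no connected component of $\Gamma_S$ other than the special one $S$ itself can carry an affine dependence among its vertices, arguing by reducing to a small list of combinatorial configurations and ruling each one out by a direct calculation.

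First, I would fix a connected component $C$ with vertices $p_0, \ldots, p_d$ and a spanning tree $T \subset C$, then translate each edge of $T$ into an equation: a black edge $\{p_i, p_j\}$ gives the linear relation $p_j - p_i = v_b - v_a$, while a red edge gives both the linear relation $p_i + p_j = v_a + v_b$ and the quadratic ``right angle'' relation $(p_i - v_a)\cdot(p_i - v_b) = 0$. Rooting $T$ at $p_0$, all $p_i$ become affine functions of $p_0$, and the $r$ red edges impose $r$ quadratic equations on $p_0 \in \R^n$. The hypothesis that $d+1$ points are affinely dependent is exactly the statement that the tree supports a \emph{resonance}: a nontrivial integer linear relation among the edge vectors of $T$ forced by the system.

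Second, I would reformulate the resonance in terms of edges rather than vertices: telescoping any affine relation $\sum_i c_i p_i = 0$, $\sum_i c_i = 0$ along the tree produces an integer relation $\sum_{e \in T} n_e \, \epsilon_e = 0$, where $\epsilon_e$ is the ``tangential contribution'' of $e$ (a difference $v_j - v_i$ for a black edge, something slightly more delicate for a red edge). The hypothesis $q = 1$ enters sharply here, tightly restricting the shape of minimal such resonances. I would then enumerate, up to symmetry, the minimal resonant edge-subsets, and next show that any resonant tree can be pruned to one in which every edge contributes. The punchline of this combinatorial analysis is that the surviving trees are essentially ``path-like'': at most two trivalent vertices, every other vertex of valency $\leq 2$.

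Third, for each surviving combinatorial type I would read off the compatibility condition explicitly. The linear relations among the $\epsilon_e$'s force linear dependencies among the affine parts of the system in $p_0$, which in turn collapse the $r$ quadratic equations into a single quadratic (or linear) equation in $p_0$ whose coefficients are polynomial in the $v_i \in S$. A direct inspection of these few equations, case by case, shows that their real solutions either lie in $S$ (so the component is not new) or are empty, possibly after excluding a finite number of algebraic conditions on $S$; the countable union of these exceptional loci is the ``non-generic'' set of $S$ to be excluded.

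The main obstacle is the combinatorial classification in the second step: enumerating the minimal resonant configurations and verifying that pruning really does land one in the ``at most two trivalent vertices'' class. This is where the hypothesis $q=1$ does its real work and where the author's lengthy detailed combinatorial analysis is unavoidable. Once that list is in hand, the genericity argument reduces to a finite case check, and the bound $d \leq n$ follows because $d+1$ affinely independent points of $\R^n$ can be at most $n+1$.
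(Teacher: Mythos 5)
Your outline reproduces, almost verbatim, the strategy already sketched in the introduction of the paper, and as a roadmap it is accurate; but as a proof it has two genuine gaps. First, you never articulate the dichotomy on which the whole argument pivots. An affine dependence $\sum_a n_a a=0$ among the vertices of a component yields, via the compatibility equations \eqref{bacos}, the identity $\sum_a n_a K(a)=2(x,\sum_a n_a\pi(a))=0$, and one must separate the case $\sum_a n_a C(a)\neq 0$ (an \emph{avoidable} resonance: a nontrivial polynomial condition on the $v_i$, excluded for generic $S$ --- this is the mechanism behind Theorem \ref{ridma} and disposes of all but the degenerate--resonant graphs) from the case in which every linear relation also annihilates the quadratic energies $C(a)$. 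Only the latter, the degenerate--resonant graphs, require the combinatorial classification, and for those no genericity assumption on $S$ can help --- they must be killed structurally, not by removing hypersurfaces. Your plan treats ``resonance'' as essentially synonymous with ``affine dependence'' and places the genericity exclusion at the end of the case check, which is not where the generic/non-generic split actually occurs.

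Second, the entire substance of the proof is deferred: the reduction to minimal resonances and their encoding graphs (even circuits, or two odd circuits joined by a segment), the sign bookkeeping $\delta_i,\sigma_i,\lambda_i$, the eighteen-case computation of the contribution $L$ of a non-critical index, the type I / type II dichotomy constraining the shape of the tree, and --- crucially --- the endgame. In the paper the conclusion is not ``inspect the collapsed equation and see that it has no real solutions outside $S$'' in the abstract; it is the specific statement (Theorem \ref{MM}) that a degenerate--resonant graph always contains a \emph{not allowable} pair of vertices in the sense of Definition \ref{ilpunto1}, i.e.\ two vertices differing by $(-2e_i,\tau)$ or $(-3e_i+e_j,\tau)$, and then Proposition \ref{ilpunto0} shows that such a pair forces either $|x-v_i|^2=0$ (so $x\in S$) or places $x$ on a sphere of negative squared radius $-\tfrac34|v_i-v_j|^2$ (so no real solution). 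Without identifying that target, and without the classification that actually produces the not-allowable pair, what you have is a plan for a proof rather than a proof.
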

In the next paragraph we explain how this problem arises in the NLS. The NLS considered in \cite{PP} depend upon an integer parameter $q$  but here we concentrate in the simplest case when $q=1$, which is connected to the previous geometric problem,  and we have the {\em cubic NLS} the remaining cases are essentially open.

\subsection{Some background}

The  cubic  NLS on a torus is a Hamiltonian system,   the
symplectic variables are the Fourier  coefficients of the
functions $u( \varphi):= \sum_{k\in \Z^n} u_k e^{\ii (k, \varphi)}$, the symplectic structure is $\ii \sum_{k\in \Z^n} d u_k\wedge d \bar u_k $
 and the Hamiltonian is

\begin{equation}\label{Ham}H:=\sum_{k\in \Z^n}|k|^2 u_k \bar u_k \pm  \sum_{k_i\in \Z^n: \sum_{i=1}^{4}  (-1)^i k_i=0}\hskip-30pt u_{k_1}\bar u_{k_2}u_{k_3}\bar u_{k_4}.  \end{equation}  We shall choose the sign $+$  for simplicity of notations.  We  perform a step of ``Resonant Birkhoff  normal form''.  Denote by $K:=  \sum_{k\in \Z^n}|k|^2 u_k \bar u_k$. A monomial  $\prod_iu_{k_i}^{\alpha_i}\bar u_{k_i}^{\beta_i}$ in the $u_k,  \bar u_k$ is an eigenvector  for $\{K,  -\}$ of eigenvalue $\sum_i(\alpha_i-\beta_i)|k_i|^2$ and such a step is a symplectic change of variables under which we cancel all or some of the quartic terms which do not Poisson commute with $K$,   to the cost of introducing higher order terms which are then treated as {\em a perturbation}.  The condition of commuting with $K$ is  $  \sum_{i=1}^{4}  (-1)^i |k_i|^2= 0$.   Dropping the perturbation one has a {\em restricted model}.  
\begin{equation}\label{Ham1}H:=\sum_{k\in \Z^n}|k|^2 u_k \bar u_k +  \sum_{k_i\in \Z^n: \sum_{i=1}^{4}  (-1)^i k_i=0,  \atop { \sum_{i=1}^{4}  (-1)^i |k_i|^2= 0}}\hskip-30pt u_{k_1}\bar u_{k_2}u_{k_3}\bar u_{k_4} .  \end{equation}
Note that the two conditions $\sum_{i=1}^{4}  (-1)^i k_i=0,  \    \sum_{i=1}^{4}  (-1)^i |k_i|^2= 0 $ have a geometric interpretation, that is the four points $k_1,k_2,k_3,k_4$ are the vertices of a {\em rectangle}.

As it is well known   (cf.  Keel--Tao \cite{KTa} and
Gr\'ebert--Thomann \cite{GT}) this restricted model admits infinitely many
invariant subspaces  defined by requiring $u_k=0$ for all $k\notin
S$ where $S=\{v_1,  \ldots,  v_m\}$,   {\em tangential sites}, is
some   (arbitrarily large) subset of $\Z^n$ satisfying a {\em
completeness condition}  (cf. \cite{PP}, 2.1.1).  The dynamics on these subspaces depends
in a subtle way on the geometric properties of $S$ and,  for generic
choices of $S$ the behavior is integrable  (cf. \cite{PP}, Proposition 1).
 In order to understand how to pass from solutions of the restricted model to  true solutions of the NLS  one has to have some structural stability result that is, as we explained before, control of  the dynamics on the normal bundle to the family of invariant tori in the given invariant subspace.   In coordinates
we set \begin{equation} \label{chofv}u_k:= z_k \;{\rm for}\; k\in
S^c\,  ,  \quad u_{v_i}:= \sqrt {\xi_i+y_i} e^{\ii x_i}= \sqrt
{\xi_i}  (1+\frac {y_i}{2 \xi_i }+\ldots  ) e^{\ii x_i}\;{\rm for}\;  i=1,  \dots m,
\end{equation} 
considering  the $\xi_i>0$ as parameters, with $|y_i|<\xi_i$, while $y,  x,  w:=  (z,  \bar z)$ are dynamical variables.  In these variables the Hamiltonian can be decomposed as $$ H\circ\Phi_\xi=   (\ome  (\xi),  y) +\sum_{k\in S^c} |k|^2|z_k|^2 + {\mathcal Q}  (\xi,x,  w)+   P  (\xi,  y,  x,  w)=N+P. $$
Where $N:=  (\ome  (\xi),  y) +\sum_{k\in S^c} |k|^2|z_k|^2 +
{\mathcal Q}  (\xi,x,  w)$, with ${\mathcal Q}  (\xi, x, w)$ quadratic,  is the {\em normal form} and $P$ the {\em
perturbation}.    
\smallskip

  We  use systematically the fact that this Hamiltonian commutes with {\em momentum $M$} and {\em mass $L$}:
  \begin{equation}\label{moma0}
M=\sum_i\xi_i v_i+\sum_i   y_i v_i+\sum_{k\in S^c}k|z_k|^2,  \quad
L=\sum_i\xi_i  +\sum_i   y_i  +\sum_{k\in S^c} |z_k|^2\,  ,
\end{equation}

We have, after some renormalizing,
   $\ome_i  (\xi):  = |v_i|^2 -2\xi_i 
$.
Finally the quadratic form is \begin{equation}
 \mathcal Q  (\xi,x,w)= 4\sum^*_{  1\leq i\neq j\leq m    \atop h,  k \in S^c}\sqrt{\xi_{i}\xi_{j}}e^{\ii  (x_{i}-x_{j})}z_{h}\bar z_{k } +
\end{equation} $$ + 2\sum^{**}_{ 1\leq i< j\leq m   \atop h,  k \in S^c }\sqrt{\xi_{i}\xi_{j}}e^{-\ii  (x_{i}+x_{j})}z_{h} z_{k } +
  2\sum^{**}_{ 1 \leq i<j\leq m    \atop h,  k \in S^c }\sqrt{\xi_{i}\xi_{j}}e^{\ii  (x_{i}+x_{j})}\bar z_{h}\bar  z_{k }.  $$  Here $\sum^*$ denotes that $  (h,  k,  v_i,  v_j)$ satisfy:
 $$ \{  (h,  k,  v_i,  v_j)\,  |\,    {h+v_i= k+v_j},  \  { |h|^2+|v_i|^2=| k|^2+|v_j|^2}\}. $$
  and $\sum^{**}$,   that  $  (h,  v_i,   k,  v_j)$ satisfy:
   $$ \{  (h,  v_i,  k,  v_j)\,  |\,    {h+k= v_i+v_j},  \  { |h|^2+|k|^2=| v_i|^2+|v_j|^2}\}. $$
Notice that in the sums  $  \sum^{**}$ each term appears twice.   These constraints describe exactly the two types of rectangles in which two vertices lie in $S$  and the others in $S^c$, thus   these last two vertices are joined, by definition, by a black edge in the first case (in which they are vertices of a side of the rectangle) and a red in the second (in which they are opposite vertices  of the rectangle). Note that the edges correspond to interacting sites. \vskip10pt

We have described   a very complicated infinite dimensional quadratic
Hamiltonian which we wish to decompose   into infinitely many decoupled finite dimensional blocks,  corresponding to the components of the geometric graph $\Gamma_S$ defined in the previous paragraph.    
 In  \cite{PP}  we show that
this  is possible and we 
also   proved the existence of  a symplectic change of
variables which makes the angles disappear.  
 \subsection{The operator $ad  (N)$\label{LaMa}}
  \begin{definition}\label{glispa}Denote by  ${\Z^m}:=\{\sum_{i=1}^ma_ie_i,  \  a_i\in\Z\}$  the lattice  with basis the elements $e_i$.

 Consider   the {\em mass $\eta$} and the {\em momentum $\pi$} (the name comes from dynamical considerations): $$ \eta:{\Z^m}\to \Z,  \  \eta  (e_i):=1,\quad \pi:\Z^m\to \Z^n,  \ \pi_S=\pi:e_i\mapsto v_i.$$
\end{definition}

At this point it is useful to formalize the idea of {\em energy
transfer} in a combinatorial way. Let    $S^2[{\Z^m}]:=\{\sum_{i,
j=1}^ma_{i,  j}e_ie_j\},  \  a_{i,  j}\in\Z$ be   the  polynomials
of degree $2$  in the $e_i$ with integer coefficients.  We extend
the map $\pi$ and introduce a linear map from ${\Z^m}$ to $S^2
({\Z^m})$ denoted $a\mapsto a^{  (2)}$ as:
 \begin{equation}
\label{adue} \pi  (e_i) = v_i,  \quad
\pi  (e_ie_j):=  (v_i,  v_j),    \quad *^{  (2)}:{\Z^m}\to S^2
({\Z^m}),  \ e_i\mapsto e_i^2 .
\end{equation}

We have $   \pi  (AB)=  (\pi  (A),  \pi  (B)),  \forall A,
B\in{\Z^m}. $

\begin{remark}
Notice that we have   $a^{  (2)}=a^2$ if and only if  $a$ equals 0 or one of the variables $e_i$.
\end{remark}
\subsubsection{The space  $F^{0,  1}$\label{spaceF}}  We start from the space  $V^{0,  1}$ of functions with basis the elements $$ e^{\ii \sum_j\nu_jx_j}z_k ,  \quad   e^{-\ii \sum_j\nu_jx_j}\bar z_k,  \quad k  \in S^c  .$$

In this space the conditions of  commuting with momentum, resp. with mass select the elements, called {\em frequency basis}
 \begin{equation}\label{llab}
 F_B= e^{\ii \sum_j\nu_jx_j}z_k ,  \quad   e^{-\ii \sum_j\nu_jx_j}\bar z_k ;   \quad k  \in S^c \end{equation}$$  \sum_j\nu_jv_j+k=\pi  (\nu)+k=0 , \qquad \text{resp.}\quad \sum_j\nu_j+1=0 .$$
Denote by $F^{0,  1}$  the subspace of   $V^{0,  1}$ commuting
with momentum and mass.\footnote{this convention is different from \cite{PP} where we only impose commutation with momentum}
  \smallskip

   An element of $F_B$  is completely determined by the value of
$\nu$  and the fact that the $z$ variable may or may not be
conjugated.   By
construction $\nu\in \Z^m_c$ where
 \begin{equation}\label{zmc}
\Z^m_c:=\{\mu\in\Z^m\,  |\,   -\pi  (\mu)\in S^c\}\,  .
\end{equation}
Denote by  $\Theta\subset \Z^m$   the kernel of $\pi_S:e_i\mapsto v_i$ then, by Formula \eqref{zmc}, we have $\Z^m_c=\Z^m\setminus\bigcup_i -e_i+\Theta$.

  Now 
$ad  (N)$ acts on $F^{0,  1}$, its matrix representation, in the frequency basis, decomposes into    infinitely many finite dimensional  blocks described by matrices with coefficients
quadratic polynomials in the variables $\sqrt{\xi_i}$.   One
easily sees that in the characteristic polynomial of each one of
these matrices the square roots disappear (Lemma \ref{spr}).  \smallskip

\subsection{The Cayley  graphs\label{Cg}} We recall how we have found useful to cast some of the description of the operator $ad(N)$ into the language of group theory and in particular of the {\em Cayley graph} (cf. \cite{MKS}).   In fact  to a matrix $C=(c_{i,j})$ we can always associate a graph, with vertices the indices of the matrix, and an edge between $i,j$ if and only if  $c_{i,j}\neq 0$. For the matrix of $ad  (N)$ in the frequency basis the relevant graph comes from a special Cayley graph.\medskip

Let $G$ be a group and $X=X^{-1}\subset G$ a subset.
\begin{definition}\label{mcg}
An $X$--marked graph is an oriented graph $\Gamma$ such that each oriented edge is marked with an element $x\in X$.
$$\xymatrix{ &a\ar@{->}[r]^{x} &b&  &a\ar@{<-}[r]^{\ x^{-1}} &b&   }$$
We mark the same edge,   with opposite orientation,   with
$x^{-1}$.  Notice that if $x^2=1$  we may drop the orientation of
the edge.
\end{definition}
 A typical way to construct an $X$--marked graph is the following.  Consider an action $G\times A\to A$ of $G$ on a set $A$,   we then define.
 \begin{definition}[Cayley graph] The graph $A_X$ has as  vertices   the elements of $A$ and,   given $a,  b\in A$ we join them by an oriented edge $\xymatrix{a\ar@{->}[r]^{x} &b }      $,   marked $x$,   if $b=xa,  \ x\in X$.
 \end{definition}

In our setting the relevant group is   the group of transformations of $\Z^m$ generated by translations $a:x\mapsto x+a$ and {\em sign change} $\tau:x\mapsto -x$. Thus $G:={\Z^m}\rtimes\Z/  (2)$ is the  {\em
semidirect product},   and $\tau:=  (0,  -1),\,G=\Z^m \cup \Z^m\tau$ and the product rule is $a\tau=-\tau a,\ \forall a\in\Z^m$ (notice that this implies $(a\tau)^2=(0,1)$).  We think of an element $a=    e^{\ii \sum_j\nu_jx_j}z_k$ as being associated to the group element  which, by abuse of notation, we still denote by $a=\sum_j\nu_je_j\in \Z^m$. Then $ \bar a= e^{-\ii \sum_j\nu_jx_j}\bar z_k$ is associated to the group element  $a\tau=(\sum_j\nu_je_j)\tau\in \Z^m\tau$. 
\smallskip

 Thus the  frequency basis  is indexed by   elements of $G^1\setminus\bigcup_i \{-e_i+\Theta,(-e_i+\Theta)\tau\}$  where $$G^1:=\{ a, a\tau,\ a\in \Z^m\,|\, \eta(a)=-1\}.$$

We now consider the Cayley graph $G_X$
 of $G$
with respect to the elements $$X^0:=\{e_i-e_j,  \ i\neq j\in[1,  \ldots,  m]\},  \quad X^{-2}:=\{  (-e_i-e_j)\tau,  \  i\neq j\in[1,  \ldots,  m]\}. $$
 If $p\in\Z$ it is easily seen that the set  $G_p:= \{a,  \ \eta  (a)=0,  \    a\tau\,  |\,   \eta  (a)=p\}$ form a subgroup.  In particular \begin{remark}\label{mcg1}
\label{G2}$G_{-2}$ is  generated by the elements  $X:=X^0\cup X^{-2}$, its right cosets are the connected components of the Cayley graph.

In the action of $G_{-2}$ on $\Z^m$ the orbit of 0 is identified to $G_{-2}$ and it is formed by the elements $a\in\Z^m\,|\,\eta(a)\in\{0,-2\}$.  We can thus identify the Cayley graph on $G_{-2}$ with the corresponding graph on this set of elements.

We distinguish the edges by {\em color}, as $X^0$  to be {\em black} and $X^{-2}$  {\em red}, hence the Cayley graph is accordingly colored; by convention we represent red edges with an unoriented double line: $g=(-e_i -e_j)\tau ,\  \xymatrix{a\ar@{=}[r]^{g} &ga }$ (recall that $g=g^{-1}$).

The set $G^1$  is also a right coset of $G_{-2}$   and thus it is also a connected component of the Cayley graph $G_X$.
\end{remark}

\subsubsection{The matrix structure of $ad  (N):=2\ii Q$}  This  is encoded in part
by the Cayley graph $G_X$
 of $G$
with respect to the elements $X:=\{e_i-e_j,    (-e_i-e_j)\tau\}$.

   Given $a=\sum_ia_ie_i ,\ \sigma=\pm 1$ set for $u=(a,\sigma)$
   \begin{equation}\label{ricon}
 C  ((a,\sigma)):= \frac{\sigma }{2}  (a^2+a^{  (2)})=  \frac{\sigma }{2}  ((\sum_ia_ie_i)^2+\sum_ia_ie_i^2) , \end{equation}$$     K  ((a,\sigma)): =\pi  (C  (u) )=\frac{\sigma }{2}   (|\sum_ia_iv_i|^2+\sum_ia_i|v_i|^2).$$
 Sometimes we call $K  (u)$ the \emph{quadratic energy} of $u$, notice that $C  (u)$ has integer coefficients. In particular if $a\in\Z^m_c$ we have $K(a\tau)=-K(a)$ and we set for $a ,b\in\Z^m_c$
\begin{equation}\label{maent}
 Q_{a,  a}=K(a) -\sum_ja_j \xi_j ,  \quad Q_{  a\tau ,   a\tau}=K(a\tau)+\sum_ja_j \xi_j\end{equation}
\begin{equation}\label{maent1} Q_{  a\tau,    b\tau}=-2\sqrt{\xi_i\xi_j}, \  Q_{a,  b}=2\sqrt{\xi_i\xi_j},  \quad\text{if}\ a,  \ b\ \text{are connected by an edge}\ e_i-e_j \end{equation} \begin{equation}\label{maent2}   Q_{a,  b\tau}= -2\sqrt{\xi_i\xi_j},\   Q_{a\tau,  b}=2\sqrt{\xi_i\xi_j},  \quad\text{if}\ a,  \ b\tau\ \text{are connected by an edge}\   (-e_i-e_j)\tau
\end{equation}

We have shown in \cite{PP} that the blocks $Q$ on $F^{0,  1}$  come into pairs of
conjugate Lagrangian blocks  $\Gamma,   \Gamma\tau$.  With respect to the frequency basis
the blocks are described as the connected components of a graph
$\Lambda_S$ which we now describe.

\begin{definition}
Given an edge $\xymatrix{u\ar@{->}[r]^{x} &v    },  $ $u=  (a,
\sigma),  v=  (b,  \rho)=xu,  \ x\in X_q$,      we say that the
edge is {\em compatible} with $S$ or $\pi$ if $K  (u) =K  (v) $.
\end{definition}
Remark now that, if $g\in G$ we have $C(g)=0$ if and only if $g=  -e_i,-e_i \tau $.  We call the elements $\{-e_i,-e_i\tau\}$  the {\em special component}. \begin{definition}\label{gliegg} The  graph $\Lambda_S$    is  the subgraph of $G_X$  inside $G^1\setminus\bigcup_i \{-e_i+\Theta,(-e_i+\Theta)\tau\}$ in which we only keep the compatible edges.
 \end{definition}
   Observe that the  graph $\Lambda_S$    is  invariant under translations by $\Theta$.
 We then have
  \begin{theorem}\label{iblo}
The indecomposable blocks of the matrix $Q$ in the frequency basis correspond to the connected components of the graph $\Lambda_S$.

In a block the entries  of $Q$ are given by \eqref{maent}, \eqref{maent1}, \eqref{maent2}.
\end{theorem}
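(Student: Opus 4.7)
The plan is a direct Poisson-bracket computation of $ad(N)=\{N,\cdot\}$ on the frequency basis $F_B$, matched against the edge structure of $\Lambda_S$; both assertions of the theorem then follow as a dictionary between the summation constraints appearing in $\sum^*,\sum^{**}$ and the notion of a compatible marked edge.

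Write $N=N_0+\mathcal{Q}$ with $N_0=(\omega(\xi),y)+\sum_{k\in S^c}|k|^2|z_k|^2$, and let $u\in F_B$ be associated to $(a,1)\in G^1$, namely $u=e^{\ii(a,x)}z_k$ with $k=-\pi(a)\in S^c$; the case $(a,-1)=a\tau$, corresponding to $\bar u$, is treated identically. Using $\omega_j(\xi)=|v_j|^2-2\xi_j$ and $|k|^2=|\pi(a)|^2$, the Poisson bracket with $N_0$ gives a scalar multiple of $u$ with scalar of the form $\pm 2\ii\,[K(a)-\sum_j a_j\xi_j]$; under the normalization $ad(N)=2\ii Q$ this is exactly the diagonal entry $Q_{a,a}$ of \eqref{maent}. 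The sign flip recorded in $Q_{a\tau,a\tau}$ is the manifestation of complex conjugation of the exponential and of the identity $K((a,\sigma))=\sigma K(a)$.

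For the off-diagonal part I examine each of the three families in $\mathcal{Q}$. A summand $4\sqrt{\xi_i\xi_j}\,e^{\ii(x_i-x_j)}\,z_h\bar z_k$ from $\sum^*$ has non-trivial Poisson bracket with $u$ only when $k$ coincides with the basis index of $u$, and the outcome is a scalar multiple of $v=e^{\ii(a+e_i-e_j,x)}z_h$, which corresponds to $(e_i-e_j)\cdot u$ in the Cayley graph; this produces an off-diagonal entry of $Q$ along the black edge $e_i-e_j$ connecting $u$ and $v$. The constraints defining $\sum^*$ translate directly into the conditions needed: $h+v_i=k+v_j$ is the momentum condition $\pi(v)=-h$ required for $v\in F_B$, while $|h|^2+|v_i|^2=|k|^2+|v_j|^2$, by a two-line computation using $K((a,1))=\tfrac12(|\pi(a)|^2+\sum_j a_j|v_j|^2)$, is equivalent to $K(u)=K(v)$, i.e.\ compatibility of the edge. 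The numerical prefactor then gives the $2\sqrt{\xi_i\xi_j}$ of \eqref{maent1}; the summands in $\sum^{**}$ produce in the same manner the red-edge couplings $Q_{a,b\tau}$ and $Q_{a\tau,b}$ of \eqref{maent2}, the different numerical factor reflecting the twice-counting noted after \eqref{Ham1}.

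Putting everything together, the nonzero off-diagonal entries of $Q$ in the frequency basis are indexed exactly by pairs $(u,v)$ joined by a compatible edge of $G_X$, that is by an edge of $\Lambda_S$. Reordering the basis by connected components of $\Lambda_S$ makes $Q$ block-diagonal, and each block is indecomposable because its off-diagonal support is a connected graph and the weights $\pm 2\sqrt{\xi_i\xi_j}$ are nowhere zero on the open set where all $\xi_i>0$. I expect the main obstacle to be purely bookkeeping: tracking the symplectic sign conventions, the $\sigma$-dependence on passing from $(a,1)$ to $(a,-1)=a\tau$, and reconciling the numerical factors ($4$ vs.\ $2$, and the doubling in $\sum^{**}$) so that the entries emerge with the signs and weights displayed in \eqref{maent}, \eqref{maent1}, \eqref{maent2}.
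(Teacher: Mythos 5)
Theorem \ref{iblo} is stated in this paper without proof: it is one of the results recalled from \cite{PP}, so there is no in-paper argument to compare yours against. Your direct computation of $\{N,\cdot\}$ on the frequency basis is the natural verification and is correct: the diagonal entries come from $\sum_j a_j\omega_j+|\pi(a)|^2=2K(a)-2\sum_j a_j\xi_j$, the momentum and energy constraints in $\sum^*$ and $\sum^{**}$ translate exactly into membership in $F_B$ and compatibility $K(u)=K(v)$ of the corresponding black or red edge, and indecomposability of each block follows from connectedness of its support graph together with $2\sqrt{\xi_i\xi_j}\neq 0$ for $\xi_i>0$; only the sign bookkeeping you already flag remains to be carried out.
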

The fact that in the graph $\Lambda_S$ we keep only compatible
edges implies in particular  that the {\em scalar part}  $K((a,\sigma))$   (which
is an integer) is constant on each block.  On the other hand,   in
general,   there are infinitely many blocks with the same scalar
part. It will be convenient to ignore the scalar term $diag(K((a,\sigma)))$, given a compatible connected component $A$ we hence define the matrix $C_A= Q_A-diag(K((a,\sigma)))$. 

One of the main ingredients of our work is to understand the possible connected components $\Gamma$ of the graphs $\Lambda_S$ for $S$ generic (but not necessarily fixed), we do this  by choosing a vertex $u\in\Gamma$ which we call the {\em root} and analyzing such a component as a translation $\Gamma=Au$  where $A$ is  now a complete subgraph of the Cayley graph   contained in $G_{-2}$ and containing the element $(0,+)=0$. If $u\in\Z^m$  the   matrix  $C_{Au}$  is obtained   from  $C_A$ by adding the scalar matrix $-u(\xi)= -(u,\xi)$  while $C_{A\tau}=-C_A$.

\begin{example} Consider the following complete subgraph containing $(0,+)$.
 $$A= \xymatrix{ (-e_1-e_2,-) \ar@{=}^{\ \ \ (-e_1-e_2)\tau}[r]  &(0,+)\ar@{->}^{e_1-e_2\ \ }[r]&(e_1-e_2,+) }.$$
 A translation by an element $(u,+)$ is hence 
 $$ A(u,+)=\xymatrix{ (-e_1-e_2-u,-) \ar@{=}^{\ \  \quad (-e_1-e_2)\tau}[r]  &(u,+)\ar@{->}^{e_1-e_2\ \quad }[r]&(e_1-e_2+u,+) }$$\smallskip
 so we get that the matrices associated to these graphs are:
$$ C_A=\begin{pmatrix}
-\xi_1-\xi_2&2\sqrt{\xi_1\xi_2}&0\\ \\
-2\sqrt{\xi_1\xi_2}&0&2\sqrt{\xi_1\xi_2}\\\\0&2\sqrt{\xi_1\xi_2}&\xi_2-\xi_1
\end{pmatrix},$$$$C_{Au}=\begin{pmatrix}
-\xi_1-\xi_2-u(\xi)&2\sqrt{\xi_1\xi_2}&0\\ \\
-2\sqrt{\xi_1\xi_2}&-u(\xi)&2\sqrt{\xi_1\xi_2}\\\\0&2\sqrt{\xi_1\xi_2}&\xi_2-\xi_1-u(\xi)
\end{pmatrix}$$
\end{example}
  In particular  we  have shown (cf. \cite{PP}, \S 9) that $A$  can be chosen among a finite number of graphs which we call {\em combinatorial}.
Note that we do not impose the compatibility constraint on  $A$ but only on its translations. It is convenient, in drawing the graphs to drop the labels on the edges since they can be deduced  from the vertices. In a combinatorial graph the color of a vertex is black if its mass is 0 and red if it is $-2$.  Then  in the vertices we drop the sign $\pm$, since this information can be deduced from the mass or from the parity (number of red edges)  of the path connecting the vertex with the root.  So the graph of the previous example will be denoted by:
$$A= \xymatrix{ -e_1-e_2 \ar@{=}[r]  &0\ar@{->}[r]&e_1-e_2 }.$$ 
Note that in all the combinatorial graphs the root is by convention set to $0$.

Let us show that:
\begin{lemma}\label{spr}
The characteristic polynomial of a matrix $C_A$ is in $\Z[\xi_1,\ldots,\xi_m,t]$  (the square roots disappear).
\end{lemma}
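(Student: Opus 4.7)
The plan is to eliminate the square roots from $C_A$ by a diagonal similarity over the extension field $\Q(\xi_1^{1/2},\ldots,\xi_m^{1/2})$; since similar matrices have the same characteristic polynomial, it will follow that $\det(tI-C_A) \in \Z[\xi_1,\ldots,\xi_m,t]$.

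For each vertex $v$ of the combinatorial graph $A$ I would set
\[
d_v = \prod_{k=1}^m \xi_k^{a_k/2} \ \text{ if } v=(a,+), \qquad d_v = \prod_{k=1}^m \xi_k^{-a_k/2} \ \text{ if } v=a\tau=(a,-),
\]
and take $D=\operatorname{diag}(d_v)$. The claim to verify is that $DC_AD^{-1}$ has all entries in $\Z[\xi_1,\ldots,\xi_m]$. The diagonal of $C_A$, namely $\mp\sum_j a_j\xi_j$ (by \eqref{maent} after subtracting $K$), is fixed by the conjugation and already polynomial. The off-diagonal entries are governed by \eqref{maent1}--\eqref{maent2}, and the twist is engineered so that each factor $d_v/d_w$ cancels the $\sqrt{\xi_i\xi_j}$ in the edge weight. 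Concretely, for a black edge $(a,+)\to (a+e_i-e_j,+)$ one has $d_v/d_w = \xi_j^{1/2}\xi_i^{-1/2}$, so the entry $2\sqrt{\xi_i\xi_j}$ becomes $2\xi_j$; a black edge between the corresponding sign-minus vertices becomes $-2\xi_i$; a red edge $(a,+)\leftrightarrow (-e_i-e_j-a,-)$ has entries $\mp 2\sqrt{\xi_i\xi_j}$ and conjugation factors $(\xi_i\xi_j)^{\mp 1/2}$, yielding $\mp 2$ and $\mp 2\xi_i\xi_j$ respectively.

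The lemma is really a direct observation once the correct twist is spotted, and there is no serious obstacle. The only piece warranting care is the bookkeeping in the red-edge case: the sign flip encoded by $\tau$ in the exponent $-a_k/2$ for sign-minus vertices must match the change $a\mapsto -e_i-e_j-a$ produced by the edge, and this is immediate from the group law in $G=\Z^m\rtimes\Z/(2)$. Having shown that $DC_AD^{-1}$ has entries in $\Z[\xi_1,\ldots,\xi_m]$, the lemma follows from $\det(tI-C_A)=\det(tI-DC_AD^{-1})$.
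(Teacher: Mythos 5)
Your argument is correct, but it is not the route the paper takes. The paper expands $\det(t-C_A)$ over permutations, groups the terms by cycles, observes that a non-trivial cycle of the permutation corresponds to a closed path in the Cayley graph, and then uses the fact that around a closed path each index $i$ occurs an even number of times among the edge markings (since the product of the markings in $G$ is the identity), so the product of the factors $\pm 2\sqrt{\xi_i\xi_j}$ over the cycle is a polynomial. Your proof replaces this cycle-by-cycle parity count with a single diagonal gauge transformation $D=\operatorname{diag}(d_v)$, $d_v=\prod_k\xi_k^{\pm a_k/2}$, over $\Q(\xi_1^{1/2},\ldots,\xi_m^{1/2})$, and your entry-by-entry check against \eqref{maent}, \eqref{maent1}, \eqref{maent2} is right: the black entries become $\pm2\xi_i$ or $\pm2\xi_j$, the red ones $\mp2$ and $\mp2\xi_i\xi_j$, all in $\Z[\xi]$, and the diagonal is untouched; invariance of the characteristic polynomial under similarity then finishes it. The two arguments are close in substance --- the telescoping of $d_{v}/d_{w}$ around a cycle is exactly the paper's parity statement --- but yours is the stronger and cleaner observation, since it exhibits an explicit integral model of $C_A$ rather than only controlling the determinant, and it makes transparent that neither compatibility of the edges nor connectedness plays any role. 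The paper's version has the mild advantage of staying entirely inside $\Z[\sqrt{\xi_1},\ldots,\sqrt{\xi_m}]$ without introducing the auxiliary field extension and the (Laurent) monomials $d_v$. Either way the lemma holds; no gap.
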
 \begin{proof}  
By definition the determinant of an $n\times n$ matrix with entries $a_{i,j}$ is the sum with sign, over all permutations $\sigma$ of the $n$ indices, of the products $a_{1,\sigma(1)} \ldots a_{n,\sigma(n)} $. It is convenient to rearrange this product  using the cycle structure of $\sigma$,  each cycle  $(i_1,\ldots,i_k)$  determines a factor $a_{i_1,i_2} \ldots a_{i_k,i_1} $. Let us show that in each of these factors the square roots disappear. In fact, if the cycle is reduced to a single element it corresponds to a diagonal entry, which has no roots. Otherwise it corresponds to a sequence of edges forming a closed path. Then, by the definitions and compatibility, one sees that each index appearing in the edges appears an even number of times in such a closed path, hence the claim follows from the formula $\pm 2\sqrt{\xi_i\xi_j}$ of the entry corresponding to each edge.
\end{proof} 
\subsubsection{Proof of Proposition \ref{ello}\label{EO}}
We are ready to prove Proposition \ref{ello}:
\begin{proof}
We proceed by induction  on the number $m$ of the parameters, for
$m=1$ the statement is trivial,   so assume the statement is true
for $m-1$  parameters. Let $\Gamma$ be one of the combinatorial
graphs,   $A  (\Gamma)$ the corresponding matrix and $  (a_1,
\ldots,  a_k)$ the vertices of $\Gamma$.

Let $\bar A$ be the  matrix obtained from $A  (\Gamma)$ by setting
$\xi_m=0$.    We claim that this matrix is  the one associated to
the not necessarily connected colored graph $\bar \Gamma$ in $m-1$
coordinates obtained by dropping  the last coordinate in all the
vertices $a_i$,   this is just a consequence of the definitions
(see \S \ref{LaMa}).

The first thing to be verified is that the vertices of $\bar \Gamma$ are all distinct (as colored vertices).  In fact given a vertex $a\in\Z^m$ let $\bar a\in\Z^{m-1}$ be the vertex  obtained by dropping  the last coordinate  $a_m$.   We  can reconstruct $a$ from $\bar a$  and its color   using the mass  since $\eta  (a)=\eta  (\bar a)+a_m$.

Now we claim that the graphs appearing give characteristic polynomials which are distinct, for this we apply Proposition \ref{seplem}. If we had two connected components of $\bar \Gamma$ giving the same characteristic polynomial we should have two elements  $\bar a$ black and $\bar b$ red so that $\bar b=\tau \bar a=-\bar a\tau$ red. We have  $a= \bar a -\eta(\bar a)e_m$  while $\tau \bar a=-\bar a\tau$  comes from $b= (-\bar a+(\eta(\bar a)-2)e_m)\tau=(-a-2e_m)\tau  $.     Thus in the
graph  $\Gamma$  we cannot have these two vertices,   since the
presence of two vertices     $b+a=-2e_m $ implies that the graph is not allowable by Definition \ref{ilpunto1}.

Now we apply the fact that we know that  all the blocks  appearing
in $A  (\bar\Gamma)$ are distinct and depend on $m-1$ variables, furthermore  two different blocks have different characteristic polynomials by the previous remark and Lemma \ref{seplem}.
From the hypotheses made there is an open region $\mathcal B_{m-1}$ in the
complement of the discriminant variety  for $m-1$ variables where for each of the finitely many combinatorial blocks
all the eigenvalues  are distinct and real.

Now this condition is stable so that for  $A  (\Gamma)$  there is a non empty open region complement of the discriminant variety  for  $A  (\Gamma)$ where all the eigenvalues are distinct and real containing $\mathcal B_{m-1}$, since we have finitely many combinatorial graphs $\Gamma$  we find an open component of the  complement of the discriminant variety for  all graphs $\Gamma$, containing $\mathcal B_{m-1}$,   where all the eigenvalues are  real. We  further remove the resultants  and have that they are also all distinct.

\end{proof}
\vfill \eject

\part{Sphere and hyperplanes problem} In order to understand the possible components of the graph $\Lambda_S$ we relate it to the geometric graph $\Gamma_S$.

\section{The geometric problem}

The condition  for two points $p,  q$ to be the vertices of an
edge  is   given by algebraic equations.   Visibly $p\in H_{i, j}$
means that $  (p-v_i,  v_i-v_j)=0$,   the corresponding
$q=p+v_j-v_i$,    while $p\in S_{i,  j}$  is given by $  (p-v_i,
p-v_j)=0$  and the corresponding opposite point $q$  is  given by
$p+q=v_i+v_j$.

We thus have two types of constraints  describing when two points
are joined by an edge,   a linear  $q-p=v_j-v_i$ or $p+q=v_i+v_j$
and a quadratic constraint $  (p-v_i,  v_i-v_j)=0$ or $  (p-v_i,
p-v_j)=0$.   The fact that a point $x$  belongs to a component described by the combinatorial  graph is  thus expressed by a list of linear and quadratic equations for $x$ deduced by eliminating all the other vertices using the linear constraints.

We describe the linear constraints again through a Cayley graph. The group $G$ also as linear operators on $\R^n$ by setting
\begin{equation}\label{azione}
a  k:=  -\pi  (a)+ k,  \ k\in\R^n,  \ a\in \Z^m\,  ,  \quad \tau
k= - k
\end{equation}
  We then have that \begin{remark}
$X$ defines also a Cayley graph on $\R^n$  and  in fact the graph  $\Gamma_S$ is a subgraph of this graph.
\end{remark}

\subsection{Equations for the root\label{gra}} From the very construction of the graph it is convenient to {\em mark}  the edges by $v_j-v_i$ in the first case and $v_j+v_i$ in the second (notice the sign change due to Formula \eqref{llab}).   In fact we use a more combinatorial way of marking which is illustrated in the next example.  It  is then clear that each connected  component of this  graph has  a  combinatorial description  which encodes  the information on the various types of edges which connect the vertices of the component.

The connection with the graph $\Lambda_S$   comes from the fact that these equations are exactly the ones which define compatible edges.
\begin{example}  The equations that $x$ has to satisfy  are:   $$
    \xymatrix{ &x-v_1+v_3\ar@{<- }[d] _{2,  1}& &\\  &x-v_2+v_3& &\\  x\ar@{ ->}[ru] _{3,  2}\ar@{<-}[ruu] ^{  1,3}\ar@{=}[rr]_{1,  2} && -x+v_1+v_2 \ar@{=}[luu]_{2,  3} \ar@{=}[lu] ^{1,  3} &   }   \begin{matrix}&\\&\\&\\
  (x,  v_2-v_3)=|v_2|^2-  (v_2,  v_3) \\&\\
|x|^2-  (x,  v_1+v_2)=-  (v_1,  v_2) \\&\\
  (x,  v_1-v_3)=|v_1|^2-  (v_2,  v_3) 
\end{matrix}
 $$\end{example}
%
In fact it should be clear that a graph in $\Gamma_S$ is obtained starting from a point $x$ and then applying the elements of a complete sub  graph $A\subset G_X$    of the Cayley graph
containing 0. One the results of \cite{PP}   (Theorem 3) is that in this fashion we have always isomorphisms between components of  $\Lambda_S$ and components of $\Gamma_S$. 

The question is thus    to understand when,   given $x\in \R^n$,
the elements $hx,  \ h\in A$  describe the vertices of a
corresponding geometric graph with {\em root} $x$ in $\Gamma_S$.

One can easily verify that
\begin{proposition} The elements $hx,  \ h\in A$  describe the vertices in a component $C$ of the   geometric graph   $\Gamma_S$ if and only if,
 for each $h=  (a,  \sigma)\in A$  we  have:
\begin{equation}\label{bacos}
\begin{cases}
    (x,  \pi  (a))= K  (h )\quad \text{if}\   \sigma=1\\
 |x|^2+  (x,  \pi  (a))=  K  ( h )\quad \text{if}\   \sigma=-1\end{cases}
. \end{equation}
 \end{proposition}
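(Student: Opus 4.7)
The plan is to prove the biconditional by induction on the combinatorial distance (in $A$) from the root $(0,+1)\in A$ to a vertex $h\in A$. Since $A$ is a complete subgraph of $G_X$, the condition that $\{hx:h\in A\}$ is the vertex set of a component of $\Gamma_S$ is equivalent to the assertion that every edge of $A$ becomes, via the action on $\R^n$, a genuine edge of $\Gamma_S$: a black edge $y=e_i-e_j$ must correspond to an incidence $hx\in H_{i,j}$, and a red edge $y=(-e_i-e_j)\tau$ to an incidence $hx\in S_{i,j}$. Thus the task reduces to showing that this edge-by-edge geometric condition is equivalent to the vertex-by-vertex algebraic equations stated in the proposition.

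The base case is $h=(0,+1)$: here $hx=x$ and the required equation $(x,0)=0=K((0,+1))$ is trivially satisfied. For the inductive step, take an edge $h\to h'=yh$ of $A$ with $y\in X^0\cup X^{-2}$, and suppose the vertex equation at $h$ holds. Using $hx=\sigma x-\pi(a)$ and the formula $K((a,\sigma))=\tfrac{\sigma}{2}(|\pi(a)|^2+\sum_k a_k|v_k|^2)$, the goal is to show that the edge condition is equivalent to the vertex equation at $h'$; this is a direct algebraic check in two cases.

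For a black edge $y=e_i-e_j$, one has $h'=(a+e_i-e_j,\sigma)$ and $h'x=hx+v_j-v_i$. The hyperplane condition $(hx-v_i,v_i-v_j)=0$, after substituting $hx=\sigma x-\pi(a)$ and invoking the equation at $h$, reduces to the equation at $h'$; the key algebraic identity, for $\sigma=+1$, is
\[
K(h')-K(h)=(\pi(a),v_i-v_j)+|v_i|^2-(v_i,v_j),
\]
with a sign-flipped analogue when $\sigma=-1$. For a red edge $y=(-e_i-e_j)\tau$, one has $h'=(-a-e_i-e_j,-\sigma)$ and $hx+h'x=v_i+v_j$. The sphere condition $|hx|^2-(hx,v_i+v_j)+(v_i,v_j)=0$, after the same substitution and use of the equation at $h$ to eliminate the mixed term $(x,\pi(a))$ (or $|x|^2$ when $\sigma=-1$), becomes the equation at $h'$; the corresponding identity is
\[
K(h)+K(h')=\sum_k a_k|v_k|^2-(\pi(a),v_i+v_j)-(v_i,v_j).
\]

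The only real obstacle is careful bookkeeping of signs for $\sigma=-1$: then $hx=-x-\pi(a)$, the vertex equation acquires the extra $|x|^2$ term, and the roles of sums and differences of the $K$-values swap. Once the four subcases (two edge-colors $\times$ two values of $\sigma$) are dispatched by the two identities above, iterating the inductive step along any path from the root yields both directions of the biconditional.
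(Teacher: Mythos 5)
The paper states this proposition without proof (``One can easily verify that\dots''), so there is no argument to compare against; your write-up supplies exactly the verification the authors had in mind. Your reduction to the edge-by-edge conditions, the formula $hx=\sigma x-\pi(a)$, and the two identities
$K(h')-K(h)=(\pi(a),v_i-v_j)+|v_i|^2-(v_i,v_j)$ for a black edge and
$K(h)+K(h')=\sum_k a_k|v_k|^2-(\pi(a),v_i+v_j)-(v_i,v_j)$ for a red edge all check out against the definitions \eqref{azione} and \eqref{ricon}, and the induction from the root (whose equation is trivial) correctly yields both directions, including for non-tree edges of the complete subgraph $A$. The proof is correct.
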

   Therefore the question that we have to address is: for which graphs $A\subset\Gamma_X$  we  can say that these equations    have a solution in $\R^n\setminus S$ for generic values of the points $v_i$? Such a graph is called {\em compatible}.
   
    A  main result in \cite{PP} is that if  the edges of the combinatorial graph span a lattice of dimension $> n$  then the only geometric realizations of this graph can be in the special component $S$.  
   
   It remains to analyze graphs  with linearly dependent edges.
 In order to address this question we need to develop a more combinatorial approach.
  \subsection{Relations}
Take a connected complete subgraph $A$, in the subgroup   $G_{-2}$ of $G$ generated by $X$, of
the Cayley graph $G_X$.    By taking the first coordinates we
identify its vertices with a subset,   still denoted by $A$,   of
the set of elements in $\Z^m$ with $\eta  (a)=0,  -2$ (the orbit of 0 under $G_{-2}$).
\begin{definition}\label{degga}\begin{itemize}
\item A  graph $A$ with $k+1$ vertices is  said to be of {\em dimension} $k$.

\item    We call the dimension  of the affine space spanned by $A$  in $\R^m$
the {\em rank},   ${\rm rk}\,   A$,    of the graph $A$.
\item If the rank of $A$ is strictly less than the dimension of $A$ we say that $A$  {\em is degenerate}.
\end{itemize}

\end{definition}

Once we choose a root $r$ for $A$ we can translate $A$  so that
$r=0$  then instead of the affine space spanned by $A$ we may
consider the lattice spanned by the non--zero elements in $A$, it
is natural to color all remaining vertices with the rule that a
vertex $a$ is {\em black} if $\eta  (a)=0$ or,   equivalently, it
is   joined to the root by an even path and {\em red} otherwise.
if $\eta  (a)=-2$. Then we can extend the notion  of {\em black }
or {\em red}  rank,   and corresponding degeneracy. When we change
the root we have a simple way of changing colors that we leave to
the reader and the two ranks may just be exchanged.

If $A$ is degenerate  then there are non trivial relations,
$\sum_an_a a =0,  \ n_a\in\Z$  among the elements $ a\in A$.
\begin{remark}
\label{maxt} It is also useful to choose a maximal tree $T$ in $\Gamma$.  There is   a triangular change of coordinates from the vertices $ a$ to the markings of $T$.  Hence the relation can be also expressed as a relation between these markings.
\end{remark}

 We must have by linearity,   for every relation  $\sum_an_a  a=0,  \ n_a\in\Z$ that $0=\sum_an_a a^{  (2)},  \ 0=\sum_an_a \pi  (a)$ and moreover  we have:\begin{equation}
0=\sum_{a,  \ |\,   \eta  (a)=-2}n_a.
\end{equation}
Applying Formula \eqref{bacos}  we deduce that we must have
\begin{equation}\label{riso}
\sum_an_aK  (g_a )=2  (x,  \sum_an_a \pi  (g_a))+[\sum_{a \,   |\,
\eta  (a)=-2}n_a]  (x)^2  =2  (x,  \sum_an_a \pi  (g_a))=0.
\end{equation}
 The expression $\sum_an_aK  (g_a)  $ is a linear combination with integer coefficients of the scalar products $  (v_i,  v_j)$.  We can prevent the occurrence of the component $\Gamma$ by imposing it as avoidable resonance.     We   need to formalize  the  setting.

  Let us use for the elements of $G$ in the subgroup $G_2$ just their coordinate  $a\in\Z^m,\ \eta(a)\in\{0,-2\}$.
Then we have $\sum_an_aK  ( a) =\pi  (\sum_an_aC  ( a) )$ hence we easily deduce:

  \begin{proposition}
  The equation   \eqref{riso}  is a non trivial constraint if and only if $\sum_an_aC  (g_a) \neq 0$. In this case    we say that the graph has an  {\em avoidable resonance}.

\end{proposition}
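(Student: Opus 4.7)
The plan is to read the proposition as a direct consequence of the definitions, together with the fact that for generic choices of the tangential sites $v_1,\dots,v_m$ the scalar products $(v_i,v_j)$ are $\Z$-linearly independent. The key identity is $K(g_a)=\pi(C(g_a))$, which is built into the definitions in \eqref{ricon}, so by $\Z$-linearity of $\pi$ the scalar constraint of \eqref{riso} rewrites as
\[\pi\!\left(\sum_a n_a C(g_a)\right)=0.\]

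For the easy implication, if $\sum_a n_a C(g_a)$ vanishes as an element of $S^2[\Z^m]$, then applying $\pi$ shows that \eqref{riso} holds identically in the $v_i$ and is therefore trivial. For the converse, I would assume that $\sum_a n_a C(g_a)$ is a nonzero element of $S^2[\Z^m]$---a nonzero integer linear combination of the formal monomials $e_ie_j$---and observe that its image under $\pi$ is the corresponding nonzero integer combination of the scalar products $(v_i,v_j)$. This is a nonzero polynomial of degree $2$ in the coordinates of $v_1,\dots,v_m\in\R^n$, and hence vanishes only on a proper algebraic subvariety of the parameter space; for generic $v_i$ it is nonzero, so \eqref{riso} genuinely constrains the tangential sites.

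No step of this argument is a real obstacle: the substantive computation has already been carried out in deriving \eqref{riso} from the relation $\sum_a n_a a=0$ together with the compatibility equations \eqref{bacos}, using $\sum_a n_a\pi(a)=0$ and $\sum_{a\,|\,\eta(a)=-2} n_a=0$. The content of the proposition is thus the passage from the $v_i$-dependent scalar condition $\sum_a n_a K(g_a)=0$ to the purely combinatorial condition on the integral element $\sum_a n_a C(g_a)\in S^2[\Z^m]$, and this passage is handled by the generic injectivity of $\pi$ on the integer lattice spanned by the monomials $e_ie_j$.
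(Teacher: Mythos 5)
Your proof is correct and follows the same route the paper intends: the paper states the proposition as an immediate consequence of the identity $\sum_a n_a K(g_a)=\pi\bigl(\sum_a n_a C(g_a)\bigr)$ ("hence we easily deduce"), and your argument simply fills in that deduction — triviality when $\sum_a n_a C(g_a)=0$, and genericity of the non-vanishing of a nonzero integer combination of the scalar products $(v_i,v_j)$ otherwise. No issues.
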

\begin{corollary}
If   we have an avoidable resonance of previous type associated to
$\Gamma$ then,   for a generic choice of the $S:=\{v_i\}$,
$\Gamma$  as   no geometric realizations.
\end{corollary}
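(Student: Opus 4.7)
The plan is to exploit the previous Proposition directly: the avoidable resonance translates the existence of a geometric realization into a polynomial equation among the scalar products $(v_i,v_j)$, and we need only verify that this equation is not a polynomial identity on $(\R^n)^m$.

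First, unpack the hypothesis. An avoidable resonance for $\Gamma$ provides integers $n_a$, not all zero, indexed by the vertices $a\in\Gamma$, such that $\sum_a n_a\, a = 0$ in $\Z^m$ and $R:=\sum_a n_a C(g_a)\neq 0$ in $S^2[\Z^m]$. Suppose for contradiction that $\Gamma$ admits a geometric realization with root $x\in\R^n\setminus S$. Applying equations \eqref{bacos} and summing with the weights $n_a$ yields Formula \eqref{riso}. The right-hand side vanishes: indeed $\sum_a n_a \pi(g_a)=\pi(\sum_a n_a a)=0$, and looking at the mass component of the same relation gives $\sum_{a\,|\,\eta(a)=-2} n_a=0$, so the $(x)^2$ term also drops. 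Thus a realization forces the scalar identity
\[
\sum_a n_a K(g_a)=\pi(R)=0.
\]

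Second, I would show this identity is a non-trivial polynomial condition on the tangential sites. By \eqref{adue}, $\pi(R)$ is an integer linear combination $\sum_{i,j} c_{ij}(v_i,v_j)$, where the symmetric matrix $(c_{ij})$ is nonzero precisely because $R\neq 0$ in $S^2[\Z^m]$. To verify non-triviality as a function of $(v_1,\dots,v_m)\in(\R^n)^m$, pick $y\in\R^m$ with $\sum_{i,j}c_{ij}y_iy_j\neq 0$ (which exists because the quadratic form on $\R^m$ with matrix $(c_{ij})$ is nonzero), pick any unit vector $e\in\R^n$, and specialize $v_i:=y_i e$. Then $(v_i,v_j)=y_iy_j$ and $\pi(R)$ evaluates to $\sum_{i,j}c_{ij}y_iy_j\neq 0$. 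Hence $\pi(R)$ is a nonzero polynomial in the coordinates of the $v_i$'s.

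Third, conclude. The equation $\pi(R)=0$ cuts out a proper real algebraic hypersurface $\mathcal{H}_\Gamma\subset(\R^n)^m$. For any choice of $S=\{v_1,\dots,v_m\}\notin\mathcal{H}_\Gamma$, the necessary condition $\sum_a n_a K(g_a)=0$ fails, so no $x\in\R^n$ can realize $\Gamma$ geometrically. Since a finite number (or at worst a countable family) of such hypersurfaces can be avoided by generic $S$, this establishes the corollary.

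There is essentially no obstacle here beyond bookkeeping: the content is entirely contained in the statement of the preceding Proposition and in the observation that the map $S^2[\Z^m]\to\{\text{polynomials in the }v_i\}$ given by $e_ie_j\mapsto(v_i,v_j)$ is injective on the level needed (any nonzero element produces a polynomial which is nonzero as a function on $(\R^n)^m$ for $n\geq 1$, via the rank-one specialization above).
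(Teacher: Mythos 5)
Your proof is correct and follows exactly the route the paper intends: the paper states this corollary without a written proof, as an immediate consequence of the preceding Proposition (a realization forces $\sum_a n_a K(g_a)=\pi\bigl(\sum_a n_a C(g_a)\bigr)=0$, which is a nonzero polynomial condition on the $v_i$ whenever $\sum_a n_a C(g_a)\neq 0$). Your rank-one specialization $v_i=y_ie$ is a clean way of spelling out the injectivity step that the paper leaves implicit in the phrase ``non trivial constraint''.
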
  The main Theorem on this topic proved in \cite{PP} is:
 \begin{theorem}\label{ridma}
Given  a compatible connected $X$--marked graph,   with a chosen
root and of rank $k$ for a  given color,   then either it has
exactly $k $  vertices of that color or it produces an avoidable
resonance.   \end{theorem}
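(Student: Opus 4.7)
Fix a color, say black (the red case will be handled by the root-color swap in Definition~\ref{degga}). Suppose $A$ has more black vertices than its black-rank $k$ permits, so the black vertices are affinely dependent. My plan is to choose a primitive integer affine relation of \emph{minimal support},
\[
\sum_{a\text{ black}} n_a\,a=0,\qquad \sum_a n_a=0,
\]
supported on $\{a_0,\ldots,a_{s-1}\}$. Minimality guarantees $a_1,\ldots,a_{s-1}$ are affinely independent and $a_0=\sum_{i\geq 1}\lambda_i a_i$ uniquely with $\sum\lambda_i=1$ and all $\lambda_i\neq 0$, so $n_i=-n_0\lambda_i$.

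\textbf{Step 1: Reduction to a quadratic statement.} For black $a$, \eqref{bacos} reads $(x,\pi(a))=K(a)$. Multiplying by $n_a$, summing, and invoking $\sum n_a a=0$ kills the left-hand side, giving
\[
\sum_a n_a K(a)\;=\;\pi\!\Bigl(\sum_a n_a C(a)\Bigr)\;=\;0.
\]
By the definition of avoidable resonance it suffices to show $\sum n_a C(a)\neq 0$ in $S^2[\Z^m]$. Writing $C(a)=\tfrac12(a^2+a^{(2)})$ and using linearity of $a\mapsto a^{(2)}$, the $a^{(2)}$-part vanishes, $\sum n_a a^{(2)}=(\sum n_a a)^{(2)}=0$, so the theorem reduces to showing $\sum_a n_a a^2\neq 0$ in $S^2[\Z^m]$.

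\textbf{Step 2: The main step --- identity and nonvanishing.} The key input is a direct expansion identity: using $n_i=-n_0\lambda_i$ and $a_0=\sum\lambda_i a_i$, one finds
\[
\sum_a n_a\,a^2\;=\;-n_0\!\!\sum_{1\leq i<j\leq s-1}\!\!\lambda_i\lambda_j\,(a_i-a_j)^2\qquad\text{in }S^2[\Z^m].
\]
After shifting so $a_1=0$, the vectors $b_k=a_k-a_1$ ($k\geq 2$) are linearly independent by affine independence of $a_1,\ldots,a_{s-1}$, so the monomials $b_k^2$ and $b_kb_l$ appear linearly independently in $S^2[\Z^m]$. Collecting coefficients, vanishing of the right-hand side forces $\lambda_k(1-\lambda_k)=0$ and $\lambda_k\lambda_l=0$ for the relevant indices. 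Combined with minimality ($\lambda_i\neq 0$) and the normalization $\sum\lambda_i=1$, this yields an immediate contradiction: either $\lambda_k\lambda_l=1\neq 0$ for $s\geq 4$, or $\lambda_1=0$ for $s=3$. Thus $\sum n_a a^2\neq 0$ and the resonance is avoidable.

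\textbf{Red case.} For red $a$, $C(a\tau)=-\tfrac12(a^2+a^{(2)})$, and the coefficient $\sum_{a\text{ red}}n_a$ of the $|x|^2$-term in \eqref{riso} equals $\sum n_a=0$ for a monochromatic red relation, so Step~1 and the identity of Step~2 carry through verbatim. The main obstacle of the argument is thus isolating the clean minimal-relation identity of Step~2 and exploiting the minimality of the chosen relation to run the coefficient analysis; once these are in place the conclusion is essentially algebraic.
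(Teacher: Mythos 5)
Your Step~2 identity and the coefficient analysis that follows are correct, and the overall strategy --- reduce to $\sum_a n_a a^2\neq 0$ and exploit the linear independence of the monomials $b_ib_j$ in $S^2[\Z^m]$ --- is essentially the one the paper uses. The gap is in your very first sentence: from ``more vertices of the given color than the color--rank $k$ permits'' you conclude that those vertices are \emph{affinely} dependent, but the hypothesis only yields a \emph{linear} dependence. Indeed $k+1$ nonzero vertices spanning a $k$--dimensional lattice need not satisfy any relation with $\sum_a n_a=0$: the black vertices $e_2-e_1$ and $2e_2-2e_1$ (rank $1$, two vertices --- exactly the degenerate example in the Remark after Theorem~\ref{irter}) admit only the linear relation $2a_1-a_2=0$, whose coefficients sum to $1$, and no affine relation at all. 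Your entire construction (minimal affine support, $\sum\lambda_i=1$, the identity $\bigl(\sum\lambda_ia_i\bigr)^2-\sum\lambda_ia_i^2=-\sum_{i<j}\lambda_i\lambda_j(a_i-a_j)^2$) never gets off the ground in such a case, so the theorem is not proved for that color.

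The gap is repairable, but it has to be addressed explicitly. For the red color an affine relation is automatic, since $\eta(a)=-2$ for every red vertex and applying $\eta$ to $\sum_a n_aa=0$ forces $\sum_a n_a=0$; you gesture at this fact but use it only to kill the $|x|^2$ term, not to justify the existence of the affine relation you start from. For the black color the root $0$ is itself black and $C(0)=0$, so one may adjoin it with coefficient $-\sum_a n_a$ and obtain an affine relation; but then its minimal support may genuinely contain $0$, and your ``shift so $a_1=0$'' has to be re-examined rather than treated as a harmless normalization. The paper's own proof avoids all of this by working directly with the linear relation: writing $a_0=-\sum_ip_ix_i$ with $x_1,\dots,x_k$ linearly independent, the assumption $\sum_an_aa^2=0$ becomes $\bigl(\sum_ip_ix_i\bigr)^2+\sum_ip_ix_i^2=0$, the mixed terms force at most one $p_i\neq 0$, and then $p_i=-1$ gives $a_0=a_i$, a contradiction --- no normalization $\sum_a n_a=0$ is ever needed.
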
\begin{proof} Let us recall the proof
for convenience of our treatment. Assume by contradiction that  we
can choose   $k+1$ distinct vertices $  (a_0,  a_1,  \ldots,  a_k)
$,  different from 0 of the given color  so that we have a non
trivial relation   $\sum_in_i  a_i=0 $ and the elements $a_i,  \
i=1,  \ldots,  k$ are linearly independent. Set $n_a=n_i,   $ if
$a=a_i$ and $n_a=0$ otherwise. If all these vertices   have sign
$+$,   we have  $ \sum_an_a a^2=0$. Similarly,   if  they are
have sign $-$ we have  $-\sum_an_a a=\sum_an_a\sigma  (a)a=0$ and
also $ \sum_an_a a^{  (2)}=0$ so  again $\sum_an_aa  ^2=0$.

We can consider thus the elements $x_i:=a_i,  i=1,  \ldots,  k$ as
new variables  and then we write  the relations   $ \sum_an_a a
=\sum_an_a a^2=0$     as
$$0= a_{k+1} +\sum_{i=1}^kp_ix_i,  \implies   (\sum_{i=1}^kp_ix_i)^2+\sum_{i=1}^kp_ix_i^2=0 . $$
Now   $\sum_{i=1}^kp_ix_i^2 $ does not contain any mixed terms
$x_hx_k,  \ h\neq k$  therefore  this equation can be verified if
and only if  the sum $\sum_{i=1}^kp_ix_i$ is reduced to a single
term   $p_ix_i$,    and then we have $p_i=-1$ and $ a_0 = a_i$, a
contradiction.  \end{proof}

Unfortunately there are examples  of unavoidable resonances as we shall discuss in the next paragraph.
 
 \subsection{Degenerate resonant graphs}

\begin{definition} We say that a graph $A$ is {\em degenerate--resonant},   if it is degenerate and, for
all the possible linear relations $\sum_in_ia_i=0$  among its
vertices we have also $\sum_in_iC  (a_i)=0.$
\end{definition}

What we claim is that a  degenerate--resonant graph $A$ has no geometric realizations outside the special component.

\begin{remark}
One may easily verify that the previous condition,   although
expressed using a chosen root,   does not depend on the choice of
the root.
\end{remark}
One of the obstacles we have is that the proof of Theorem
\ref{ridma}  breaks down in general since in fact there are non
trivial degenerate--resonant graphs,   the simplest of them is the
{\em minigraph}
\begin{equation}
\label{mig} \xymatrix{   \ar@{-}[d]\ar@{=}[r]   (   -e_2+e_1)&    ( -2e_1 ) \ar@{-}[d] \\   \ar@{=}[r]0&     ( -e_2-e_1 )    }
\quad \xymatrix{ & \ar@{-}[d]\ar@{=}[r]   (   -e_2+e_1)+a&    ( -2e_1 )-a \ar@{-}[d] \\ & \ar@{=}[r]a&     ( -e_2-e_1 )-a    }
\end{equation}
Relation is $  (   -e_2+e_1)-    ( -e_2-e_1 ) +  ( -2e_1 )=0$,
we have
$$ C  (   -e_2+e_1)=e_1^2-e_1e_2,  \quad  C  ( -e_2-e_1 ) =-e_1e_2,  \quad C  ( -2e_1 )=-e_1^2$$ $$ e_1^2-e_1e_2-  (-e_1e_2)-e_1^2=0.$$
A more complex example is

 $$ \xymatrix{&&&e_2-e_3\ar@{-}[d]^{ e_2-e_3}&\\
 -3e_1+e_2\ar@{=}[r]^{-e_1-e_4}&2e_1-e_2-e_4\ar@{-}[r]^{ e_1-e_4}&e_1-e_2\ar@{-}[r]^{e_1-e_2}&0\ar@{= }[r]^{-e_2-e_3}&-e_2-e_3}.$$
What is common of these two examples is that in each there is a pair of vertices $a,b$, of distinct colors,  with $a+b=-2e_i$ for some index $i$.
\begin{definition}\label{ilpunto1}
We shall say that a connected graph  $G$ is {\em allowable}  if there is no pair of vertices $a,c\in G$   with $a c^{-1}=  c^{-1}a=(-2 e_i  ,\tau),$ or $   (-3 e_i+ e_j,\tau)$, otherwise it is {\em not allowable}.
\end{definition}
We may assume $a\in\Z^m$ black and $c=b\tau,\ b\in\Z^m$ red. We then easily see that 
\begin{proposition}\label{ilpunto0}
If a graph is not allowable then it has no geometric realization outside the special component (i.e. it is not compatible).\end{proposition}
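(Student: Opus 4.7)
My plan is to reduce the problem to a single-vertex compatibility equation via a right translation. Given a non-allowable pair $a$ (black) and $c=b\tau$ (red) in $A$ with $ac^{-1}=(a+b,\tau)$ in $\{(-2e_i,\tau),\,(-3e_i+e_j,\tau)\}$, I would consider the translated combinatorial graph $Ac^{-1}$; it contains $0=cc^{-1}$ together with the forbidden vertex $ac^{-1}$, and any compatible root $x$ for $A$ yields a compatible root $y=cx$ for $Ac^{-1}$, so equation \eqref{bacos} at $ac^{-1}$ must be satisfied by $y$.

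The point is that each of the two forbidden forms already makes this one equation extremely rigid. For $ac^{-1}=(-3e_i+e_j,\tau)$, completing the square gives
\[
\bigl|y-\tfrac{3}{2}v_i+\tfrac{1}{2}v_j\bigr|^2 = -\tfrac{3}{4}\,|v_i-v_j|^2<0\quad(v_i\ne v_j),
\]
so no real $y$ exists and $A$ has no geometric realization at all. For $ac^{-1}=(-2e_i,\tau)$, the equation rearranges to $|y-v_i|^2=0$, forcing $y=v_i$ and hence $x=-\pi(b)-v_i=v_i+\pi(a)$ by $a+b=-2e_i$.

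I would then plug this $x$ into the compatibility equation $(x,\pi(a))=K(a)$ for the black vertex $a\in A$; after simplification it becomes
\[
2(v_i,\pi(a))+|\pi(a)|^2=\sum_k a_k\,|v_k|^2,
\]
an identity in the Gram entries $(v_k,v_l)$. Matching coefficients---of $|v_k|^2$ one gets $a_i(a_i+1)=0$ and $a_k(a_k-1)=0$ for $k\ne i$; of the off-diagonal $(v_i,v_k)$ one gets $a_k(a_i+1)=0$; and of $(v_k,v_l)$ with $k,l\ne i$, $k\ne l$, one gets $a_ka_l=0$---combined with $\eta(a)=\sum_ka_k=0$ these pin the solutions down to $a=0$ (whence $x=v_i\in S$) or $a=e_k-e_i$ with $k\ne i$ (whence $x=v_k\in S$). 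For every other $a$ the identity is not a formal identity in the Gram entries, so it fails for generic $S$ and no compatible root $x$ exists.

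In every case a would-be compatible root either lies in $S$ or is absent; therefore $A$ has no realization in $\R^n\setminus S$, which is exactly non-compatibility. The one step that requires care is the coefficient-matching case analysis above; the rest reduces to two completing-the-square computations. The hypothesis $q=1$ plays no role here, since the argument only uses the explicit group-theoretic form of the two forbidden vertices.
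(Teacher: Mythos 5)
Your proof is correct, and for the $(-3e_i+e_j,\tau)$ case it is exactly the paper's computation: the single equation \eqref{bacos} at that vertex completes to a sphere of square radius $-\tfrac34|v_i-v_j|^2<0$. In the $(-2e_i,\tau)$ case the two arguments share the decisive first step --- the quadratic equation at the forbidden vertex completes to $|y-v_i|^2=0$, pinning one point of the realization to $v_i\in S$ --- but they finish differently. The paper stops there and invokes Remark 15 of \cite{PP} (the special component is an isolated component of $\Gamma_S$) to conclude that the whole component is the special one. You instead feed the forced value $x=v_i+\pi(a)$ back into the compatibility equation for the black vertex $a$ and read the resulting relation as the vanishing of the integer quadratic form $a^2+2e_ia-a^{(2)}$ in $S^2[\Z^m]$; for generic $S$ this is exactly the genericity already used for avoidable resonances, and your coefficient matching (which checks out, including the use of $\eta(a)=\sum_k a_k=0$) leaves only $a=0$ or $a=e_k-e_i$, hence $x\in\{v_i,v_k\}\subset S$ in every surviving case. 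The price is an extra case analysis; the gain is that non-compatibility (no root in $\R^n\setminus S$) is derived directly from the root equations, without appealing to the isolatedness of the special component. Both routes rely on the same background genericity of $S$, so neither is more general, but yours is the more self-contained of the two.
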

\begin{proof}  
We write the quadratic equation \eqref{bacos}, for a vertex $x$, corresponding to the root $a$, given by the vertex $b=-2e_i$. Since $C(-2e_i)=-e_i^2,\ K(-2e_i)=-|v_i|^2$      we get $$0= |x|^2+(x,\pi(-2e_i))-K(-2e_i)=|x|^2-2(x, v_i )+|v_i|^2= |x-v_i|^2.$$  Hence the only real solution of $|x-v_i|^2=0$ is $x=v_i$. Then we apply  Remark 15 of \cite{PP}  where we have shown that the  special component is an isolated component of the graph.

In the other case $x$ is in a sphere whose square radius is  $\pi(A)$  
$$A=\frac{(-3e_i+e_j)^2}4+C(-3e_i+e_j)=-\frac 14[{(-3e_i+e_j)^2} +2{ (-3e_i^2+e_j^2) } ] $$
  $$ = -\frac 14[9e_i^2-6e_ie_j+e_j^2-6e_i^2+2e_j^2]= -\frac 34[e_i-e_j]^2$$
clearly $\pi(A)=-\frac 34|v_i-v_j|^2<0,\ \forall v_i\neq v_j.$

\end{proof}

  What we conjectured and shall prove in this paper is (cf. \S \ref{TR}):
\begin{theorem}\label{MM}
A degenerate--resonant graph $A$ is {\em not allowable} hence it has no geometric realizations outside the special component.
\end{theorem}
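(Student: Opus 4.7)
The plan is to show that every degenerate--resonant graph $A$ contains a forbidden pair of vertices in the sense of Definition \ref{ilpunto1}; the conclusion will then follow from Proposition \ref{ilpunto0}. The argument is almost entirely combinatorial, and the restriction $q=1$ is used repeatedly to keep the catalogue of configurations finite.

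First I would reduce to a minimal situation. Choosing a spanning tree $T\subset A$ with a root, Remark \ref{maxt} lets one rewrite any linear relation $\sum_a n_a a=0$ as an integer relation among the edge markings of $T$; taking a suitable $\Z$--linear combination of such relations I may assume we have a single \emph{minimal} resonance, supported on a subtree $A_0\subset A$ in which every edge participates in the relation. Since the induced relation on the $C(a)$'s also vanishes by hypothesis, $A_0$ is itself degenerate--resonant, and it suffices to find the forbidden pair inside $A_0$.

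Next I would classify the possible shapes of $A_0$. Under $q=1$ each marking $e_i-e_j$ or $(-e_i-e_j)\tau$ involves at most two indices; combined with the quadratic constraint $\sum n_a C(a)=0$, the same ``no mixed terms'' mechanism that drives the proof of Theorem \ref{ridma} eliminates all but a short list of minimal resonances (the minigraph \eqref{mig} and a handful of longer chains and branched shapes). Using minimality one then bounds the valencies inside $A_0$: I would show that no vertex can be $4$--valent (too many independent markings enter the relation) and that at most two vertices can be $3$--valent, so that $A_0$ is either a simple path or a path with at most two branchings.

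For each such shape the vertices can be written down explicitly as prefix sums of edge markings from the allowed alphabet, and one checks directly that among these vertices there is always a pair $(a,b\tau)$ with $a+b\in\{-2e_i,\,-3e_i+e_j\}$; this is exactly the not--allowable configuration. Combined with Proposition \ref{ilpunto0} this yields Theorem \ref{MM}. The main obstacle is the finite but lengthy case analysis of this last step: one must systematically go through every shape in the catalogue and every consistent assignment of colors and index labels to its edges, while exploiting the symmetry that exchanges the root and the coloring (so that no configuration is analyzed twice). Steps 1--3 are essentially formal reductions that set up this final verification.
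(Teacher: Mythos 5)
Your outline reproduces the strategy announced in the paper's introduction, but at the two places where the real work happens it substitutes a claim that is either false or unsubstantiated. First, the reduction to a subtree ``in which every edge participates in the relation'' does not hold: when the forest spanned by the relation edges is not connected, the connecting segment can only be shown to collapse to a single \emph{extra edge} $E$ with $\zeta(E)=2$ (black $e_i-e_j$ with $i,j$ of opposite parity, or red $-e_i-e_j$ with $i,j$ of equal parity), and these extra-edge configurations survive the reduction and require their own full analysis (they occupy the entire subsection on the extra edge, with the sub-cases $E$ black or red and the several relative positions of $S_A$, $S_B$ and $E$). You cannot discard them by passing to a subgraph, because the resonance hypothesis quantifies over \emph{all} linear relations among the vertices of the original graph, and a proper subgraph of a minimal degenerate graph is non-degenerate, hence carries no relation at all to work with.

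Second, the claim that the ``no mixed terms'' mechanism of Theorem \ref{ridma} cuts the problem down to a finite catalogue of shapes that can be checked exhaustively is not correct. That mechanism only applies when all vertices entering the relation have the same color; in a degenerate--resonant graph the colors are necessarily mixed, and the paper replaces it by a much finer tool: extracting, for each non-critical index $u$, the coefficient $C_u(\mathcal R)$ of $e_u$ in the quadratic resonance \eqref{retr}. This is what produces the 18-case table, the type I/type II dichotomy of Corollary \ref{leduei}, and the sign bookkeeping of Lemmas \ref{icons} and \ref{chiave}; none of it is a formality. Moreover the surviving configurations (segments $S_A,S_B,S_C$ in various relative positions) have \emph{unbounded} length, since the loops in the encoding graph can be arbitrarily long, so the final step cannot be a direct inspection of finitely many vertex lists. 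It is an algebraic argument: one computes $x-y=-2e_1$ for the two extremes of a segment, feeds this into the resonance to obtain $y^{(2)}+xy=0$, and uses the irreducibility of $y^{(2)}$ (unless $y=\beta(e_i-e_j)$) to force $x+y=-2e_j$. Without these two ingredients --- the extra-edge analysis and the $C_u$ machinery with the closing factorization argument --- the proposal does not constitute a proof.
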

From this Theorem Proposition \ref{affind} follows.
%

\section{Resonant graphs} \subsection{Encoding graphs} In order to understand relations,   consider   the complete graph $T_m$ on the vertices  $1,  \ldots,  m$.    If we are given a marked graph $\Gamma$ we associate to it   the subgraph $\Lambda$ of  $T_m$,   called its {\em encoding graph} in which we join the vertices $i,  j$ with a black edge  if $\Gamma$ contains an edge marked $e_j-e_i$  and by a    red edge edge if $\Gamma$ contains an edge marked $-e_j-e_i$. We mark $=$ the red edges.

 For each connected component of the encoding graph consider the subspace spanned by its edges. It is easily seen that  these subspaces form a direct sum. Hence the encoding graph of a minimal relation is connected. Moreover a circuit in the encoding graph corresponds to a relation  between the corresponding edges if and only if it contains an even number of red edges and we call it an {\em even circuit}.

This follows from the basic relations with which we can substitute  two consecutive edges with a single one:
$$ (e_i-e_j)+(e_j-e_k)+(e_k-e_i)=0,\quad \xymatrix{i\ar@{-}[rd]\ar@{-}[rr]&&k\\&j\ar@{-}[ru]}$$
$$ (e_i-e_j)-(-e_j-e_k)+(-e_k-e_i)=0,\quad \xymatrix{i\ar@{-}[rd]\ar@{=}[rr]&&k\\&j\ar@{=}[ru]}$$
$$ -2e_i=-(e_i-e_j)+(-e_i-e_j).$$
Thus    for each index $i$ of an odd circuit  a sum, with coefficients $\pm 1$, of its edges  equals to $-2e_i$. The edges of an even circuit have a linear  relation (unique up to sign) given by a sum with coefficients $\pm 1$ equal 0.
If we have a list of edges of $\Gamma$ which are linearly
dependent and minimal (with respect to this property) then we claim that the
corresponding elements in the encoding graph from a circuit,
with some provisos  due to the presence of red edges.
More precisely we may have a {\em simple circuit}  in which an even number of red edges appear or {\em two odd circuits} joined by a segment   (possibly reduced to a point).

\begin{example} An even and a doubly odd encoding graph:

\xymatrix{       & 1   \ar@{=}[r] &10\ar@{-}[r]    &9\ar@{-}[r]  &8\ar@{-}[r] &7    && \\   &2 \ar@{-}[u] \ar@{=}[r]   &3\ar@{ =}[r]    &4\ar@{-}[r]  &5\ar@{-}[r] &6\ar@{=}[u]      &&  }

\xymatrix{    &12\ar@{-}[r]&13\ar@{=}[r]&14\ar@{-}[r]&15\ar@{=}[r]&16&    && \\ & 11\ar@{-}[u] &   &19\ar@{-}[r]  \ar@{-}[u] &18\ar@{-}[r] &17\ar@{-}[u]     &&  \\   & 1 \ar@{-}[u]  \ar@{=}[r] &10\ar@{-}[r]    &9  \ar@{-}[r]  &8\ar@{=}[r] &7      && \\   &2 \ar@{-}[u] \ar@{=}[r]   &3\ar@{ =}[r]    &4\ar@{-}[r]  &5\ar@{-}[r] &6\ar@{=}[u]      &&  }
\end{example}
This can be easily justified.   Recall that the {\em valency} of a
vertex is the number of edges which admit it as vertex.  If the
given edges give a minimal relation their encoding graph must be
connected,   furthermore it cannot have any vertex of valency 1
since the corresponding edge is clearly linearly independent from
the others.   Finally it cannot have more than 2 simple circuits
otherwise we easily see that we have at least 2 relations.

 For a connected graph the number $c$ of independent circuits is the dimension of its first homology group and thus given,   using the Euler characteristic,   by $c=e-v+1$ where $e,  v$ are the number of edges and vertices respectively.   In our setting all vertices  have valency $\geq 2$ and we denote the valency of the vertex $i$ by $V_i=v_i+2$   (with $v_i\geq 0$).  We have $2e=\sum_i V_i=\sum_iv_i+2v$  so that we have  $\sum_iv_i=2c-2$.  If $c=1$ the encoding graph is a simple circuit. If $c=2$ we deduce that  $\sum_iv_i= 2$ hence we have either only one vertex of valency 4 and the others of valency 2 or  two vertices of valency 3 and the others of valency 2. The first case  gives two loops  joined in one vertex the second   gives  either two loops joined by a segment or two vertices joined by 3 segments.  This last case is not possible since two of these segments will have the same  parity and generate an even loop contradicting minimality.

\subsection{Minimal relations}We want to study a minimal degenerate resonant graph $\Gamma$.  Observe that for such a graph any proper subgraph is non-degenerate. In particular we have one and only one relation among the edges of a given maximal tree  $T$ in the graph and a corresponding relation for the vertices.

A minimal degenerate graph has a special type of relation which
comes from the fact that in a maximal tree we have a minimum
number of dependent edges.   Such a situation arises when these
edges,   call their set $\mathcal E$,   form  in the encoding
graph,   a {\em even circuit}    (where we allow the possibility
that we have two odd  circuits  matching) as in the previous
paragraph.   Call $|\mathcal E|$  the subgraph of $T$  formed by
the edges $\mathcal E$,   of course it need not be a priori
connected  but only a {\em forest} inside $T$.

In an even circuit the relation is a sum of edges
$\sum_j\delta_j\ell_j=0$,   with signs $\delta_j=\pm 1$  in two
odd matching circuits  we may have some $\delta_j=\pm 2$ corresponding to the
edges appearing in the segment connecting the two odd loops.   In
any case we list the edges appearing as $\ell_i$.  Each $\ell_i$
black is $\ell_i=a_i-b_i$  with $a_i,  b_i$,   its vertices of the
same color while a red is  $\ell_i=a_i+b_i$  with $a_i$ red and
$b_i$ black  its vertices.

The relation is thus
\begin{equation}
\label{leqed}\sum_{i\ \text{black}}\delta_i  (a_i-b_i)+\sum_{j\ \text{red}}\delta_j  (a_j+b_j)=0 .
\end{equation}

Notice that,   by minimality,   all the end points of $T$ must be
in $|\mathcal E|$. We may think of \eqref{leqed} as a formal
relation on the vertices   (instead of on the edges),   note that
a vertex in $\mathcal E$  need not appear in  \eqref{leqed}
however all end-points in $\mathcal E$ must appear and,   if a
vertex $v$ has coefficient $k$ in the relation,    it must be the
vertex of at least $k$ of the given edges   (in the case
$\delta_i=\pm 1$).
\subsubsection{Basic formulas \label{BaF}} We  work with $G_{-2}$  identified with elements in $\Z^m$  either with $\eta(a)=0$,   {\em black} or $\eta(a)=-2$   {\em red}.  We have set  $C(a)=\frac{1}{2}  (  a^2+  a^{  (2)})$ for $a$ black and  $C(a)=-\frac{1}{2}  (  a^2+  a^{  (2)})$ for $a$ red.

In our computations we use always the rules: \begin{itemize}
\item for $u,  v$ black, we have $u+v$ black and

$$ C  (u+v)=\frac{1}{2}  (  (u+v)^2+  (u+v)^{  (2)})=C  (u)+C  (v)+uv$$
\item for $u$ black $v$ red, we have $u+v$ red and

$$C  (u+v)=-\frac{1}{2}  (  (u+v)^2+  (u+v)^{  (2)})= -C  (u)+C  (v)-uv$$

\item for $u,  v$ red, we have $u-v$ black and
$$C  (u-v)=\frac{1}{2}  (  (u-v)^2+  (u-v)^{  (2)})=\frac{1}{2}  (  (u^2+v ^2-2uv+  (u-v)^{  (2)})$$$$=\frac{1}{2}  (  (u^2+v ^2-2uv+  (u-v)^{  (2)})=-C  (u)+C
(v)+v^2-uv$$
\item for $u$ black, we have $-u$ black and
$$C  (-u)=C  (u)-u^{  (2)}. $$\end{itemize}

 \section{The resonance \label{TR}}
 \subsection{The resonance relation}

This chapter is devoted to the proof of Theorem \ref{MM}. In order to prove it we take a minimal  degenerate resonant graph $\Gamma$ and inside it a maximal tree $T$ and then we start studying it. In fact it would be possible to classify these trees, we arrive a little short of this  since we need only to show \ref{MM}.
\subsubsection{Relations}
   Associated to $T$  we have its encoding graph and the encoding graph of the edges $\mathcal E$ involved in the relation. We index the edges in the relation and set
   $\ell_i=\vartheta_i e_i-e_{i+1}$ where $\vartheta_i=\pm 1$ (depending on the color of the  edge). As we explain in course of the proofs we will need to identify some vertices $e_i$.

 We  distinguish two cases, if the encoding graph of the relation is 1) an even or  2) a doubly odd loop. The simplest case to treat is case 1) which then suggests how to deal with the other cases. \smallskip

Case 1.  Up to changing notations we may assume that the loop is formed by the edges
 $\ell_i=\vartheta_i e_i-e_{i+1},  \ i=1,  \ldots,    k-1,  \ \ell_{ k}=\vartheta_{ k} e_{ k}-e_1,$ (here we identify $e_1=e_{k+1}$).  Set  $$\delta_i:=\prod_{j\leq i}\vartheta_i=\vartheta_i\delta_{i-1},$$ we assume we have an even number of $\vartheta_i=-1$, by assumption $\delta_k=1$.

 We call $\delta_i$ the {\em parity} of $i$.
 \begin{lemma}
We have the relation:
$$A=\sum_{i=1}^{ k}  \delta_i \ell_i=0.$$
\begin{proof}
Consider an index $ i>1$, the coefficient of $e_i$ in $A$ is  $-\delta_{i-1}+\delta_i\vartheta_i$.  Since $\delta_i=\delta_{i-1} \vartheta_i$  for this $e_i$ the coefficient is 0.  For $e_1$   the coefficient  comes from $\delta_1\ell_1+\delta_k\ell_k$, we have $\delta_1=\vartheta_1,\delta_k=1$  so we also get coefficient 0.
\end{proof}
\end{lemma}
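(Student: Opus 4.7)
The plan is a direct coefficient check: since $A$ is a $\mathbb{Z}$-linear combination of the basis vectors $e_1,\ldots,e_k$, it suffices to verify that each $e_i$-coefficient of $A$ vanishes. The structural observation that makes this almost automatic is that the edges $\ell_1,\ldots,\ell_k$ form a simple cycle through $e_1,\ldots,e_k$, so every $e_i$ appears in exactly two of the $\ell_j$, namely $\ell_{i-1}$ and $\ell_i$ (with indices read mod $k$).

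First I would treat the generic index $i\geq 2$. The vector $e_i$ appears with coefficient $-1$ in $\ell_{i-1}=\vartheta_{i-1}e_{i-1}-e_i$ and with coefficient $\vartheta_i$ in $\ell_i=\vartheta_i e_i-e_{i+1}$. Hence its coefficient in $A$ is
\[
-\delta_{i-1}+\delta_i\vartheta_i,
\]
and substituting the recursion $\delta_i=\delta_{i-1}\vartheta_i$ together with $\vartheta_i^2=1$ yields $\delta_i\vartheta_i=\delta_{i-1}$, so the coefficient is zero.

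Next I would handle $e_1$, using the cyclic identification $e_{k+1}=e_1$. It receives contributions from $\ell_1$ (coefficient $\vartheta_1$) and from $\ell_k$ (coefficient $-1$, coming from the term $-e_{k+1}=-e_1$). So its total coefficient in $A$ is $\delta_1\vartheta_1-\delta_k=\vartheta_1^2-\delta_k=1-\delta_k$. This is the only place in the argument where the parity hypothesis is used: by assumption there is an even number of $\vartheta_j=-1$, whence $\delta_k=\prod_{j=1}^k\vartheta_j=1$ and the coefficient vanishes.

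There is no genuine obstacle here — the lemma is really a bookkeeping identity forced by the cycle structure. The one conceptual point worth flagging is precisely where the even-loop hypothesis enters: it is used exactly once, to close the cycle at $e_1$. Without it, one would instead obtain $A=(1-\delta_k)e_1$, which for $\delta_k=-1$ contributes $2e_1$. This matches the odd-circuit phenomenon described just above the lemma, where a signed sum of edges around an odd loop produces $-2e_i$ rather than $0$.
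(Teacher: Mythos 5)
Your proof is correct and follows exactly the same route as the paper's: a term-by-term coefficient check, handling the generic index $i\geq 2$ via the recursion $\delta_i=\delta_{i-1}\vartheta_i$ and closing the cycle at $e_1$ using $\delta_k=1$. Your added remark pinpointing that the even-loop hypothesis enters only through $\delta_k=1$ (and that an odd loop would instead leave $2e_1$) is accurate and consistent with the paper's surrounding discussion, but the argument itself is the same.
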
 Set $\zeta:\Z^m\to\Z,  \ \zeta  (e_i)=\delta_{i-1}$  (by convention $\delta_0=1$)  so  that, by linearity, $\zeta(\ell_i)=\vartheta_i\delta_{i-1}-\delta_{i}=0.$
\begin{lemma}\label{zita}
The $\ell_i$  span the codimension 1 subspace of the space $e_1,
\ldots,  e_{ k}$  formed by the vectors $a$ such that \begin{equation}
\label{par}a=\sum_i\alpha_ie_i \ |\,   \zeta  (a)=\sum_i
 \delta_{i-1}\alpha_i=0.
\end{equation}
\end{lemma}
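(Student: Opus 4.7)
The plan is to prove this by a dimension count: show that every $\ell_i$ lies in $\ker\zeta$, then show that the $\ell_i$ span a space of dimension exactly $k-1$, which equals the dimension of $\ker\zeta$ inside $\langle e_1,\ldots,e_k\rangle$.

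First I would verify that $\zeta(\ell_i)=0$ for every $i$. By linearity, $\zeta(\ell_i)=\vartheta_i\zeta(e_i)-\zeta(e_{i+1})=\vartheta_i\delta_{i-1}-\delta_i$. For $1\le i<k$ this vanishes because the very definition gives $\delta_i=\vartheta_i\delta_{i-1}$. For $i=k$, we interpret $e_{k+1}=e_1$ so $\zeta(e_{k+1})=\delta_0=1$, and we need $\vartheta_k\delta_{k-1}=1$; but $\vartheta_k\delta_{k-1}=\delta_k=1$ by the hypothesis that $\delta_k=1$ (coming from the even number of negative $\vartheta_i$'s). Thus $\mathrm{span}(\ell_1,\ldots,\ell_k)\subset\ker\zeta$.

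Next I would bound the codimension. Since $\zeta$ is a nonzero linear functional on $\langle e_1,\ldots,e_k\rangle$ (its value on $e_1$ is $\delta_0=1\neq 0$), its kernel has codimension $1$ in this space, hence dimension $k-1$. So it remains to show $\dim\mathrm{span}(\ell_1,\ldots,\ell_k)\ge k-1$; equivalently, that $\ell_1,\ldots,\ell_{k-1}$ are linearly independent. Writing these $k-1$ vectors in the basis $e_1,\ldots,e_k$, the submatrix consisting of the columns indexed by $e_1,\ldots,e_{k-1}$ is upper triangular with diagonal entries $\vartheta_1,\ldots,\vartheta_{k-1}\in\{\pm 1\}$, hence has nonzero determinant. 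Therefore $\ell_1,\ldots,\ell_{k-1}$ are linearly independent.

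Combining the two steps, $\mathrm{span}(\ell_1,\ldots,\ell_k)$ is contained in $\ker\zeta$ and has dimension at least $k-1=\dim\ker\zeta$, so the two coincide, which is exactly the statement of the lemma (and as a consistency check, the relation $\sum_i\delta_i\ell_i=0$ established in the preceding lemma accounts for the single linear dependency among the $k$ vectors). The only delicate point in the argument is the boundary case $i=k$ in verifying $\zeta(\ell_i)=0$, where one must use both the cyclic identification $e_{k+1}=e_1$ and the parity hypothesis $\delta_k=1$; everything else is routine linear algebra.
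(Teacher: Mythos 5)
Your proof is correct and follows essentially the same route as the paper's: check that $\zeta(\ell_i)=0$ so the span sits inside the kernel, then match dimensions. The paper's own proof is a one-line version of this (it simply asserts the codimension-$1$ span), so your explicit verification of the wrap-around case $i=k$ and the upper-triangular argument for the independence of $\ell_1,\ldots,\ell_{k-1}$ just supplies details the paper leaves implicit.
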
\begin{proof}
$\zeta(\ell_i) =0$, so the $\ell_i$ are in this subspace, but they span a subspace of codimension 1 hence the claim.
\end{proof}

Case 2.  For a double  loop with $k$ edges, we have  either one or two  vertices in the encoding graph of valency $>2$ separating the two odd loops, we call these vertices {\em critical}. We start from a odd loop and a critical vertex  which we may assume to be 1. We call $A=\{1,\ldots,h\}$ the indices in the first loop. We then list the edges $\ell_1,\ldots,\ell_h$  in a circular way and 
\begin{lemma}  We may choose the signs  $ \delta_i=\pm 1$ so that  for any index $j\leq h$ we have:
\begin{equation} 
\label{sopar}\sum_{i=1}^j\delta_i\ell_i=-\delta_j e_{j+1}-e_1,\quad \sum_{i=1}^h\delta_i\ell_i=-2e_1,\quad \sum_{i=j+1}^{h}\delta_i\ell_i=  - e_1+\delta_j e_{j+1}.
\end{equation}
\end{lemma}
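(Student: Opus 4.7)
The plan is to verify the first identity by induction on $j$, from which the other two fall out immediately. The only substantive choice is the sign convention: I set $\delta_1 := -\vartheta_1$ and $\delta_j := \vartheta_j \delta_{j-1}$ for $j\geq 2$, so that $\delta_j = -\prod_{i=1}^j \vartheta_i$. This differs from the Case 1 convention by an overall sign, and the flip is precisely what generates the extra $-e_1$ term on the right hand side.

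For the base case $j=1$ a direct computation gives $\delta_1\ell_1 = -\vartheta_1(\vartheta_1 e_1 - e_2) = -e_1 + \vartheta_1 e_2 = -e_1 - \delta_1 e_2$. For the induction step, assuming the identity at $j-1$, add $\delta_j \ell_j = \delta_j \vartheta_j e_j - \delta_j e_{j+1}$; the coefficient of $e_j$ in the new sum is $\delta_j \vartheta_j - \delta_{j-1}$, which vanishes by the recursion $\delta_j = \vartheta_j\delta_{j-1}$, leaving $-e_1 - \delta_j e_{j+1}$ as required.

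Specializing the first identity to $j=h$ and using the closing condition $e_{h+1} = e_1$ (we are traversing a loop that returns to the critical vertex $1$) yields $\sum_{i=1}^h \delta_i \ell_i = -\delta_h e_1 - e_1 = -(1+\delta_h)\,e_1$. Because the loop is odd, $\prod_{i=1}^h \vartheta_i = -1$, and therefore $\delta_h = 1$, so the sum equals $-2e_1$. The third identity is then simply the difference of the first two. I do not anticipate any real obstacle: the whole argument is a short induction followed by two lines of arithmetic. The only conceptual point is that the oddness of the loop forces simultaneously $\delta_1 = -\vartheta_1$ and $\delta_h = 1$, and that these combine to give the factor $-2$ at the basepoint, which is the manifestation at the level of the relation of the elementary identity $-2e_i = -(e_i - e_j) + (-e_i - e_j)$ recalled earlier in the paper.
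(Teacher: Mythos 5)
Your proof is correct and follows essentially the same route as the paper: the same sign convention $\delta_1=-\vartheta_1$, $\delta_i=\vartheta_i\delta_{i-1}$, the same induction on the first identity, and the same deduction of the other two from it (the paper merely asserts ``from the first Formula the others follow,'' whereas you spell out the use of $e_{h+1}=e_1$ and the oddness of the loop to get $\delta_h=1$). No gaps.
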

\begin{proof}  From the first Formula the others follow.  We define $\delta_i=\vartheta_i\delta_{i-1}$  if $i>1$ and set $\delta_1=-\vartheta_1$. Then 
if $j=1$, $\delta_1\ell_1=\delta_1(\vartheta_1e_1-e_2)=-e_1-\delta_1e_2$  and  this follows from the definitions. By induction
$$\sum_{i=1}^{j+1}\delta_i\ell_i=-\delta_j e_{j+1}-e_1+\delta_{j+1}\ell_{j+1}$$$$-\delta_j e_{j+1}-e_1+\delta_{j+1} \vartheta_{j+1}e_{j+1}-\delta_{j+1}e_{j+2} = -e_1        -\delta_{j+1}e_{j+2} .$$
\end{proof} 

For notational convenience we identify $e_{h+1}=e_1$. If we have two critical vertices, call $b\geq h+2$ the other, we have then a segment joining them formed by a string of elements $\ell_{h+1}=\vartheta_{h+1} e_{h+1}-e_{h+2},\ldots,\ell_{b-1}=\vartheta_{b-1} e_{b-1}-e_b$. We call $B$ this set of indices and assign to these edges{ \em signs} $\delta=\pm 2$    so that  $\sum_{i=h+1}^{b-1}  \delta_i\ell_i=\sum_{i\in B}  \delta_i\ell_i=2[e_1+\vartheta e_{b}]  $  where $\vartheta=1$  if and only if this segment is odd. 

We finish with the other odd loop, call $C$ the corresponding set of indices and assign, as before,  signs $\pm 1$ so that  $\sum_{i=b}^k\delta_i\ell_i=\sum_{i\in C}\delta_i\ell_i= -2\vartheta e_{b}$. With these choices the relation is
\begin{equation}
\label{LAR}R:=\sum_{i=1}^k\delta_i\ell_i =-2e_1+2[e_1+\vartheta e_{b+1}] -2\vartheta e_{b+1}=0.
\end{equation} We have chosen the indices so that   we order the edges   as they occur in one way of {\em walking} the cycle, starting from the critical vertex 1. We say that an index is critical if the corresponding vertex is critical. Here $1,h+1,b$ are critical.

\begin{remark}\label{crii}
The  non critical indices are divided in 2 or 3  sets (depending if we have only one critical vertex or two).
If $ u$  is not critical  we have $\delta_u=\vartheta_u\delta_{u-1}$.
\end{remark}

\medskip

\begin{lemma}\label{sublatti}
The $\ell_i$  span the sublattice of the lattice spanned by  $e_1,
\ldots,  e_{ k}$  formed by those vectors \begin{equation}
\label{par1}a=\sum_i\alpha_ie_i \ |\,   \piff  (a)=\sum_i
  \alpha_i\cong 0,\ \text{modulo}\ 2.
\end{equation}
\end{lemma}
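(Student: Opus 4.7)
My plan is to prove the two inclusions between $\operatorname{span}_{\Z}(\ell_1,\dots,\ell_k)$ and the sublattice $L=\{a\in\Lambda: \eta(a)\equiv 0\pmod 2\}$, where $\Lambda$ denotes the rank-$(k-1)$ ambient lattice spanned by the distinct $e_i$ (recall $e_{h+1}=e_1$ and $e_{k+1}=e_b$). The inclusion $\operatorname{span}_{\Z}(\ell_i)\subseteq L$ is immediate, since $\eta(\ell_i)=\vartheta_i-1\in\{0,-2\}$ is always even. For the reverse inclusion I note that, by minimality of the degenerate--resonant graph $\Gamma$, the $k$ vectors $\ell_i$ satisfy only the relation \eqref{LAR} up to scalar, so their $\Z$-span already has rank $k-1$, equal to that of $L$. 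Hence $L/\operatorname{span}_{\Z}(\ell_i)$ is finite, and it suffices to exhibit generators of $L$ inside the span.

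Two batches of generators will suffice. First, the full-loop identity in \eqref{sopar} gives $2e_1=-\sum_{i=1}^h\delta_i\ell_i\in\operatorname{span}_{\Z}(\ell_i)$, and the symmetric calculation on the second odd loop $C$ yields $2e_b\in\operatorname{span}_{\Z}(\ell_i)$ as well. Second, for any pair $p,q$ of distinct vertices of the encoding graph I would take a simple path $p=p_0,p_1,\dots,p_r=q$ with successive edges $\ell^{(1)},\dots,\ell^{(r)}$ and recursively choose signs $\epsilon_s\in\{\pm 1\}$ so that the contributions of $\epsilon_s\ell^{(s)}$ and $\epsilon_{s+1}\ell^{(s+1)}$ to their shared vertex $e_{p_s}$ cancel. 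Writing each $\ell^{(s)}=\sigma_s e_{p_{s-1}}+\tau_s e_{p_s}$ with $\sigma_s,\tau_s\in\{\pm 1\}$, a short induction on $s$ shows that $\sum_s\epsilon_s\ell^{(s)}=\pm e_p\pm e_q\in\operatorname{span}_{\Z}(\ell_i)$, with the final pair of signs determined by the orientations and by the red-parity of the path.

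The decisive combinatorial input is now that, because both circuits of the doubly odd loop are odd, between any two vertices $p,q$ one can realize paths of both red-parities: prepending a full traversal of an odd cycle to any given path flips its red-parity without altering the endpoints. Consequently both $\pm(e_p-e_q)$ and $\pm(e_p+e_q)$ lie in the span; their sum (resp.\ difference) yields $\pm 2e_p$ (resp.\ $\pm 2e_q$), and these together with the original elements produce $e_p-e_q$ and $e_p+e_q$ unambiguously in the span for every pair. Since $\{2e_1\}\cup\{e_1+e_j: j\text{ ranges over the other distinct vertex indices}\}$ is a $\Z$-basis of $L$, the reverse inclusion $L\subseteq\operatorname{span}_{\Z}(\ell_i)$ follows.

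The main obstacle I anticipate is the sign bookkeeping in the telescoping step: verifying that the recursion $\epsilon_{s+1}=-\epsilon_s\tau_s/\sigma_{s+1}$ is always well-defined in $\{\pm 1\}$ and that the resulting expression takes the asserted form $\pm e_p\pm e_q$ for every combination of black versus red edges and of orientations. Coupled with this is the careful check that odd-red-parity paths are always available between arbitrary vertex pairs, which is the only place where the hypothesis of a \emph{doubly} odd (as opposed to even) circuit genuinely enters, marking the key structural difference from the even case handled in Lemma \ref{zita}.
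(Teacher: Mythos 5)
Your proposal is correct, and it supplies a genuine argument where the paper offers almost none: the published proof of Lemma \ref{sublatti} consists of the easy inclusion (which you give identically, via $\eta(\ell_i)=\vartheta_i-1\in\{0,-2\}$) followed by the single phrase that the spanning direction ``is easily seen by induction.'' Your route makes that direction explicit. The mechanism is sound: for a walk $p=p_0,\dots,p_r=q$ with $\ell^{(s)}=\sigma_s e_{p_{s-1}}+\tau_s e_{p_s}$ and $\sigma_s\tau_s=-\vartheta_s$, the recursion $\epsilon_{s+1}=-\epsilon_s\tau_s\sigma_{s+1}$ is well defined in $\{\pm1\}$ and telescopes to $\epsilon_1\sigma_1\bigl(e_p-\prod_s\vartheta_s\,e_q\bigr)$, so the red--parity of the walk does decide between $\pm(e_p-e_q)$ and $\pm(e_p+e_q)$ exactly as you assert; the doubly odd hypothesis enters precisely in guaranteeing walks of both parities (walk to an odd cycle, traverse it, walk back --- the detour contributes evenly), which is the structural contrast with Lemma \ref{zita}. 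Together with $2e_1$ and $2e_b$ from \eqref{sopar} and \eqref{LAR}, the elements $2e_1$ and $e_1+e_j$ do form a $\Z$-basis of the even-mass sublattice (the determinant is $2$, matching the index), so the reverse inclusion follows. Two cosmetic remarks: the rank count and the finiteness of $L/\operatorname{span}_\Z(\ell_i)$ are superfluous, since exhibiting generators of $L$ inside the span already gives $L\subseteq\operatorname{span}_\Z(\ell_i)$; and note that walks may repeat edges, which is harmless because the telescoped sum is still an integer combination of the $\ell_i$. What your approach buys is a self-contained verification with transparent sign bookkeeping; what the paper's (implicit) induction would buy is brevity.
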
\begin{proof}
$\piff(\ell_i) \cong 0$ modulo 2, so the $\ell_i$ are in this sub--lattice,  the fact that they span  is easily seen by induction.
\end{proof}
\subsubsection{Signs}
We choose a root  $r$  in $T$  and then each vertex $x$ acquires a color  $\sigma_x=\pm1$. The color of $x$ is   red and $\sigma_x=-1$ if the path  from the root to $x$ has an odd number of red edges, the color is black and $\sigma_x=1$ if  the path  is even.

An edge $\ell_i$ is connected to the root $r$ by a unique path $p_i$ ending with $\ell_i$ we denote its final vertex $x_i$ and we set  $\sigma_i:=\sigma_{x_i}$.  If $\ell_i$ is black we set  $\lambda_i=1$ if  the edge is equioriented with the path, that is it points outwards, $\lambda_i=-1$ if it points inwards. Finally we set $\lambda_i=1$ if  the edge is red.
\begin{equation}
   \xymatrix{   r    \ar@{.}[r]  & \ldots&  \ar@{->}[r]^{\ell_i}  & x    }\quad  \lambda_i=1,\qquad  \xymatrix{   r    \ar@{.}[r]  & \ldots&  \ar@{<-}[r] ^{\ell_i}  & x    }\quad  \lambda_i=-1.
\end{equation}
\begin{definition}
Once we choose a root in $T$, each red edge $\ell_i$  (that is $\vartheta_i=-1$) appears as edge with one
end denoted by $a_i$ red and the other denoted by $b_i$ black,   we have $\ell_i=a_i+b_i$. For a black edge  $\vartheta_i=1$  we define $a_i,b_i$ so that  instead $a_i=b_i+\ell_i$, and $a_i,b_i$ have the same color. We thus write $\ell_i=a_i-\vartheta_ib_i$.
\end{definition}

In particular for the resonant trees:
\begin{proposition}
 \begin{equation}\mathcal R:=
\label{retr} \sum_{i\,|\,\vartheta_i=-1} \delta_i   ( -a_i^{  (2)}- \ell_i a_i +e_ie_{i+1} )+\sum_{i\,|\,\vartheta_i= 1}  \delta_i \sigma_i( -e_{i+1}^2+ e_ie_{i+1}+\ell_i a_i )=0.
\end{equation}
$$\sum_{i\,|\,\vartheta_i=-1} \delta_i   ( b_i^{  (2)} +\ell_ib_i  -e_ie_{i+1} )+\sum_{i\,|\,\vartheta_i= 1}  \delta_i \sigma_i(e_i ^2-e_ie_{i+1}+\ell_i  b_i )=0 $$
\end{proposition}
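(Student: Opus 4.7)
The plan is to recognize both identities as restatements of the resonance hypothesis: the integral vertex relation induced by $\sum_i\delta_i\ell_i=0$ must also hold for the $C$-values. I first record, from the definition $C(u)=\frac{\sigma}{2}(u^2+u^{(2)})$, that $C(\ell_i)=e_i^2-e_ie_{i+1}=e_i\ell_i$ for a black edge $\ell_i=e_i-e_{i+1}$, and $C(\ell_i)=-e_ie_{i+1}$ for a red edge $\ell_i=-e_i-e_{i+1}$.

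For a black edge ($\vartheta_i=1$) the endpoints $a_i,b_i$ share the common colour $\sigma_i$ and satisfy $a_i=b_i+\ell_i$. Applying the rules of \S\ref{BaF} separately in the cases $\sigma_i=\pm1$ collapses to the uniform identity $C(a_i)-C(b_i)=\sigma_i(C(\ell_i)+\ell_i b_i)$. Substituting $b_i=a_i-\ell_i$ and expanding produces $C(a_i)-C(b_i)=\sigma_i(-e_{i+1}^2+e_ie_{i+1}+\ell_i a_i)$; keeping $b_i$ instead yields $C(a_i)-C(b_i)=\sigma_i(e_i^2-e_ie_{i+1}+\ell_i b_i)$. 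Multiplying by $\delta_i\sigma_i$ (using $\sigma_i^2=1$) shows that each version equals $\delta_i(C(a_i)-C(b_i))$, so the black summands of $\mathcal R$ and of the $b$-counterpart match. For a red edge ($\vartheta_i=-1$) we have $a_i$ red, $b_i$ black and $\ell_i=a_i+b_i$ red. Writing $b_i=\ell_i-a_i$ with both $\ell_i,a_i$ red and invoking the red--minus--red rule of \S\ref{BaF} gives
\begin{equation*}
C(a_i)+C(b_i)=-C(\ell_i)-\ell_i a_i-a_i^{(2)}=-a_i^{(2)}-\ell_i a_i+e_ie_{i+1},
\end{equation*}
i.e.\ the red summand of $\mathcal R$; writing $a_i=\ell_i+(-b_i)$ and using $C(-b_i)=C(b_i)-b_i^{(2)}$ gives the dual $C(a_i)+C(b_i)=b_i^{(2)}+\ell_i b_i-e_ie_{i+1}$, the red summand of the second identity.

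Summing these per-edge identities and regrouping by vertex, both $\mathcal R$ and its $b$-counterpart take the form $\sum_v n_v C(v)$, where the integer coefficients $n_v$ are obtained by collecting all contributions at each vertex $v$ of the tree $T$. These are exactly the coefficients that encode the edge relation $\sum_i\delta_i\ell_i=0$ as a vertex relation $\sum_v n_v v=0$, via $\ell_i=a_i-\vartheta_i b_i$. The degenerate--resonant hypothesis on $\Gamma$ forces every such integral vertex relation to pass to $C$-values, so $\sum_v n_v C(v)=0$, and both identities follow. The only subtle point is the two-case analysis for the colour $\sigma_i$ on black edges, cleanly handled by the uniform formula $C(a_i)-C(b_i)=\sigma_i(C(\ell_i)+\ell_i b_i)$ above; the rest is a direct application of the rules of \S\ref{BaF}.
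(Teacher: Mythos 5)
Your proposal is correct and follows essentially the same route as the paper: convert the edge relation $\sum_i\delta_i\ell_i=0$ into the vertex relation $\sum_{\vartheta_i=-1}\delta_i(a_i+b_i)+\sum_{\vartheta_i=1}\delta_i(a_i-b_i)=0$, invoke the resonance hypothesis to pass to $C$-values, and then evaluate $C(a_i)\pm C(b_i)$ edge by edge via the rules of \S\ref{BaF}. Your packaging of the black case as $C(a_i)-C(b_i)=\sigma_i(C(\ell_i)+\ell_i b_i)$ is just a tidier form of the same expansion the paper carries out directly, and your red-edge and dual computations agree with the paper's.
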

\begin{proof}
We start from the relation $\sum_i\delta_i\ell_i=0$ and substitute the previous formulas, we deduce
\begin{equation}
\label{retr0} R:=\sum_{i\,|\,\vartheta_i=-1} \delta_i   ( a_i+b_i )+\sum_{i\,|\,\vartheta_i= 1}  \delta_i  (a_i-b_i)=0.
\end{equation}
We next  have by the resonance hypothesis $$\sum_{i\,|\,\vartheta_i=-1} \delta_i   ( C(a_i)+C(b_i) )+\sum_{i\,|\,\vartheta_i= 1}  \delta_i  (C(a_i)-C(b_i))=0.$$
We next apply the formulas \ref{BaF}.

For $a_i,    \ell_i=-e_i-e_{i+1}$ red, we have $b_i+a_i=  \ell_i$  and $b_i$ is black:

$$C(a_i)+C(b_i)=-1/2(a_i^2+a_i^{(2)})+1/2(b_i^2+b_i^{(2)})$$$$=-1/2(a_i^2+a_i^{(2)})+1/2((\ell_i-a_i)^2 +\ell_i^{(2)}-a_i^{(2)})$$
$$=-1/2(a_i^2+a_i^{(2)})+1/2( \ell_i^2-2\ell_ia_i+a_i^2  +\ell_i^{(2)}-a_i^{(2)})$$
$$=-a_i^{(2)}  -\ell_ia_i+e_ie_{i+1}=-a_i^{(2)}  -\ell_ia_i+e_ie_{i+1}.$$
 For $a_i=b_i+\ell_i$ and $\ell_i=e_i-e_{i+1}$ black we have:
 $$C(a_i)-C(b_i)=\sigma_i[1/2(a_i^2+a_i^{(2)})-1/2(b_i^2+b_i^{(2)})]$$$$=\sigma_i[1/2(a_i^2+a_i^{(2)})-1/2((a_i-\ell_i)^2 -\ell_i^{(2)}+a_i^{(2)})$$
$$=\sigma_i[ -1/2( \ell_i ^2 -2\ell_ia_i-\ell_i^{(2)} )]=\sigma_i[ -e_{i+1}^2+ e_ie_{i+1} +\ell_ia_i   ].$$ The second identity follows from the first by substituting. \end{proof}

 \subsubsection{Some reductions}
Denote by $b_i=\sum_{h=1}^mb_{i,  h}e_h$ and expand the second Formula
\eqref{retr}. Observe that  the coefficients  of the mixed terms  $e_ie_j,\ i\neq j$   come all  from the sum

$$B:=\sum_{i\,|\,\vartheta_i=-1}   \delta_i   (   \ell_i b_i -e_ie_{i+1} )+\sum_{i\,|\,\vartheta_i= 1} \delta_i  \sigma_i( -e_ie_{i+1}+\ell_i b_i ).
$$ 

If $h\notin[1,  \ldots,   k],   $  the
coefficient of $e_h$ in $B$  (which must be equal to 0)  is
$$ \sum_{i\,|\,\vartheta_i=-1}   \delta_i       \ell_i b_{i,  h} +\sum_{i\,|\,\vartheta_i= 1} \delta_i  \sigma_i  \ell_i b_{i,  h} =0.
$$ 
By the uniqueness of the relation it follows that this relation is a multiple of \eqref{LAR} hence the numbers $b_{i,  h},\ \vartheta_i=-1$ and $ \sigma_i    b_{i,  h},\ \vartheta_i= 1$ are all equal.  Since  now we can choose as root one of the elements $b_i$  we deduce that all these coefficients $b_{i,  h}$ equal to 0. Thus, with this choice of root, $b_i,a_i$ have support in the vertices of the encoding graph.

 \smallskip

As a consequence we claim that:
\begin{lemma}
In case 2) the edges of the tree coincide with the edges $\ell_i$ of the relation.

In case 1) either the edges of the tree coincide with the edges $\ell_i$ of the relation or we can reduce to the case   in which
the tree $T$  consists only of the   edges involved in the
relation,   plus a single special extra edge $E$ with $\zeta(E)=2$ (see Lemma \ref{zita} for the definition of $\zeta$).

$E$ is    either a red edge of the form $-e_i-e_j$ with
$i,  j$ of the same value of $\zeta$ or  a black edge of the form $-e_i+e_j$
with $i,  j$ of the opposite  value of $\zeta$.

\end{lemma}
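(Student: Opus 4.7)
The plan is to exploit the support statement just established --- with the root chosen among the $b_i$'s, every $a_i, b_i$ has support in $\{e_1,\ldots,e_k\}$ --- together with the parity function $\zeta$ of Lemma \ref{zita}. Set $V_\mathcal{E} := \{a_i,b_i\}_{1\le i\le k}$; recall that by minimality every leaf of $T$ lies in $|\mathcal{E}|$ and hence in $V_\mathcal{E}$.

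First I would reduce to $V(T) = V_\mathcal{E}$. If $v \in V(T) \setminus V_\mathcal{E}$, then $v$ is not a leaf, so $v$ belongs to a nonempty subtree $S$ of $T$ formed by extra edges and attached to $V_\mathcal{E}$. No edge of $S$ participates in the relation $\sum_i \delta_i\ell_i = 0$, so removing $S$ yields a strictly smaller degenerate resonant subgraph; by the minimality of $\Gamma$ we may replace $\Gamma$ by this subgraph and assume $V(T) = V_\mathcal{E}$. Then $T$ has $|V_\mathcal{E}|-1$ edges, $k$ of which are the $\ell_i$. In case 2, tracing the positions $a_i,b_i$ around the doubly odd cycle using the partial-sum formulas \eqref{sopar} (with $\pm 2$ coefficients on the joining segment, as explained in Remark \ref{crii}) shows that the endpoints identify to give exactly $k+1$ distinct points in $V_\mathcal{E}$, hence $T$ contains only the $k$ edges $\ell_i$. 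In case 1 the same bookkeeping yields either $|V_\mathcal{E}| = k+1$ (so $T = |\mathcal{E}|$) or $|V_\mathcal{E}| = k+2$, in which case the $\ell_i$ split as two paths in $T$ and a unique extra edge $E$ is required to bridge them.

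In the latter subcase I would pin down $E$ via Lemma \ref{zita}: every $\ell_i$ lies in $\ker\zeta$, so the failure of the $\ell_i$'s alone to close up the encoding cycle inside $T$ forces $\zeta(E) = \pm 2$; after a change of orientation we take $\zeta(E) = 2$. For a red marking $E = -e_i - e_j$ this reads $-\zeta(e_i) - \zeta(e_j) = 2$, so $\zeta(e_i) = \zeta(e_j) = -1$, i.e. $i$ and $j$ have the same value of $\zeta$; for a black marking $E = -e_i + e_j$ it reads $-\zeta(e_i) + \zeta(e_j) = 2$, forcing $\zeta(e_i)$ and $\zeta(e_j)$ to be opposite. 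The main technical obstacle will be the explicit vertex count in case 2: one has to unwind \eqref{sopar} and verify, through a short case analysis on the parity of the joining segment and on the signs $\vartheta_i$, that the coincidences among the $a_i,b_i$ conspire to give exactly $k+1$ points with no room for an extra tree edge.
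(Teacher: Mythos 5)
There is a genuine gap at the very first step. You propose to reduce to $V(T)=V_{\mathcal E}$ by arguing that a vertex $v\notin V_{\mathcal E}$ sits in a subtree $S$ of extra edges which, since its edges do not enter the relation, can simply be deleted to produce a smaller degenerate resonant graph. But such an $S$ cannot be deleted: since every leaf of $T$ lies in $|\mathcal E|$, each connected component $S$ of the non-relation edges has all its end points in $V_{\mathcal E}$, and (because $T$ is a tree) those end points must lie in \emph{different} connected components of the forest $|\mathcal E|$. Hence $S$ is a bridge, removing it disconnects $T$, and no connected piece of what remains carries the whole relation $\sum_i\delta_i\ell_i=0$. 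This bridge situation is precisely the hard case the lemma is about, and it is where the extra edge $E$ comes from; your reduction assumes it away. The paper instead evaluates the bridge: the signed sum $E$ of the edges of $S$ is supported in $[1,\ldots,k]$ (this is where the earlier choice of root, making all $a_i,b_i$ supported on the encoding vertices, is used). In case 2) Lemma \ref{sublatti} then puts $2E$ in the integral span of the $\ell_i$, producing a second, independent relation among the edges of $T$ and contradicting minimality; in case 1) the codimension-one statement of Lemma \ref{zita} forces $\zeta(E)=\pm2$, forbids a second bridge, and an analysis of the encoding graph of $S$ shows $E$ has support $\pm e_u\pm e_v$ (or else one lands on a $-2e_u$ vertex and the graph is not allowable), after which $S$ is replaced by the single edge $E$ by minimality.

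Your case 2) vertex count is also not sound as stated: "tracing the positions $a_i,b_i$ around the doubly odd cycle" via \eqref{sopar} takes place in the encoding graph, which lives on the index set $\{1,\ldots,m\}$, whereas the coincidences among the $a_i,b_i$ as vertices of $T$ are determined by the adjacency structure of $T$ itself — exactly the unknown. Concluding $|V_{\mathcal E}|=k+1$ is equivalent to the connectedness of the forest $|\mathcal E|$, which is the assertion to be proved, so the bookkeeping is circular. The final paragraph of your proposal, determining the color and $\zeta$-values of the marking of $E$ once one knows $\zeta(E)=\pm2$ and that $E$ is an edge, is correct and agrees with the paper, but it rests on the two unestablished steps above.
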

\begin{proof}
Let $T'$ be the forest  support of the edges $\ell_i$,   if this is a tree it must coincide with $T$ by minimality and we are done,   if $T'$ is not a tree there is at least one segment $S$ in $T$  joining two end points in $T'$.   All the edges in $S$ by definition are not in the relation.   Their  sum with suitable signs is supported in $[1,  \ldots,   k]$ and in fact it is either the sum or the difference of two of the elements  $a_i,  b_j$,   in particular it has the form $E=\sum_{i=1}^k\alpha_ie_i$.

If we are in case 2) then, by Lemma \ref{sublatti}, $2E$ is a linear combination of the $\ell_i$ with integer coefficients.  This is a new relation containing edges not supported in $[1,  \ldots,   k]$  contradicting the hypotheses.

If we are in case 1)  we must have $\zeta  (E)\neq 0$ otherwise $E$ is in the span of the edges $\ell_i$ and we have another relation among the edges of $T$ contradicting minimality.   By the  same reason  we cannot have two such segments,   since the $\ell_i$ span a subspace of codimension 1 and we still would have a new relation.

Finally we claim that $E$ is an edge.  
\smallskip

We look at the encoding graph  $U$ of the  edges in $S$,    we
want to show that they form a path  joining two points in $[1,  2,
\ldots,  k]$ so that the  loop they generate in this way is odd.

First remark that every end vertex  of  $U$  appears with non zero coefficient  $\pm 1$  in the vector $E$ hence all end points of $U$ lie in  $[1,  2,
\ldots,  k]$.

Next if $U$ contains   two different paths  joining points in
$[1,  2,  \ldots,  k]$ each such path gives rise by summing with suitable signs to a non--zero linear combination of elements in $[1,  2,  \ldots,  k]$.  Since the span of  the edges $\ell_i$ has codimension 1 in the span of the elements $e_i$,  if we have two more paths  we deduce a new relation.   We deduce that $U$ is either a  single path joining two vertices $u,v\in [1,  2,
\ldots,  k]$ and not meeting any other point of $[1,  2,
\ldots,  k]$  or it may also be a single loop  originating from a vertex  $u$  in $[1,  2,
\ldots,  k]$.
In this case the loop must be odd otherwise we have another relation, then we see that if we choose as root one of the two vertices of $T$ joined by $S$ the other vertex is $-2e_u$ and we are finished, since we have proved that the graph is not allowable i.e. we found the desired pair of Proposition \ref{ilpunto0}.

Otherwise  $E$ is an element of mass  either 0 or $-2$     has support in two elements of $[1,  \ldots,  k]$  with coefficients $\pm 1$ hence it is an edge,   since we are
assuming that it does not appear  in the relation the only
possibility is that  it must be of the form $e_u-e_v,   -e_u-e_v.
\ u,  v\in [1,  2,  \ldots,  k]$ and linearly independent from
the edges $\ell_h$,   this means,   by Formula \eqref{par1}, that
$u,  v$  must have opposite parity in the first case and the same
parity in the second. If $S$ is not equal to the edge $E$  we claim that  the complete graph $\Gamma$ we started from was not minimal. 
Indeed we construct a tree $T'$  by replacing the  path $S$  by the single  edge $E$. This is  a proper subgraph of $\Gamma$ by completeness.  The complete graph associated to $T'$ is  resonant-degenerate (it contains  all the vertices appearing in the relation). This is a contradiction.

$$ I)\quad\xymatrix{       & 1    \ar@{=}[r] &10\ar@{=}[r]    &9  \ar@{-}[r]  &8\ar@{=}[r] &7      && \\   &2 \ar@{=}[u] \ar@{=}[r]   &3\ar@{ =}[r] \ar@{ ->}[u]    &4\ar@{-}[r]  &5\ar@{=}[r] &6\ar@{=}[u]      &&  }$$
\begin{equation}
\label{dude}II)\quad \xymatrix{       & 1    \ar@{=}[r] &10\ar@{=}[r]    &9  \ar@{=}[r]  &8\ar@{-}[r] &7      && \\   &2 \ar@{=}[u] \ar@{=}[r]   &3\ar@{ -}[r] \ar@{ =}[ru]    &4\ar@{=}[r]  &5\ar@{=}[r] &6\ar@{=}[u]      &&  }\end{equation}

\end{proof}
\begin{remark}\label{crii2}
In the case 1) with an extra edge joining the indices $i,j$ we shall say that $i,j$ are critical and divide accordingly the remaining indices in two sets and all edges in two sets $A,B$ accordingly. Note that in this case for all indices one has $\delta_u= \vartheta_u \delta_{u-1}$.
\end{remark} 
\begin{remark}\label{divido}
In case 2) we divide the edges in three sets $A,B,C$ where $A$ are the edges of the first loop, $B$ (possibly empty) the edges of the segment and $C$ the ones of the second loop.
In case 1) with an extra edge we divide the edges in two sets $A$,$B$ separated by the extra edge $E$.

As for a non critical index $u$ we shall say that $u\in A$  resp. $u\in B,C$ if the two edges $\ell_{u-1},\ell_u$ are in $A$ (resp. $B,C$).
\end{remark}
\subsubsection{Some geometry of trees}  Let us collect some generalities which will be used in the course of the proof.   In all this section $T$ will be a tree,   for the moment with no further structure and later related to the Cayley graph.

Given a set $A$ of edges in $T$ let us denote  by $\langle
A\rangle$ the minimal tree contained in $T$ and containing $A$,
we call it the {\em tree generated by $A$}.  The simplest trees are the {\em segments} in which no vertex has valency $>2$. In fact in a segment we have exactly two end points of valency 1 and the {\em interior points} of valency 2.

\begin{lemma}
\label{SEP} 1)\quad If $A$ consists of 2 edges then  $\langle
A\rangle$  is a segment,   more generally if  $A$ consists of 2
segments $S_1,  S_2$ with the interior vertices of valency 2 then
again $\langle A\rangle$  is a segment,  if moreover $S_1\cap S_2$
contains an edge,   then  $S_1\cup S_2 =\langle  S_1,
S_2\rangle$ and all its interior vertices have valency 2.
\smallskip

 \quad If we only assume that $ S_2$ has interior vertices of valency 2   but we also assume that $S_1\cap S_2$ contains at least one edge then

2)    $\langle S_1,  S_2\rangle=S_1\cup S_2$ and it  is a segment.   
\end{lemma}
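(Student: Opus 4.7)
The plan is to exploit the uniqueness of paths in the tree $T$ and the fact that the intersection of two subtrees is a connected subtree (hence a subpath when the subtrees are paths).

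For two edges: either they share a vertex (giving a length-$2$ path) or the unique $T$-path joining them concatenates with them into a segment. For two disjoint segments $S_1, S_2$ satisfying the interior-valency-$2$-in-$T$ hypothesis of part $1$, the connecting $T$-path must attach at endpoints of the $S_i$ (otherwise an interior vertex of some $S_i$ would have $T$-valency $\geq 3$), so the concatenation is again a segment. The subcase in which $S_1 \cap S_2$ reduces to a single vertex $w$ forces $w$ to be an endpoint of both $S_i$ (otherwise the two $T$-edges at $w$, which all lie in $S_2$, would also be in $S_1$, contradicting $S_1 \cap S_2 = \{w\}$); so the union is again a segment.

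The substantive case is when $S_1 \cap S_2$ contains an edge, which I would treat uniformly for parts $1$ and $2$. Here $S_1 \cap S_2$ is a non-empty subpath of each $S_i$, so $S_1 \cup S_2$ is already connected and equals $\langle S_1, S_2\rangle$; the real content is showing that $S_1 \cup S_2$ is a segment. The crux is a valency count at each endpoint $w$ of the intersection: I claim $w$ must be an endpoint of at least one of $S_1, S_2$. Indeed, if $w$ were interior to $S_2$, the $S_2$-hypothesis gives $w$ exactly two $T$-edges, yet $w$ carries the intersection edge together with two distinct ``away'' edges, one in $S_1 \setminus S_2$ and one in $S_2 \setminus S_1$, a contradiction. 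Note that this argument invokes only the hypothesis on $S_2$, which is precisely why part $2$ suffices with the weaker assumption. With the claim in hand, a routine case check shows that every vertex of $S_1 \cup S_2$ has valency at most $2$ in $S_1 \cup S_2$, with exactly two vertices of valency $1$, so $S_1 \cup S_2$ is a segment. For part $1$, the extra statement that interior vertices of $S_1 \cup S_2$ have $T$-valency $2$ follows because each such vertex is interior to $S_1$ or to $S_2$ and inherits its $T$-valency from the corresponding hypothesis.

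The main obstacle I expect is bookkeeping: keeping straight what ``endpoint'' and ``interior'' mean in each of $S_1$, $S_2$, $S_1 \cap S_2$, and $S_1 \cup S_2$, and verifying that the case analysis at the two boundary vertices of the intersection is exhaustive. Conceptually, however, the entire proof reduces to a single valency count at the extreme vertices of $S_1\cap S_2$, with the disjoint subcase handled by the uniqueness of $T$-paths.
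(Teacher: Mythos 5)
Your proof is correct and follows essentially the same route as the paper: both arguments reduce to a valency count at the extreme vertices of $S_1\cap S_2$ (respectively, at the attachment points of the connecting path in the disjoint case), showing that each such vertex must be an end point of one of the two segments. One small point of phrasing: the contradiction in your key claim should be derived under the assumption that $w$ is interior to \emph{both} segments, since the ``away'' edge in $S_1\setminus S_2$ at $w$ exists only when $w$ is not an end point of $S_1$.
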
\begin{proof}
1)\quad  Consider $S_1\cap S_2$,   if this is empty,   there is a
unique segment joining two  points in $S_1,  S_2$ and disjoint from
them,   then this must join two end points by the hypothesis on
the valency and the statement is clear.
\smallskip

2)\quad  Let $A$  be a segment connected component of  $S_1\cap
S_2$.   Unless $S_2\subset S_1$  one of the end points $a$ of $A$
is an internal vertex of $S_1$,   since this has valency 2 this is
possible only if  $a$  is an end point of $S_1$,   if also the
other end point of $A$ is an internal vertex of $S_1$ the same
argument shows that $S_1\subset S_2$.  The final case is that the
other end of $A$  is  also an end point of $S_2$ and then the
statement is clear. \end{proof}
 
\section{The contribution of an index $u$}
  \subsubsection{The strategy}
We want to exploit Formula \eqref{retr} in order to understand  the graph. We proceed as follows.  \begin{definition}
Given a quadratic expression $Q$ in the elements $e_i$ and any index $u$  we set $e_uC_u(Q)$  to be the  sum of all terms in $Q$ which contain $e_u$ but not $e_u^2$.
\end{definition}
Notice that $C_u$  is a linear map from quadratic expressions to linear expressions in the $e_i,\ i\neq u$. By Formula \eqref{retr} we    have  $C_u(\mathcal R)=0$.  We observe that only the terms $\ell_ia_i$ or $-e_ie_{i+1}$  may contribute to  $C_u(\mathcal R)$ hence:
$$C_u(\mathcal R)=\sum_{i\,|\,\vartheta_i=-1} \delta_i   (-C_u( \ell_i a_i )+C_u(e_ie_{i+1}) )+\sum_{i\,|\,\vartheta_i= 1}  \delta_i \sigma_i( C_u(e_ie_{i+1})+C_u(\ell_i a_i ))=0.$$

 We choose an index $u$  which appears only  in $\ell_{u-1}=\vartheta_{u-1}e_{u-1}-e_u$ and  in $\ell_{u }=\vartheta_{u }e_{u }-e_{u+1}$. This is any index in case 1) with no extra edge while it excludes the  {\em critical indices } in the other cases (see Remarks \ref{crii} and \ref{crii2}). 

We separately compute the contributions of 
$$-\mathcal R':=\sum_{i\,|\,\vartheta_i=-1} \delta_i    e_ie_{i+1}  +\sum_{i\,|\,\vartheta_i= 1}  \delta_i \sigma_i e_ie_{i+1}   ,\quad \mathcal R'':=-\sum_{i\,|\,\vartheta_i=-1} \delta_i     \ell_i a_i + \sum_{i\,|\,\vartheta_i= 1}  \delta_i \sigma_i  \ell_i a_i\,,$$
since
$C_u(\mathcal R)= C_u(\mathcal R'')- C_u(\mathcal R')$.

We need the following formula for the elements $a_j$, easily proved by induction, where the black edges $\ell$ are oriented outwards from the root and $\sigma_\ell$ denotes the color of the endpoint of the segment ending with $\ell$:
 

\begin{equation} 
\label{La}a_j=\begin{cases}\begin{matrix}
-\sum_{\ell\preceq \ell_j}\sigma_\ell\ell  ,&\sigma_j=-1, &\ell_j\ \text{red}\quad \\
 - \sum_{\ell\prec  \ell_j}\sigma_\ell\ell  ,&\sigma_j= 1,\ \ &\ell_j\ \text{red}\quad \\
\sigma_j\sum_{\ell\preceq \ell_j}\sigma_\ell\ell , &\lambda_j= 1,\ \ &\ell_j\ \text{black}\\
\sigma_j\sum_{\ell\prec  \ell_j}\sigma_\ell\ell , &\lambda_j= -1, &\ell_j\ \text{black} 
\end{matrix}
\end{cases}
\end{equation}

If $i\neq u-1,u$ set $\mu_u(i)$  to be the coefficient of $e_u$ in $a_i$ then
\begin{lemma}
If $i\neq u-1,u$ we have $C_u(\ell_i a_i )=\mu_u(i)\ell_i.$

The contribution $C_u(\mathcal R')$ depends on the two colors of $\ell_{u-1},\ell_u$ according to the following table (see Remarks \ref{crii}, \ref{crii2}) :
\begin{equation}\label{Rp}
\begin{matrix}
colors&&& contribution\ of\ \mathcal R'\\
rr  &\delta_{u-1}=-\delta_{u}&&-\delta_{u-1}e_{u-1}-\delta_{u }e_{u +1}= \delta_{u }[e_{u-1}- e_{u+1 }]\\
rb    &\delta_{u-1}=\delta_{u}\,\quad &&-\delta_{u-1}e_{u-1}-\sigma_u\delta_{u }e_{u +1}\ \ =-\delta_{u }[e_{u-1}+\sigma_ue_{u +1}]\\
br  &\delta_{u-1}=-\delta_{u}&& -\delta_{u-1}\sigma_{u-1}e_{u-1}- \delta_{u }e_{u +1}= \delta_{u}[\sigma_{u-1}e_{u-1}-  e_{u +1}]\\
bb   &\delta_{u-1}=\delta_{u}\, \quad && -\delta_{u-1}\sigma_{u-1}e_{u-1}- \sigma_u\delta_{u }e_{u +1}\ \ =-\delta_{u }[\sigma_{u-1}e_{u-1}+\sigma_{u }e_{u+1 }]
\end{matrix}
\end{equation}
\end{lemma}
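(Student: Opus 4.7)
The plan is to prove both assertions by a direct unwinding of the definitions, exploiting the crucial hypothesis that $u$ appears only in the two edges $\ell_{u-1}$ and $\ell_u$, which holds precisely because $u$ is non-critical (Remarks \ref{crii} and \ref{crii2}).

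For the first identity, I would first observe that since $u$ appears neither in $\ell_i = \vartheta_i e_i - e_{i+1}$ for $i \neq u-1, u$ nor in any other edge, the variable $e_u$ does not occur in $\ell_i$. Writing $a_i = \mu_u(i) e_u + r_i$ where $r_i$ has no $e_u$-component, we get $\ell_i a_i = \mu_u(i) e_u \ell_i + \ell_i r_i$; the second summand contains no $e_u$, and the first contains no $e_u^2$ since $\ell_i$ itself has none. Therefore $C_u(\ell_i a_i) = \mu_u(i)\ell_i$, as claimed.

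For the table, I would start from the definition $-\mathcal{R}' = \sum_{\vartheta_i=-1} \delta_i\, e_i e_{i+1} + \sum_{\vartheta_i=1} \delta_i \sigma_i\, e_i e_{i+1}$ and note that $e_i e_{i+1}$ contributes to $C_u$ only when $i = u-1$ or $i = u$. Since $u$ is non-critical, by Remarks \ref{crii} and \ref{crii2} no other index supplies an $e_u$, and $C_u(e_{u-1} e_u) = e_{u-1}$, $C_u(e_u e_{u+1}) = e_{u+1}$. So the whole contribution is a linear combination of $e_{u-1}$ and $e_{u+1}$ with coefficients determined entirely by the colors of $\ell_{u-1}$ and $\ell_u$.

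The rest is a four-case verification using the non-critical identity $\delta_j = \vartheta_j \delta_{j-1}$. In each case one reads off the coefficient at $i=u-1$ (namely $\delta_{u-1}$ if $\ell_{u-1}$ is red, $\sigma_{u-1}\delta_{u-1}$ if black) and the coefficient at $i=u$ (namely $\delta_u$ if $\ell_u$ is red, $\sigma_u \delta_u$ if black), flips the overall sign to pass from $-\mathcal{R}'$ to $\mathcal{R}'$, and then uses $\delta_u = -\delta_{u-1}$ when an odd number of the two edges is black (i.e.\ cases $rr$, $br$) and $\delta_u = \delta_{u-1}$ otherwise (cases $rb$, $bb$) to factor out $\pm\delta_u$ and match the four rows of \eqref{Rp}. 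This is the only place that requires care, but it is purely mechanical; there is no real obstacle here since each row of the table is just bookkeeping for one of the four color patterns.
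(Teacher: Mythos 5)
Your proof is correct and follows essentially the same route as the paper: the first identity holds because $e_u$ does not occur in $\ell_i$ for $i\neq u-1,u$, and the table is obtained by isolating the two terms $e_{u-1}e_u$, $e_ue_{u+1}$ in $\mathcal R'$, reading off the coefficients according to color, and rewriting via $\delta_u=\vartheta_u\delta_{u-1}$. One small slip in wording: the relation $\delta_u=-\delta_{u-1}$ holds exactly when $\ell_u$ is red (cases $rr$, $br$), not ``when an odd number of the two edges is black'' --- but since you list the correct cases explicitly, the computation is unaffected.
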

\begin{proof}
The first statement is clear since the edge $\ell_i$ does not contain the term $e_u$.  For the second  we see  that the contribution  to $C_u(\mathcal R')$ comes from the two terms $e_{u-1}e_u,\ e_ue_{u+1}$.  The term  $e_{u-1}e_u$ if  $\theta_{u-1}=-1$, i.e. $\ell_{u-1}$ is red,   appears from $C_u(-\delta_{u-1} e_{u-1}e_u)=-\delta_{u-1} e_{u-1}$. If $\theta_{u-1}= 1$, i.e. $\ell_{u-1}$ is black,     appears from $C_u(-\sigma_{u-1}\delta_{u-1} e_{u-1}e_u)=-\sigma_{u-1}\delta_{u-1} e_{u-1}$.

The term  $e_ue_{u+1}$, if  $\theta_{u }=-1$, i.e. $\ell_{u }$ is red,   gives rise to $C_u(-\delta_{u } e_ue_{u+1})=-\delta_{u } e_{u+1}$ if   $\theta_{u }= 1$, i.e. $\ell_{u }$ is black,   gives rise to $C_u(-\sigma_{u }\delta_{u } e_ue_{u+1})=-\sigma_{u }\delta_{u } e_{u+1}$.

We then use the fact that $\delta_u=\delta_{u-1}$ if $\delta_u$ is black, while $\delta_u=-\delta_{u-1}$ if $\delta_u$ is red.

\end{proof}
We thus write
$$0=-C_u(\mathcal R)= \sum_{i\,|\,\vartheta_i=-1,\ i\neq u-1,u} \delta_i \mu_u(i)\ell_i  -\sum_{i\,|\,\vartheta_i= 1,\ i\neq u-1,u}   \delta_i \sigma_i  \mu_u(i)\ell_i+L_u$$ where $L_u$  is the contribution from $C_u(\mathcal R')  $ and    from the terms associated to $a_{u-1}\ell_{u-1}, a_u\ell_u  $.

We now choose the root so that the segment $S_u$, generated by the two edges $\ell_{u-1}, \ell_u $,  appears as follows:
\begin{equation}
\label{esu}S_u:=\quad  \xymatrix{   r    \ar@{-}[r]^{\ell_u} & \ldots&  \ar@{-}[r]^{\ell_{u-1}}  & x_{u-1}   } .
\end{equation}

   The value of $L_u$ depends upon 3 facts, 1) the two colors of  $\ell_{u-1},\ell_u $. 2) The orientation $\lambda$  of the edges  $\ell_{u-1},\ell_u $ which are black. 3)  The color $\sigma_{u-1}$ of  $x_{u-1}$.  We thus obtain 18 different cases described in \S \ref{18}.

\subsubsection{The contribution of  $ a_u\ell_u  $ }
   If $\ell_u=-e_u-e_{u+1}$ is red we have $a_u=\ell_u$ and $C_u(\delta_u\ell_ua_u)=2\delta_ue_{u+1}$.  If  $\ell_u=e_u-e_{u+1}$ is black we have $\sigma_u=1$, if $\lambda_u=1$ we have $a_u=\ell_u$ and $C_u(-\delta_u\sigma_u\ell_ua_u)=2\delta_u e_{u+1}$. If $\lambda_u=-1$ we have $a_u=0$ and $C_u(-\delta_u\sigma_u\ell_ua_u)=0.$  Summarizing:\smallskip

   \begin{equation}
\label{ellu} \begin{matrix}
\quad\quad  C_u(\delta_u\ell_ua_u)=2\delta_ue_{u+1},\quad &\ell_u\quad \text{is red}\quad&\\
C_u(-\delta_u\sigma_u\ell_ua_u)=2\delta_u e_{u+1},\quad &\ell_u\quad \text{is black}& \lambda_u=1\quad\\
C_u(-\delta_u\sigma_u\ell_ua_u)=0,\quad\quad \quad \quad &\ell_u\quad \text{is black}& \lambda_u=-1
\end{matrix}\end{equation}
\subsubsection{The contribution of  $ a_{u-1}\ell_{u-1} $ }
   In $a_{u-1}$ consider the part $\bar a_{u-1}$ of the sum formed by the edges $\ell_i,\ \ell_u\prec\ell_i\prec\ell_{u-1}$.

 We have $a_{u-1}=\bar a_{u-1}+\tilde a_{u-1}$ where

\begin{equation}
\label{La1}\tilde a_{u-1}=\begin{cases}\begin{matrix}
-\sigma_{u}\lambda_u\ell_u  +\ell_{u-1}  ,\ &\text{if}\ \sigma_{u-1}=-1,\quad  &\ell_{u-1}\ \quad\text{red}\quad\\
-\sigma_{u}\lambda_u\ell_u  ,\quad\ &\text{if}\ \sigma_{u-1}= 1,\ \quad&\ell_{u-1}\  \quad\text{red}\quad\\
\sigma_{u-1}\sigma_{u}\lambda_u\ell_u  +\ell_{u-1}  ,\quad\ &\text{if}\  \lambda_{u-1}= 1,\ \quad&\ell_{u-1}\  \quad\text{black}\\
\sigma_{u-1}\sigma_{u}\lambda_u\ell_u  ,\quad\ &\text{if}\  \lambda_{u-1}= -1,\ &\ell_{u-1}\   \quad\text{black}\\
\end{matrix}
\end{cases}
\end{equation}
 We then have
 $$ C_u(\ell_{u-1}a_{u-1})=-\bar a_{u-1}+C_u(\ell_{u-1}\tilde a_{u-1})$$
   Finally
   $$C_u(\ell_{u-1}\ell_u)=\vartheta_{u-1}\vartheta_u e_{u-1}+e_{u+1},\ C_u(\ell_{u-1}^2)=-\vartheta_{u-1}2e_{u-1}.$$
  $$C_u(\ell_{u-1}\tilde a_{u-1})=\begin{cases}\begin{matrix}
-\sigma_{u}\lambda_uC_u(\ell_{u-1}\ell_u) +C_u(\ell_{u-1}^2)  ,\ &\ \sigma_{u-1}=-1,\ &\ell_{u-1}\ \text{red}\quad \\
-\sigma_{u}\lambda_uC_u(\ell_{u-1}\ell_u)  ,\quad\ &\ \sigma_{u-1}= 1,\   &\ell_{u-1}\ \text{red}\quad \\
\sigma_{u-1}\sigma_{u}\lambda_uC_u(\ell_{u-1}\ell_u)  +C_u(\ell_{u-1}^2)  , \ &\  \lambda_{u-1}= 1,\ \quad &\ell_{u-1}\ \text{black}\\
\sigma_{u-1}\sigma_{u}\lambda_uC_u(\ell_{u-1}\ell_u)  , \ &\  \lambda_{u-1}= -1,\ &\ell_{u-1}\ \text{black}
\end{matrix}
\end{cases} $$
 $$C_u(\ell_{u-1}\tilde a_{u-1})=\begin{cases}\begin{matrix}
-\sigma_{u}\lambda_u(-\vartheta_u e_{u-1}+e_{u+1}) +2e_{u-1}  ,\ & \sigma_{u-1}=-1,\   &\ell_{u-1}\  \text{red}\quad \\
-\sigma_{u}\lambda_u(-\vartheta_u e_{u-1}+e_{u+1})  ,\quad\ & \sigma_{u-1}= 1,\ \ \ &\ell_{u-1}\  \text{red}\quad \\
\sigma_{u-1}\sigma_{u}\lambda_u( \vartheta_u e_{u-1}+e_{u+1})  -2e_{u-1}  ,\  & \lambda_{u-1}= 1,\ \quad &\ell_{u-1}\  \text{black}\\
\sigma_{u-1}\sigma_{u}\lambda_u( \vartheta_u e_{u-1}+e_{u+1})  , \ & \lambda_{u-1}= -1,\ &\ell_{u-1}\  \text{black} 
\end{matrix}
\end{cases} $$
   If $\ell_{u-1}$ is red  we then compute the contribution of
   $\delta_{u-1}\ell_{u-1}a_{u-1}$  getting (recall that $\sigma_u$ is $-1$ if $\ell_u$ is red, one otherwise)  \begin{equation}
\label{ru-1}-\delta_{u-1}\bar a_{u-1}+\delta_{u-1}\begin{cases}\begin{matrix}
 e_{u+1}+3e_{u-1},\ &\  \sigma_{u-1}=-1,   \ &\ell_{u} &\text{red}\quad \\
 e_{u+1}+e_{u-1} ,\quad\ &\  \sigma_{u-1}= 1,\  \quad &\ell_{u} &\text{red}\quad \\
- \lambda_u[e_{u+1}-e_{u-1}] +2e_{u-1} ,\ &\  \sigma_{u-1}=-1,  \ &\ell_{u} &\text{black}\\
- \lambda_u [e_{u+1}-e_{u-1}] ,\quad\ &\  \sigma_{u-1}= 1,\  \quad  &\ell_{u} &\text{black}
\end{matrix}
 \end{cases}
\end{equation}
   If $\ell_{u-1}$ is black  we then compute the contribution of
   $-\sigma_{u-1}\delta_{u-1}\ell_{u-1}a_{u-1}$  getting  
 \begin{equation}
\label{bu-1}\sigma_{u-1}\delta_{u-1}\bar a_{u-1}-\sigma_{u-1}\delta_{u-1}\begin{cases}\begin{matrix}
-\sigma_{u-1}  [e_{u+1}-e_{u-1} ]  -2e_{u-1} , & \lambda_{u-1}= 1,\ \  &\ell_{u}\ \text{red}\ \ \\
- \sigma_{u-1}   [e_{u+1}-e_{u-1} ]  , & \lambda_{u-1}= -1, &\ell_{u}\  \text{red}\ \ \\
\sigma_{u-1}\lambda_u[e_{u-1}+e_{u+1} ]  -2e_{u-1}  , & \lambda_{u-1}= 1,\ \ &\ell_{u}\ \text{black}\\
 \sigma_{u-1}\lambda_u[e_{u-1}+e_{u+1} ], &  \lambda_{u-1}= -1,  &\ell_{u}\ \text{black}
\end{matrix}
\end{cases} 
\end{equation}
   We thus write if $\ell_{u-1}$ is red
\begin{equation}
\label{relr}0=-C_u(\mathcal R)= \sum_{i\,|\,\vartheta_i=-1,\ i\neq u-1,u} \delta_i \mu_u(i)\ell_i  -\sum_{i\,|\,\vartheta_i= 1,\ i\neq u-1,u}   \delta_i \sigma_i  \mu_u(i)\ell_i-\delta_{u-1}\bar a_{u-1}+L
\end{equation}If $\ell_{u-1}$ is black
\begin{equation}
\label{relb}0=-C_u(\mathcal R)= \sum_{i\,|\,\vartheta_i=-1,\ i\neq u-1,u} \delta_i \mu_u(i)\ell_i  -\sum_{i\,|\,\vartheta_i= 1,\ i\neq u-1,u}   \delta_i \sigma_i  \mu_u(i)\ell_i+\sigma_{u-1}\delta_{u-1}\bar a_{u-1}+L.
\end{equation}
In both cases by $L$  we denote the contribution  from the  Formulas \eqref{Rp},\eqref{ellu}, and \eqref{ru-1} or \eqref{bu-1}.

\subsubsection{The 18  cases\label{18}}So now we expand  $L$

\noindent 1)\  $\ell_{u-1},\ell_u$ both red $\sigma_{u-1}=1.$
$$\delta_{u }[e_{u-1}- e_{u+1 }]+2\delta_ue_{u+1}-\delta_u(e_{u+1}+e_{u-1})\ =0.$$
2)\  $\ell_{u-1},\ell_u$ both red $\sigma_{u-1}=-1.$
$$\delta_{u }[e_{u-1}- e_{u+1 }]+2\delta_ue_{u+1}-\delta_u[e_{u+1}+3e_{u-1}]=-2\delta_u e_{u-1}.$$
3)\  $\ell_{u-1}$   red, $\ell_u$ black $\sigma_{u-1}=1,\lambda_u=1.$
$$-\delta_{u }[e_{u-1}+e_{u +1}]+2\delta_u e_{u+1}-\delta_u   [e_{u+1}-e_{u-1}]=0$$
4)\  $\ell_{u-1}$   red, $\ell_u$ black $\sigma_{u-1}=-1,\lambda_u=1.$
$$-\delta_{u }[e_{u-1}+e_{u +1}]+2\delta_u e_{u+1}-\delta_u [e_{u+1}-e_{u-1}] -\delta_u2e_{u-1}=-2\delta_u e_{u-1}$$
5)\  $\ell_{u-1}$   red, $\ell_u$ black $\sigma_{u-1}=1,\lambda_u=-1.$
$$-\delta_{u }[e_{u-1}+e_{u +1}]+\delta_u  [e_{u+1}-e_{u-1}]=-2\delta_u e_{u-1}$$
6)\  $\ell_{u-1}$   red, $\ell_u$ black $\sigma_{u-1}=-1,\lambda_u=-1.$
$$-\delta_{u }[e_{u-1}+e_{u +1}]+\delta_u  [e_{u+1}-e_{u-1}] +\delta_u2e_{u-1}=0$$
7)\  $\ell_{u-1}$   black, $\ell_u$ red $\sigma_{u-1}=1,\lambda_{u-1}=1.$
$$\delta_{u}[ e_{u-1}-  e_{u +1}] +2\delta_ue_{u+1}-\delta_{u}  [e_{u+1}-e_{u-1} ] -2\delta_{u} e_{u-1}=0$$
8)\  $\ell_{u-1}$   black, $\ell_u$ red $\sigma_{u-1}=-1,\lambda_{u-1}=1.$
$$\delta_{u}[-e_{u-1}-  e_{u +1}]+2\delta_ue_{u+1}+\delta_{u}  [e_{u+1}-e_{u-1} ] +2\delta_{u} e_{u-1}=2\delta_ue_{u+1}.$$
9)\  $\ell_{u-1}$   black, $\ell_u$ red $\sigma_{u-1}=1,\lambda_{u-1}=-1.$
$$\delta_{u}[ e_{u-1}-  e_{u +1}]+2\delta_ue_{u+1}+\delta_{u}  [e_{u+1}-e_{u-1} ]= 2\delta_ue_{u+1} $$
10)\  $\ell_{u-1}$   black, $\ell_u$ red $\sigma_{u-1}=-1,\lambda_{u-1}=-1.$
$$\delta_{u}[-e_{u-1}-  e_{u +1}]+2\delta_ue_{u+1}-\delta_{u}  [e_{u+1}-e_{u-1} ] =0. $$
11)\  $\ell_{u-1}$,  $\ell_u$  both black,  $\sigma_{u-1}=1,\lambda_{u-1}=1,\ \lambda_u=1.$
$$-\delta_{u }[ e_{u-1}+ e_{u+1 }]+2\delta_ue_{u+1}-\delta_u [e_{u-1}+e_{u+1} ]  +2\delta_ue_{u-1} =0$$
12)\ $\ell_{u-1}$,  $\ell_u$  both black $\sigma_{u-1}=-1,\lambda_{u-1}=1,\ \lambda_u=1.$
$$-\delta_{u }[-e_{u-1}+ e_{u+1 }]+2\delta_ue_{u+1}-\delta_u[e_{u-1}+e_{u+1} ]  -2\delta_ue_{u-1} =-2\delta_ue_{u-1}$$
13)\ $\ell_{u-1}$,  $\ell_u$  both black $\sigma_{u-1}=1,\lambda_{u-1}=-1,\ \lambda_u=1.$
$$-\delta_{u }[ e_{u-1}+ e_{u+1 }] +2\delta_ue_{u+1}-\delta_u [e_{u-1}+e_{u+1} ]=-2\delta_u e_{u-1}$$
14)\ $\ell_{u-1}$,  $\ell_u$  both black $\sigma_{u-1}=-1,\lambda_{u-1}=-1,\ \lambda_u=1.$
$$-\delta_{u }[-e_{u-1}+ e_{u+1 }] +2\delta_ue_{u+1}-\delta_u[e_{u-1}+e_{u+1} ]=0$$
15)\  $\ell_{u-1}$,  $\ell_u$  both black,  $\sigma_{u-1}=1,\lambda_{u-1}=1,\ \lambda_u=-1.$
$$-\delta_{u }[ e_{u-1}+ e_{u+1 }]+\delta_u[e_{u-1}+e_{u+1} ]  +2\delta_ue_{u-1}=2\delta_ue_{u-1} $$
16)\ $\ell_{u-1}$,  $\ell_u$  both black $\sigma_{u-1}=-1,\lambda_{u-1}=1,\ \lambda_u=-1.$
$$-\delta_{u }[-e_{u-1}+ e_{u+1 }]+\delta_u[e_{u-1}+e_{u+1} ]  -2\delta_ue_{u-1} =0$$
17)\ $\ell_{u-1}$,  $\ell_u$  both black $\sigma_{u-1}=1,\lambda_{u-1}=-1,\ \lambda_u=-1.$
$$-\delta_{u }[ e_{u-1}+ e_{u+1 }]+\delta_u[e_{u-1}+e_{u+1} ]=0 $$
18)\ $\ell_{u-1}$,  $\ell_u$  both black $\sigma_{u-1}=-1,\lambda_{u-1}=-1,\ \lambda_u=-1.$
$$-\delta_{u }[-e_{u-1}+ e_{u+1 }]+\delta_u[e_{u-1}+e_{u+1} ]=2\delta_ue_{u-1}$$
By inspection we see that we have proved the following remarkable:
\begin{corollary}\label{leduei}
The contribution of $L$ equals to 0 if and only if $\sigma_{u-1}= \lambda_{u-1}\lambda_u $. In this case the coefficient of $e_u$ in the end point $x_{u-1}$ of  the segment $S_u$ is $0$.

If $\sigma_{u-1}= -\lambda_{u-1}\lambda_u $ the contribution of $L$ equals to $\pm 2e_{u\pm 1}$. In this case the coefficient of $e_u$ in the end point $x_{u-1}$ of  the segment $S_u$ is $\pm 2$.
\end{corollary}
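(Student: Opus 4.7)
The plan is to split the corollary into its two assertions and treat them separately. For the first — that $L = 0$ precisely when $\sigma_{u-1} = \lambda_{u-1}\lambda_u$ — I would simply consolidate the 18 cases already computed in \S\ref{18}. Using the convention $\lambda_i = 1$ for red $\ell_i$, I tabulate $\sigma_{u-1}\lambda_{u-1}\lambda_u$ next to the value of $L$: the vanishing cases are exactly $1, 3, 6, 7, 10, 11, 14, 16, 17$, and these are precisely the ones with $\sigma_{u-1}\lambda_{u-1}\lambda_u = +1$, while the remaining nine give $\pm 2 e_{u \pm 1}$ and satisfy $\sigma_{u-1}\lambda_{u-1}\lambda_u = -1$. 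This half is pure bookkeeping across the subcases and requires no new computation.

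For the second assertion — that the $e_u$-coefficient of $x_{u-1}$ is $0$ or $\pm 2$ in the corresponding cases — I would compute it directly by walking the segment $S_u$. I normalize so that the root $r$ has zero $e_u$-coordinate; because $u$ is a non-critical index, only $\ell_u$ (first edge) and $\ell_{u-1}$ (last edge) among path edges contain $e_u$. An intermediate black edge acts on a vertex $v$ as $v \mapsto v \pm \ell$ and leaves the $e_u$-coefficient unchanged, whereas an intermediate red edge acts as $v \mapsto -v + \ell$ and negates it. Writing colour-signs $s_j \in \{\pm 1\}$ ($+1$ black, $-1$ red) for $\ell_u, \ell_{u-1}$ and using that the cumulative red-edge parity along the path satisfies $\sigma_{u-1} = (-1)^{k_{\mathrm{red}}} s_u s_{u-1}$, a one-step calculation at each endpoint gives $c_u(r_1) = s_u \lambda_u$ after crossing $\ell_u$ and $c_u(x_{u-1}) = s_{u-1} c_u(r_{N-1}) - \lambda_{u-1}$ after crossing $\ell_{u-1}$. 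Assembling these yields the uniform formula
\[
c_u(x_{u-1}) = \sigma_{u-1}\lambda_u - \lambda_{u-1},
\]
valid in all four colour patterns of $(\ell_{u-1},\ell_u)$, which vanishes iff $\sigma_{u-1} = \lambda_{u-1}\lambda_u$ and otherwise equals $-2\lambda_{u-1} \in \{\pm 2\}$.

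The main obstacle is not substantive but notational: one must juggle the convention $\lambda = 1$ on red edges, the $\tau$-twist appearing in the action of red Cayley generators, and the fact that the vertex colours $\sigma$ encode cumulative red-edge parity. Once these conventions are coherently fixed, both halves of the corollary follow immediately, and the coincidence of the dichotomies $L = 0 \Leftrightarrow c_u(x_{u-1}) = 0$ ceases to look accidental: both are governed by the same sign $\sigma_{u-1}\lambda_{u-1}\lambda_u$.
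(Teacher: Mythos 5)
Your proposal is correct and follows essentially the same route as the paper: the first assertion is settled by inspecting the 18 tabulated cases (your list of vanishing cases $1,3,6,7,10,11,14,16,17$ checks out), and the second by observing that only $\ell_u$ and $\ell_{u-1}$ contribute to the $e_u$-coefficient of $x_{u-1}$, which is exactly the paper's use of the expression $\sigma_{u-1}[\sigma_u\lambda_u\ell_u+\sigma_{u-1}\lambda_{u-1}\ell_{u-1}]$ coming from Formula \eqref{La}. Your closed formula $c_u(x_{u-1})=\sigma_{u-1}\lambda_u-\lambda_{u-1}$ is a slightly cleaner uniform packaging of the paper's case-checks, but not a different argument.
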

 \begin{proof}  The first is by inspection, as for the second we check a few cases.
This coefficient comes from the two contributions of  $\ell_{u-1},\ell_u$. They appear by $\sigma_{u-1}[\sigma_u\lambda_u\ell_u+\sigma_{u-1}\lambda_{u-1}\ell_{u-1}]$.  Now $\sigma_u\lambda_u\ell_u=-\ell_u=e_u+e_{u+1}$  if $\ell_u$ is red and similarly $\sigma_{u-1}\lambda_{u-1}\ell_{u-1}=e_u+e_{u-1}$  if $\ell_{u-1}$ is red and $\sigma_{u-1}=-1$.  This is case 2).    If $\ell_{u-1}$ is black  then the coefficient of $e_u$ in $\sigma_{u-1}\lambda_{u-1}\ell_{u-1}$ is 1 if and only if $\sigma_{u-1}\lambda_{u-1}=-1$  and in this case this is equivalent to $\sigma_{u-1}=-\lambda_{u-1} \lambda_{u }.$  These are cases 8,9.

Similar argument when $\ell_u$ is black.
\end{proof} 
\begin{corollary}\label{leduei1}
If $\ell_{u-1}\prec\ell_j$ we have $\mu_u(j)=0$ if the contribution of $L$ is 0, otherwise $\mu_u(j)=\pm 2$.
\end{corollary}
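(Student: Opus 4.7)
The plan is to reduce the statement to Corollary~\ref{leduei} by showing that $\mu_u(j)$ and the coefficient of $e_u$ in the endpoint $x_{u-1}$ of the segment $S_u$ agree up to a global sign.

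First I would note that since $u$ is a non-critical index (Remarks~\ref{crii}, \ref{crii2}), the variable $e_u$ occurs only in the two edges $\ell_{u-1}=\vartheta_{u-1}e_{u-1}-e_u$ and $\ell_u=\vartheta_u e_u-e_{u+1}$ among all $\ell_1,\dots,\ell_k$. Under the hypothesis $\ell_{u-1}\prec\ell_j$, both of these edges lie strictly below $\ell_j$ on the path from the root, so both belong to every index set $\{\ell\preceq\ell_j\}$ and $\{\ell\prec\ell_j\}$ which can appear in \eqref{La}. Consequently $\mu_u(j)$ can receive contributions only from $\ell_u$ and $\ell_{u-1}$.

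Second, each of the four cases of \eqref{La} has the uniform shape $a_j=\varepsilon_j\sum\sigma_\ell\,\ell$ with $\varepsilon_j\in\{\pm 1\}$ (namely $\varepsilon_j=-1$ if $\ell_j$ is red and $\varepsilon_j=\sigma_j=\pm 1$ if $\ell_j$ is black). Reading off the coefficient of $e_u$ and using that $\vartheta_u,\vartheta_{u-1}\in\{\pm 1\}$, I obtain
\[
\mu_u(j)=\varepsilon_j\bigl(\vartheta_u\,\sigma_{\ell_u}-\sigma_{\ell_{u-1}}\bigr)\ \in\ \{-2,0,2\},
\]
where $\sigma_{\ell_u},\sigma_{\ell_{u-1}}\in\{\pm 1\}$ are the colors at the endpoints of the path-prefixes terminating with $\ell_u$ and $\ell_{u-1}$.

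Third, the same combination $\vartheta_u\sigma_{\ell_u}-\sigma_{\ell_{u-1}}$ is, up to the global sign $\varepsilon_j$, precisely the coefficient of $e_u$ in the endpoint $x_{u-1}$ of $S_u$: only $\ell_u$ and $\ell_{u-1}$ among the edges of $S_u$ can contribute $e_u$ when one walks from the root to $x_{u-1}$. In the proof of Corollary~\ref{leduei} this coefficient was evaluated case by case and shown to vanish exactly when $\sigma_{u-1}=\lambda_{u-1}\lambda_u$ (equivalently, when $L=0$), and to equal $\pm 2$ otherwise. Combining this with the displayed formula for $\mu_u(j)$ yields the dichotomy $\mu_u(j)=0$ when $L=0$ and $\mu_u(j)=\pm 2$ when $L\neq 0$.

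The main obstacle I foresee is purely bookkeeping of signs: one has to check that the conventions defining $\sigma_\ell$ in \eqref{La} match those used in \eqref{Rp} and in the 18 cases of \S\ref{18}, so that the quantity $\vartheta_u\sigma_{\ell_u}-\sigma_{\ell_{u-1}}$ coming from $a_j$ really coincides with the $e_u$-coefficient of $x_{u-1}$ computed there in terms of $(\sigma_{u-1},\lambda_{u-1},\lambda_u)$ and the colors of $\ell_{u-1},\ell_u$. Once this dictionary is pinned down, the corollary is an immediate consequence of Corollary~\ref{leduei}.
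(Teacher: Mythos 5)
Your argument is correct and is essentially the paper's (implicit) one: Corollary~\ref{leduei1} is stated without proof precisely because, once one observes that $e_u$ occurs only in $\ell_{u-1},\ell_u$ and that both precede $\ell_j$, the coefficient $\mu_u(j)$ coincides up to an overall sign with the $e_u$-coefficient of $x_{u-1}$ computed in Corollary~\ref{leduei}. The sign bookkeeping you flag is the only content, and your reduction handles it as intended.
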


\subsubsection{Contribution  of $L$ equals to 0\label{coL0}} We say that $u$ is of type  I.  We deduce that the other edges $\ell_i$  satisfy a relation, i.e. either \eqref{relr} or \eqref{relb}. This is impossible unless this is the trivial relation with all coefficients 0.  Let us draw the implications of this.  Recall that $S_u$ is the minimal segment containing the edges $\ell_u,\ell_{u-1}$ (cf. Formula \eqref{esu}).

 Notice that any edge $\ell_j$ comparable with $\ell_u$ and not with $\ell_{u-1}$  appears in the relation, only from the  term $\mu_u(j)\ell_j$ (indeed in this case $\bar a_{u-1}$ does not depend on $\ell_j$).  Since then $\mu_u(j)=\pm 1$   this is a contradiction. Thus  no  edge  is comparable  with $\ell_u$ and not with $\ell_{u-1}$. This means that  all internal vertices of $S_u$ have valency 2, moreover  all  edges  $\ell_j$  with $\ell_u\prec\ell_j\prec\ell_{u-1}$ appear with coefficient $\pm\delta_{u-1}\pm \delta_j$, coming from $\bar a_{u-1}\delta_{u-1}$ and from $\pm \delta_j\ell_j\mu_u(j)$ (see formulas \eqref{relr}-\eqref{relb}) we thus must have that this sum equals zero. 
 
  Now, in case 2)  if we start from $u\in A\cup C$ (see Remark \ref{divido}) this implies that  it is not possible that $j\in B$  since  the sum of these two coefficients  is odd  and so it is not zero, so the segment $S_u$ is all formed by elements in $A\cup C$. If we start from $u\in B$   it is not possible that $j\in A\cup C$  since again  $\pm\delta_{u-1}\pm \delta_j$ is odd, so the segment $S_u$ is all formed by elements in $B$.

 Finally in case 1)  with an extra edge $E$  it is not possible that $E$  is in between  $\ell_{u-1},\ell_u$ otherwise  $E$ would appear and only in $\bar a_{u-1}$. Hence the value of $\zeta$ of the  relation would be $\pm2.$
 \subsubsection{Contribution  of $L$ equals to $\pm 2\delta_ue_{u\pm 1}$}  We say that $u$ is of type II.    We  thus have, from \eqref{relr} or  \eqref{relb}, a relation expressing $\pm 2\delta_ue_{u\pm 1}$ as linear combination of the edges $\ell_j\neq  \ell_{u-1},\ \ell_u$.  
 Now these edges are linearly independent so such an expression if it exists it is unique.  Let us assume for instance that the relation expresses $2e_{u-1}$, the other case is identical.
 
 In order to understand which elements appear in $C_u$, 
 first remark that
 the only edges that may contribute to the expression of $C_u$ are those for which  $\ell_u\prec\ell_j$. If $\ell_j$ is not comparable with $ \ell_{u-1}$ they contribute by  $\pm \delta_j$.
If $\ell_u\prec\ell_j \prec\ell_{u-1}$ they contribute by  $\pm \delta_j\pm\delta_{u-1}.$  Finally if  $ \ell_{u-1}\prec\ell_j$  they contribute by $0,\pm2\delta_j$ by   Corollary \ref{leduei1}.
 
{\bf Case 1A} (single loop) no extra edge: \quad such a relation does not exist. For instance if $2e_{u-1}$ is a linear combination $\sum_jc_j\ell_j$ of the edges $\ell_j\neq  \ell_{u-1},\ \ell_u$ since $e_{u-1}$ only appears in $\ell_{u-2}$ with sign $-1$ we must have that $c_{u-2}=-2$ and then $2e_{u-2}$ is a linear combination $\sum_jc_j\ell_j$ of the edges $\ell_j\neq  \ell_{u-2}, \ell_{u-1},\ \ell_u$, continuing by induction we reach a contradiction.\smallskip

{\bf Case 1B}  (single loop)  an extra edge: \quad we may assume that the extra edge $E=\vartheta e_1-e_h$, this edge divides the loop into two parts $A,B$. The edges in $A:=\{\ell_1,\ldots,\ell_{h-1}\}$ and $E$ form an odd loop as well as the edges in $B$ and $E$.   We may  assume for instance that $h<u $ is an index in $B$. We know that, for an odd loop, we can write $2e_{1}$ uniquely as the sum  of the edges of the odd loop $A,E$ and then we write $2e_{u-1}= \pm \sum_{k=1}^{u-2}2\delta_k\ell_k\pm 2e_1$, let us call $\mathcal R'$ this relation.  The edges appearing in the relation are all the edges of $A,E$ with coefficient $\pm 1$  and all the edges $\ell_k,\ h\leq k\leq u-2$ with coefficients $\pm 2$.
 This relation must be proportional to either \eqref{relr}  or \eqref{relb}. Notice that $E$ appears in this relation with coefficient $\pm 1$.
 
   This is  possible if and only if $E\prec \ell_{u-1}$. Moreover we know that  all the edges in $A$ appear with coefficient $\pm 1$ hence by  Corollary \ref{leduei1} it follows that they must be comparable with $\ell_u$ but not with $\ell_{u-1}$.  Finally for the edges in $B$  we have that the $\ell_k$ with $h\leq k\leq u-2$ are comparable with $\ell_u$ and, since they appear with coefficient $\pm 2$ in $\mathcal R'$, we must have either $ \ell_k\prec\ell_{u-1}$ or $\ell_{u-1}\prec\ell_k     $. All the others are not comparable with $\ell_u$.  
   
   Denote by  $T_A$ and $T_B$  the  two minimal trees generated by $A,B$ respectively. We have:
   \begin{corollary}\label{ePo} \quad 1)\  If the indices of $A$ and $B$ are all of type I  then either $T_A$ and $T_B$ form two disjoint segments separated by $E$, or      the edges in $A\cup B$ form a segment, the extra edge is outside this segment so the graph is not minimal degenerate.
   
\quad 2)\ If there is an index in $B$  (resp. in $A$) of type II, the two minimal trees  $T_A$ and $T_B$  generated by $A,B$ respectively are segments and can intersect only in a vertex  or in the edge $E$.  If they intersect in a vertex then all $v\in A$ (resp. all   $v\in B$) have type I and the vertex is an end point of $E$.
\end{corollary}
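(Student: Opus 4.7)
The proof splits according to whether every index of $A\cup B$ is of type I, or whether some index of $B$ is of type II (the $A$ case being symmetric). I would handle part (1) first. For each non-critical index $u$, the argument of \S\ref{coL0} shows that the segment $S_u$ generated in $T$ by $\ell_{u-1},\ell_u$ has all internal vertices of valency $2$ in $T$ and does not contain $E$. The non-critical indices of $A$ are $2,\dots,h-1$, and since consecutive segments $S_u,S_{u+1}$ share the edge $\ell_u$, Lemma \ref{SEP}(2) applied inductively identifies $T_A=S_2\cup\cdots\cup S_{h-1}$ with a segment whose internal vertices still have valency $2$ in $T$; the same argument gives $T_B$.

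If $T_A,T_B$ are vertex-disjoint, then $E$ is forced to be the unique bridge between them in $T$, and we are in the first alternative. Otherwise, the valency-$2$ constraint prevents any shared vertex from being interior to either segment, so it is an endpoint of both, and $T_A\cup T_B$ is itself a segment. Then $E$ must attach as a pendant (otherwise it would create a cycle in $T$), and since the resonance is supported entirely inside $T_A\cup T_B$, deleting $E$ together with its non-shared endpoint produces a strictly smaller degenerate-resonant subgraph, contradicting the minimality of $\Gamma$.

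For part (2), fix a type II index $u\in B$. The analysis preceding the statement gives $E\prec\ell_{u-1}$, every $\ell_i\in A$ is a descendant of $\ell_u$ not comparable with $\ell_{u-1}$, and every $\ell_k\in B$ either lies in the ancestor/descendant chain of $\ell_{u-1}$ or is incomparable with $\ell_u$. Thus every edge of $A$ lives inside the subtree of $T$ hanging off the endpoint $w$ of $\ell_u$ opposite to the $\ell_{u-1}$-side; $T_A$ is contained in this subtree and, by the segment-gluing of part (1) applied to the type I indices inside $A$, is itself a segment. A parallel analysis on $B$, now permitting $E$ to appear as an internal connector between the parts of $B$ lying on either side of it, identifies $T_B$ with a segment that may contain $E$. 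Consequently $T_A\cap T_B$ can share at most the branching vertex $w$, or, when both sides contribute type II indices forcing $E$ into both trees, exactly the edge $E$; this gives the stated dichotomy. When the intersection is a single vertex, it must equal $w$, which is incident to both $\ell_u$ and $E$ in $T$ and is therefore an endpoint of $E$. The existence of a type II index in $A$ would symmetrically force $E$ into $T_A$ as well, incompatible with a single-vertex intersection, so in the vertex case all indices of $A$ must be of type I.

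The main obstacle is the careful bookkeeping of the edge partial order in the rooted tree $T$: while Lemma \ref{SEP} handles the segment-gluing mechanically, proving in part (2) that $T_A$ needs no connectors outside $A$, and deciding precisely when $E$ is absorbed into $T_B$ versus sitting outside both subtrees, requires repeated unpacking of the ancestor/descendant data from the type I/II analysis and checking that alternative configurations would contradict either the minimality of $\Gamma$ or the structure of the resonance relation.
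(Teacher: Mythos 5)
Your part 1) is essentially the paper's argument: build each $S_u$ with interior vertices of valency $2$ and not containing $E$, glue consecutive segments along the shared edge $\ell_u$ via Lemma \ref{SEP}, and dispose of the case where $T_A$ and $T_B$ meet by invoking minimality. That part is fine.

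In part 2) there is a genuine gap at the final claim. You assert that the single intersection vertex ``must equal $w$, which is incident to both $\ell_u$ and $E$ in $T$ and is therefore an endpoint of $E$.'' Neither half of this is justified. First, the preceding analysis only shows that the edges of $A$ satisfy $\ell_u\prec\ell_i$ and are not comparable with $\ell_{u-1}$, i.e.\ they lie in branches originating from vertices of $S_u$ other than the last vertex of $\ell_{u-1}$; a priori these branches may attach at \emph{any} interior vertex of $S_u$, not at the endpoint of $\ell_u$. Second, $E$ is only known to satisfy $\ell_u\prec E\prec\ell_{u-1}$, so $E$ sits somewhere in the interior of $S_u$ and need not be adjacent to $\ell_u$ at all; hence even if the attachment vertex were an endpoint of $\ell_u$, it would not follow that it is an endpoint of $E$. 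This is precisely the point where the paper has to work: assuming the attachment vertex $v$ is \emph{not} an endpoint of $E$, it considers the segment $S$ from $v$ to $E$ inside $S_u$ and shows by induction that every edge $\ell_j\in S$ forces its neighbours $\ell_{j\pm1}$ into $S$ (using that such $j$ must be of type I, since the edges of $A$ could not all follow $\ell_j$ otherwise), until the contradiction $\ell_u\in S$ is reached. Relatedly, your one-sentence ``parallel analysis'' for $T_B$ hides the induction the paper carries out on $T_B^i=\cup_{h<j\le i}S_j$, distinguishing whether the pieces being glued come from type I or type II indices and whether $E$ has already been absorbed; without that case distinction Lemma \ref{SEP} does not directly apply. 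You should supply the walking-to-$E$ argument and the explicit induction for $T_B$ to close the proof.
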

\begin{proof} \quad 1)\ In this case we know that all the segments $S_u$ for $u$ non critical are segments which do not contain $E$ and with the interior vertices of valency 2. By a simple induction we have that $\cup_{u\in A}S_u$ and $\cup_{v\in B}S_u$ are segments which do not contain $E$ and with the interior vertices of valency 2 (cf. Lemma \ref{SEP}).  If these two segments have an edge in common  then, by the same Lemma, their union is a segment not containing $E$ and thus  this segment gives a minimal degenerate graph and the one we started from is not minimal.  The same happens if they meet in an end point of both. The only remaining case is that  $T_A$ and $T_B$ form two disjoint segments separated by $E$.\smallskip

\quad 2)\ We have just seen that all the edges in $A$ lie in branches originating from vertices of the segment $S_u$  different from the last vertex of $\ell_{u-1}$. On the other hand the edges in $B$ are in $S_u$ and possibly in the other branches  originating from the end points of $S_u$. This implies that the two trees $T_A$ and $T_B$ can only have an intersection inside $S_u$.

Take any non critical index $v\in A$, if  $v$ is of type I  the segment $S_v$  is either disjoint from $S_u$ or it may intersect  $S_u$ in a vertex, since $S_v$ has the interior vertices of valency 2 and it cannot overlap with $S_u$ otherwise one or both of its ending edges, both in $A$ would also be in $S_u$ which on the contrary is all formed by edges in $B$. If  $v$ is of type II  we can apply the same analysis to $v$ and deduce that   the segment $S_v$  intersects  $S_u$ in the edge $E$.

If  all indices in $A$ are of type I  by the previous analysis the tree they generate can  meet  $S_u$ (and also $T_B$) only in one vertex so  they lie in a single branch. Applying Lemma \ref{SEP} it follows that the tree $T_A$  is a segment and it intersects $S_u$  in a vertex.

Now suppose that this vertex $v$  is not an end point of  $E$. Call $S$ the segment from $v$ to $E$.  If $\ell_j\in S$   we must have that if $j$ is not a critical index $1,h$ it must be of type I (otherwise we could not have that the edges in $A$ follow $\ell_j$) and thus $\ell_{j-1}\in S$.   Also $\ell_{j+1}\in S$  otherwise  it should be of type II  but then we have again that the vertex $v$ is outside the segment $S_{j+1}$, by induction we arrive at a contradiction $\ell_u\in S$.
\smallskip

As for $T_B$  we have now proved that it is formed only by the edges in $B$ and by $E$.  By induction we see that $T_B=\cup_{a\in B} S_a$ and in fact it is a segment. In fact let $T_B^i:=\cup_{h<j\leq i\leq k} S_i$, assume $T_B^i$ is a segment and consider $T_B^{i+1}= T_B^i\cup S_{i+1}.$  By induction and construction these two segments intersect at least in the edge $\ell_i$.  If at least one of the two is only formed from indices of type I we see again by induction that its interior vertices have valency 2 and by Lemma \ref{SEP} we have that their union is a segment.  If   $i+1$ is of type II  as well as one of the indices $j$ with $h<j\leq i$  we have that $T_B^i$ contains $E$. By the previous analysis it follows that inside the segment  $T_B^i$ and $S_{i+1}$  all interior vertices have valency 2 hence again Lemma \ref{SEP} applies.

\end{proof}

 {\bf Case 2}    A doubly odd loop is divided in 3 (or 2) parts: the two odd loops $A,C$ and the segment $B$ (possibly empty) joining them. We divide this into two subcases:
 
 \quad {\bf  Assume first $u\in A$  } (the case $u\in C$ is similar)\quad we have  $\pm 2\delta_ue_{u\pm 1} $ a linear combination  of the edges  in $B,C$ with coefficient $\delta_i$ (or all $-\delta_i$) equal to $2e_1$ plus, (cf. Formula \eqref{sopar}), $2\sum_{i=1}^{u-2}\delta_i\ell_i=-2\delta_{u-2} e_{u-1}-2e_1$      from which we have the required expression for $-2e_{u-1}$, similarly for $-2e_{u+1}$. This is the unique expression $\mathcal R'$ as linear combination of the linearly independent edges $\ell_j\neq  \ell_{u-1},\ \ell_u$.

As before  this relation must be proportional to either \eqref{relr}  or \eqref{relb}. Inspecting these relations we first observe that, if $j\in B,C$     the edge $\ell_j$ must have coefficient   $\pm\delta_j$.  By corollary \ref{leduei1}  if $\ell_{u-1}\prec \ell_j$ we have that $\mu_u(j)=\pm 2$  hence by inspection we deduce that   $\ell_{u-1}\not\prec\ell_j $. 

If $\ell_u\prec\ell_j\prec \ell_{u-1}$ the coefficient of $\ell_j$  in the two possible relations comes from two terms,   a term $\pm \delta_j$ coming from the first two summands (since in this case $\mu_u(j)=\pm 1$), and a term $\pm \delta_{u-1}$ from $\bar a_{u-1}$, hence no index in $B$ or $C$ can appear in  $\bar a_{u-1}$ by parity.   Since these edges appear in the relation $\mathcal R'$ we deduce that all $\ell_j, j\in B\cup C$ are in branches which originate from internal vertices of $S_u$. Inside the segment $S_u$  there are only   edges of  $A $. If we are in the case  $L=\pm 2\delta_ue_{u- 1}$ all edges $\ell_j$ with $\ell_{u-1}\prec\ell_j$  appear with coefficient $\pm 2\delta_j$ hence they are in the set  $i\in A, i\leq u-2$.  The remaining edges $\ell_i$ in $A$ with $i>u$ do not appear hence they  either satisfy   $\ell_{u }\prec\ell_i\prec\ell_{u-1}$ or are not comparable with $\ell_u$. Similar discussion for $L=\pm 2\delta_ue_{u+ 1}$.
 A similar consideration holds if $u\in C$.
 
 \quad {\bf  Assume   $u\in B$  } \quad If $u\in B$   the contribution of $L$ is $\pm 4e_{u\pm 1}$.  The two cases are similar.

   i) If the contribution is  $\pm 4e_{u-1}$, this comes from a sum   $2\sum_{i\in A}\delta_i\ell_i=\pm 4e_{ 1}$  plus  $2\sum_{j\in B,\ j\leq u-2}\delta_j\ell_j=\pm 4[e_{u-1}\pm e_{ 1}]$.
   
    ii) The contribution    $\pm 4e_{u+1}$,  comes from  the sum   $2\sum_{i\in C}\delta_i\ell_i=\pm 4e_{b}$  plus a sum of  $2\sum_{j\in B,\ j\geq u+1}\delta_j\ell_j=\pm 4[e_{u+1}\pm e_{ b}]$.

This formula for $L$  must coincide  with that given by \eqref{relr}  or \eqref{relb}.

We claim that there is no edge $\ell_j$  with $\ell_u\prec\ell_j$ and $\ell_j$ is not comparable with $\ell_{u-1}$.  Indeed this edge would have  $\mu_u(j)=\pm 1$ and would not appear in $\bar a_{u-1}$. This is incompatible with the fact that the coefficient must be $\pm 2\delta_j$. Thus  we deduce that all internal vertices of the segment $S_u$ have valency 2.

Finally if $\ell_u\prec\ell_j\prec\ell_{u-1}$  we have that the coefficient of  $\ell_j$ in the relation associated to Formulas \eqref{relr}  or \eqref{relb} is $\pm \delta_j\pm \delta_{u-1}$. Note that $u\in B$ is not critical and hence $\ell_u,\ell_{u-1}\in B$ so $\delta_{u-1}=\pm 2$.  If $j\in A\cup C$  we have that this  number is odd  so it cannot be one of the coefficients  appearing in the relation i) or ii). In case i) finally we deduce that if $j\in A$ we have $\ell_{u-1}\prec \ell_j$ while all the  $j\in C$  lie in the branches of the tree from the root different from the one  containing $\ell_u$.
\begin{corollary}\ 1)\ 
The edges in $B$ always form a segment, its internal vertices have valency 2.

\  2)\  If there is  an index of type II in $B$ all edges in $A$ and all edges in $C$ are separated and lie in the two segments originating from the two end points of  $S_u$.

\ 3)\  If there is  an index of type II in $A$ (or $C$) all edges in $A$ and all edges in $C$ are separated and lie in   two segments which can be disjoint or meet in one vertex.

\ 4)\ If all indices are of type I  then either all edges in $A$ and all edges in $C$ are separated and lie in the two segments originating from the two end points of  $S_u$.
 or the edges of $A\cup C$ form a segment.
\end{corollary}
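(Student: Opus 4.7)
The plan is to establish the four items by feeding the detailed case analysis carried out in the paragraphs immediately preceding the corollary into the segment-gluing Lemma \ref{SEP}, mirroring the strategy already used for Corollary \ref{ePo}. For every non-critical index $u$, the local conclusion is that $S_u$ is a segment with interior vertices of valency $2$ whose edges sit in a specific part of the partition $A\cup B\cup C$. Global statements are then obtained by induction, observing that $S_u$ and $S_{u+1}$ always share at least the edge $\ell_u$, so Lemma \ref{SEP} part 2) applies at every step.

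For part 1, I would show that for each non-critical $u\in B$ the segment $S_u$ consists only of $B$-edges with all interior vertices of valency $2$. For $u$ of type I this is the content of \S\ref{coL0}: the parity of $\pm\delta_{u-1}\pm\delta_j$ forbids an interior edge with $j\in A\cup C$, and Corollary \ref{leduei1} rules out side-branches. For $u$ of type II the same conclusions come out of the computation of $u\in B$ just before the corollary, since $\delta_{u-1}=\pm 2$ forces the interior coefficient $\pm\delta_j\pm\delta_{u-1}$ to be odd when $j\in A\cup C$, incompatible with the admissible coefficients of $\mathcal R'$. An induction on the non-critical indices of $B$ using Lemma \ref{SEP} part 2) then glues the $S_u$ into a single segment $T_B$ containing every $\ell_j,\ j\in B$, with all interior vertices of valency $2$.

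For part 2, I pick the type II index $u\in B$ and, without loss of generality, assume $L=\pm 4e_{u-1}$. The analysis just completed says that $A$-edges that appear in the relation $\mathcal R'$ satisfy $\ell_{u-1}\prec\ell_j$, while $C$-edges lie in the branch of the root at the end point of $S_u$ opposite to $\ell_u$; since interior vertices of $S_u$ already have valency $2$, the $A$-branch and the $C$-branch can only hang off the two distinct end points of $S_u$. A further induction with Lemma \ref{SEP} applied to the $S_v,\ v\in A$ and $S_v,\ v\in C$ shows that $T_A$ and $T_C$ are themselves segments anchored at these two end points. Part 3 is analogous with a type II $u\in A$ (or $C$): the local case analysis forces every $B\cup C$-edge into branches off \emph{internal} vertices of $S_u$, hence no $B$-edge lies between $T_A$ and $T_C$, so two segments $T_A,T_C$ produced by the usual gluing meet in at most a single vertex.

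For part 4, when all indices are of type I, \S\ref{coL0} gives that $S_u$ with $u\in A$ (respectively $u\in C$) is supported in $A\cup C$-edges, disjoint from $B$. Gluing produces segments $T_A,T_C$ which are disjoint from $T_B$ except at the critical vertices. Either $T_A\cap T_C$ contains an edge, in which case Lemma \ref{SEP} part 2) implies that $T_A\cup T_C$ is a single segment, realizing the second alternative; or $T_A$ and $T_C$ are edge-disjoint, in which case minimality of the degenerate resonant graph and connectedness of $T$ force both to originate from the two end points of $T_B$. The main obstacle throughout is the book-keeping to rule out spurious interior branches in each $S_u$: every such exclusion reduces to the parity/divisibility of $\pm\delta_j\pm\delta_{u-1}$ and to Corollary \ref{leduei1}, and the delicate step is to verify these exclusions uniformly across the partition $A,B,C$, in particular at the transitions through the critical vertices $1,h+1,b$.
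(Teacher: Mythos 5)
Your proposal is correct and follows essentially the same route as the paper: it feeds the type I analysis of \S\ref{coL0} and the type II computations for $u\in A$, $u\in B$ preceding the corollary into the segment-gluing Lemma \ref{SEP}, exactly as the paper does (modelled on Corollary \ref{ePo}), with the same parity argument on $\pm\delta_j\pm\delta_{u-1}$ and the same use of Corollary \ref{leduei1} to exclude interior branches.
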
 
\begin{proof} \ 1)\ The proof is similar to that of Corollary \ref{ePo}.  We already know that, if $j\in B$  is of type I 
inside the segment $S_u$ there are only edges $\ell_j$ with $j\in B$ and its internal vertices have valency 2, we have proved this now also for type II.   The claim follows from  Lemma \ref{SEP}.

\ 2)\  Assume there is an index $u\in B$  of type II with contribution $\pm 4e_{u-1}$. Analyzing the corresponding relation we have then that all edges $\ell_i$ with $i\leq u-2$   and all edges in $C$ precede $\ell_u$, all edges in $A$ follow $\ell_{u-1}$.\smallskip

Finally if $\ell_{u-1}\prec\ell_j$ then $\ell_j$ appears in the relation  so since in  the relation appear either all the edges in $C$ and none of the edges in $A$ or conversely we must have that these two blocks lie in the two branches originating from the two end points of  $S_u$.\smallskip

\ 3)\  Assume there is an index of type II in $A$, we then have seen that $T_C$ is formed by branches originating from interior points of $S_u$.  Now  if  $u\in C$  is of type I  the segment  $S_u$ cannot contain edges in $A$ otherwise it would contain interior vertices of valency $>2$.   If  $u\in C$  is of type II  the segment  $S_u$ does not contain edges in $A$ by the previous argument.

\ 4)\  If all indices are of type I we have seen that all segments $S_u$ for $u\in A\cup C$ are formed by edges in $A\cup C$ and their interior vertices have valency 2. Finally the statement that we have segments follows from Lemma \ref{SEP} as in Corollary \ref{ePo}.
\end{proof} 
 
\subsubsection{ All indices  are of type I,\ $L=0$}  We have already seen (Case 1) that the case of   the single loop and all indices  are of type I is not possible. Let us thus treat the special case when we are in the doubly odd loop  and still  all indices of $A\cup C$ are of type I   or  when just the indices of $A$ are of type I but we know that they form a segment.

If neither $S_A,S_B,S_C$ contains a critical vertex we   have seen that the graph spanned by  $A\cup C$  is a segment as well as $S_B$ and we have.
\begin{equation}
\label{unseg}a') \xymatrix{    0\ar@{-}[rr]^{S_{A\cup C}  }& &v  \ar@{-}[rr]^{S_B} && w    }. 
\end{equation}

 In this segment we take as root one on its end points and denote by $\bar\sigma_i,\bar\lambda_i$ the corresponding values  of color and orientation (with respect to this root).  Recall that the notation $\sigma_i,\lambda_i$ is relative to the segment   $S_u$ as in the previous discussion (see formula \eqref{esu}).   
 In the next Lemma we analyze  the 9 cases in which $L=0$.

 \begin{lemma}\label{icons}
 We claim that every edge $\ell_j,\ j\in A$  (resp. $j\in C$)   has the property that $\delta_j= \delta  \bar\sigma_j$ if red and $\delta_j=  \delta \bar\lambda_j\bar\sigma_j$ if black for $\delta=\delta_1\bar\sigma_1$ (resp.  $\delta=\delta_h\bar\sigma_h$ where $h$ is the minimal element in $C$).
\end{lemma}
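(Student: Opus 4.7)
The plan is to proceed by induction along the segment spanned by the edges of $A$ (and analogously $C$), starting from the edge with the smallest index. First I would verify the base case $j=1$: if $\ell_1$ is red, the identity $\delta_1 = \delta\bar\sigma_1 = \delta_1\bar\sigma_1^{2}$ is immediate since $\bar\sigma_1^{2}=1$; if $\ell_1$ is black, one additionally checks that $\bar\lambda_1 = 1$, which follows from the choice of the global root at the endpoint of the segment adjacent to $\ell_1$, so that $\ell_1$ points outward.

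For the inductive step at a non-critical index $u\in A$, two ingredients combine. Remark \ref{crii} gives the scalar recursion $\delta_u = \vartheta_u\delta_{u-1}$, so $\delta_u=-\delta_{u-1}$ when $\ell_u$ is red and $\delta_u=\delta_{u-1}$ when $\ell_u$ is black. Corollary \ref{leduei} gives the type I constraint $\sigma_{u-1}=\lambda_{u-1}\lambda_u$, but with $\sigma,\lambda$ computed relative to the local root of the two-edge segment $S_u$. Since the overall structure is a simple segment with internal vertices of valency $2$, moving the root from the local position to the chosen global endpoint introduces only a uniform sign flip depending on the parity of red edges traversed, so the identity translates into one relating $\bar\sigma,\bar\lambda$.

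The remainder is a case analysis on the four color combinations for the pair $(\ell_{u-1},\ell_u)$: red/red, red/black, black/red, black/black. In each case I would verify that the right-hand side of the proposed formula transforms from $j=u-1$ to $j=u$ in exactly the same way as $\delta_j$: $\bar\sigma$ flips across red edges and is preserved across black ones, while for a black edge the factor $\bar\lambda_j$ depends on its intrinsic orientation relative to the outward direction from the global root. Combined with the translated type I identity, each of the four cases yields the claim at $j=u$. The argument for $j\in C$ is identical with $\ell_h$ in place of $\ell_1$ and $\bar\sigma_h$ in place of $\bar\sigma_1$.

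I expect the main obstacle to be the careful bookkeeping of the three different sign conventions simultaneously in play: the $\delta_j$'s from the cycle relation of the resonance, the local $\sigma_j,\lambda_j$'s of Corollary \ref{leduei} measured from the root of $S_u$, and the global $\bar\sigma_j,\bar\lambda_j$'s measured from the chosen endpoint of the segment. The induction must keep all three consistent across every step, and the red/black and black/red cases in particular require simultaneously tracking the intrinsic edge orientation and the color-parity flip caused by the adjacent red edge; once the local-to-global translation is set up cleanly, the four-way case check becomes routine.
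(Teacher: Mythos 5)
Your proposal follows essentially the same route as the paper's proof: induction along the segment, the recursion $\delta_u=\vartheta_u\delta_{u-1}$, the type~I identity $\sigma_{u-1}=\lambda_{u-1}\lambda_u$ from Corollary \ref{leduei}, the translation between the local signs $\sigma,\lambda$ of $S_u$ and the global $\bar\sigma,\bar\lambda$, and the four-way case check on the colors of $(\ell_{u-1},\ell_u)$. The only cosmetic difference is that you spell out the base case (the paper omits it); since the substantive content used later is just that $\delta_j\bar\sigma_j\bar\lambda_j$ is constant along $A$, the normalization of $\delta$ at $j=1$ is harmless.
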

\begin{proof} By induction $\delta_{u-1}= \delta  \bar\sigma_{u-1}$ if red and $ \delta_{u-1}=  \delta \bar\lambda_{u-1}\bar\sigma_{u-1}$ if black.

Look at  $S_u$. If $\ell_{u-1}, \ell_u$ are both red $\sigma_{u-1}=1$, (Case 1))  $$\delta_u=-\delta_{u-1}=  -\delta  \bar\sigma_{u-1}= \delta  \bar\sigma_{u }\sigma_{u-1}   =   \delta  \bar\sigma_{u }$$

If $\ell_{u-1}$ is red and $ \ell_u$ is black  we are in Cases 3), 6) and we have $\sigma_{u-1}=\lambda_u,\ \delta_u=\delta_{u-1}=  \delta  \bar\sigma_{u-1}$.   We also have $\sigma_{u-1}=-\bar \sigma_{u-1}\bar \sigma_{u}$ if $\ell_{u-1}\prec\ell_u$ and $\sigma_{u-1}= \bar \sigma_{u-1}\bar \sigma_{u}$ if $\ell_u\prec\ell_{u-1}$.
$$ \delta_u= \begin{cases}
-\delta  \bar\sigma_{u }\lambda_u= \delta  \bar\sigma_{u }\bar\lambda_u\quad \ell_{u-1}\prec\ell_u\\
\delta  \bar\sigma_{u }\lambda_u= \delta  \bar\sigma_{u }\bar\lambda_u\quad \ell_u\prec\ell_{u-1}\end{cases} .$$
If  $\ell_{u-1}$ is black and $ \ell_u$ is  red  we are in Cases 7), 10) and we have  $\sigma_{u-1}=\lambda_{u-1}$.   If $\ell_{u-1}\prec\ell_u$ we have $\lambda_{u-1}\bar\lambda_{u-1}=-1, \bar\sigma_{u-1}=  \bar\sigma_u\sigma_{u-1}$$$\delta_u= -\delta_{u-1}=   -\delta  \bar\sigma_{u-1}\bar\lambda_{u-1} = - \delta  \bar\sigma_{u }\sigma_{u-1} \bar\lambda_{u-1}  =  - \delta  \bar\sigma_{u }\lambda_{u-1}\bar\lambda_{u-1}=\delta  \bar\sigma_{u }.$$
If $\ell_u\prec\ell_{u-1}$ we have $\lambda_{u-1}\bar\lambda_{u-1}= 1, \bar\sigma_{u-1}= - \bar\sigma_u\sigma_{u-1}$$$\delta_u= -\delta_{u-1}=   -\delta  \bar\sigma_{u-1}\bar\lambda_{u-1} =   \delta  \bar\sigma_{u }\sigma_{u-1} \bar\lambda_{u-1}  =   \delta  \bar\sigma_{u }\lambda_{u-1}\bar\lambda_{u-1}=\delta  \bar\sigma_{u }.$$

If $\ell_{u-1}, \ell_u$ are both black  we are in Cases 11), 14), 16), 17)  and we have  $\sigma_{u-1}=\lambda_u\lambda_{u-1}$ by Corollary \ref{leduei}.
 If $\ell_{u-1}\prec\ell_u$ (in the order of the total segment) we have $\lambda_{u-1}\bar\lambda_{u-1}=-1, \bar\sigma_{u-1}=  \bar\sigma_u\sigma_{u-1}$$$\delta_u=  \delta_{u-1}=   \delta  \bar\sigma_{u-1}\bar\lambda_{u-1} =  \delta  \bar\sigma_{u }\sigma_{u-1} \bar\lambda_{u-1}  =    \delta  \bar\sigma_{u }\lambda_{u-1}\bar\lambda_{u-1}=\delta  \bar\sigma_{u }.$$
 $$\delta_u= -\delta_{u-1}=   -\delta  \bar\sigma_{u-1}\bar\lambda_{u-1} =   \delta  \bar\sigma_{u }\sigma_{u-1} \bar\lambda_{u-1}  =   \delta  \bar\sigma_{u }\lambda_{u-1}\lambda_{u }\bar\lambda_{u-1} $$
  Now clearly  $\lambda_{u-1}\lambda_{u }\bar\lambda_{u-1}=\bar\lambda_{u }$.
 \end{proof}
 
Now  we take the left vertex of $S_{A\cup C}$ as in \eqref{unseg}  as root, that is we consider it as the 0 vertex and want to compute first the value of the other end vertex $v$  of $S_{A\cup C}$ and then the end vertex $w$ of the total segment appearing in \eqref{unseg}. Recall that  we have an even number of red edges so that the end vertex is  black, let us say that this vertex belongs to the last edge $\ell_j$.  We can compute it by using the various options of formula \eqref{La}.  If $\ell_j$ is red  or if it is black and $\lambda_j=-1$  we have that the last vertex is $v=b_j$ and not $a_j$, in the remaining case $v=a_j$. In all cases a simple   analysis shows that $v=\pm\sum_j\bar \lambda_j \bar\sigma_j\ell_j$. By Lemma \ref{icons} we have  $\bar \lambda_j \bar\sigma_j=\delta\delta_j$ hence   $\sum_{j\in A}\bar \lambda_j \bar\sigma_j\ell_j=\pm 2e_1$ and similarly $\pm\sum_{j\in C}\bar \lambda_j \bar\sigma_j\ell_j=\pm 2e_b$.  We thus  have that $v=\pm 2(e_1-e_b)$  or  $v=\pm 2(e_1+e_b)$ but this  is impossible for a black vertex which has mass 0.

Now a similar argument on the segment $S_B$ gives as value of $S_B$ either $\pm(e_1-e_b)$ or  $-e_1-e_b$.

In the first case  we take as root the point $v$. Now the left and right hand vertices are $a=\pm (e_1-e_b),b=\pm 2(e_1-e_b)$. The relation is $b=\pm 2a$ so the resonance must be $C(b)=\pm 2 C(a)$  which we see immediately is not valid.

It remains the possibility $a=-e_1-e_b,\, b=\pm 2(e_1-e_b)$, in this case fixing one end vertex to be 0 the other is $a+b= -e_1-e_b  \pm 2(e_1-e_b)$ which also gives a non allowable graph from Definition \ref{ilpunto1} and Proposition \ref{ilpunto0}.

 If the edges in $A$ form a segment and are of type I  the same argument shows that fixing the root at one end the other end vertex is $-2e_i$ for some $i$. We deduce
\begin{corollary}\label{PrC}
The case of all indices of type I  does not occur or it produces a not--allowable graph   \ref{ilpunto1}.
\end{corollary}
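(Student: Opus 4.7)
The plan is to separate the configurations that the earlier analysis has already funneled us into (single loop, single loop with extra edge $E$, and doubly odd loop) and in each case either quote the prior impossibility argument or push the computation one last step to expose a pair of vertices $a,b$ with $a+b=-2e_i$ or $-3e_i+e_j$, which by Definition \ref{ilpunto1} and Proposition \ref{ilpunto0} forces the graph to be non-allowable and hence without a geometric realization outside the special component.

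For Case 1A (single loop, no extra edge) I would simply recall the discussion in \S \ref{coL0}: if every index is of type I then for each $u$ the relation \eqref{relr}/\eqref{relb} reduces, after dropping the $L$-term, to a linear combination of the $\ell_j$ ($j\neq u-1,u$) equal to zero, and by linear independence this forces every $\mu_u(j)=0$ and in fact a contradiction by the inductive argument already given. For Case 1B and for Case 2 I would invoke the structural corollaries proved immediately before (the descriptions of $T_A,T_B,T_C$ in Corollary \ref{ePo} and its Case-2 analogue) together with Lemma \ref{SEP}, which together force the edges of $A\cup C$ to assemble into a single segment glued to the segment $S_B$ at a common vertex $v$, exactly as in the picture \eqref{unseg}.

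The core computation is then to pin down the endpoints $v$ and $w$ of this total segment. I would root the graph at the left end of $S_{A\cup C}$ and use formula \eqref{La} to write the right end as $v=\pm\sum_j \bar\lambda_j\bar\sigma_j\,\ell_j$, where $\bar\sigma_j,\bar\lambda_j$ are the color/orientation indicators relative to the new root. Lemma \ref{icons} translates these global signs into the relation $\bar\lambda_j\bar\sigma_j=\delta\,\delta_j$, so the sums split along $A$ and $C$ and collapse, using \eqref{sopar} applied to each odd loop separately, to give $\sum_{j\in A}\delta_j\ell_j=\pm 2e_1$ and $\sum_{j\in C}\delta_j\ell_j=\pm 2e_b$. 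Hence $v=\pm 2(e_1\pm e_b)$; since $v$ must be a black vertex (mass $0$) only $v=\pm 2(e_1-e_b)$ survives. Repeating the same bookkeeping along $S_B$ produces $w-v\in\{\pm(e_1-e_b),\,-e_1-e_b\}$. A direct inspection of the pairs $\{0,v\}$, $\{0,w\}$, $\{v,w\}$ (after re-rooting at each endpoint in turn) exhibits, in every subcase, a pair whose sum is $-2e_i$ or $-3e_i+e_j$, so the graph is not allowable and Proposition \ref{ilpunto0} concludes. The degenerate variant in which only $A$ is a type I segment is handled identically: the same computation gives a non-root endpoint equal to $-2e_i$.

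The only genuinely delicate step is the sign bookkeeping of Lemma \ref{icons}, i.e.\ passing from the local signs $\sigma_i,\lambda_i$ used in the $18$-case table of \S \ref{18} (where the reference segment is $S_u$) to the global signs $\bar\sigma_j,\bar\lambda_j$ attached to the single long segment $S_{A\cup C}$; this has to be verified by the case split on the colors and relative order of $\ell_{u-1},\ell_u$ already present in the statement of Lemma \ref{icons}. Once that identity is in hand, the collapse to $v=\pm 2(e_1-e_b)$ and the final appeal to Definition \ref{ilpunto1} are a matter of inspection, so I would expect this to be the only place where the argument requires real care.
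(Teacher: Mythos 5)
Your plan follows the paper's route almost exactly: the same reduction to the configuration \eqref{unseg} via Corollary \ref{ePo} and Lemma \ref{SEP}, the same use of Lemma \ref{icons} to collapse the endpoint to $v=\pm2(e_1\pm e_b)$, the same mass argument discarding $\pm2(e_1+e_b)$, and the same computation of the value of $S_B$. The gap is in your final step, where you claim that inspecting the pairs $\{0,v\}$, $\{0,w\}$, $\{v,w\}$ exhibits, \emph{in every subcase}, a pair summing to $-2e_i$ or $-3e_i+e_j$. This fails in the subcase where the value of $S_B$ is $\pm(e_1-e_b)$: there both $S_{A\cup C}$ and $S_B$ contain an even number of red edges, so the three distinguished endpoints all have the same color, and a non-allowable pair in the sense of Definition \ref{ilpunto1} requires a black--red pair (the connecting group element must lie in $\Z^m\tau$). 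No re-rooting produces one, and indeed $\pm(e_1-e_b)\pm 2(e_1-e_b)$ is never of the form $-2e_i$ or $-3e_i+e_j$.

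The paper closes that subcase by a different mechanism, which is precisely why the corollary is stated as a disjunction ("does not occur \emph{or} produces a not-allowable graph"): rooting at $v$, the two remaining endpoints satisfy the linear relation $b=\pm 2a$ with $a=\pm(e_1-e_b)$, so degenerate-resonance would force $C(b)=\pm2C(a)$; but $C(2a)-2C(a)=a^2\neq 0$ (and similarly for the other sign), so the graph is not resonant and this configuration simply does not occur. You need to add this resonance computation; without it the subcase $w-v=\pm(e_1-e_b)$ is left open. The rest of your outline (Case 1A, the segment-gluing reductions, the sign bookkeeping of Lemma \ref{icons}, and the variant where only $A$ is a type I segment yielding an endpoint $-2e_i$) matches the paper.
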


 \subsubsection{Indices of type II}  If there is at least one index of type II  the case analysis that we have performed shows that
  between two edges in $A$ there are only edges in $A$ and the edges in $A$ form a segment, the same happens for $B,C$. Denoting $S_A,S_B,S_C$ these segments  their union is a tree, the internal vertices of $S_B$ have valency 2,  so their relative position a priori can be  only one of the following.
$$ a) \xymatrix{    \\\ar@{-}[rr]^{S_C}  & &  \ar@{-}[rr]^{S_B} &&  \ar@{-}[rr]^{S_A}  &&   } \qquad b) \xymatrix{  &  \ar@{-}[ddd]^{S_C}  && \ar@{-}[dd]^{S_B}  &\\  &&&&   \\  \ar@{-}[rrrr]^{S_A} &&&&\\  &&&&   } $$
$$ c) \xymatrix{  &  \ar@{-}[ddd]^{S_A}  && \ar@{-}[dd]^{S_B}  &\\  &&&&   \\  \ar@{-}[rrrr]^{S_C} &&&&\\  &&&&   } \qquad  d) \xymatrix{     \ar@{-}[rrr]^{S_A}  && \ar@{-}[dd]^{S_B}  &\\  &&&&   \\  \ar@{-}[rrrr]^{S_C\qquad} &&&&\\  &&&&   } $$ where if only one of $S_A,S_C$ contains a critical vertex we have the special cases 

$$ b') \xymatrix{  &  \ar@{-}[dd]^{S_C}  && \ar@{-}[dd]^{S_B}  &\\  &&&&   \\  \ar@{-}[rrrr]^{S_A} &&&&\\  &&&&   } \qquad  c') \xymatrix{  &  \ar@{-}[dd]^{S_A}  && \ar@{-}[dd]^{S_B}  &\\  &&&&   \\  \ar@{-}[rrrr]^{S_C} &&&&\\  &&&&   } $$
In all these cases it is possible that the two critical vertices coincide as in 
 $$ b'') \xymatrix{  &  \ar@{-}[ddd]^{S_C}  && \ar@{-}[ddll]^{S_B}  &\\  &&&&   \\  \ar@{-}[rrrr]^{S_A} &&&&\\  &&&&   } $$

 In all these cases we  may also have that $B$ is empty  so $S_B$ does not appear.

2) If  $A$   contains no index of type II)  we apply to it Lemma \ref{icons}  and deduce that the segment equals $\delta \sum_{i\in A}\delta_i\ell_i=-2\delta e_1$. Since the mass of a segment can only be $0,-2$ we deduce that if one extreme is set to be 0 the other is $-2e_1$.

3) is similar to 2).

Notice that at this point we have proved for the doubly odd loop Theorem \ref{MM} in all cases except b), c), d), b'). Of course b) and c) are equivalent and in fact b') is a special case of b).

4) Let us treat the case in which $u\in A$ gives a contribution to $L$ equal $\pm 2e_{u-1}$ (the other is similar), from our analysis in our setting  all edges $\ell_j,\ j\leq u-2$ must be comparable with $\ell_u $.
\smallskip

In all cases we have that   $S_A$  and $S_C$ have a unique critical vertex which divides the segment.

 So $S_A$ is divided into two segments, one  $X$ ending with a red vertex $x$ the other $Y$ with a black vertex $y$ since in   $S_A$ there is an odd number of red edges  which are distributed into the two segments.  \smallskip

We choose as root the critical vertex. With this choice we denote by $\bar\sigma,\bar \lambda$ the corresponding values  on the edges  (in order to distinguish from the ones $ \sigma,  \lambda$ we have used where the root is at the beginning of $S_u$). 
 \smallskip

 \begin{lemma}\label{chiave} i)\quad   The edges in $Y, X$  have the property that, $ \delta_j\bar\sigma_j\bar\lambda_j=\delta$ is constant.
 \smallskip

  ii)\quad  $$y=\sum_{j\in Y} \bar\sigma_j \bar\lambda_j\ell_j= \delta\sum_{j\in Y}\delta_j\ell_j;\quad x=-\sum_{j\in X} \bar\sigma_j \bar\lambda_j\ell_j=- \delta\sum_{j\in X}\delta_j\ell_j$$
$$\delta=-1,\quad x-y=-2e_1 $$
 
\end{lemma}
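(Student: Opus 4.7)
The plan is to reduce both parts to the sign-bookkeeping already deployed for Lemma \ref{icons}, now with the root chosen at the critical vertex of $S_A$ rather than at an endpoint of $S_{A\cup C}$. The key observation is that the critical vertex serves as a natural common basepoint for the two sub-segments $X$ and $Y$.

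For part i), I would run the inductive argument of Lemma \ref{icons} verbatim along $Y$ outward from the critical vertex, then separately along $X$. The 18-case table (restricted to the Type I sub-cases, since no interior vertex of $X$ or $Y$ is critical by the hypothesis of case 4) propagates the invariant $\delta_j\bar\sigma_j\bar\lambda_j = \mathrm{const}$ along any segment based at a common root, giving constants $\delta_Y$ on $Y$ and $\delta_X$ on $X$. A direct check at the critical vertex, examining the color/orientation of the two edges $\ell_{j},\ell_{j'}$ incident to it, then forces $\delta_X = \delta_Y =: \delta$.

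For part ii), apply Formula \eqref{La} at the critical root to each terminal edge of $X$ and $Y$. Since $y$ is the black endpoint of $Y$ and $x$ the red endpoint of $X$, the appropriate branches of \eqref{La} give
\[
y = \sum_{j\in Y}\bar\sigma_j\bar\lambda_j\ell_j,\qquad x = -\sum_{j\in X}\bar\sigma_j\bar\lambda_j\ell_j,
\]
the extra minus in $x$ coming from the red cases of \eqref{La}. Substituting $\bar\sigma_j\bar\lambda_j = \delta\delta_j$ from (i) yields the advertised expressions $y = \delta\sum_Y\delta_j\ell_j$ and $x = -\delta\sum_X\delta_j\ell_j$. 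Combining these with the global identity $\sum_{i\in A}\delta_i\ell_i = -2e_1$ from Formula \eqref{sopar}, split as $\sum_X + \sum_Y$, produces $\delta^{-1}(y-x) = -2e_1$, equivalently $x - y = 2\delta e_1$.

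The final assertion $\delta = -1$ is the only delicate point: it must be pinned down by the hypothesis that the index $u\in A$ has Type II with contribution precisely $+2\delta_u e_{u-1}$ (not $-2\delta_u e_{u-1}$). Tracing this sign back through Corollary \ref{leduei} at $u$, comparing the color $\bar\sigma_u$ and orientation $\bar\lambda_u$ relative to the critical root against the invariant $\delta_u\bar\sigma_u\bar\lambda_u = \delta$ from (i), and distinguishing whether $u$ lies in $X$ or in $Y$, forces $\delta = -1$. This final sign bookkeeping — a short case-by-case computation entirely parallel to Lemma \ref{icons}, but now tethered to the Type II convention — is the main technical obstacle; once it is in place, $x - y = -2e_1$ follows immediately, which (via Definition \ref{ilpunto1} and Proposition \ref{ilpunto0}) will deliver the non-allowability conclusion needed for Theorem \ref{MM}.
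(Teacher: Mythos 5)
Your setup for part i) and the first half of part ii) matches the paper's: propagate the invariant $\delta_j\bar\sigma_j\bar\lambda_j$ along $X$ and $Y$ by the computation of Lemma \ref{icons}, read off $x$ and $y$ from Formula \eqref{La}, and combine with $\sum_{i\in A}\delta_i\ell_i=-2e_1$ to get $x-y=2\delta e_1$. But there are two problems with where you locate the real work.

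First, the ``direct check at the critical vertex'' that forces $\delta_X=\delta_Y$ is not a routine verification you can wave at: it is precisely the place where the type II condition enters. The paper compares the values $\bar\sigma,\bar\lambda$ (root at the critical vertex) with $\sigma,\lambda$ (root at the start of $S_u$), reduces the required identity to $-\sigma_{u-1}\lambda_{u-1}\lambda_u=1$, and observes that this is exactly the second alternative of Corollary \ref{leduei}, i.e.\ the statement that $u$ is of type II. So Corollary \ref{leduei} is consumed in part i), gluing the two constants across the critical vertex; it is not available as a further source of sign information.

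Second, and more seriously, your route to $\delta=-1$ would fail. You propose to pin down $\delta$ by the hypothesis that the contribution of $L$ is $+2\delta_u e_{u-1}$ rather than $-2\delta_u e_{u-1}$; but the surrounding text treats the contribution as $\pm 2\delta_u e_{u\pm1}$ with either sign, and the conclusion $\delta=-1$ must hold in all of these cases, so the sign of the contribution cannot be what determines $\delta$. The paper's argument is a one-line mass computation that you have not used: $x$ is the red endpoint and $y$ the black endpoint (because $S_A$ carries an odd number of red edges split between $X$ and $Y$), so $\eta(x)=-2$, $\eta(y)=0$, hence $\eta(x-y)=-2$; since $x-y=2\delta e_1$ has mass $2\delta$, this forces $\delta=-1$ and $x-y=-2e_1$. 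Replacing your ``sign bookkeeping tethered to the Type II convention'' by this mass argument closes the gap.
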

\begin{proof}  i)    We want to prove that on   $X$ and $Y$ the value  $ \delta_j\bar\sigma_j\bar\lambda_j$ is constant. For this by induction it is enough to see that  the value does not change  for $\ell_u,\ell_{u-1}$. When they are not separated  we can use Lemma  \ref{icons}. When separated
  we first compare the values that we call $\bar\sigma_j$ when we place the root at the critical vertex with the values $\sigma_j$  when we place the root at the beginning of $\ell_u$ and we easily see that
 $ \bar\sigma_u\bar\sigma_{u-1}=\sigma_{u-1} $. 
In order to prove  that $\delta_j \bar \sigma_j\bar\lambda_j$ is constant  we need to show that when $\ell_u,\ell_{u-1}$ are separated
$$ 1=   \delta_{u-1}\bar\sigma_{u-1}\bar\lambda_{u-1}    \delta_u \bar\sigma_u \bar\lambda_u=  \delta_{u-1} \sigma_{u-1}\bar\lambda_{u-1}    \delta_u   \bar\lambda_u . $$
  We have $\bar\lambda_{u-1}= \lambda_{u-1}$  while $\bar\lambda_u=- \vartheta_u\lambda_u$. In other words  we need
  $$  -\delta_{u-1}\vartheta_u \sigma_{u-1} \lambda_{u-1}    \delta_u    \lambda_u=1. $$
Since by definition $\delta_{u-1}\vartheta_u =    \delta_u $
we have to verify that
   $$  -\delta_{u-1}\vartheta_u \sigma_{u-1} \lambda_{u-1}    \delta_u    \lambda_u=-  \sigma_{u-1} \lambda_{u-1}      \lambda_u=1.  $$  This is in our case the content of the second part of Corollary \ref{leduei}.
\smallskip

    ii) By definition
   $$y=\sum_{j\in Y} \bar\sigma_j \bar\lambda_j\ell_j= \delta\sum_{j\in Y}\delta_j\ell_j;\quad x=-\sum_{j\in X} \bar\sigma_j \bar\lambda_j\ell_j=- \delta\sum_{j\in X}\delta_j\ell_j$$ hence
   $x-y=- \delta\sum_{j\in A}\delta_j\ell_j=\delta2e_1$.  But $\eta(x)=-2,\eta(y)=0$  implies $\delta=-1$.
\end{proof}

If we take as root the vertex $x$  the other vertex of $S_A$ is $x+y$.   \begin{proposition}
If the graph is resonant $x+y=-2e_j$ for some $j$.
\end{proposition}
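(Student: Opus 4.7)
The plan is to deduce $x+y=-2e_{j_0}$ by a direct algebraic computation, where $j_0\in\{1,\dots,h\}$ is the encoding index of the critical tree vertex of $S_A$ separating it into the sub-segments $X$ and $Y$. The resonance hypothesis has already been absorbed in Lemma \ref{chiave}; what remains is to combine the explicit description of $x,y$ supplied by that lemma with the telescoping identities \eqref{sopar}.

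Concretely, Lemma \ref{chiave} (with the critical vertex as root and the value $\delta=-1$) provides
\[
x=\sum_{j\in X}\delta_j\ell_j,\qquad y=-\sum_{j\in Y}\delta_j\ell_j.
\]
The splitting of $S_A$ at the critical vertex partitions the cyclic indexing $\{1,\dots,h\}$ of $A$ into two consecutive arcs $\{1,\dots,j_0-1\}$ and $\{j_0,\dots,h\}$, one identified with $X$ and the other with $Y$. Formula \eqref{sopar} applied at $j=j_0-1$ gives
\[
\sum_{i=1}^{j_0-1}\delta_i\ell_i=-\delta_{j_0-1}e_{j_0}-e_1,\qquad \sum_{i=j_0}^{h}\delta_i\ell_i=-e_1+\delta_{j_0-1}e_{j_0}.
\]
Substituting into $x$ and $y$, either arc-assignment produces $x+y=\pm 2\delta_{j_0-1}e_{j_0}$.

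The mass constraints $\eta(x)=-2$ and $\eta(y)=0$ (since $X$ ends at the red vertex $x$ and $Y$ at the black vertex $y$) fix $\delta_{j_0-1}$ uniquely and force both sub-cases to collapse to $x+y=-2e_{j_0}$, which is precisely the not-allowable relation required by Definition \ref{ilpunto1}. The only delicate point I foresee is the sign bookkeeping---in particular ensuring that the tree order of $S_A$ aligns with the cyclic indexing of the encoding loop so that the two arcs are correctly identified---together with the degenerate case $j_0=1$ (when the critical vertex coincides with an endpoint of $S_A$ and one of $X,Y$ is empty), which should be recovered from the same formulas by interpreting empty sums as $0$.
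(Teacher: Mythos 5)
Your argument does not use the resonance hypothesis at the point where the paper's proof needs it, and the step where you invoke Formula \eqref{sopar} rests on an identification that is not available. The sets $X$ and $Y$ of Lemma \ref{chiave} are the two sub-segments into which the critical \emph{tree} vertex divides the tree segment $S_A$; the order in which the edges $\ell_1,\dots,\ell_h$ occur along $S_A$ is dictated by the tree structure (the type I / type II analysis) and is in general a nontrivial permutation of the cyclic order used in \eqref{sopar}. Hence there is no reason for $X$ to be a consecutive arc of the encoding loop, nor for the cut between $X$ and $Y$ to fall at the critical index $1$. All that Lemma \ref{chiave} actually yields is $x-y=-2e_1$, equivalently $x+y=2e_1+2\sum_{j\in X}\delta_j\ell_j$; the partial sum $\sum_{j\in X}\delta_j\ell_j$ telescopes to a vector supported on two coordinates only when $X$ is a cyclically consecutive arc, and $x+y$ collapses to a single $-2e_j$ only when, in addition, one endpoint of that arc sits at the index $1$. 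Neither fact comes for free, and the paper explicitly stops short of the classification of these trees that would be needed to establish them combinatorially.

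What actually forces the conclusion is a second application of the resonance, which Lemma \ref{chiave} has not ``absorbed''. The paper notes that $x-y=-2e_1$ is also a linear combination of the edges \emph{outside} $S_A$, so the resonance relation restricted to supports forces $C(x)-C(y)=\alpha e_1^2$, with $\alpha=-1$ by applying the mass $\eta$; the rules of \S\ref{BaF} then give $y^{(2)}+xy=0$, and since $y^{(2)}$ is irreducible unless $y=\beta(e_i-e_j)$, one deduces $y=\beta(e_i-e_j)$, $x=-e_i-e_j$, and only then $x+y=-2e_j$ (or $-2e_1$). Your proposal omits this entire mechanism, so the gap is essential rather than a matter of sign bookkeeping.
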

\begin{proof} We choose as root the critical vertex of $S_A$.
We have  $x-y=-2e_1 =\sum_{j\notin A}\delta_j \ell_j$. This  is a linear combination of the edges outside the segment $S_A$ therefore the resonance relation has the form:
$$C(x)-C(y)=\sum \alpha_iC(v_i) $$  where the vertices $v_i$ are linear combination of the edges not in $A$. Therefore  these vertices have support which intersects the support of the vertices in $S_A$ only in $e_1$, hence we must have $C(x)-C(y)=\alpha e_1^2$ for some $\alpha$. Applying the mass $\eta$ we see that $\eta(C(y))=0,\ \eta(C(x))=-1$ hence $\alpha=-1.$

We now apply the rules of the operator $C$ to $x $ red, $y$ black  $$ -e_1^2=C(-2e_1)=-C(-y )+C(x )+xy=-C( y)+y^{(2)}+C(x)+xy$$and get that $C(x)-C(y)=-e_1^2-y^{(2)}- x y$.  Thus if  the  graph is resonant we must have    $  y^{(2)}+ x y=0.$  One easily verifies that $  y^{(2)}$ is an irreducible polynomial unless $y$ is of the form
   $y =\beta(e_i-e_j)$. In this case from  the factorization  $  y^{(2)}=- x y$ and the fact that $\eta(x)=-2$ we deduce that $x =-e_i-e_j$.  Since $x-y=-2e_1$  we must have that $\beta=\pm 1$  and if $\beta=1$ we have  $e_i=e_1,\ x +y = -2e_j$.  If $\beta=-1$ we have  $e_j=e_1,\ x +y = -2e_1$.
\end{proof}
We have thus verified  that the graph is not--allowable by Definition \ref{ilpunto1} for the two extremes of the segment $S_A$, a similar analysis would apply to $S_C$.

\subsection{The extra edge}

We treat now case 1) with an extra edge $E=\vartheta e_1-e_h,\ \vartheta=\pm1$.  We have the function $\zeta$  such that $\zeta(e_1)=1,\ \zeta(\ell_i)=0,\ \forall i$ and $\zeta(E)=2\vartheta $.  In this case the  even loop is divided into two  odd paths. We divide the indices different from the two critical indices $1,h$  in  two blocks $A=(2,\ldots,h-1),\ B=(h+1,\ldots,k-1)$ and argue as in the previous section.

From Corollary \ref{ePo} it follows that, either the extra edge  is outside  the segment spanned by the $\ell_i$, this may happen if we are in a situation as (up to symmetry between $A,B$)

$$ a) \xymatrix{   &&\\ \ar@{-}[r]^{E}  &   \ar@{-}[rr]^{S_B} &&  \ar@{-}[rr]^{S_A}  &&   } \quad b) \xymatrix{  && \ar@{-}[d]^{E}  & \\  \ar@{-}[rr]^{S_B} &&  \ar@{-}[rr]^{S_A}  &&   } $$

In these cases the edge $E$ can be removed and the graph is not minimal. Otherwise it could separate the two segments spanned by the two blocks $A,B$ or it could appear in one or both of these segments according to the following pictures:

$$ c) \xymatrix{  &&\ar@{-}[dd]^{S_A}  && &  &\\ &&&&&\\ \ar@{-}[rr]^{S_B} &&\ar@{-}[r]^{E}  &\ar@{-}[dd]^{S_A}\ar@{-}[rr]^{S_B} &&\\  &&&&   \\  &&&& \\&&&&  } $$
$$ d) \xymatrix{  &&\ar@{-}[dd]^{S_A}  && &  &\\ &&&&&\\ \ar@{-}[rr]^{S_B} &&\ar@{-}[r]^{E}  & \ar@{-}[rr]^{S_B} &&\\  &&&&   \\  &&&& \\&&&&  } \quad  e) \xymatrix{  &&\ar@{-}[dd]^{S_B}  && &  &\\ &&&&&\\ \ar@{-}[rr]^{S_A} &&\ar@{-}[r]^{E}  & \ar@{-}[rr]^{S_A} &&\\  &&&&   \\  &&&& \\&&&&  } $$

Cases d), e)  are   special cases of  c), and in fact follow from previous results, so we treat   case c).

  \subsubsection{ $E=e_1-e_h$ is black} We look at the picture c).$$ c) \xymatrix{  &&a\ar@{-}[dd]^{S_A^0}  && &  &\\ &&&&&\\ y\ar@{-}[rr]^{S_B^0} &&r\ar@{-}[r]^{E}  &z\ar@{-}[dd]^{S_A^1}\ar@{-}[rr]^{S_B^1} &&x\\  && &&   \\  &&&b& \\&&&&  } $$

   We can fix the signs $\delta_i$ so that $$\sum_{i=1}^{h-1}\delta_i\ell_i=-e_1-e_h,\quad   \sum_{i=h }^{k}\delta_i\ell_i= e_1+e_h.$$
  Of the  two vertices $y,x$ one is black   the other is red.
  The same for $a,b$.

{\bf  Case 1:\quad  $a,y$  black $b,x$ red}\quad  gives for the various paths:
  $$  S_B^1=z+x,\ S_B^0=y,\ S_A^0=a,\ S_A^1=z+b$$$$y=\sum_{j\in S_B^0}\sigma_j\lambda_j\ell_j= \delta\sum_{j\in S_B^0}\delta_j\ell_j,\quad x=-E-\sum_{j\in S_B^1}\sigma_j\lambda_j\ell_j=-E-\delta\sum_{j\in S_B^1}\delta_j\ell_j$$
$$a=\sum_{j\in S_A^0}\sigma_j\lambda_j\ell_j= \delta'\sum_{j\in S_A^0}\delta_j\ell_j,\quad b=-E-\sum_{j\in S_A^1}\sigma_j\lambda_j\ell_j=-E-\delta'\sum_{j\in S_A^1}\delta_j\ell_j$$
  $$x-y= -\delta \sum_{i\in B}\delta_i\ell_i-E= -\delta (e_1+e_h)-e_1+e_h ,$$$$\ b-a= -\delta' \sum_{i\in B}\delta_i\ell_i-E=  \delta'(e_1+e_h)-e_1+e_h$$ for two signs $\delta,\delta'$. Applying the mass $\eta$  we see that $\delta= 1,\delta'=-1$ hence
   $  x-y=  b-a = -2e_1 $     is the relation among the vertices of the graph.   By resonance
  $$  x -y =  b -a  ,\implies C(x)-C(y)= C(b)-C(a) .$$  We now apply the rules of the operator $C$ to $x $ red, $y$ black  $$ -e_1^2=C(-2e_1)=-C(-y )+C(x )+xy=-C( y)+y^{(2)}+C(x)+xy$$and get that $C(x)-C(y)=-e_1^2+y^{(2)}+ x y=-e_1^2+y^{(2)}+ (y-2e_1) y $.  On the other hand this element is a quadratic polynomial in the elements $e_i$ appearing in the edges of $B$ which must be equal by the resonance relation to a quadratic polynomial in the elements $e_i$ appearing in the edges of $A$. Now the edges of $A$ have in common with the edges of $B$ only the elements $e_1,e_h$, so $-e_1^2+y^{(2)}+ (y-2e_1) y$  must contain only these indices, it easily follows that  if an element $e_i,\ i\neq 1,h$ appears in $y$ with coefficient $\alpha$ we must have $\alpha=-1$, moreover if $e_i$ appears in $y$ no $e_j,\ j\neq 1$ can appear in $y$ otherwise we have a mixed term in $y^2$  of type $2e_ie_j$ which does not cancel.
  Next  we can only have $y=e_1-e_i$  in order to cancel the mixed term  from $-2e_1  y$.

  In this case the segment from $y$ to $x$ has value $x-(-y)=x-y+2y= -2e_1+2(e_1-e_i)=-2e_i$ and the result is proved.\smallskip

  The other possibility is that $y=\alpha(e_1-e_h)$ for some $\alpha$, since $y$ is in any case a sum of edges in  $B$    this is actually not  possible by computing the value of $\zeta$.
 \smallskip

 {\bf   $a,y$  red $b,x$ black}\quad  is symmetric to the previous case.\smallskip

 {\bf  Case 2:\quad   $a,x$  black $b,y$ red}\quad  gives, as in the previous case, the value $b-a=-2e_1$. Then:

  $$  S_B^1+z=x,\ S_B^0=y,\ S_A^0=a,\ S_A^1-z=b$$$$y=-\sum_{j\in S_B^0}\sigma_j\lambda_j\ell_j= -\delta\sum{j\in S_B^0}\delta_j\ell_j,$$$$ x= E+\sum_{j\in S_B^1}\sigma_j\lambda_j\ell_j=E+\delta\sum{j\in S_B^1}\delta_j\ell_j$$
  $$x-y=  \delta \sum_{i\in B}\delta_i\ell_i+E=  \delta(e_1+e_h)+e_1-e_h , $$ by mass  $\delta=1$ and $y-x=-2e_1$, we argue as before.
 \subsubsection{ $E=-e_1-e_h$ is red}

    In this case the  even loop is divided into two  even paths. We can fix the signs $\delta_i$ so that $$\sum_{i=1}^{h-1}\delta_i\ell_i= e_1-e_h,\quad   \sum_{i=h }^{k}\delta_i\ell_i= -e_1+e_h.$$

    We still have a situation as in the previous analysis  with some changes.

  {\bf   Case 1:\quad  $a,y$  black $b,x$ red}\quad  gives for the various paths:
  $$y=\sum_{j\in S_B^0}\sigma_j\lambda_j\ell_j= \delta\sum_{j\in S_B^0}\delta_j\ell_j,\quad x= E-\sum_{j\in S_B^1}\sigma_j\lambda_j\ell_j= E-\delta\sum_{j\in S_B^1}\delta_j\ell_j$$
$$a=\sum_{j\in S_A^0}\sigma_j\lambda_j\ell_j= \delta'\sum_{j\in S_A^0}\delta_j\ell_j,\quad b= E-\sum_{j\in S_A^1}\sigma_j\lambda_j\ell_j= E-\delta'\sum_{j\in S_A^1}\delta_j\ell_j$$
  $$x-y= -\delta \sum_{i\in B}\delta_i\ell_i+E= -\delta (-e_1+e_h)-e_1-e_h ,$$$$ b-a= -\delta' \sum_{i\in B}\delta_i\ell_i+E=  \delta'(e_1-e_h)-e_1-e_h$$ for two signs $\delta,\delta'$.  Thus $x-y,b-a$ can take the values $-2e_1,-2e_h$. If they take the same value we have   $  x-y=  b-a  $  and we argue as in the previous section.  Otherwise up to symmetry we may assume that $  x-y= -2e_1,\  b-a= -2e_h $ and $x-y=b-a+2z$   is the relation among the vertices of the graph.  By resonance
  $$  x -y =  b -a +2z ,\implies C(x)-C(y)= C(b)-C(a) +2C(E)=C(b)-C(a) -2e_1e_h .$$  We now apply the rules of the operator $C$ to $x $ red, $y$ black  $$ -e_1^2=C(-2e_1)=-C(-y )+C(x )+xy=-C( y)+y^{(2)}+C(x)+xy$$and get that $C(x)-C(y)=-e_1^2+y^{(2)}+ x y=-e_1^2+y^{(2)}+ (y-2e_1) y $.  On the other hand this element is a quadratic polynomial in the elements $e_i$ appearing in the edges of $B$ which must be equal by the resonance relation to a quadratic polynomial in the elements $e_i$ appearing in the edges of $A$. Now the edges of $A$ have in common with the edges of $B$ only the elements $e_1,e_h$, so $-e_1^2+y^{(2)}+ (y-2e_1) y$  must contain only these indices, it easily follows that  if an element $e_i,\ i\neq 1,h$ appears in $y$ with coefficient $\alpha$ we must have $\alpha=-1$, moreover two distinct elements of this type cannot appear otherwise we have a mixed term in $y^2$  of type $2e_ie_j$ which does not cancel.
  Next  we can only have $y=e_1-e_i$  in order to cancel the mixed term  from $-2e_1  y$.

  In this case the segment from $y$ to $x$ has value $x-(-y)=x-y+2y= -2e_1+2(e_1-e_i)=-2e_i$ and the result is proved.\smallskip

  The other possibility is that $y=\alpha(e_1-e_h)$ for some $\alpha$, this is possible only if $\alpha=\pm 1$ and $y=\sum_{j\in S_B }\sigma_j\lambda_j\ell_j $ all edges are involved, and $x=z$.  Then the segment from $y$ to $x=z=E$  has values $E-y=-e_1-e_h\pm (e_1-e_h)=-2e_1,\ -2e_h$.
 \smallskip

 {\bf   $a,y$  red $b,x$ black}\quad  is symmetric to the previous case.\smallskip

 {\bf  Case 2:\quad   $a,x$  black $b,y$ red}\quad  gives as in the previous case the value $b-a=-2e_1,\ -2e_h$. Then:

   $$y=-\sum_{j\in S_B^0}\sigma_j\lambda_j\ell_j= -\delta\sum{j\in S_B^0}\delta_j\ell_j,$$$$ x= -E+\sum_{j\in S_B^1}\sigma_j\lambda_j\ell_j=-E+\delta\sum{j\in S_B^1}\delta_j\ell_j$$
  $$y-x=  -\delta \sum_{i\in B}\delta_i\ell_i+E=  -\delta(-e_1+e_h)-e_1-e_h \in\{-2e_1,-2e_h\}. $$
  We argue again as before.
\vfill \eject

 \part{The irreducibility Theorem}
 \section{The matrices}  The operator $ad(N)=2\ii  Q$  under study acts on the space spanned by the frequency basis and here it decomposes into blocks corresponding to the connected components of the Cayley graph $G_X$ restricted by Defnition \ref{gliegg}(Theorem \ref{iblo}).

 For each such component $A$  we have seen that $Q$ acts as a scalar $K(a)$  plus a matrix $C_A$  homogeneous of degree 1 in the variables $\xi_i$. According to Formulas \eqref{maent}, \eqref{maent1}, \eqref{maent2}
the entries of $C_A=(c_{a,b})$ are the following.   If $a\in A,\ a=\sum_ia_ie_i\in \Z^m$ the diagonal entry $c_{a,a}=-a(\xi)=-\sum_ia_i\xi_i$.
 If $a\in A,\ a=(\sum_ia_ie_i)\tau\in \Z^m\tau$ the diagonal entry $c_{a,a}= a(\xi)= \sum_ia_i\xi_i$.

 If $a,b\in A$ are not connected by an edge $c_{a,b}=0$.  If $a,b\in \Z^m$ are connected by a black edge $e_i-e_j$  then $c_{a,b}=2\sqrt{\xi_i\xi_j}$, if $a,b\in \Z^m\tau$ are connected by a black edge $e_i-e_j$  then $c_{a,b}=-2\sqrt{\xi_i\xi_j}$, finally if $a,b$ are connected by a red edge $-e_i-e_j$  then one of them is in $\Z^m$ the other in $\Z^m\tau$ and we have $c_{a,b}=-2\sqrt{\xi_i\xi_j}$ if $a\in\Z^m,\ b\in \Z^m\tau$ and   $c_{a,b}=2\sqrt{\xi_i\xi_j}$ in the other case.  If red edges are not present the matrix is symmetric.

 Notice then some rules, if  $b\in\Z^m$  we have  $C_{Ab}=C_A-b(\xi)Id$, finally $C_{A\tau}=-C_A$.

By Lemma \ref{spr}, when we expand the characteristic polynomial of such a matrix the square roots disappear and we get a polynomial, denoted $\chi_A(t)$ (or sometimes just $\chi_A$) monic in $t$ and with coefficients polynomials in the variables $\xi_i$ with integer coefficients.  Our goal is to prove that \begin{theorem}[irreducibility theorem]
\label{irter}If $A$ is a non--degenerate allowable graph in $G_X$  the  polynomial  $\chi_A(t)$ is irreducible  as  polynomial in $\Z[t,\xi]$. \end{theorem}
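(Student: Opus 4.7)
The plan is to prove Theorem \ref{irter} by induction on the number $m$ of parameters $\xi_i$ appearing in the entries of the matrix $C_A$. In the base case $m$ is small, and the inductive reduction below eventually forces us to verify irreducibility on a finite list of basic combinatorial graphs; these will be checked by hand or by a symbolic computation, consistent with the computer algebra step announced in the introduction.

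For the inductive step we exploit the specialization lemma (Corollary \ref{laffa}): setting any single variable $\xi_i=0$ in $C_A$ yields a block diagonal matrix whose blocks are precisely the matrices $C_{A_k}$ attached to the connected components $A_1,\ldots,A_r$ of the subgraph of $A$ obtained by deleting every edge whose marking involves the symbol $e_i$. Each $A_k$ is again a non--degenerate allowable graph in $G_X$, but on strictly fewer parameters, so by the inductive hypothesis every $\chi_{A_k}(t)$ is irreducible in $\Z[\xi_{\neq i},t]$. Combined with the separation Lemma \ref{seplem} applied inductively, this yields a factorization
\begin{equation*}
\chi_A(t)\big|_{\xi_i=0}\;=\;\prod_{k=1}^{r}\chi_{A_k}(t)
\end{equation*}
into pairwise distinct monic irreducible factors.

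Now suppose, for contradiction, that $\chi_A=P\cdot Q$ is a nontrivial factorization in $\Z[\xi_1,\ldots,\xi_m,t]$ with both $P,Q$ monic in $t$ of positive $t$--degree. Since $\Z[\xi_{\neq i},t]$ is a UFD and the $\chi_{A_k}$ are distinct irreducibles, specialization at $\xi_i=0$ distributes the factors: there is a bipartition $\{1,\ldots,r\}=I_i\sqcup I_i^c$ with $P|_{\xi_i=0}=\prod_{k\in I_i}\chi_{A_k}$ and $Q|_{\xi_i=0}=\prod_{k\in I_i^c}\chi_{A_k}$. Repeating the argument with a second index $j$ produces a second bipartition of the components of the $j$--deleted graph. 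The heart of the argument is then the combinatorial verification that, for every non--degenerate allowable $A$ with enough parameters, one can choose $i$ and $j$ so that no pair of such bipartitions can simultaneously arise from a single global factorization of $\chi_A$: one produces inconsistencies either in $t$--degree, in specific coefficients in $t$, or by tracking a distinguished vertex whose ``fate'' under the two specializations is forced into opposite blocks of the putative global bipartition. This forces $P$ or $Q$ to be trivial, hence $\chi_A$ is irreducible.

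I expect the main obstacle to be the residual graphs for which every pair of specializations produces compatible bipartitions: these tend to be small or highly symmetric graphs in which the induction carries no discriminating information, and they must be isolated as base cases and dispatched by direct (computer--algebraic) verification. A secondary technical point is to track carefully the effect of the specialization on the \emph{red} edges $(-e_i-e_j)\tau$ as well as on the black ones $e_i-e_j$, and to check that each sub--block $A_k$ inherits the non--degeneracy and allowability hypotheses from $A$ (the latter being where the restriction of Definition \ref{ilpunto1} together with Proposition \ref{ilpunto0} enters in an essential way), so that the inductive hypothesis legitimately applies to each piece.
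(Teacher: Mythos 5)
Your overall strategy coincides with the paper's: induct on the number of parameters, use Corollary \ref{laffa} to write $\chi_A(t)|_{\xi_i=0}$ as the product of the (inductively irreducible) characteristic polynomials of the components of the $i$--deleted graph, compare the factorizations obtained from two different specializations $\xi_i=0$ and $\xi_j=0$, and dispatch a residue of small cases by computer algebra. The points you flag at the end (inheritance of non--degeneracy and allowability by the components, the treatment of red edges) are real but minor; they are settled by the proposition preceding Corollary \ref{laffa} and by the fact that allowability is a condition on pairs of vertices, hence passes to subgraphs.

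The genuine gap is that the step you label ``the heart of the argument'' is in fact the entire content of the proof, and your description of it (inconsistencies in $t$--degree, in specific coefficients, or in the fate of a distinguished vertex) does not match the mechanism that actually works. What the paper does is prove Lemma \ref{lem0}: if two blocks $A,B$ avoiding the indices $i,j$ are forced by a putative global factor $U$ to have congruent characteristic polynomials modulo $\xi_i=\xi_j=0$, then the separation lemma \ref{seplem} \emph{together with the affine independence of the vertices} (i.e.\ non--degeneracy, which is where Theorem \ref{MM} enters) forces $|A|=|B|=1$ and forces the two vertices to differ by $n_ie_i+n_je_j$ with only a handful of admissible values. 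Each such conclusion is then killed either because it creates an edge outside the maximal tree (contradicting Lemma \ref{dumt}), or by the parity constraints of Corollary \ref{corlem0} and Lemma \ref{parita}, or by allowability. Orchestrating this requires classifying how the $2n$ index occurrences distribute over the $n$ edges of a maximal tree (Lemma \ref{le4p}) and a long case analysis culminating in the four Mathematica checks; none of this is routine, and a degree or coefficient count alone will not produce the contradiction. A smaller inaccuracy: the pairwise distinctness of the factors $\chi_{A_k}$ does not follow from the separation lemma alone, since a component and its image under $\tau$ share a characteristic polynomial --- excluding that coincidence is exactly where Definition \ref{ilpunto1} is invoked (see the proof of Proposition \ref{ello}); fortunately the bipartition argument only needs unique factorization in $\Z[\xi_{\neq i},t]$, not distinctness.
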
 We prove furthermore that the graph $A$ is determined by $\chi_A(t)$, this we call the {\em separation lemma} \ref{seplem}.

 In fact in this form the statement is not true, we need to use the fact that mass is conserved.  This is enough for the dynamical consequences. In algebraic terms the  conservation of mass consists in restricting to the coset of $G_2$ (one of the connected components of the Cayley graph) of elements $a, a\tau\in G,\ a\in \Z^m,\ \eta(a)=-1$.    We also need to use systematically Theorem \ref{MM}  which tells us that we can restrict to those graphs in which the vertices are affinely independent.
 \begin{remark}
The hypothesis that the graph is non--degenerate is necessary.  In the simple example of
$$ \xymatrix{&0\ar@{->}^{1,  2}[r]  &e_2-e_1\ar@{->}^{1,  2}[r]   &2e_2-2e_1}$$one easily verifies that the characteristic polynomial is not irreducible.
\end{remark}
On the other hand it is likely that the condition to be allowable is not necessary in order to prove irreducibility and separation. To avoid it complicates the proofs and, since we do not need the stronger result, we have not tried to discuss it.
 \section{Irreducibility and separation\label{irresp}} \subsection{Preliminaries} Observe first that,   given $g\in G,  A\subset G$ we have that  $\chi_A  (t)$ is irreducible if and only if $\chi_{Ag}  (t)$ is irreducible.

  Consider a projection $\pi_i:\Z^m\rtimes \Z/(2)\to \Z^{m-1}\rtimes \Z/(2)$  where  we remove the $i^{th}$ coordinate $\pi_i  [(a_1,  \ldots,  a_m),\delta]\mapsto   [(a_1,  \ldots,  \check a_i,  \ldots a_m),\delta]$.   Take now  a set $A\subset \Z^m\rtimes \Z/(2)$ of vertices  and consider the graph  obtained from $\Gamma_A$ by removing all the edges which contain $i$ in its marking,   call this new graph $\Gamma_A^i$.  Even if $A$ is connected this new graph $\Gamma_A^i$ may well not be connected.   We now claim
\begin{proposition}
If $A$ is connected the map $\pi_i$,   restricted to $\Gamma_A^i$,
is injective and a graph isomorphism with $\Gamma_{\pi_i  (A)}$,
a graph in $\Z^{m-1}\rtimes \Z/(2)$.

If $A$ is non degenerate each connected component of  $\Gamma_{\pi_i  (A)}$ is non degenerate.
\end{proposition}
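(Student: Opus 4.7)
The map $\pi_i$ is a group homomorphism from $G:=\Z^m\rtimes\Z/(2)$ to $G':=\Z^{m-1}\rtimes\Z/(2)$, since dropping the $i$-th coordinate commutes with the sign change $\tau$. Under it each non-$i$ generator $e_j-e_k$ or $(-e_j-e_k)\tau$ (with $j,k\neq i$) is sent to its analogue in the Cayley graph of $G'$. A direct inspection of the four edge types shows that the quantity $\sigma(a)\,a_i$ is preserved along every non-$i$ edge (trivially for a black non-$i$ edge; for a red non-$i$ edge both $\sigma$ and $a_i$ flip sign), hence is constant on each connected component of $\Gamma_A^i$; call this constant $c$.

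\textbf{Injectivity and graph isomorphism.} Since $A$ is connected in $\Gamma_A$ it lies in a single right coset of $G_{-2}$, so the mass $\eta$ is constant on each color class of $A$. If $a,b\in A$ satisfy $\pi_i(a)=\pi_i(b)$, they share color, all non-$i$ coordinates, and $\eta$, which forces $a_i=b_i$, so $a=b$. For edges: any non-$i$ edge $(a,xa)$ of $\Gamma_A^i$ projects to the Cayley edge $(\pi_i(a),\pi_i(x)\pi_i(a))$ of $\Gamma_{\pi_i(A)}$. Conversely, given an edge $(\pi_i(a),u)$ of $\Gamma_{\pi_i(A)}$, let $c\in A$ be the unique preimage of $u$. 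Then $c\,a^{-1}\in\pi_i^{-1}(X')\cap G_{-2}$; the mass constraint forces the $i$-th coordinate of $c\,a^{-1}$ to vanish, so $c\,a^{-1}\in X$ and $(a,c)$ is an edge of $\Gamma_A^i$.

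\textbf{Non-degeneracy of components.} Let $A'$ be the component of $\Gamma_A^i$ that maps isomorphically onto a given component $B$ of $\Gamma_{\pi_i(A)}$. Since $A'\subset A$ is affinely independent in $\Z^m$, the affine rank of $B$ in $\Z^{m-1}$ equals $|A'|-1$ unless $e_i$ lies in the linear span of $\{a-a_0:a\in A',\,a\neq a_0\}$. By the invariant we have $a_i=\sigma(a)\,c$ on $A'$. If $c=0$ or $A'$ is monochromatic, then $A'$ lies in a single hyperplane $\{x_i=\mathrm{const}\}$ on which $\pi_i$ is an affine isomorphism, and the affine independence of $B$ follows immediately.

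The remaining case $c\neq 0$ with both colors present is the main technical obstacle. A hypothetical degeneracy-producing relation $\sum \alpha_\ell v_\ell=\beta e_i$ with $\sum\alpha_\ell=0$, $\beta\neq 0$, $v_\ell\in A'$ yields, after applying $\eta$ and using $\eta(v_\ell)=\sigma(v_\ell)-1$, the equality $\beta=\sum\alpha_\ell\sigma(v_\ell)=\beta/c$, forcing $c=1$ and consequently $\sum_{j\neq i}a_j=-1$ for every $a\in A'$. My plan is to rule out this residual configuration by translating the relation into an avoidable resonance on $A$ in the sense of \S\ref{TR}; the standing allowability hypothesis on $A$ together with Theorem~\ref{MM} then excludes such a resonance, so $e_i\notin\mathrm{span}(A'-a_0)$ and $B$ is non-degenerate. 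I expect the bulk of the remaining work to lie in this final case analysis.
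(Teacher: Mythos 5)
Your treatment of the first two claims is correct and essentially identical to the paper's: injectivity because the mass, constant on each colour class of the connected $A$, reconstructs the dropped coordinate, and the edge correspondence because $\pi_i$ is a group homomorphism matching the non-$i$ generators of $X_m$ bijectively with $X_{m-1}$ (your mass argument for lifting an edge is a correct way of saying this). The non-degeneracy claim, however, you do not prove: you explicitly leave open the case $c\neq 0$ with both colours present, and the plan you sketch for it cannot succeed. Theorem \ref{MM} concerns \emph{degenerate} resonant graphs, whereas your $A$ is non-degenerate and admits no relation among its vertices at all, so there is no ``resonance on $A$'' to exclude; and the Proposition carries no allowability hypothesis. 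Worse, the configuration you are trying to rule out actually occurs: take $m=3$, $i=3$ and $A=\{0,\ e_3-e_1,\ -e_2-e_3,\ e_3-e_2\}$ with markings $e_3-e_1$ (black), $-e_1-e_2$ (red), $e_1-e_2$ (black). This $A$ is connected and non-degenerate, the component $A'=\{e_3-e_1,\ -e_2-e_3,\ e_3-e_2\}$ of $\Gamma_A^3$ has $c=1$ and both colours, and $(-e_2-e_3)-(e_3-e_2)=-2e_3$, so $e_3$ does lie in the span of the differences of the first coordinates; indeed the two vertices $-e_2-e_3$ and $e_3-e_2$ have the \emph{same} unsigned projection $-e_2$. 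No argument will exclude this.

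What you are missing is which vectors the degeneracy notion refers to. As in \S\ref{SEPA} and in the proof of Lemma \ref{lem0}, the list attached to a component is $w_a=\sigma(a)\,a$ (equivalently, by Remark \ref{maxt}, degeneracy is linear dependence of the markings of a maximal tree); only these transform by a common translation under a change of root, so only for these is affine independence well defined. With this reading your own invariant closes the argument in one line and in every case: $(w_a)_i=\sigma(a)a_i=c$ for all $a\in A'$, so the $w_a$ lie in the single hyperplane $\{x_i=c\}$, on which $\pi_i$ is an affine isomorphism, and since colours are preserved $\pi_i(w_a)$ is exactly the vector attached to the vertex $\pi_i(a)$ of $\Gamma_{\pi_i(A)}$; affine independence of $\{w_a\}_{a\in A'}$, inherited from the non-degenerate $A$, therefore passes to the component. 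Equivalently and even more directly: every marking of an edge of $\Gamma_A^i$ has vanishing $i$-th coordinate, and $\pi_i$ is injective on the hyperplane $\{x_i=0\}$, so a maximal tree of $A'$ has linearly independent markings if and only if its image does. (In the example above the signed vectors of $A'$ are $-e_1,\ e_2,\ -e_2$ after projection, which are affinely independent.) This is what the paper means by ``follows easily from the definitions''; your split into the cases $c=0$, monochromatic, and the residual one is an artifact of working with the unsigned first coordinates, and the residual case is where the statement in your reading is simply false.
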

\begin{proof}
We  know that the mass $\ell=\eta  (a)$  depends only on
the color of $a$ so that we have $a_i=\eta  (a)-\eta  (\pi_i (a))$
and thus if $a,  b$ are black vertices  (or red vertices), $\pi_i
(a)=\pi_i  (b)$:$\eta  (a)=\eta  (b)$ hence $a_i=b_i\implies a=b$.
Otherwise,   if $a$ is black,   $b$ is red then it is clearly
$\pi_i (a)\neq\pi_i  (b)$ because $\pi_i  (a)$ is black,   $\pi_i
(b)$ is red. If we decompose $X=X_m$ into the elements containing
the index $i$ and the complement $X_m^i$ we see that $\pi_i$
establishes a 1--1 correspondence between $X_m^i$ and $X_{m-1}$
from which the second claim since $\pi_i$ is a group homomorphism.  The third claim follows easily from
the definitions.
\end{proof}
A simple corollary of this proposition is that.
\begin{corollary}\label{laffa}
If we set $\xi_i=0$ in the matrix $C_A$ we have the matrix
$C_{\pi_i  (A)}$,   hence
$$\chi_A  (t)|_{\xi_i=0}=\chi_{\pi_i  (A)}  (t) $$
 Let $B_1,  \ldots,  B_k$ be the connected components of  $\pi_i  (A)$.  We have  $$\prod_{j=1}^k\chi_{B_j}  (t)=\chi_{\pi_i  (A)}  (t) =\chi_A  (t)|_{\xi_i=0}. $$
\end{corollary}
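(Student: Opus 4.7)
The plan is to verify the statement essentially by direct inspection of the entries of $C_A$ given in \eqref{maent}, \eqref{maent1}, \eqref{maent2}, and then combine this with the proposition on $\pi_i$ just proved.

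First I would check the specialization at the level of matrix entries. The diagonal entries of $C_A$ are of the form $\pm\sum_j a_j \xi_j$ for a vertex $a=\sum_j a_j e_j\in A$ (with a sign depending on whether $a\in\Z^m$ or $a\in\Z^m\tau$). Setting $\xi_i=0$ removes the $i$-th summand, producing $\pm\sum_{j\neq i} a_j\xi_j$, which is precisely the diagonal entry associated to $\pi_i(a)$ in $C_{\pi_i(A)}$. For the off-diagonal entries, each nonzero $c_{a,b}$ is $\pm 2\sqrt{\xi_k\xi_l}$ for an edge marked either by $e_k-e_l$ or by $(-e_k-e_l)\tau$. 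If the marking involves the index $i$ (that is, $k=i$ or $l=i$), the entry vanishes at $\xi_i=0$; otherwise the entry is unchanged and coincides, by definition, with the entry $c_{\pi_i(a),\pi_i(b)}$ in $C_{\pi_i(A)}$. In other words, specializing $\xi_i=0$ exactly kills the rows-and-columns matching of those edges that carry the label $i$, and this is precisely what one does to pass from $\Gamma_A$ to $\Gamma_A^i$.

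Next I would invoke the preceding proposition: $\pi_i$ restricted to $\Gamma_A^i$ is a bijection onto $\Gamma_{\pi_i(A)}$. Hence, after reordering rows and columns by this bijection, the specialized matrix $C_A|_{\xi_i=0}$ is literally equal to $C_{\pi_i(A)}$. This gives the first identity
\[
\chi_A(t)\big|_{\xi_i=0}=\chi_{\pi_i(A)}(t).
\]

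For the factorization, I would note that although $A$ is connected in $G_X$, the graph $\Gamma_{\pi_i(A)}$ need not be; let $B_1,\dots,B_k$ be its connected components. The entries of $C_{\pi_i(A)}$ between vertices in distinct $B_j$ are zero (there is no edge between them), so after a permutation the matrix is block diagonal with blocks $C_{B_j}$. The characteristic polynomial of a block-diagonal matrix is the product of characteristic polynomials of the blocks, giving
\[
\chi_{\pi_i(A)}(t)=\prod_{j=1}^{k}\chi_{B_j}(t),
\]
which combined with the previous identity yields the Corollary. There is no genuine obstacle here: the entire argument is a bookkeeping exercise reading off the definitions, once the preceding proposition on the injectivity of $\pi_i$ on $\Gamma_A^i$ is in hand.
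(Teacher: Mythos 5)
Your proposal is correct and is exactly the argument the paper intends: the paper states this as "a simple corollary" of the preceding proposition with no written proof, and your entry-by-entry verification of the specialization, followed by the identification via the isomorphism $\pi_i:\Gamma_A^i\to\Gamma_{\pi_i(A)}$ and the block-diagonal factorization of the characteristic polynomial, is precisely the bookkeeping the authors leave to the reader.
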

As a consequence,   we have the following inductive step.
\begin{corollary}
\label{iste} Assume that $A$ is non degenerate and that we have already proved the irreducibility theorem  for $m-1$ or for  $n<|A|$.  We deduce that the factors $\chi_{B_j}  (t)$ of $\chi_{\pi_i  (A)}  (t) $ are the irreducible monic factors of $\chi_A  (t)|_{\xi_i=0}$.
\end{corollary}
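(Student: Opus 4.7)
The plan is to combine Corollary \ref{laffa} with the inductive hypothesis on Theorem \ref{irter}. By Corollary \ref{laffa} we already have the identity of polynomials
\[
\chi_A(t)\bigm|_{\xi_i=0} \;=\; \chi_{\pi_i(A)}(t) \;=\; \prod_{j=1}^k \chi_{B_j}(t).
\]
Each factor $\chi_{B_j}(t)$ is monic in $t$ by Lemma \ref{spr}, and the ring of polynomials in $t$ over $\Z[\xi_1,\dots,\widehat{\xi_i},\dots,\xi_m]$ is a UFD in which monic polynomials factor uniquely into monic irreducibles. Thus it suffices to show that every single $\chi_{B_j}(t)$ is itself irreducible in this smaller ring.

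To this end I would first invoke the Proposition immediately preceding Corollary \ref{laffa}, which guarantees that every connected component $B_j$ of $\pi_i(A)$ is non-degenerate. I would then verify that the allowability condition of Definition \ref{ilpunto1} passes from $A$ to each $B_j$: since $\pi_i$ is a graph isomorphism on $\Gamma_A^i$, any pair of vertices in $B_j$ whose difference equals $(-2e_r,\tau)$ or $(-3e_r+e_s,\tau)$ would lift to a pair in $A$ whose difference has the same form outside the $i$-coordinate, and the mass-preservation convention built into Definition \ref{gliegg} pins the missing coordinate down, reproducing a forbidden pair already in $A$. Hence each $B_j$ inherits both non-degeneracy and allowability from $A$, and therefore satisfies the hypotheses of Theorem \ref{irter}.

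At this stage the inductive hypothesis applies in one of two ways. If $\pi_i(A)$ is disconnected, so $k\geq 2$, then $|B_j|<|A|$ for every $j$, and the irreducibility of $\chi_{B_j}(t)$ follows from the induction on $n<|A|$, carried out in the same number of variables. If instead $\pi_i(A)$ is connected then $k=1$ and $B_1=\pi_i(A)$ is a graph inside $\Z^{m-1}\rtimes\Z/(2)$; its characteristic polynomial lies in the polynomial ring over $\Z$ in the $m-1$ surviving parameters, and the irreducibility is given by the inductive hypothesis for $m-1$ variables.

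The only potentially delicate point in this otherwise routine assembly is the descent of the allowability condition under $\pi_i$. Once that combinatorial check is in place, everything else reduces to bookkeeping: Corollary \ref{laffa} provides the factorization, Lemma \ref{spr} provides monicity, and the inductive hypothesis provides irreducibility of each factor, which taken together identify $\chi_{B_1}(t),\dots,\chi_{B_k}(t)$ as the monic irreducible factors of $\chi_A(t)|_{\xi_i=0}$.
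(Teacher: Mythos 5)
Your proof is correct and follows the same route the paper intends: the paper states this corollary without proof, as an immediate consequence of Corollary \ref{laffa} (the factorization of $\chi_A(t)|_{\xi_i=0}$ into the $\chi_{B_j}(t)$) together with the inductive hypothesis applied to the non-degenerate connected components $B_j$, which live in fewer variables and/or have fewer vertices. Your extra verification that allowability descends to each $B_j$ --- using the mass to pin down the missing $i$-th coordinate of a would-be forbidden pair --- is a detail the paper leaves implicit but which is indeed needed before Theorem \ref{irter} can be invoked on the components.
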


We want to prove Theorem \ref{Lase} by induction as follows.
We assume irreducibility and separation in dimension $n-1$ and prove first the  separation in dimension $n $ and  finally irreducibility   in dimension $n. $

 Take a connected $A$ and let $\ell$ be the mass of a
black vertex of $A$,   then the mass of a red vertex is
$-2-\ell$.
\begin{lemma}[Parity test]\label{parita}
\begin{enumerate}
\item If we compute  $t$ at a number $g\not\cong\ell\ \mod  (2)$,    we have  $\chi_A  (g)\neq 0.$

\item If a linear form $t+\sum_ia_i\xi_i,  \ a_i\in\Z$ divides $\chi_A  (t)$ we must have $\sum_ia_i \cong\ell\ \mod  (2)$.
\end{enumerate}
\end{lemma}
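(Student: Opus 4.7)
The plan is a straightforward reduction modulo~$2$. The key observation is that every off-diagonal entry of $C_A$ has the form $\pm 2\sqrt{\xi_i\xi_j}$, so it carries a factor of~$2$. Consequently, working in the polynomial ring $\Z[\sqrt{\xi_1},\ldots,\sqrt{\xi_m}][t]$ reduced modulo~$2$, the matrix $tI - C_A$ becomes diagonal, and hence
\[
\chi_A(t) \;\equiv\; \prod_{v\in A}\bigl(t - c_{v,v}\bigr) \pmod 2.
\]
Since by Lemma~\ref{spr} both sides actually lie in $\Z[\xi_1,\ldots,\xi_m,t]$, this congruence already holds in $\Z[\xi,t]$.

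Next I would evaluate the congruence at $\xi_1=\cdots=\xi_m=1$. For a black vertex $v=a_v\in \Z^m$ one has $c_{v,v} = -a_v(\xi)$, while for a red vertex $v = a_v\tau$ one has $c_{v,v} = +a_v(\xi)$; modulo~$2$ the signs disappear, and the specialization gives $a_v(\xi)|_{\xi=1} = \eta(a_v)$. By hypothesis $\eta(a_v) = \ell$ for black $v$ and $\eta(a_v) = -2-\ell$ for red $v$, and both reduce to $\ell$ modulo~$2$. Therefore
\[
\chi_A(t)\big|_{\xi_i=1} \;\equiv\; (t+\ell)^{|A|} \pmod 2.
\]

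Both parts now follow immediately. For~(i), if $g\not\equiv \ell\pmod 2$ then $(g+\ell)^{|A|}$ is odd, so the integer $\chi_A(g)|_{\xi_i=1}$ is odd, in particular nonzero; this forces the polynomial $\chi_A(g)\in\Z[\xi]$ itself to be nonzero. For~(ii), if the linear form $t+\sum_i a_i\xi_i$ divides $\chi_A(t)$ in $\Z[\xi,t]$, then specializing $\xi_i=1$ gives $(t+\sum_i a_i)\mid \chi_A(t)|_{\xi=1}$ in $\Z[t]$; reducing modulo~$2$ yields $(t+\sum_i a_i)\mid (t+\ell)^{|A|}$ in $\mathbb F_2[t]$, whence $\sum_i a_i\equiv \ell\pmod 2$.

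There is essentially no serious obstacle here; the only point that needs care is to check that, regardless of colour, the mass $\eta(a_v)$ has the same residue class $\ell$ modulo~$2$, which is precisely what makes the right-hand side collapse to a pure power of $t+\ell$ after the specialisation.
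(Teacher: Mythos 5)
Your argument is correct and is essentially the paper's own proof: reduce modulo $2$ so that $C_A$ becomes diagonal, specialize $\xi_i=1$ to get $\chi_A(t)\equiv(t+\ell)^{|A|}\pmod 2$, and read off both statements. The only cosmetic difference is in part (ii), where the paper rephrases divisibility as $\chi_A(-\sum_i a_i\xi_i)=0$ and invokes part (i), while you specialize the divisibility relation directly; the two are equivalent.
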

\begin{proof}
i)   The matrix $C_A$ modulo 2 is diagonal and $\chi_A
(t) \cong  \prod_i (t+a_i(\xi)) \ \mod  (2)$. If we compute modulo 2 and  set all $\xi_i=1$,   we get $\chi_A
(t) \cong   (t+\ell)^m\ \mod  (2)$,   hence  $\chi_A  (g) \cong
  (g+\ell)^m\cong g+\ell\ \mod  (2)$.

ii) A linear form $t+\sum_ia_i\xi_i,  \ a_i\in\Z$ divides $\chi_A
(t)$ if and only if  we have $\chi_A  (-\sum_ia_i\xi_i)=0$,   then
set $\xi_i=1$ and use the first part.
\end{proof}

We shall use the parity test  as follows. \begin{lemma}\label{supertest}
  Suppose we have a connected set $A$ in $\Z^m$,   in which we find a vertex  $a $ and an index,   say 1,    so that  the graph $\Gamma_A$  has the following properties:
 $$ \xymatrix{&&c&\\ \ldots &d&a\ar@{. }^{1,  h}[r]  \ar@{. }^{1,  h}[r] \ar@{. }^{1,  k}[l]\ar@{. }^{1,  j}[d]\ar@{. }^{1,  i}[u]&b \ldots   &\ldots \\&&e&\  }$$
  we have:
\begin{itemize}
\item $1 $ appears in all and only the edges  having $a$ as vertex.
\item When we remove $a$   (and the edges meeting $a$) we have    a connected graph ${\A}$ with at least 2 vertices.
\item When we remove the edges associated to any index,   the factors described in Corollary \ref{laffa}  are irreducible.
\end{itemize}
  Then the polynomial   $\chi_A  (t)$ is irreducible.
\end{lemma}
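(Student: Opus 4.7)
The plan is to argue by contradiction. Assume $\chi_A(t)=P(t)Q(t)$ is a nontrivial factorization in $\Z[\xi,t]$ with both $P,Q$ monic in $t$ of positive $t$-degree.

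First I would specialize $\xi_1=0$: by hypotheses (i) and (ii) the graph $\pi_1(A)$ splits into the singleton $\{a\}$ and the connected subgraph $\A$ with $|\A|\ge 2$, so Corollary~\ref{laffa} yields
\[
\chi_A(t)\bigl|_{\xi_1=0}=L(t)\cdot\chi_\A(t),
\]
where $L(t)=t-c_{a,a}|_{\xi_1=0}$ is linear in $t$ and $\chi_\A(t)$ is irreducible of $t$-degree $|A|-1\ge 2$ by hypothesis~(iii) applied at $j=1$. Since $P,Q$ are both monic in $t$ of positive $t$-degree, unique factorization in $\Q(\xi)[t]$ forces, after possibly exchanging $P$ and $Q$, that $P|_{\xi_1=0}=L$; hence $\deg_tP=1$ and $P=t-\lambda(\xi)$. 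By the standard homogeneity of $\chi_A$ in $\xi$ (the coefficient of $t^{|A|-k}$ is homogeneous of degree $k$), $\lambda$ is a $\Z$-linear form, and the identity $P|_{\xi_1=0}=L$ then reads $\lambda(\xi)=c_{a,a}(\xi)+\gamma\xi_1$ for some $\gamma\in\Z$.

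Next I would apply the same kind of specialization at an arbitrary index $j\in\{2,\dots,m\}$. Corollary~\ref{laffa} and hypothesis~(iii) give $\chi_A|_{\xi_j=0}=\prod_s\chi_{B_s}$ as a product of irreducibles; the factor $P|_{\xi_j=0}=t-\lambda(\xi)|_{\xi_j=0}$ is linear (hence irreducible) in $t$, so by unique factorization it must coincide with some $\chi_{B_{s(j)}}$, forcing $|B_{s(j)}|=1$. Therefore the $\xi_j$-specialization must isolate a vertex $b_j\in A$, whose edges in $A$ all carry the index $j$, and matching diagonal entries forces $b_j\mp a\in\Z e_1\oplus\Z e_j$, the sign depending on whether $b_j$ is black or red.

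The main obstacle is to close the argument from these constraints. The easy subcase is the existence of some $j\in\{2,\dots,m\}$ admitting no such isolating vertex in $\pi_j(A)$: for instance if no edge of $A$ carries the marking $j$, or more generally if $\pi_j(A)$ remains connected. Then $\chi_A|_{\xi_j=0}=\chi_{\pi_j(A)}$ is irreducible of $t$-degree $|A|\ge 3$ by hypothesis~(iii), so it cannot admit the linear factor $P|_{\xi_j=0}$, a contradiction. The hard subcase is when every $j\in\{2,\dots,m\}$ does isolate some $b_j\in A$; here $\{a,b_2,\dots,b_m\}$ is forced into a rigid star-shaped family in $A$, with each $b_j\mp a$ supported on $\{e_1,e_j\}$, and one must exploit the connectedness of $\A$ from hypothesis~(ii) together with the structural requirement that the entire star of edges at each $b_j$ be marked with $j$ to show that no such configuration can actually be realized inside the Cayley graph $G_X$. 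This combinatorial case analysis is the bulk of the proof.
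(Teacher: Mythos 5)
Your opening move is the same as the paper's: specialize $\xi_1=0$, use Corollary \ref{laffa} and hypotheses (i)--(iii) to force any factorization into (linear) $\times$ (irreducible of degree $>1$), and deduce that the linear factor is $t-\lambda(\xi)$ with $\lambda(\xi)=c_{a,a}(\xi)+\gamma\xi_1$ for some $\gamma\in\Z$. Up to that point the argument is sound.

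The gap is in what you call the ``hard subcase.'' You stop at the observation that each $j\geq 2$ must isolate a vertex $b_j$ with $b_j\mp a$ supported on $\{e_1,e_j\}$, and you then defer to an unexecuted ``combinatorial case analysis'' exploiting the connectedness of $\A$ and realizability in the Cayley graph. No such analysis is needed, and the argument you sketch would not obviously close (nothing in the hypotheses prevents such a star-shaped family from existing as a subset of $\Z^m$). The paper finishes in two short steps that your proposal misses. First, take the root at $a$ (so $a=0$, $\lambda(\xi)=\gamma\xi_1$) and pick just \emph{one} further index $i$ (chosen $\neq h$ when $a$ is an endpoint whose unique edge is marked $(1,h)$, so that $\{a\}$ is not among the resulting components). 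If the linear factor survives this specialization it must equal $t+d(\xi)|_{\xi_i=0}$ for some isolated vertex $d\neq a$; since the index $1$ marks \emph{only} the edges at $a$, the tree path from $0=a$ to $d$ contains exactly one such edge, so the coefficient of $e_1$ in $d$ is $\pm1$, forcing $\gamma=\pm 1$ and $\lambda(\xi)=\pm\xi_1$. Second, the parity test (Lemma \ref{parita}(ii)): a linear factor $t+\sum_i a_i\xi_i$ of $\chi_A$ must satisfy $\sum_i a_i\equiv\eta(a)=0\pmod 2$, whereas $\pm\xi_1$ gives $\sum_i a_i=\pm 1$. This mod-$2$ obstruction is the actual engine of the lemma; without it (or an equivalent substitute) your proof is incomplete.
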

\begin{proof}
We take $a$ as root,   and translate the set $A$ so that $a=0$.
Setting $\xi_1=0$ we have by Corollary \ref{laffa} and the
hypotheses,   that  $\chi_A  (t)=t\,    P  (t)$  with $P=\chi_{\A}
(t)$ irreducible of degree $>1$.    Thus,   if the polynomial
$\chi_A  (t)$ factors,   then it must factor into a linear $t-L
(\xi)$ times an  irreducible polynomial of degree $>1$.

Moreover modulo $\xi_1=0$ we have that $0$ and $\ell$ coincide,
thus $L  (\xi)$ is a multiple of $\xi_1$.

Take another index $i\neq 1,  h$ if $a$ is an end and the only
edge from $a$ is marked $  (1,  h)$ otherwise just different from
$1$ and   set $\xi_i=0$.  Now    the polynomial $\chi_A  (t)$
specializes to the product $\prod_j\chi_{A_j}  (t)$ where the
$A_j$ are the connected components of the graph obtained from $A$
by removing all edges in which $i$ appears as marking.  By
hypothesis $\{a\}$ is not one of the $A_j$.

If no factor is linear we are done.  Otherwise there is an isolated vertex $d\neq a$   so that $\{d\}$ is one of the connected components $A_j$.  The linear factor associated is $t+d  (\xi)|_{\xi_i=0}$.   Clearly    we have that the coefficient  of  $\xi_1 $ in  $d  (\xi)$ is $\pm 1$   (since the marking 1 appears only once).     This implies that $L  (\xi)=\pm\xi_1$  and this is not possible by the parity test. \end{proof}

\section{The separation lemma\label{SEPA}}
Given a connected  graph $G\subset G_X$ consider  $\tau
G=\{ (-a, -\delta)| (a, \delta)\in G\}$.
\begin{remark}\label{rem3}$\tau G$ is a connected graph,  if and only if $G$ contains only black edges. \end{remark}
\begin{proof} The connected components of the Cayley graph are the cosets $G_2u,\ u\in G$. If there exists a red edge $(-e_i-e_j,\tau)$ connecting    two elements $a$, $b\in G$ then $ba^{-1}  =(-e_i-e_j,\tau)\Rightarrow \tau b(\tau a )^{-1}=(e_i+e_j,\tau)\notin G_2$. $\tau b,\tau a $  are not in the same connected component of the Cayley graph.
Instead $b a^{-1}=e_i-e_j\Rightarrow   \tau b(\tau a )^{-1}=e_j-e_i$,  $\tau a, \tau b$ are
connected by a black edge marked $j, i$ in $\tau G$.
\end{proof}
\begin{lemma}\label{seplem} (Separation lemma)\quad Given two connected non--degenerate allowable graphs $G_1, G_2\subset G_X$ if $\chi_{G_1}=\chi_{G_2}$,  then $G_1=G_2$ or
$G_1=\tau G_2$. \end{lemma}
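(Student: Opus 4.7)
The plan is to proceed by induction on $m$, assuming both the irreducibility theorem and the separation lemma for non-degenerate allowable connected graphs in fewer than $m$ parameters. A first reduction is that inside the Cayley graph $G_X$ the edges between two vertices $a,b$ are determined by whether $a^{-1}b \in X$; so the edge set of any subgraph is entirely determined by its vertex set, and it suffices to show that $\chi_G$ determines the vertex set of $G$ up to the global $\tau$-action.

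For each index $i\in[1,m]$, Corollary \ref{laffa} gives
\begin{equation*}
\chi_{G_k}(t)\big|_{\xi_i=0}\;=\;\prod_{j}\chi_{B_j^k}(t),
\end{equation*}
where $B_j^k$ are the connected components of $\pi_i(G_k)$, each non-degenerate by the proposition preceding Corollary \ref{laffa}. By the inductive irreducibility each $\chi_{B_j^k}$ is irreducible in $\Z[\xi,t]$ (with one fewer variable), so the hypothesis $\chi_{G_1}=\chi_{G_2}$ forces by unique factorization a bijection $\sigma_i$ matching the components of $\pi_i(G_1)$ with those of $\pi_i(G_2)$. The inductive separation then gives, for each matched pair $B,B'$, either $B=B'$ or $B=\tau B'$ in $\Z^{m-1}\rtimes\Z/(2)$. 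This lifts to a vertex-level bijection between $G_1$ and $G_2$ inside $\Z^m\rtimes\Z/(2)$, since $\pi_i$ is injective on a connected subgraph (the $i$-th coordinate of any $v$ being $\eta(v)-\eta(\pi_i(v))$, with $\eta(v)$ determined by the color of $v$).

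The main obstacle is to show that these a priori component-by-component $\tau$-choices collapse to a single global choice, and that this choice is consistent across different indices. I would exploit two tools. First, the parity test (Lemma \ref{parita}) shows that $C_A$ is diagonal modulo $2$ (since each off-diagonal entry carries a factor $2$), so $\chi_A \bmod 2$ exhibits the diagonal linear forms of all vertices of $A$ simultaneously; a mixed $\tau$-choice within $\sigma_i$ would alter this invariant and contradict $\chi_{G_1}\equiv\chi_{G_2}\pmod 2$. Second, comparing the matchings $\sigma_i$ and $\sigma_j$ for two distinct indices via the double specialization $\xi_i=\xi_j=0$, and using the connectedness of $G_1,G_2$ (which links different $B_j^k$ via edges marked by the removed index), enforces that the global $\tau$-choice made for the $i$-specialization is the same as the one made for the $j$-specialization.

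Combining these, either $\pi_i(G_1)=\pi_i(G_2)$ for all $i$, whence $G_1=G_2$, or $\pi_i(G_1)=\tau\pi_i(G_2)$ for all $i$, whence $G_1=\tau G_2$ (with Remark \ref{rem3} ensuring that in the latter case $G_2$ contains only black edges, so that $\tau G_2$ is actually connected). The base cases $m=1$ and $|G_k|\leq 2$ are checked directly: there $\chi_{G_k}$ is a single linear or product of two linear factors that pins down the diagonal forms and hence the vertex set up to sign.
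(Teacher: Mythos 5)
Your setup (induction on the number of variables, Corollary \ref{laffa}, unique factorization of the specialized polynomial, and the inductive separation applied to the components of $\pi_i(G_k)$) coincides with the paper's. But you have misidentified the main difficulty, and the tools you propose do not resolve the actual one. First, the per-component $\tau$-ambiguity is largely harmless: the list of vectors $w=\delta v$ attached to a component is $\tau$-invariant (since $\tau$ flips both $\delta$ and $v$), so by induction each specialization $\xi_i=0$ determines \emph{exactly} the multiset $L^i$ of vectors of $G_k$ with the $i$-th coordinate deleted; the residual ambiguity in colors is controlled by the total mass read off from the trace (Lemma \ref{lams}). In particular your mod~$2$ argument is both unnecessary for this purpose and ineffective: modulo $2$ the diagonal forms $a(\xi)$ and $-a(\xi)$ coincide, so $\chi_A \bmod 2$ is blind to any $\tau$-flip and cannot "alter" under a mixed choice.

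The genuine obstacle, which your sketch does not engage, is the \emph{matching} problem: knowing $L^1$ and $L^2$ one must reassemble $L$, i.e.\ decide which first coordinate (read off from $L^2$) is paired with which second coordinate (read off from $L^1$) among the vectors that agree in coordinates $3,\ldots,m$. The bijections obtained from the specializations at $\xi_1=0$ and $\xi_2=0$ need not be compatible, and this is not merely a base-case nuisance: the paper's example \eqref{22} exhibits two connected graphs ($(e_1,+)$ joined to $(e_2,+)$ by $e_2-e_1$, versus $(0,+)$ joined to $(-e_1-e_2,-)$ by a red edge) which are not $\tau$-equivalent yet have identical specializations at $\xi_1=0$ and at $\xi_2=0$; only the full characteristic polynomials, $t^2+(\xi_1+\xi_2)t-3\xi_1\xi_2$ versus $t^2+(\xi_1+\xi_2)t+4\xi_1\xi_2$, tell them apart, and the same ambiguity propagates to larger graphs. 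The paper therefore spends most of its proof on a case analysis of how many vectors of $L$ (at most $3$, by affine independence) can share the same truncation $(*,*,c_3,\ldots,c_m)$, using the mass bookkeeping of Lemma \ref{lams}, the allowability hypothesis of Definition \ref{ilpunto1}, and, in the residual ambiguous configuration, a structural argument forcing both edges $e_1-e_2$ and $-e_1-e_2$ to occur followed by a further specialization. None of this is recoverable from your outline, so as it stands the proposal has a real gap.
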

If we take $G\subset G^1$, then $G$ is of mass  $-1$ we have that $\tau G$ is of mass 1, we deduce that a connected color marked
graph  $G$ of mass -1   can be recovered    from its characteristic
polynomial.
\begin{proof}We will prove this lemma by induction.  When $n=0:
\chi_G (t)=t+a$,  it is easy to see that $G=\{ (a, +)\}$ or $G=\{
(-a, -)\}$.  \newline Induction process: $n>1$. Suppose that we
have the separation and the irreducibility for graphs of
dimensions $k\leq n-1$.  Take a connected colored marked graph $G=\{
(v_1, \delta_1 ), \ldots,  (v_{n+1}, \delta_m)\}$, $ (v_i,
\delta_i)\in \mathbb Z^m \rtimes  \mathbb{Z}/ (2)$,  the associated
matrix $C_G$ and its characteristic polynomial $\chi_G$.   

Associate to $G$ the list $L$ of vectors $w_i:=\delta_iv_i$, we see that these vectors are affinely independent. If the $w_i$ have all the same mass then the graph $G$  has only black edges  and then it is either the graph with vertices $w_i$ or with vertices $\tau w_i$ as seen before, if they have different masses  then the masses are of type $k$ for black vertices and $k+2$ for red and the graph $G$ is thus reconstructed  from $L$

Therefore we need to show that, from the characteristic polynomial, we can recover the list $L:=\{w_1,\ldots,w_n\}$.
Before starting the proof let us make a useful remark, the characteristic polynomial  gives as information the trace of the matrix $C_G$  and thus in particular the sum $\sum_{i=1}^n w_i(\xi)$ and the mass $s:=\sum_{i=1}^n \eta(w_i ).$   If we have $a$  elements in the list of mass $k$ and $(n-a)$ of mass $k+2$  we have that $s=nk+2b=n(k+2)-2(n-b)$.  Thus if we know that a certain number $h$ is the mass of a vertex we can deduce
\begin{lemma}\label{lams}
If   $s=nh$  then all  vertices in $G$ have the same color.  If $nh<s$ then $h$ is the mass of the black vertices and there are $b$  red vertices where $s=nh+2b$. Similarly if   $nh>s$ then $h$ is the mass of the red vertices and there are $b$  red vertices where $s=nh-2(n-b)$. 
\end{lemma}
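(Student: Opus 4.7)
The plan is to reduce the lemma to a direct arithmetic observation. By the passage immediately preceding the statement, the vertices of the connected graph $G$ carry only two possible masses: the black vertices share a common mass $k$ and the red vertices share mass $k+2$. Writing $b$ for the number of red vertices among the $n$ total vertices, this immediately gives the identity
$$ s \;=\; (n-b)k + b(k+2) \;=\; nk + 2b. $$

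Next I would use the hypothesis that $h$ is attained as the mass of some vertex of $G$ to enforce the dichotomy $h \in \{k, k+2\}$. Substituting into the above identity yields $s - nh = 2b$ in the case $h = k$ (so $h$ is the black mass), and $s - nh = 2b - 2n = -2(n-b)$ in the case $h = k+2$ (so $h$ is the red mass). Thus the sign of $s - nh$ records which of the two colors $h$ belongs to, and the equality $s = nh$ forces either $b = 0$ or $b = n$, i.e.\ a uniform color throughout $G$.

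The three cases of the lemma then follow by inspection. The equality $s = nh$ gives $b = 0$ or $b = n$, so all vertices have the same color. The strict inequality $s > nh$ rules out $h = k+2$, so $h = k$ is the mass of the black vertices and the $b = (s - nh)/2$ red vertices satisfy $s = nh + 2b$ as required. Symmetrically, $s < nh$ rules out $h = k$, so $h = k + 2$ is the mass of the red vertices and the $b$ red vertices satisfy $s = nh - 2(n-b)$ by the second computation above.

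There is no genuine obstacle here; the only point requiring care is the implicit appeal to the preceding two-mass dichotomy (that the list $L$ takes at most two mass values differing by $2$), together with the standing hypothesis that $h$ is actually realized as the mass of some vertex of $G$ — without this, the sign test on $s - nh$ cannot be applied.
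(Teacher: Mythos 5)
Your argument is exactly the paper's: the authors establish the identity $s=nk+2b=n(k+2)-2(n-b)$ in the sentence immediately preceding the lemma and treat the three cases as an immediate consequence, which is precisely the sign test on $s-nh$ that you spell out. Your write-up is correct and merely makes explicit the two-mass dichotomy ($h\in\{k,k+2\}$) that the paper leaves implicit.
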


  We set
one of the variables $\xi_i=0$ for instance $\xi_1=0$.  We know
that the matrix $C_G$ specializes to the direct sum of the
matrices $C_{G_i}$ where the $G_i$ correspond to the various
connected components of the graph $G$ which are obtained by
removing all edges in which 1 appears as marking  and dropping in
each component the first coordinate of the various vertices.  We
have that specializing $\xi_1=0$ we specialize the polynomial
$\chi_G$ to $\prod_i\chi_{G_i}$.  Since we are assuming
irreducibility in dimensions less than $n-1$  the factors
$\chi_{G_i}$ are all irreducible and thus can be determined by the
unique factorization of polynomials.  Therefore all the vectors of
$\pi_1 (L)$,  that is the $w_i$ with the first coordinate removed
can be recovered uniquely (up to the sign) by induction and we obtain a list of $n$ vectors  $L^1:\{  
(*, b_i, c_{3, i}, . . . , c_{m, i} )\}$.
\newline Now we set another variable,  say $\xi_2=0$.  By similar
arguments as above all the $w_i$ with the second coordinate
removed can be recovered   by induction giving a list $L^2:\{ 
(a_i, *, c_{3, i}, . . . , c_{m, i} )\}$.\smallskip

Now our problem is
this: if we know the vectors obtained from $L$
after removing the first or the second coordinate can we recover
the given vectors? We shall need to perform a case analysis.\smallskip

1) Recovering the list $L$: \newline 

We thus 
  consider the vectors $L^{1,2}$ obtained from $L$  by dropping the first two coordinates $  (*,
*, c_3, . . . , c_m )$ and collect the ones   where $c_3, \ldots, c_m $ are
fixed.  The first remark is that, if in this list a given vector  $  (*,
*, c_3, . . . , c_m )$ appears only once then we know exactly from which vector it comes from the two lists $L^1,L^2$    and so we can reconstruct  the vector $v$ in $L$ from which it arises.  Then by       Lemma \ref{lams} we can determine if in the graph all vertices have the same color or, if this is not the case, which is the mass of the black end red vertices and how many there are.

Next 
since the vectors in the graph, by assumption, are affinely
independent,  we have at most 3 vectors in $L$, giving the same vector   $ (*, *,
c_3, . . . , c_m)$ in $L^{1,2}$  since 4 of such vectors lie in a 2--dimensional plane so they are not affinely independent. 

a)\quad Assume we have 3 vectors $v_1,v_2,v_3\in L$ giving the same vector $\underline c= (*, *,
c_3, . . . , c_m)$ in $L^{1,2}$ and let $c=\eta(\underline c)$. We claim that $v_1,v_2,v_3 $ cannot have the same color, in fact this would imply that they have the same mass and then they lie in a line and cannot be affinely independent.  Let then $a_1,a_2,a_3$ resp. $b_1,b_2,b_3$ be the first, resp. second coordinates of these vectors (deduced from the two lists  $L^1,L^2$) we need to be able to reconstruct the 3 vectors $v_1,v_2,v_3\in L$ by matching the $a_i$ with the $b_j$.  First observe that we know the total mass  $m$ of $v_1,v_2,v_3$. This is $m=3k+2$ or $m=3k+4$ depending if we have two or 1 black vertices among $v_1,v_2,v_3 $. Since $3k+2$ is congruent to 2 modulo 3 while  $3k+4$ is congruent to 1 modulo 3, we can deduce both $k$ and the number of black vertices from $m$.

Call $l:=k-c$, now consider one of the vectors in $L^1$, start from $(a_1, *,
\underline c)$, if there is no $b_i$ with $a_1+b_i=l$ then there must necessarily be one, say $b_1$ with $a_1+b_1=l+2$ and then 
$(a_1, *,
\underline c)$ comes from the red vector $(a_1, b_1,
\underline c)$.  Similarly  if there is no $b_i$ with $a_1+b_i=l+2$ then there must necessarily be one, say $b_1$ with $a_1+b_1=l $ and then 
$(a_1, *,
\underline c)$ comes from the black vector $(a_1, b_1,
\underline c)$.   In this case we can easily see how to match the other two vectors, in case the other two vectors have the same color  we must match them so that $a_2+b_i=l',a_3+b_j=l'$ where $l'=l$ if the color is black and $l+2$ if red. We claim that  only one match is possible, in fact if  we had  $a_2+b_3=a_3+b_2=a_2+b_2=a_3+b_3$ we would have that the two vectors $v_2,v_3$ coincide.

Suppose now we know that the two colors are distinct, then as before,  if there is no $b_j, j=2,3$ such that $a_2+b_j=l$ we know that there is one, say $b_2$ for which $a_2+b_2=l+2$ and we have reconstructed the two vectors $(a_2,b_2,
\underline c),  (a_3, b_3,
\underline c)$. Finally it is possible that $b_3=b_2+2$ and $a_2+b_2=l$ then we have $a_3+b_3=l+2$ which implies $a_3=a_2=a$ and again we reconstruct the two vectors (actually by Definition \ref{ilpunto1} this is not allowed).

It remains to  analyze the case in which none of the $a_i$  satisfies the condition that it cannot be paired uniquely.

So let us assume that, up to reordering $b_1$ is maximum. There is one $a_i$ which must be paired with $b_1$  and we are assuming that it can also be paired with another $b_i$ giving a different color.  We must necessarily have that the value of this $a_i$, which we may assume reordering to be $a_1$ is $a_1=l+2-b_1$, we have recovered a red vector   $(a_1,b_1,\underline c)$. The rest of the analysis follows as before.
\medskip

b) There are in $L^{1,2}$ only 2 vectors of the form $  (*, *, c_3,
..., c_m )$ with $c_3, ...,c_m $  fixed. For
simplicity we denote $\underline c:= (c_3, . . ,c_m)$ and their sum
by $c$.  We know  then two vectors in $L^{1,2}$ of the form $(a_1,*,\underline c),(a_2,*,\underline c)$ and  two vectors in  $L^{ 2}$ of the form $(*,b_1, \underline c),(*,b_2, \underline c)$ which specialize in $L^{1,2}$ to the given vectors.

A priori in $L$ we can either have $(a_1,b_1,\underline c),(a_2,b_2,\underline c)$ or  $(a_1,b_2,\underline c),(a_2,b_1,\underline c)$. The first pair gives two vertices of the same color if and only if  $a_1+b_1=a_2+b_2$, similarly for the second.  If we have $a_1+b_1=a_2+b_2,  a_1+b_2=a_2+b_1$ we deduce that $a_1=a_2, b_1=b_2$  and this is impossible since it implies that in $L$ we have two equal vectors, therefore in at least one of the  two pairs  we have different colors.  We may thus assume (changing the indices if necessary)  that $ a_1+b_2=a_2+b_1+2 $, this implies $ a_1-a_2=b_1-b_2+2$. Write $a_1+b_1=a_2+b_2+x,\ x\in(-2,0,2) $ and thus $2(b_1-b_2)=x-2$. 
If $x=-2$  we have $b_1-b_2=-2,a_1=a_2$  and we argue as before, this case is impossible.

If $x=2$  we have $b_1=b_2=b,  a_1=a_2+2=a+2$  we have in the possible list  of  vectors $(a+2,b ,\underline c),(a ,b ,\underline c)$.  We know that this list is not allowed by Definition \ref{ilpunto1}.  Assume that $x=0$ thus $b=b_1, b_2=b+1, a=a_2,a_1=a+1$ we have  the two possibilities 1) $(a+1,b ,\underline c),(a ,b+1,\underline c)$ or  2) $(a+1,b+1,\underline c),(a ,b ,\underline c)$.  In this case both cases are a priori possible, in fact if  the graph were just a single edge marked $ e_1-e_2 $ or $-e_1-e_2$ the two  cases cannot be recovered by the two specializations but only from the full characteristic polynomial. 
$$  
 G_1= \xymatrix{  (e_1,+)\ar@{->}[r]^{e_2-e_1 }&  (e_2 ,+)  &  G_2=\ (0,+)  \ar@{=}[r]^{-e_2-e_1 } &  (-e_1-e_2,-)}, 
$$  
  \begin{equation}\label{22}C_{G_1 }= \begin{vmatrix}
- \xi_1& 2\sqrt{\xi_1\xi_2}\\&&&\\ 2\sqrt{\xi_1\xi_2}&- \xi_2  
\end{vmatrix},\quad C_{G_2 }= \begin{vmatrix}
0&  -2\sqrt{\xi_1\xi_2}\\&&&\\   2\sqrt{\xi_1\xi_2}& -\xi_1-\xi_2  
\end{vmatrix}  \end{equation}
The characteristic polynomials are distinct:
$$t^2+(\xi_1+\xi_2)t-3\xi_1\xi_2,\quad  t^2+(\xi_1+\xi_2)t+4\xi_1\xi_2$$ but the two specializations coincide.

 
 So we need a deeper analysis.
 First let us assume that we know if all the vectors have the same mass or we know the mass of black and red vertices.
 
 If we know that all vertices have the same mass then case 2) is excluded. Suppose then that we know the mass $k$ of a black vertex.
 
 If case 1) holds we must have that $a+b+c $ is either $k-1$ or $k+1$, if case 2) holds we must have that $a+b+c=k$. Thus we can determine  in which case we are.
 
 The other possibility is that we do not have the previous information but by the previous analysis this means that in the list $L^{1,2}$ each vector appears twice.  If the list consists of just two vectors we can conclude by the explicit formulas of the characteristic polynomial.
 
Assume we have at least two pairs one $u_1,u_2$ giving $(*,*, \underline c)$ the other $v_1,v_2$ giving $(*,*, \underline d)$. In each case we know that the two vertices are connected either by the edge $e_1 -e_2$ or by $-e_1-e_2$. We deduce that the only possibility at this point is that there are only two such lists  so $L$ has 4 elements and we must have both edges  $e_1 -e_2$ and $-e_1-e_2$.

The two edges  involve two disjoint pairs of vertices so that the graph must be  of the form 
$$ \xymatrix{ &a\ar@{->}[r]^{\pm(e_1 -e_2)} &b  \ar@{- }[r]^{\ell} &c\ar@{=}[r]^{-e_1-e_2} &d&   }$$ if $\ell$ does not contain any of the indices $1,2$ or possibly of  the form

$$ \xymatrix{  a\ar@{->}[d]_{\pm(e_1 -e_2)}  \ar@{- }[rd]&&\\ b  \ar@{- }[r]^{\ell} &c\ar@{=}[r]^{-e_1-e_2} &d&   } \xymatrix{   & &   c\ar@{- }[ld]\ar@{=}[d]^{-e_1-e_2}\\ a\ar@{->}[r]_{\pm(e_1 -e_2)}&b \ar@{- }[r]^{\ell}  &d&   }\xymatrix{   &    a \ar@{->}[d]_{\pm(e_1 -e_2)}\ar@{-}[dr] &c\ar@{- }[ld]\ar@{=}[d]^{-e_1-e_2}\\  &b \ar@{- }[r]^{\ell}  &d&   }$$  if $\ell$   contains one of the indices $1,2$. The edge $l$ can have either color (which determines the color of the further edge).

In particular the graph  has either 3 black and one red vertex or 3 red and one black vertex so either $s= 4k+6=4(k+1)+2$  or $s=4k+2$.  

This gives two possible values for the mass of black vertices, $k$ or $k+1$. Finally specializing  to $\xi_i=0$ where $i\neq 1,2$ appears in $\ell$ and to $\xi_1=0$ (or $\xi_2=0$) if 1 resp. 2 does not appear in $\ell$  we see that of the 4 vectors in $L^{1,2}$ at least one appears only once and we are back in the previous case which we have treated.

    \end{proof}\section{Irreducibility theorem}
We prove Theorem \ref{irter} by induction. Assume the separation and
irreducibility in all dimensions less than $n$,  we will prove the
irreducibility in dimension $n$. Since this property is invariant
under translation we often choose a vertex as the root and assume
that it corresponds to 0. We thus always deal with combinatorial graphs and we may identify  the black vertices as elements $a$ in $\Z^m$
with $\eta(a)=0$ and  the red vertices as elements $a$ in $\Z^m$
with $\eta(a)=-2 $ (Remark \ref{G2}). \smallskip

 Therefore from now on we assume that $G$ is a combinatorial graph with $n+1$ vertices and $T$ a maximal tree in $G$  with $n$  linearly
independent  edges.
 \begin{lemma}\label{le4p}
We  have one of the following possibilities:
\begin{enumerate}
\item We have $n$ indices all with multiplicity 2.
\item We have at least two indices  with multiplicity 1  in distinct edges.\item We have  two  indices  with multiplicity 1  in the same edge the remaining  with multiplicity 2.
\item We have one index with multiplicity 1  one with multiplicity 3 and the remaining  with multiplicity 2.

\end{enumerate}
\end{lemma}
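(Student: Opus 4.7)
The plan is a counting argument combining two constraints on the $n$ edges of the maximal tree $T$. Each marking in $X^0\cup X^{-2}$ involves exactly two distinct indices in $\{1,\dots,m\}$, so if we write $m_i$ for the multiplicity of index $i$ among the markings of the edges of $T$ and set $\mathcal I=\{i:m_i\ge 1\}$, then $\sum_{i\in\mathcal I}m_i=2n$. On the other hand, since the $n$ edges are linearly independent vectors $\pm e_i\pm e_j\in\Z^{\mathcal I}$, we must have $|\mathcal I|\ge n$, so $s:=|\mathcal I|-n\ge 0$.

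Setting $a_k=\#\{i:m_i=k\}$ for $k\ge 1$, the two relations $\sum m_i=2n$ and $\sum 1=|\mathcal I|=n+s$ combine (subtracting twice the second from the first) into the single identity
\[ a_1 \;=\; 2s \;+\; \sum_{k\ge 3}(k-2)\,a_k. \]
I would then case-split on $a_1$. If $a_1=0$, both nonnegative terms on the right must vanish, forcing $s=0$ and $a_k=0$ for all $k\ge 3$; this is case i). If $a_1=1$, the identity forces $s=0$ and $\sum_{k\ge 3}(k-2)a_k=1$, hence $a_3=1$ and $a_k=0$ for $k\ge 4$; this is case iv). If $a_1\ge 3$, then since any single edge carries at most two indices at least two of the singleton indices must lie in distinct edges, giving case ii). If $a_1=2$ with the two singletons in distinct edges, again case ii).

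The only remaining configuration is $a_1=2$ with both singletons $\alpha,\beta$ in the same edge $e\in T$; this is where the linear independence hypothesis does the actual work. Removing $e$ leaves $T'=T\setminus\{e\}$, a set of $n-1$ linearly independent edges whose markings lie entirely in $\mathcal I\setminus\{\alpha,\beta\}$ (because $m_\alpha=m_\beta=1$ implies $\alpha,\beta$ appeared only in $e$). Linear independence inside $\Z^{\mathcal I\setminus\{\alpha,\beta\}}$ forces $|\mathcal I|-2\ge n-1$, i.e.\ $s\ge 1$; the identity then gives $2=a_1\ge 2s$, so $s=1$ and all $a_k$ with $k\ge 3$ vanish. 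This is case iii). The main obstacle is really this last subcase: the bookkeeping in the other three cases is pure arithmetic from the identity, whereas here one needs to combine the combinatorial fact that an edge has only two index slots with the fact that subsets of linearly independent families remain linearly independent, in order to force an extra index to appear.
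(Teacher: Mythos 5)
Your proof is correct and follows essentially the same route as the paper: both arguments rest on the two counting facts that the $n$ independent edges force at least $n$ distinct indices and carry total multiplicity $2n$, and then case-split on the number of multiplicity-one indices (your identity $a_1=2s+\sum_{k\ge 3}(k-2)a_k$ is just a cleaner packaging of the paper's inequalities $a+b\ge n-1$, $3a+2b\le 2n-1$). The treatment of the same-edge subcase — removing that edge and reapplying linear independence to the remaining $n-1$ edges — is also the paper's argument.
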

\begin{proof}
 We must have at least $n$ distinct indices
appearing in the edges,
 otherwise these edges span a subspace of dimension less than
$n$.  In total on the $n$ edges of $T$ appear $2n$ indices counted with multiplicity.  If every index appears with multiplicity $\geq 2$  we must have   $n$ indices all with multiplicity 2.

 If we have at least 3 indices of multiplicity 1 we are in case ii), if we have only two indices of multiplicity 1 in the same edge, the remaining indices satisfy property i) for the remaining $n-1$ edges.
Assume finally that only one index appears with multiplicity 1. Of the remaining
$k\geq n-1$ indices appearing assume $a$ have multiplicity $\geq 3$ and $b$ multiplicity $2$  hence $$a+b\geq n-1, 3a+2b\leq 2n-1\implies   b\geq n-2$$  we deduce that $a=1$  and the multiplicity is 3, we are in the last case.

 \end{proof}

We thus have to treat 4 cases.
\begin{remark}
\begin{itemize}
\item Dash lines mean that they may be black or red.
\item Black edges are denoted by single lines,   red edges-by double
lines.
\item $\bar{A}$   denotes the completed graph obtained from
the graph $A$.
\end{itemize}
\end{remark}
Sometimes given a  combinatorial graph $G$  by a {\em block} $A$ of $G$ we mean a connected complete subgraph $A$ of $G$.  If $A$ is a block in a maximal tree $T$ of $G$  the completion $\bar A$ is a block in $G$. By abuse of notation we denote by $\chi_A(t):=\chi_{\bar A}(t)$ to be the characteristic polynomial of the matrix associated to $\bar A$. We now fix a maximal tree in $G$.
 \begin{lemma}\label{lem0}
If in $T$ there are two blocks $A,  B$ and two indices $i,  j$
such that:\begin{enumerate}\item $i,  j$ do not appear in the
edges of the blocks $A,  B$.
\item
\begin{equation}\label{05}\chi_{\bar{A}}\cong \chi_{\bar{B}}\ \text{modulo}\  {\xi_i=\xi_j=0},\end{equation}
\end{enumerate}  then $|B|=|A|=1, \ A=\{(a,\sigma_1)\}, B=\{(b,\sigma_2)\},\ a,b\in\Z^m$ and
$b=\ell+\sigma_2\sigma_1a$.  Where $\ell=n_ie_i+n_je_j, n_i+n_j= -1+\sigma_2\sigma_1$. 

Assume that $i,j$ appear at most twice in the tree then  if $\sigma_2\sigma_1=1$ we may have $\ell=\pm(e_i-e_j),\pm2(e_i-e_j)$. If $\sigma_2\sigma_1=-1$ we may have $\ell=-e_i-e_j,-2e_i,-2e_j$. \end{lemma}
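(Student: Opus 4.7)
The plan is to combine Corollary~\ref{laffa} with the inductive form of the separation lemma~\ref{seplem} and then exploit the linear independence of the edges of a non-degenerate tree $T$ to collapse each block to a single vertex.

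Since $i,j$ do not appear in any edge of $\bar A$ or $\bar B$, both projections $\pi_{ij}(\bar A)$ and $\pi_{ij}(\bar B)$ obtained by dropping the $i$th and $j$th coordinates are connected, non-degenerate and allowable. Corollary~\ref{laffa} applied twice identifies
\[
\chi_{\bar A}(t)\big|_{\xi_i=\xi_j=0}=\chi_{\pi_{ij}(\bar A)}(t),
\qquad
\chi_{\bar B}(t)\big|_{\xi_i=\xi_j=0}=\chi_{\pi_{ij}(\bar B)}(t),
\]
so hypothesis~(2) becomes $\chi_{\pi_{ij}(\bar A)}=\chi_{\pi_{ij}(\bar B)}$. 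Since both polynomials are monic in $t$ this already forces $|\bar A|=|\bar B|$. Invoking Lemma~\ref{seplem} inductively then yields either $\pi_{ij}(\bar A)=\pi_{ij}(\bar B)$ or $\pi_{ij}(\bar A)=\tau\,\pi_{ij}(\bar B)$ as subsets of $\Z^{m-2}\rtimes\Z/(2)$.

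The next step is to promote this equality back to $\Z^m\rtimes\Z/(2)$. In the first case I would produce a colour-preserving bijection $\phi:\bar A\to \bar B$ for which $\phi(v)$ and $v$ coincide in all coordinates outside $\{i,j\}$. Following a black edge of $\bar A$ one sees that the difference $\phi(v)-v$ is preserved, while along a red edge it is negated; in either case the four vertices $v_1,v_2,\phi(v_1),\phi(v_2)$ are affinely dependent (a parallelogram in the black case, an ``anti-parallelogram'' with $v_1+v_2=\phi(v_1)+\phi(v_2)$ in the red case). The $\tau$-case is handled by composing with the involution, and in fact already rules out red edges in $\bar A$ because no marking of $X$ can accommodate the required change of the first coordinate. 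In every scenario, the existence of an edge in $\bar A$ contradicts the affine independence of the $n+1$ vertices of the non-degenerate tree $T$; hence $\bar A$ has no edge, and combined with $|\bar A|=|\bar B|$ this gives $|A|=|B|=1$.

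With $A=\{(a,\sigma_1)\}$ and $B=\{(b,\sigma_2)\}$ the matrices are $1\times 1$, so $\chi_{\bar A}(t)=t+\sigma_1 a(\xi)$ and $\chi_{\bar B}(t)=t+\sigma_2 b(\xi)$. Hypothesis~(2) then yields $\sigma_1 a_k=\sigma_2 b_k$ for every $k\notin\{i,j\}$, i.e.\ $b=\sigma_2\sigma_1 a+\ell$ with $\ell=n_ie_i+n_je_j$; applying $\eta$ to both sides and using $\eta(a)=-1+\sigma_1$, $\eta(b)=-1+\sigma_2$ produces $n_i+n_j=-1+\sigma_2\sigma_1$ after a short simplification. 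For the final list of admissible $\ell$ under the ``appear at most twice'' hypothesis, the plan is to write the unique $T$-path from $a$ to $b$ as a product of edge markings $x_k\cdots x_1=(w,\epsilon)\in\Z^m\rtimes\Z/(2)$; the semidirect-product rule gives $b=w+\epsilon a$, $\sigma_2=\epsilon\sigma_1$, so $\ell=w$, and the $e_i$-coefficient of $w$ is a signed sum of $\pm 1$ contributions, one per edge of the path in which $i$ occurs. This bounds $|n_i|,|n_j|\le 2$, and a direct enumeration of integer pairs with $n_i+n_j=\sigma_1\sigma_2-1$ (discarding $\ell=0$, which would give $A=B$) produces exactly $\ell\in\{\pm(e_i-e_j),\pm 2(e_i-e_j)\}$ when $\sigma_1\sigma_2=1$ and $\ell\in\{-e_i-e_j,-2e_i,-2e_j\}$ when $\sigma_1\sigma_2=-1$. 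The only delicate point, and the main place I expect to need care, is the sign bookkeeping across red edges when expanding $w$ via the iterated semidirect-product formula.
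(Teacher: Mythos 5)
Your proposal is correct and follows essentially the same route as the paper: specialize at $\xi_i=\xi_j=0$, invoke the inductively known separation lemma to match the two projected blocks, use the affine independence of the vertices of the non-degenerate graph to force $|A|=|B|=1$, and then determine $\ell$ from the mass and the bound on how often $i,j$ occur in the connecting path. The only cosmetic difference is that the paper works with the $\tau$-invariant lists of signed vectors $\sigma_v v$ (so $v_h-w_h$ is constant and the $\tau$-ambiguity never arises), whereas you treat the $\tau$ case separately; the substance is identical.
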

\begin{proof} Since the degree of the characteristic polynomial is the number of vertices by assumption  $|B|=|A|$.  Choose the root in $A$. This gives to each vertex $v$ a sign $\sigma_v$. Let $A=\{  (a_1,  \sigma_1),  . . . , (a_r,
\sigma_r)\}; B=\{  (b_1,  \delta_1),  . . . ,    (b_r,
\delta_r)\}$, then to these graphs we associate  as in \S \ref{SEPA} the list $L$ of vectors $v_h=\sigma_ha_h$ and $w_h= \delta_h b_h$.
Since $i,  j$ do not appear in $A$   (resp. $B$),   the vectors $v_h$  have the same $i$-th and
$j$-th coordinates and we can write $v_h=\bar v_h+a$,   similarly for $B$  the vectors $w_h=\bar w_h+b$  where $a,b$ are linear combinations of $e_i,e_j$ and $\bar v_h,\bar w_h$  are linear combinations of the $e_s, s\neq i,j$.  

The list of vectors $\bar v_h$ is the one associated to the graph $\bar A$ once we set equal to 0 the elements $e_i, e_j$ hence it is the list of vectors associated to the polynomial $ \chi_{\bar{A}}|_{\xi_i=\xi_j=0}$ similarly $\bar w_h$ is the one associated to  $\chi_{\bar{B}}|_{\xi_i=\xi_j=0}$. Hence by the separation lemma  up to reordering we may assume that $\bar v_h=\bar w_h$ hence $v_h=w_h+c,\ c=a-b=n_ie_i+n_je_j$.

Clearly if  $r>1$ we have that $w_r=w_1-v_1 +v_r$ so that the vectors  $(v_h, w_k)$ are not affinely independent contrary to the hypotheses.

We have thus proved that $|B|=|A|=1 $ hence $ A=\{(a,\sigma_1)\}, B=\{(b,\sigma_2)\}$ and finally $b=n_ie_i+n_je_j+\sigma_2\sigma_1a$. Of course  $n_ie_i+n_je_j$ is the value up to sign of the path joining $a,b$. If $\sigma_2\sigma_1=1$  we have $\eta(a)=\eta(b)$ hence $\ell=n(e_i-e_j)$  if  both indices $i,j$   cannot  appear more than twice in the path we have $|n|\leq 2$. If $\sigma_2\sigma_1=-1$  we have $\eta(a+b)=-2$ hence $\ell=n e_i-(n+2)e_j$.  A similar case analysis gives the possibilities $\ell=-e_i-e_j,-2\e_i,-2e_j $ if  both indices $i,j$   cannot  appear more than twice in the path.   \end{proof}
\begin{corollary}\label{corlem0}
Under the assumptions of Lemma \ref{lem0}  the number of edges in the path from $a$ to $b$  in which appears any marking $h\neq i,j$ must be even. The parity of the number of edges in which appears $i$ equals the  parity of the number of edges in which appears $j$.
\end{corollary}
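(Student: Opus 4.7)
The plan is to translate the tree-path from $a$ to $b$ into a single group-element identity in $G=\Z^m\rtimes\Z/(2)$ and then read off the coefficients of the $e_h$.

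Let $x_1,\ldots,x_k$ be the edges of the unique path in $T$ from $a$ to $b$, written so that $x_r\cdots x_1\cdot(a,\sigma_1)=(b,\sigma_2)$. Since $A=\{(a,\sigma_1)\}$ and $B=\{(b,\sigma_2)\}$ by Lemma \ref{lem0}, this path is unambiguous. Thus
\[
 x_k\cdots x_1=(b,\sigma_2)(a,\sigma_1)^{-1}.
\]
A direct computation in the semidirect product (using that $(a,+)^{-1}=(-a,+)$ and $(a,-)^{-1}=(a,-)$) shows the right-hand side equals $(b-\sigma_2\sigma_1 a,\;\sigma_2\sigma_1)=(\ell,\sigma_2\sigma_1)$, where I use $b=\ell+\sigma_2\sigma_1 a$ from Lemma \ref{lem0}.

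Next I expand the left-hand side. Writing $x_r=(v_r,\epsilon_r)$, where $\epsilon_r=-1$ exactly when $x_r$ is red, the multiplication law in $G$ gives
\[
x_k\cdots x_1 \;=\; \Bigl(\sum_{r=1}^{k}\theta_r\,v_r,\ \prod_{s=1}^{k}\epsilon_s\Bigr),\qquad \theta_r:=\prod_{s=r+1}^{k}\epsilon_s\in\{\pm1\}.
\]
Comparing first coordinates, $\ell=\sum_{r}\theta_r v_r$. Now each $v_r$ equals $e_p-e_q$ if $x_r$ is the black edge with marking $\{p,q\}$, or $-e_p-e_q$ if $x_r$ is the red edge with marking $\{p,q\}$. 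In either case the coefficient of $e_h$ in $v_r$ is $\pm 1$ when $h$ is among the markings of $x_r$, and $0$ otherwise. Hence, for every index $h$, the coefficient $[\ell]_h$ of $e_h$ in $\ell$ is a sum of $\pm 1$'s, with exactly one summand for each edge of the path having $h$ as a marking.

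To conclude: by the description of $\ell$ in Lemma \ref{lem0}, $\ell=n_i e_i+n_j e_j$, so $[\ell]_h=0$ for every $h\neq i,j$. A sum of $\pm 1$'s that vanishes must have an even number of terms, which gives the first assertion. For $h=i,j$ we have $[\ell]_i=n_i$ and $[\ell]_j=n_j$ with $n_i+n_j=-1+\sigma_2\sigma_1\in\{0,-2\}$, an even integer. Since each $n_h$ has the same parity as the number $N_h$ of edges of the path marked by $h$, we obtain $N_i+N_j$ even, i.e.\ $N_i\equiv N_j\pmod 2$, which is the second assertion. The only delicate step is bookkeeping of the signs $\theta_r$ induced by the $\tau$-twists on red edges, but this is automatic once the identity $x_k\cdots x_1=(\ell,\sigma_2\sigma_1)$ is in place.
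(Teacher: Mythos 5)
Your proof is correct, and it supplies in full the argument the paper leaves implicit: the corollary is stated without proof because it is meant to follow exactly as you argue, by telescoping the path into the identity $x_k\cdots x_1=(\ell,\sigma_2\sigma_1)$ and reading off that each coefficient $[\ell]_h$ is a sum of one $\pm1$ per edge marked $h$, so $[\ell]_h\equiv N_h \pmod 2$. The two assertions then follow from $\ell=n_ie_i+n_je_j$ and $n_i+n_j=-1+\sigma_1\sigma_2$ being even, precisely as you conclude.
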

In  a maximal tree $T$    in a graph $\Gamma$  consider an edge $\ell$  
  containing the indices $i,  j$. Denote by
 $A,  B$ the two connected components   obtained by removing $\ell$ from  $T$.
\begin{lemma} Assume that the two connected components $A,  B$ do not have the index $i$ in any edge.
  Then any other edge in $\Gamma$ connecting $A,  B$  must contain the index $i$.
\end{lemma}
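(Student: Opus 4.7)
The plan is to argue by contradiction: suppose $\ell'$ is an edge of $\Gamma$ joining some $a'\in A$ to some $b'\in B$ whose marking involves neither index $i$, and derive a parity contradiction by tracking the $i$-th coordinate of vertices. The key observation is how an edge whose marking avoids $i$ acts on the $i$-th coordinate of its endpoints: a black edge $e_k-e_h$ (with $k,h\neq i$) satisfies $v-u=e_k-e_h$ and so preserves $u_i$; a red edge $-e_k-e_h$ (with $k,h\neq i$) satisfies $u+v=-e_k-e_h$ and thus sends $u_i$ to $-u_i$.

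Next I fix a reference vertex $r\in A$. Since no edge of $A$ contains $i$, walking along the unique $T$-path from $r$ to any $v\in A$ only rescales the $i$-th coordinate by a sign, so $v_i=\epsilon_v r_i$ for some $\epsilon_v\in\{\pm1\}$. Analogously, every $v\in B$ satisfies $v_i=\eta_v b_i$ for some $\eta_v\in\{\pm1\}$, where $a\in A$, $b\in B$ denote the endpoints of $\ell$. Because $\ell$ does carry the index $i$, a direct case check gives $b_i=a_i\pm 1$ (if $\ell$ is black) or $b_i=-a_i-1$ (if $\ell$ is red); in either case $b_i=\mu\,\epsilon_a\, r_i+\nu$ for some $\mu,\nu\in\{\pm1\}$.

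Finally, applying the same preserve/negate rule to $\ell'$ gives $b'_i=\rho\, a'_i=\rho\,\epsilon_{a'}\, r_i$, where $\rho=+1$ if $\ell'$ is black and $\rho=-1$ if $\ell'$ is red; tracking $b'$ back to $b$ through $B$ yields $b'_i=\eta_{b'}(\mu\epsilon_a r_i+\nu)$. Equating these two expressions,
\[
(\eta_{b'}\mu\epsilon_a-\rho\epsilon_{a'})\,r_i=-\eta_{b'}\nu,
\]
so the left coefficient lies in $\{-2,0,2\}$ while the right side lies in $\{\pm1\}$. Since $r_i\in\Z$, the left side is even and the right is odd, a contradiction. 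The only delicate point is the uniform sign bookkeeping across the four color combinations of $(\ell,\ell')$; once the transformation rule on the $i$-th coordinate is phrased uniformly as above, the parity conclusion is automatic.
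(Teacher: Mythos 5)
Your proof is correct, and it is essentially the paper's argument: the paper disposes of this in one line by noting that in the circuit closed up by $\ell$, $\ell'$ and the two tree paths, the index $i$ would occur an odd number of times, which is impossible, and your coordinate bookkeeping (each edge avoiding $i$ preserves the $i$-th coordinate up to sign, while each edge carrying $i$ shifts it by $\pm 1$) is exactly an explicit rendering of that parity obstruction. The sign-tracking is sound, so this is the same proof written out in full detail.
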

\begin{proof}
In a  path which is a circuit you cannot have that an index  appears only once   (or even an odd number of times).
\end{proof}
We now consider two edges $\ell_1,  \ell_2$  containing the
indices $i,  h$ and $i,  k$ respectively.  When we remove these
edges in $T$ we have 3 connected components in $T$ $$A\stackrel{i,
h}\ldots B\stackrel{i,  k}\ldots C$$ in the complete graph $\bar
T$ once we remove all the edges  containing $i$  the graph $\bar
B$ is a connected component.   Then we may either have other 2
components $\bar A,  \bar C$  or a connected component
$\overline{A\cup C}$. We shall use this fact systematically as follows. By induction in the first case we have $\chi_G(t)|_{\xi_i=0}=\chi_{\bar{A}}(t)  \chi_{\bar{B}}(t)  \chi_{\bar{C}}(t)\ \text{modulo}\ {\xi_i=0}$ is a factorization into irreducible factors, in the second case a factorization into irreducible factors  is $\chi_G(t)\cong \chi_{\overline{A\cup C}}(t) \chi_{\bar{B}}(t)\ \text{modulo}\  {\xi_i=0} $. 

Hence if $G$ is not irreducible in the second case it can only factor into two irreducible factors $\chi_G(t)=UV$  with $U\cong \chi_{\bar{B}}(t) ,V\cong\chi_{\bar{A\cup C}}(t)\ \text{modulo}\  {\xi_i=0}$, in the first case we may have either a factorization into 3 irreducible factors $\chi_G(t)=UVW$  with $U\cong\chi_{\bar{A}}(t) ,V\cong\chi_{\bar{B}}(t) , W\cong\chi_{\bar{C}}(t)\ \text{modulo}\  {\xi_i=0}  $ or 3 possible factorizations into 2 irreducible factors.
\subsection{Indices appearing once}
\begin{lemma}\label{angle} If there exists a pair of indices,   say $  (1,  i)$,   such
that 1 appears only once in the maximal tree $T$ and $T$ has the
form:
\begin{figure}[H]
\begin{center}$$\xymatrix{A\ar@{--}[r]^{1,  h}&B}$$\caption{}\label{pic_angle}\end{center}
\end{figure}\noindent where $i\neq h$,   and $i$ appears only in the block
$B$. Then $\chi_G$ is irreducible.
\end{lemma}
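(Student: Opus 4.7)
The strategy is to assume, for contradiction, that $\chi_G=PQ$ is a nontrivial factorization, derive tight constraints by comparing the two specializations $\xi_1=0$ and $\xi_i=0$, and close with a parity obstruction.  Throughout I translate so that the root $0$ lies in $A$, which has the effect that $\chi_{\bar A}$ involves neither $\xi_1$ nor $\xi_i$, since no edge of $A$ carries either of these two indices and the root has trivial $1$- and $i$-coordinates.

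First, set $\xi_1=0$.  Because $1$ appears only in the edge $(1,h)$ and no vertex of $A$ nor of $B$ has nonzero $1$-coordinate internally, any edge of $\bar G$ joining $A$ to $B$ must carry the index $1$, so this specialization disconnects $\bar G$ into $\bar A$ and $\bar B$, yielding
\[
\chi_G|_{\xi_1=0}=\chi_{\bar A}\cdot\chi_{\bar B}|_{\xi_1=0},
\]
a product of two irreducibles by the inductive hypothesis.  After possibly relabelling, $P|_{\xi_1=0}=\chi_{\bar A}$ and $\deg P=|A|$.  Next, set $\xi_i=0$: since $i\neq h$ and $i$ occurs only in $B$, the edge $(1,h)$ survives, so $A$ stays attached to the component $B_0\subset B$ containing its endpoint $w$, while the remainder of $B$ splits into components $B_1,\dots,B_s$ with $s\geq 1$ (as $i$ must appear in some edge of $B$).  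Thus
\[
\chi_G|_{\xi_i=0}=\chi_{\overline{A\cup B_0}}\cdot\prod_{j=1}^{s}\chi_{\bar B_j}|_{\xi_i=0},
\]
again a product of irreducibles.  Since $\deg\chi_{\overline{A\cup B_0}}=|A|+|B_0|>|A|=\deg P$, this factor must divide $Q|_{\xi_i=0}$, whence $P|_{\xi_i=0}=\prod_{j\in J}\chi_{\bar B_j}|_{\xi_i=0}$ for some $J\subset\{1,\dots,s\}$.

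Now substitute both $\xi_1=\xi_i=0$ in the two expressions for $P$.  From the first, $P|_{\xi_1=\xi_i=0}=\chi_{\bar A}$.  From the second, every vertex of $B_j$ shares first coordinate $-1$ and a common $i$-coordinate $\alpha_j$, so the double specialization is realized by translation, $\chi_{\bar B_j}|_{\xi_1=\xi_i=0}=\chi_{\overline{B_j+e_1-\alpha_je_i}}$.  Comparing these expressions and using the irreducibility of $\chi_{\bar A}$ forces $J$ to be a singleton $\{j_0\}$, with
\[
\chi_{\bar A}=\chi_{\overline{B_{j_0}+e_1-\alpha_{j_0}e_i}}=\chi_{\bar B_{j_0}}|_{\xi_1=\xi_i=0}.
\]
Neither $1$ nor $i$ appears in edges of $A$ or of $B_{j_0}$ (the latter being a component of $B$ after deleting the index-$i$ edges), so Lemma~\ref{lem0} applies to the pair $(A,B_{j_0})$ with the two indices $1,i$, forcing $|A|=|B_{j_0}|=1$.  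Thus $A=\{0\}$ and the unique vertex $b$ of $B_{j_0}$ satisfies $b=n_1e_1+n_ie_i$ with $n_1+n_i=-1+\sigma_2\sigma_1$; combined with $b\in B$ having first coordinate $-1$, this gives $b=-e_1+\sigma_2\sigma_1 e_i$.

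To close, observe that $\chi_{\bar A}=t$, so $P=t-L(\xi)$ is linear with $L$ homogeneous of degree $1$ in the $\xi_k$.  The condition $P|_{\xi_1=0}=t$ kills every $\xi_k$ with $k\neq 1$, hence $L=a\xi_1$; the condition $P|_{\xi_i=0}=\chi_{\bar B_{j_0}}|_{\xi_i=0}=t\mp\xi_1$ (the sign depending on the color of $b$) forces $a=\pm 1$.  But then the parity test of Lemma~\ref{parita}, applied to the linear factor $P$ of $\chi_G$, requires the sum of coefficients of $L$ to have the same parity as $\ell=\eta(0)=0$, whereas this sum is $\pm 1$, odd.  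This contradiction rules out the factorization, so $\chi_G$ is irreducible.  The delicate step is the double-specialization bookkeeping that identifies $\chi_{\bar A}$ with a translate of $\chi_{\bar B_{j_0}}$; once this identification is in place, Lemma~\ref{lem0} together with the parity test finishes the proof.
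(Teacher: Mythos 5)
Your proof is correct and follows essentially the same route as the paper: specialize at $\xi_1=0$ and $\xi_i=0$, compare the irreducible factorizations to force $\chi_{\bar A}\cong\chi_{\bar B_{j_0}}$ modulo $\xi_1=\xi_i=0$, and invoke Lemma \ref{lem0} to get $|A|=|B_{j_0}|=1$. The only difference is cosmetic: where the paper closes by citing Lemma \ref{supertest}, you inline the same argument (the putative linear factor must be $t\pm\xi_1$, which the parity test of Lemma \ref{parita} excludes), which is valid here since $P|_{\xi_i=0}$ has already been pinned down exactly.
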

\begin{proof}Let the root be in $A$.  Since 1 appears only once in $T$,   every edge in $G$ that connects $A$ and $B$ must have 1 in the indexing. We have:\begin{equation}\chi_G\cong \chi_{\bar{A}}\chi_{\bar{B}}\  \text{modulo}\   {\xi_1=0}. \end{equation} By the previous discussion   if $\chi_G$ is not irreducible,   it must factor into two irreducible polynomials: $\chi_G=UV  $ such that $U\cong \chi_{\bar{A}} \  \text{modulo}\   {\xi_1=0}. $

Let $B_1,  . . . ,  B_s$ be the connected components obtained from
$B$ by deleting all the edges which have $i$ in the indexing,
$B_1$ be the component that is connected with $A$.  We
have:\begin{equation}\chi_G\cong\chi_{\overline{A\cup
B_1}}\chi_{\bar{B_2}} . . . \chi_{\bar{B_s}}\  \text{modulo}\   {\xi_i=0}    .
\end{equation} Remark that $deg  (U)=|A|<deg
(\chi_{\overline{A\cup B_1}})=|A|+|B_1|$.
$U\cong \chi_{\bar{A}}$ is irreducible    modulo ${\xi_1=\xi_i=0}$,   then
$U $ must be irreducible modulo $ \xi_i=0 $.  Hence
\begin{equation}\label{09}U\cong \chi_{\bar{B_j}}\  \text{modulo}\   {\xi_i=0}  \mbox{ for some }j\in\{2,  . . . ,  s\}\end{equation}
From  $U\cong \chi_{\bar{A}} \  \text{modulo}\   {\xi_1=0} $ and \eqref{09} we deduce
$\chi_{\bar{A}}\cong \chi_{\bar{B_j}}  \  \text{modulo}\   {\xi_1=\xi_i=0}$.  So,   by
lemma \ref{lem0},   $|A|=|B_j|=1$.  Let $A=\{a\}$.  Then by lemma
\ref{supertest},   for the vertex $a$ and the index 1,   $\chi_G$
is irreducible.
\end{proof}
\begin{corollary}\label{duesoli} If there are two indices which appear only once and not in the same edge in the maximal tree then $\chi_G$ is irreducible.
\end{corollary}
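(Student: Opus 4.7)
The plan is to reduce directly to Lemma \ref{angle}. Call the two indices appearing only once $1$ and $i$, with $1$ belonging to edge $\ell_1$ of $T$ and $i$ belonging to a different edge $\ell_2$. First I would remove $\ell_1$ from $T$: since $T$ is a tree this disconnects it into two subtrees $A$ and $B$, and because $i$ occurs exactly once in $T$, the edge $\ell_2$ lies entirely in one of them, say $B$. Hence $i$ does not appear in any edge of $A$ and appears only in $B$.

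Next I would read off the marking of $\ell_1$. By hypothesis $1$ appears only in $\ell_1$, so $\ell_1$ is marked $(1,h)$ for some index $h$. The assumption that the two distinguished indices are \emph{not in the same edge} forces $h\neq i$. Thus the tree $T$ has exactly the shape
$$\xymatrix{A\ar@{--}[r]^{1, h}&B}$$
with $h\neq i$ and with $i$ appearing only in the block $B$.

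This is precisely the configuration of Lemma \ref{angle} applied to the pair of indices $(1,i)$. Invoking that lemma immediately yields that $\chi_G$ is irreducible, which is the desired conclusion.

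The only step that requires any thought is verifying that $\ell_2$ really lies inside one of the two components obtained by deleting $\ell_1$, but this is automatic: in a tree, removing a single edge separates it into two subtrees, and an edge distinct from the removed one belongs to exactly one of them. So no serious obstacle arises and the corollary is a straightforward specialization of the preceding lemma.
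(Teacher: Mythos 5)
Your proposal is correct and is exactly the reduction the paper intends: the corollary is stated immediately after Lemma \ref{angle} with no separate proof precisely because deleting the unique edge containing the first index splits the tree into the two blocks $A$, $B$ of that lemma, and the hypotheses ($h\neq i$, $i$ only in $B$) follow just as you argue. Nothing is missing.
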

We have thus treated one of the 4 cases of Lemma \ref{le4p}.

\begin{lemma}\label{angle1} If there exists a pair of indices,   say $  (1,  i)$,   such
that 1 appears only once in the maximal tree $T$ while $i$ appears twice and $T$ has the
form:
\begin{figure}[H]
\begin{center}$$\xymatrix{A\ar@{--}[r]^{i,  l}&B\ar@{--}[r]^{1,  h}&C\ar@{--}[r]^{i,  k}&D}$$\caption{}\label{pic_angle1}\end{center}
\end{figure}\noindent  then either $\chi_G$ is irreducible or $|A|=|C|=1$ or $|B|=|D|=1$.
\end{lemma}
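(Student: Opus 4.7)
The plan is to push the two-specialization strategy of Lemma \ref{angle} one step further: set $\xi_1=0$ to get a two-factor decomposition of $\chi_G$, set $\xi_i=0$ to get a three-factor decomposition, and match them using degree considerations together with unique factorization in $\Z[\xi,t]$. Throughout I use that by the inductive hypothesis every completed block appearing has an irreducible characteristic polynomial.

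First, set $\xi_1=0$. Since $1$ appears only in the tree edge $B$-$C$, every edge in $G$ joining $A\cup B$ to $C\cup D$ carries the marking $1$, and by induction
\[
\chi_G\equiv \chi_{\overline{A\cup B}}\,\chi_{\overline{C\cup D}}\pmod{\xi_1=0}
\]
is the factorization into irreducibles. If $\chi_G$ were reducible it could therefore split only as $\chi_G=UV$ with $U\equiv\chi_{\overline{A\cup B}}$ and $V\equiv\chi_{\overline{C\cup D}}$ modulo $\xi_1$; in particular $\deg U=|A|+|B|$ and $\deg V=|C|+|D|$. Next, set $\xi_i=0$: because $i$ appears only in the edges $A$-$B$ and $C$-$D$ of $T$, a similar argument gives
\[
\chi_G\equiv \chi_{\bar A}\,\chi_{\overline{B\cup C}}\,\chi_{\bar D}\pmod{\xi_i=0},
\]
again with irreducible factors.

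Now compare. Since $U$ is irreducible in $\Z[\xi,t]$, its reduction modulo $\xi_i$ is a sub-product of $\{\chi_{\bar A},\chi_{\overline{B\cup C}},\chi_{\bar D}\}$. Matching degrees with $\deg U=|A|+|B|$, the only sub-products of the correct total degree are
\[
\text{(c)}\ \chi_{\overline{B\cup C}}\ (|A|=|C|),\quad \text{(d)}\ \chi_{\bar D}\ (|D|=|A|+|B|),\quad \text{(e)}\ \chi_{\bar A}\chi_{\bar D}\ (|B|=|D|),\quad \text{(f)}\ \chi_{\overline{B\cup C}}\chi_{\bar D}\ (|A|=|C|+|D|);
\]
the other sub-products force $|A|,|B|,|C|$ or $|D|$ to be $0$ and are excluded. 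I rule out (d) and (f) as follows: in case (d), reducing the identities $U\equiv\chi_{\overline{A\cup B}}\pmod{\xi_1}$ and $U\equiv \chi_{\bar D}\pmod{\xi_i}$ modulo both specializations yields
\[
\chi_{\bar A}\big|_{\xi_1=\xi_i=0}\cdot\chi_{\bar B}\big|_{\xi_1=\xi_i=0}\ =\ \chi_{\bar D}\big|_{\xi_1=\xi_i=0}.
\]
Since neither $1$ nor $i$ occurs in any edge of $\bar A,\bar B,\bar D$, Corollary \ref{laffa} together with the inductive irreducibility statement makes each of the three factors irreducible, and $\chi_{\bar A}\big|_{\xi_1=\xi_i=0}$ and $\chi_{\bar B}\big|_{\xi_1=\xi_i=0}$ have positive degree in $t$; this contradicts the irreducibility of the right-hand side. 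Case (f) is symmetric.

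It remains to analyse (c) and (e). In case (c) the analogous double reduction of $U\equiv\chi_{\overline{A\cup B}}\pmod{\xi_1}$ and $U\equiv\chi_{\overline{B\cup C}}\pmod{\xi_i}$ reads
\[
\chi_{\bar A}\big|_{\xi_1=\xi_i=0}\cdot\chi_{\bar B}\big|_{\xi_1=\xi_i=0}\ =\ \chi_{\bar B}\big|_{\xi_1=\xi_i=0}\cdot\chi_{\bar C}\big|_{\xi_1=\xi_i=0},
\]
and cancelling the nonzero irreducible factor $\chi_{\bar B}\big|_{\xi_1=\xi_i=0}$ gives $\chi_{\bar A}\big|_{\xi_1=\xi_i=0}=\chi_{\bar C}\big|_{\xi_1=\xi_i=0}$. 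Since $1$ and $i$ do not occur in the edges of $\bar A$ or $\bar C$, Lemma \ref{lem0} forces $|A|=|C|=1$. The same reasoning applied in case (e) yields $\chi_{\bar B}\big|_{\xi_1=\xi_i=0}=\chi_{\bar D}\big|_{\xi_1=\xi_i=0}$ and hence $|B|=|D|=1$, proving the lemma.

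The main technical obstacle will be justifying the clean decompositions $\chi_{\overline{A\cup B}}\big|_{\xi_i=0}=\chi_{\bar A}\big|_{\xi_i=0}\chi_{\bar B}\big|_{\xi_i=0}$ and similarly for $\overline{B\cup C}$ modulo $\xi_1$: one must know that the only edges of the completed graph carrying the marking $i$ (resp.~$1$) are the tree edges $A$-$B$ and $C$-$D$ (resp.~$B$-$C$), which follows from the cycle-balancing property (an index appearing in a single edge of a cycle cannot be balanced) once one checks that the multiplicities asserted for $T$ propagate to the whole of $G$. This index-tracking is the only delicate point; once it is in place, the argument above is purely combinatorial and the appeal to Lemma \ref{lem0} is immediate.
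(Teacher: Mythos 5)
Your overall strategy --- specialize at $\xi_1=0$ and at $\xi_i=0$, match irreducible factors by degree using unique factorization, and invoke Lemma \ref{lem0} --- is exactly the paper's, and the cases you do treat (including the explicit exclusion of the sub-products of degree $|A|+|B|$ that the paper leaves implicit) are handled correctly. The gap is in the specialization at $\xi_i=0$: you assert the clean three-factor decomposition $\chi_G\equiv\chi_{\bar A}\,\chi_{\overline{B\cup C}}\,\chi_{\bar D}$, but this can fail. An edge of $G$ joining a vertex of $A$ to a vertex of $D$ closes a circuit whose tree part crosses \emph{both} $i$-marked edges, so the index $i$ already occurs an even number of times and such an edge need not carry the marking $i$ (by the same balancing applied to the $B$--$C$ edge it must carry the marking $1$, but that does not help here). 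When such an edge exists, setting $\xi_i=0$ yields only the two components $\overline{A\cup D}$ and $\overline{B\cup C}$, and your list (c)--(f) is not the correct list of sub-products. The paper records precisely this dichotomy: $\chi_G\cong \chi_{\overline{A\cup D}}\,\chi_{\overline{B\cup C}}$ or $\chi_G\cong\chi_{\bar A}\,\chi_{\bar D}\,\chi_{\overline{B\cup C}}$ modulo $\xi_i=0$. Your closing paragraph makes the problem worse rather than better: the claim that ``the only edges of $G$ carrying the marking $i$ are the two tree edges'' is neither what is needed nor what cycle-balancing gives --- balancing shows that every $G$-edge from $A$ or from $D$ into $B\cup C$ \emph{must} carry $i$, while the $A$--$D$ edges are exactly the ones exempt.

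The omission is repairable along your own lines: in the two-factor case, degree matching forces $|A|+|B|=\deg U$ to equal $|A|+|D|$ or $|B|+|C|$, and a further reduction modulo $\xi_1=0$ splits $\chi_{\overline{A\cup D}}$ as $\chi_{\bar A}\chi_{\bar D}$ (every $A$--$D$ edge carries $1$), after which the same cancellation gives $\chi_{\bar B}\equiv\chi_{\bar D}$ or $\chi_{\bar A}\equiv\chi_{\bar C}$ modulo $\xi_1=\xi_i=0$ and Lemma \ref{lem0} concludes as before. But as written the proof does not cover this case, and the justification you sketch for why it cannot occur is incorrect.
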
\begin{proof}
We have $\chi_G\cong\chi_{\overline {A\cup B}} \chi_{\overline {C\cup D}}$ modulo ${\xi_1=0}$ so if  $\chi_G$ is not irreducible it has a factor  $U\cong\chi_{\overline {A\cup B}}$ modulo ${\xi_1=0}$. This implies  $  U\cong\chi_{\overline {A }} \chi_{\overline { B}} $  modulo ${\xi_1=\xi_i=0}$.  Now $\chi_G\cong \chi_{\overline {A\cup D}} \chi_{\overline {B\cup C}} $ or $\chi_G\cong \chi_{\overline {A } }\chi_{\overline {  D}} \chi_{\overline {B\cup C}}$ modulo ${\xi_3=0}$ and inspecting the two factorizations the claim follows from Lemma \ref{lem0}.
\end{proof}
 \subsection{Two indices   appear only once and in the same edge}
Let these two indices be $1,  2$.  If there exists another index,
say 3,   which appears only once,   then we can replace 2 by 3 and
we are back in the case of Corollary \ref{duesoli}.  Otherwise by Lemma \ref{le4p}    we have exactly $n-1$ distinct indices
different from 1,  2 and they appear twice.   Take one of these
indices,   say 3.  If we cannot apply lemma \ref{angle} we must be
in the case,   in which the maximal tree $T$ has the form
\begin{figure}[H]
   \begin{center}
  $$ \xymatrix{ A \ar@{--}[r]^{3,  k} &B  \ar@{--}[r]^{1,  2}  & C  \ar@{--}[r]^{3,  h}  & D  }  $$ \caption{}
 \label{pic1p}
  \end{center}
\end{figure}
  \noindent where the indices $1$ and $3$ do not appear
elsewhere in the tree.
   By  inspection of figure \eqref{pic1p}
all edges in G which connect $A$ and $C$ contain $1,  3$ in the
indexing,   all edges in G which connect $B$ and $D$ contain $1,
3$ in the indexing. Then we have:

\begin{equation}\label{eq7} \chi_G\cong  \chi_{\overline{A
\cup B}}\, \chi_{\overline{C \cup D}}\ \text{modulo}\ {\xi_1=0}.
\end{equation}
\begin{equation}\label{eq8}
\chi_G\cong\chi_{\bar{A}}\, \chi_{\overline{B\cup
C}}\, \chi_{\bar{D}} \quad\text{or}\quad \chi_G\cong \chi_{\overline{A\cup
D}}\, \chi_{\overline{B\cup
C}}\ \text{modulo}\ {\xi_3=0}.
\end{equation} The second case holds when $A,  D$ are joined by some edge  which  does not contain 3.
From \eqref{eq7} we see that if $\chi_G$ is not irreducible, then
it has an irreducible factor 
$U\cong\chi_{\overline{A \cup B}}\ \text{mod.}\  {\xi_1=0} $ which implies $ U\cong\chi_{\overline{A}} \chi_{ \overline{B}}\ \text{modulo}\ {\xi_1=\xi_3=0}$.  Comparing \eqref{eq7}
and \eqref{eq8} taking into account the  degree and using the irreducibility of
$\chi_{\bar{A}},\chi_{ \overline{B}},  \chi_{\bar{D}}\ \text{modulo}\ {\xi_1=\xi_3=0}$ we get the following
possibilities\begin{equation}
\label{eqq8}U\cong \chi_{\bar{A}}\chi_{\bar{D}},\ 
\chi_{\overline {A\cup D}},\ 
\chi_{\overline {B\cup C}}\quad \text{modulo}\ {\xi_3=0}
. 
\end{equation} In   the first two cases of  \eqref{eqq8}  we have 
$$U\cong \chi_{\overline{A}} \chi_{ \overline{B}} \cong \chi_{\bar{A}}\chi_{\bar{D}}\ \text{modulo}\ {\xi_1=\xi_3=0}$$
which implies \begin{equation}\label{eq13} 
\chi_{\bar{B}}\cong \chi_{\bar{D}}\ \text{modulo}\ {\xi_1=\xi_3=0}\end{equation}
Hence by lemma \ref{lem0} we must have: $ B =\{b\},D=\{d\}$.  But the index 2 appears only once in the path from $b$ to $d$ contradicting Corollary \ref{corlem0}.

In   the   last case  of  \eqref{eqq8}  we have 
$$U\cong \chi_{\overline{A}} \chi_{ \overline{B}}\cong \chi_{\bar{B}}\chi_{\bar{C}}  \ \text{modulo}\ {\xi_1=\xi_3=0}$$
which implies \begin{equation}\label{eq14} 
\chi_{\bar{A}}\cong \chi_{\bar{C}}\ \text{modulo}\ {\xi_1=\xi_3=0}\end{equation}
We arrive at the same
conclusions.

\subsection{Only the index  1  appears once in the
tree}\label{sub2} From Lemma \ref{le4p} there is only one index,
say 3,  which appears three times. All other indices,   different
from 1,  3,  appear twice. We need to distinguish two subcases:
\subsubsection{When 1,  3 appear together in one edge}    If $T$ has the form as in figure \eqref{pic26} then,   by lemma \ref{angle},  $\chi_G$ is
irreducible.
\begin{figure}[H]
   \begin{center}
  $$\xymatrix{A\ar@{--}[r]^{1,  3}&B\ar@{--}[r]^{2,  k_1}&C\ar@{--}[r]^{2,  k_2}&D}$$ \caption{}
 \label{pic26}
  \end{center}
\end{figure}

   Therefore, assume that $T$ has the form as in figure \eqref{pic29}
\begin{figure}[H]
   \begin{center}
  $$\xymatrix{A\ar@{--}[r]^{2,  k_1}&B\ar@{--}[r]^{1,  3}&C\ar@{--}[r]^{2,  k_2}&D}$$ \caption{}
 \label{pic29}
  \end{center}
\end{figure}

We start the discussion as in the previous paragraph

\begin{equation}\label{beq7} \chi_G\cong  \chi_{\overline{A
\cup B}}\, \chi_{\overline{C \cup D}} \quad \text{modulo}\  {\xi_1=0}.
\end{equation}
\begin{equation}\label{beq8}
\chi_G\cong \chi_{\bar{A}}\, \chi_{\overline{B\cup
C}} \, \chi_{\bar{D}}  \quad\text{or}\quad \chi_G\cong \chi_{\overline{A\cup
D}}\, \chi_{\overline{B\cup
C}}\quad \text{modulo}\ {\xi_2=0}.
\end{equation} The second case holds when $A,  D$ are joined by some edge  which  does not contain 2.
From \eqref{beq7} we see that if $\chi_G$ is not irreducible, then
it must factor into two irreducible polynomials: $\chi_G=UV$,
$U\cong \chi_{\overline{A \cup B}}$ modulo $\xi_1=0$ implies $ U\cong \chi_{\overline{A}} \chi_{ \overline{B}} $  modulo $\xi_1=\xi_2=0$.  Comparing \eqref{beq7}
and \eqref{beq8} taking into account the  degree and using the irreducibility of
$\chi_{\bar{A}},\chi_{ \overline{B}},  \chi_{\bar{D}} $  modulo $\xi_1=\xi_2=0$ we get the following
possibilities\begin{equation}
\label{beqq8}U\cong \chi_{\bar{A}}\chi_{\bar{D}},\ 
\chi_{\overline {A\cup D}},\ 
\chi_{\overline {B\cup C}}   \quad\text{modulo}\ \xi_2=0 
. 
\end{equation} In   the first two cases of  \eqref{beqq8}  we have 
$$U\cong\chi_{\overline{A}} \chi_{ \overline{B}}\cong \chi_{\bar{A}}\chi_{\bar{D}}\quad \text{modulo}\ {\xi_1=\xi_2=0}$$
which implies \begin{equation}\label{beq13} 
\chi_{\bar{B}}\cong\chi_{\bar{D}}\quad \text{modulo}\ {\xi_1=\xi_2=0}\end{equation}
 
In   the   last case  of  \eqref{beqq8}  we have 
$$U\cong\chi_{\overline{A}} \chi_{ \overline{B}}\cong \chi_{\bar{B}}\chi_{\bar{C}}\quad \text{modulo}\ {\xi_1=\xi_2=0}$$
which implies \begin{equation}\label{beq14} 
\chi_{\bar{A}}\cong\chi_{\bar{C}}\quad \text{modulo}\ {\xi_1=\xi_2=0}\end{equation}                        
By symmetry we need to consider only case \eqref{beq14}. By lemma \ref{lem0} we get   $|A|=|C|=1,  A=\{0\},   C=\{c\},
c=\tau_{n_1e_1+n_2e_2}  (0)$.  By inspection of Figure
\eqref{pic29} $n_1,  n_2\in\{\pm 1\}$.
\begin{equation}\label{14}\eta(c)\in\{0,-2\}\implies
c=\pm   (e_1-e_2),-e_1-e_2  \end{equation}We have thus proved: \begin{lemma}\label{dari}
Either $|A|=|C|=1$ and there is an
edge marked $  (1,  2)$ that connects $A=0$ and $c=C$. Or the same statement for $B,D$. 
Moreover,  all indices,   different from 1,  2 must appear an even
number of times in every path from 0 to $c$ (resp. $b,d$).
\end{lemma}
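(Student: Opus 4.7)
The plan is to complete the case analysis initiated in the paragraph preceding the statement. That analysis---using the factorization of $\chi_G$ modulo $\xi_1=0$ from \eqref{beq7}, the one modulo $\xi_2=0$ from \eqref{beq8}, the inductive irreducibility of the smaller $\chi_{\bar X}$, and unique factorization of polynomials---has already reduced any non-trivial factorization $\chi_G=UV$ to one of the three shapes \eqref{beqq8}; comparing degrees shows the first two feed into the congruence \eqref{beq13} ($\chi_{\bar B}\cong\chi_{\bar D}$ modulo $\xi_1=\xi_2=0$), while the third feeds into \eqref{beq14} ($\chi_{\bar A}\cong\chi_{\bar C}$ modulo $\xi_1=\xi_2=0$). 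By the reflective symmetry of Figure \ref{pic29} (swapping $A\leftrightarrow D$, $B\leftrightarrow C$, $k_1\leftrightarrow k_2$), the two cases are isomorphic, so it suffices to treat \eqref{beq14} and transport the conclusion to the other pair by symmetry.

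Assume then $\chi_{\bar A}\cong\chi_{\bar C}$ modulo $\xi_1=\xi_2=0$. Since, by inspection of the figure, neither $1$ nor $2$ occurs in any internal edge of $A$ or $C$, Lemma \ref{lem0} applies with the pair of indices $(i,j)=(1,2)$ and yields $|A|=|C|=1$, so $A=\{0\}$ and $C=\{c\}$ with $c=n_1e_1+n_2e_2$ for some integers $n_1,n_2$. The second half of the same lemma, combined with the specific shape of Figure \ref{pic29}---in which the indices $1$ and $2$ appear only on the three inter-block edges, with index $1$ appearing once and index $2$ twice---forces $n_1,n_2\in\{\pm 1\}$. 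Imposing finally the mass constraint $\eta(c)\in\{0,-2\}$ cuts the four remaining sign combinations down to the three surviving displacements $c\in\{\pm(e_1-e_2),\,-e_1-e_2\}$.

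Each of these three elements lies in the generating set $X^0\cup X^{-2}$ of the Cayley graph, so in the completed graph $\bar T$ there is a genuine edge connecting $0$ to $c$, whose label automatically involves only the indices $\{1,2\}$; this gives the structural claim for the $(A,C)$ case, and the $(B,D)$ case follows via the reflective symmetry. The parity assertion is then nothing but Corollary \ref{corlem0} applied with $(a,b,i,j)=(0,c,1,2)$. The step I expect to be most delicate is the refinement $n_1,n_2\in\{\pm 1\}$: the second part of Lemma \ref{lem0}, using only total index multiplicities at most $2$, admits the larger list $\{\pm(e_1-e_2),\pm 2(e_1-e_2),-e_1-e_2,-2e_1,-2e_2\}$, and excluding the three coefficient-$2$ options really uses the specific tree-layout of Figure \ref{pic29} (in particular, the fact that the tree-path from $A$ to $C$ crosses each of the indices $1,2$ exactly once), rather than purely abstract multiplicity bounds.
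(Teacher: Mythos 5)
Your proposal is correct and follows the paper's own argument essentially verbatim: the same two specializations \eqref{beq7}--\eqref{beq8}, the same reduction via unique factorization to \eqref{beq13}/\eqref{beq14}, the same symmetry reduction, the same application of Lemma \ref{lem0} with the pair $(1,2)$, the same inspection of Figure \ref{pic29} to force $n_1,n_2\in\{\pm1\}$, the same mass constraint yielding $c\in\{\pm(e_1-e_2),-e_1-e_2\}$, and Corollary \ref{corlem0} for the parity claim. Your closing observation that the $\pm1$ refinement relies on the tree layout (each of $1,2$ crossed exactly once on the path from $A$ to $C$) rather than on abstract multiplicity bounds is precisely the content of the paper's phrase ``by inspection of Figure \eqref{pic29}.''
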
  Assume $A=0, C=c$, consider the index
$k_1$.

i) If $k_1\neq 3$,   then $k_1$ must appear once more in the block
$B$ like:$$\xymatrix{0\ar@{--}[d]_{2, k_1}\ar@{--}[rrd]^{2,  1}\\ B_1\ar@{--}[r]^{k_1,
s}&B_2\ar@{--}[r]_{1, 3}&c\ar@{--}[r]^{2,  k_2}&D}$$
\noindent Now
we can apply
 \ref{angle} to the pair $  (1,  k_1)$ and get the irreducibility of
$\chi_G$.

ii) So we can assume that $k_1=3$,  consider the index $k_2$.
$$\xymatrix{0\ar@{--}[d]_{2, 3}\ar@{--}[rd]^{2,  1}\\  B \ar@{--}[r]_{1, 3}&c\ar@{--}[r]^{2,  k_2}&D}$$ 

A) If $k_2\neq 3$, then either $k_2$ appears in the block $D$ as
in   figure \eqref{pic45}, and then  by lemma \ref{angle} for
the pair $  (1,  k_2)$,  $\chi_G$ is irreducible;
  or it appears in the block $B$ as in
figure \eqref{pic44}.

\begin{figure}[H]\begin{minipage}[t]{2.6in}\begin{center}$$ \xymatrix{ 0\ar@{--}[d]_{2, 3}\ar@{--}[rd]^{2,  1}\\  B_1 \ar@{--}[r]_{1, 3}&c\ar@{--}[r]^{2,  k_2}&D\\B_2 \ar@{--}[u]_{k_2, s} }$$\caption{}\label{pic44}\end{center}\end{minipage}\hfill\begin{minipage}[t]{2.6in}\begin{center}$$\xymatrix{ 0\ar@{--}[d]_{2, 3}\ar@{--}[rd]^{2,  1}&&&\\  B_2 \ar@{--}[r]_{1, 3}&c\ar@{--}[r]^{2,  k_2}&D_1 \ar@{--}[r]_{k_2, s} &D_2}$$ \caption{}\label{pic45}\end{center}\end{minipage}\end{figure}

 %
%
%
In the case of   figure
\eqref{pic44} we can apply Lemma \ref{dari}  for $1,k_2$. Since $|0\cup B_1|>1$  the only possibility is  that $B_2=b_2$ and  there exists an edge with the
marking $ (1,  k_2)$ that connects $c$ and $b_2$.

Now we claim that we must have $s=3$  in fact $s$  must appear an even number of times  in both paths from $0,c$ and from $b_2,c$, this is possible only for $s=3$.
$$ \xymatrix{ 0\ar@{--}[d]_{2, 3}\ar@{--}[rd]^{2,  1}\\  B_1 \ar@{--}[r]_{1, 3}&c\ar@{--}[r]^{2,  k_2}&D\\b_2 \ar@{--}[u]^{k_2, 3}\ar@{--}[ur]_{k_2, 1} }$$\noindent We now remove the two edges marked $1,3$ and $k_2,3$. In the resulting maximal tree 3 appears once and we can apply Lemma  \ref{angle} to the pair $  (3,  k_2)$, $\chi_G$ is
irreducible.  

%
%

B) If $k_2=3$ and $|B|>1$.  Let $i$ be an index that appears in
$B$. If $i$ appears twice in $B$,   then, by lemma \ref{angle} we
get the irreducibility of $\chi_G$.  Otherwise,   $i$ appears in
this form:
\begin{figure}[H]\begin{center}\includegraphics[scale=0.7]{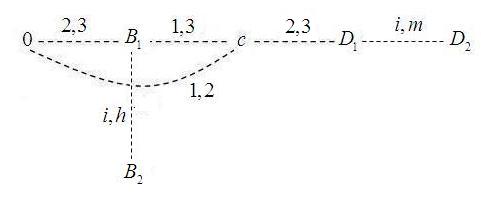}\caption{}\label{pic47}\end{center}\end{figure}
This case is   excluded by Lemma \ref{dari}  for the pair $1,i$. 
  The case $|D|>1$ is treated similarly.  So
now we have to consider only the case,   when $|B|=|D|=1$.

C) $k_2=3,   |B|=|D|=1$.  Up to symmetry,   we have 4 subcases,
displayed in figures \eqref{pic48}-\eqref{pic51}.
\begin{figure}[H]\begin{minipage}[t]{2.6in}\begin{center}\includegraphics[scale=0.7]{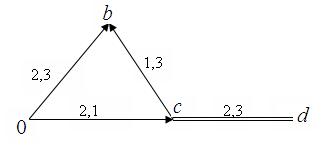}\caption{}\label{pic48}\end{center}\end{minipage}\hfill\begin{minipage}[t]{2.6in}\begin{center}\includegraphics[scale=0.7]{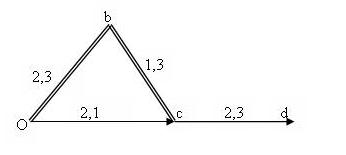}\caption{}\label{pic49}\end{center}\end{minipage}\end{figure}
\begin{figure}[H]\begin{minipage}[t]{2.6in}\begin{center}\includegraphics[scale=0.7]{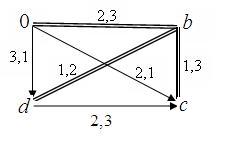}\caption{}\label{pic50}\end{center}\end{minipage}\hfill\begin{minipage}[t]{2.6in}\begin{center}\includegraphics[scale=0.7]{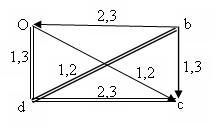}\caption{}\label{pic51}\end{center}\end{minipage}\end{figure}
By using the program Mathematica we have verified that the
characteristic polynomials of these graphs are irreducible.
\subsubsection{When 1,  3 do not appear together in any edge:} We have three possible
cases  (given in figures \eqref{pic30},  \eqref{pic31}, \eqref{pic54a}).

1)\quad When $T$ up to   symmetry has the form as in   figure
\eqref{pic30}:
\begin{figure}[H]
   \begin{center}
$$\xymatrix{A\ar@{--}[r]^{1,  2}&B}$$\caption{}
 \label{pic30}
  \end{center}
\end{figure}\noindent where $3$ appears only in the block $B$
 then, by lemma \ref{angle},  for the pair $  (1,  3)$,  $\chi_G$ is irreducible.
\newline 2)\quad When $T$ up to  symmetry has the form as in  figure 
\eqref{pic31}:
\begin{figure}[H]
   \begin{center}
  $$\xymatrix{A\ar@{--}[r]^{3,  k_1}&B\ar@{--}[r]^{1,  2}&C\ar@{--}[r]^{3,  k_2}&D\ar@{--}[r]^{3,  k_3}&E}$$ \caption{}
 \label{pic31}
  \end{center}
\end{figure}
We have
\begin{equation}\label{106}\text{modulo}\quad \xi_1=0,\quad 
\chi_G\cong\chi_{\overline{A\cup B}}\chi_{\overline{C\cup
D\cup E}}  \end{equation}
\begin{equation}\label{107}\text{modulo}\quad \xi_3=0,\quad\chi_G=\begin{cases}
\chi_{\bar{A}}\chi_{\overline{B\cup
C}}\chi_{\bar{D}}\chi_{\bar{E}}\\
\chi_{\overline{A\cup D}}\chi_{\overline{B\cup
C}}\chi_{\bar{E}}\\
\chi_{\overline{A\cup D}}\chi_{\overline{B\cup C\cup
E}}\\\chi_{\overline{A}}\chi_{\overline{D}}\chi_{\overline{B\cup C\cup
E}}\end{cases}
\end{equation}  Arguing as in previous cases,  if $\chi_G$ factors then we can factor it as $UV$ with $U_{\xi_1=0}=\chi_{\overline{A\cup B}}$.  Analyzing the possible values of $U_{\xi_3=0}$ we have,
comparing \eqref{106} and \eqref{107} and setting $\xi_1=\xi_3=0$,
the following possibilities:
\begin{equation}\begin{matrix}\\
U\\\text{mod} \ \xi_3=0
\end{matrix}
 \cong\begin{cases}\chi_{\overline{B\cup
C}}\qquad \qquad\quad\,\implies \,
\chi_{\bar{A}}\cong\chi_{\bar{C}}\ \text{mod.}\ {\xi_1=\xi_3=0}
 \\\chi_{\overline{A\cup
D}} \ \text{or}\ \chi_{\bar{A}}\chi_{\bar{D}}\quad  \implies
\chi_{\bar{B}}\cong \chi_{\bar{D}}\ \text{mod.}\ {\xi_1=\xi_3=0}
 \\ \chi_{\bar{A}}\chi_{\bar{E}}\quad\qquad\qquad\implies
\chi_{\bar{B}}\cong \chi_{\bar{E}}\ \ \text{mod.}\ {\xi_1=\xi_3=0} 
\\ \chi_{\bar{D}} \chi_{\bar{E}} \implies
\begin{cases}
\chi_{\bar{A}}\cong \chi_{\bar{D}} ,\chi_{\bar{B}} \cong\chi_{\bar{E}}\ \text{mod.}\ {\xi_1=\xi_3=0}
 \\\chi_{\bar{A}}\cong\chi_{\bar{E}}  ,\chi_{\bar{B}}\cong\chi_{\bar{D}}\ \text{mod.}\ {\xi_1=\xi_3=0}
\end{cases}\end{cases}\label{109}\end{equation} 
It is enough to exclude the first 3 cases of \eqref{109}.  

  {\bf Case 1}\quad  If
$\chi_{\bar{A}}=\chi_{\bar{C}}$ modulo ${\xi_1=\xi_3=0}$, by lemma \ref{lem0} and by inspection we deduce
that $A=\{0\}, C=\{c\}$
 and $c=\pm(e_1-e_3),-e_1-e_3$.  Hence there is an edge marked $1,3$  that connects 0 and $c$. We can then replace the maximal tree $T$ with the one in which we keep this edge and remove the one marked $1,2$ and we find ourselves in the case treated in the previous paragraph.

{\bf Case 2}\quad  If $\chi_{\bar{B}}\cong \chi_{\bar{D}}$ modulo ${\xi_1=\xi_3=0}$,   then, by lemma \ref{lem0}  $B=\{b\},
D=\{d\}$ and 2 should appear an even number of times  between them, again a contradiction (we are in the case $k_2=2$).

{\bf Case 3}\quad  If $\chi_{\bar{B}}\cong\chi_{\bar{E}}$ modulo ${\xi_1=\xi_3=0}$, then, by lemma \ref{lem0} and choosing the root at $B$ we have $B=\{0\},
E=\{e\}$ we have the same contradiction as in the previous case.\medskip

3)\quad  When $T$ has the
form:\begin{figure}[H]\begin{center}$$\xymatrix{&A\ar@{--}[r]^{3,
k_1}&B\ar@{--}[r]^{1,  2}&C\ar@{--}[r]^{3,  k_3}\ar@{--}[d]^{3,
k_2} &E\\&&&D&&}$$\caption{}\label{pic54a}\end{center}\end{figure}
\begin{equation}\label{19}\chi_G\cong \chi_{\overline{A\cup
B}}\chi_{\overline{C\cup D\cup
E}}\quad \text{modulo}\ {\xi_1=0}\end{equation}  From \eqref{19} we see that if $\chi_G$ is not irreducible,
then $\chi_G=UV$,  where $U,  V$ are irreducible,
$U\cong \chi_{\overline{A\cup B}}\  \text{modulo}\ {\xi_1=0}, \implies U\cong\chi_{\overline{A }}\chi_{\overline{  B}}\ \text{modulo}\  {\xi_1=\xi_3=0}$.\begin{equation}\label{20}\text{modulo}\ \xi_3=0,\ \chi_{G}\cong\begin{cases}\chi_{\bar{A}}\chi_{\overline{B\cup
C}}\chi_{\bar{D}}\chi_{\bar{E}}\\\chi_{\overline{A\cup
D}}\chi_{\overline{B\cup C}}\chi_{\bar{E}}\\
\chi_{\overline{A\cup E}}\chi_{\overline{B\cup
C}}\chi_{\bar{D}}
\\\chi_{\bar{A}}\chi_{\overline{B\cup
C}}\chi_{\overline{D\cup E}}
\\\chi_{\overline{A\cup D\cup
E}}\chi_{\overline{B\cup C}}\end{cases}\end{equation}
 As for  $U $  it may be congruent modulo $  \xi_3=0$ to
 $$\chi_{\overline{B\cup C}},\,\chi_{\overline{A\cup D }},\,\chi_{\overline{A\cup E }},\,$$$$\chi_{\overline{A  }} \chi_{\overline{ D }},\,\chi_{\overline{A  }} \chi_{\overline{ E }},\,\chi_{\overline{D  }} \chi_{\overline{ E }},\,\chi_{\overline{A\cup D\cup
E}} $$  giving the following subcases:
1) $\chi_{\bar{C}}\cong\chi_{\bar{A}}$, 2)  $\chi_{\bar{B}}\cong\chi_{\bar{D}}$, 3) $\chi_{\bar{B}}\cong\chi_{\bar{E}}$,  4) $\chi_{\bar{B}}\cong\chi_{\overline{D\cup E}}$ modulo ${\xi_1=\xi_3=0}. $  The fourth case can be excluded by cardinality. We treat the other 3 cases.\medskip

1)\quad $\chi_{\bar{C}}|_{\xi_1=\xi_3=0}=\chi_{\bar{A}}$,  by Lemma
\ref{lem0},  $ A=\{0\},   C=\{c\},$ and $c=\pm(e_1-e_3),-e_1-e_3$.  Hence there is an edge marked $1,3$  that connects 0 and $c$. We can then replace the maximal tree $T$ with the one in which we keep this edge and remove the one marked $1,2$ and we find ourselves in the case treated in the previous paragraph.

2) $ \chi_{\bar{B}}\cong\chi_{\bar{D}}$ modulo ${\xi_1=\xi_3=0}$ by
lemma \ref{lem0} $\implies |B|=|D|=1,  B=\{b\},   D=\{d\}$  
and $\sigma_dd+\sigma_bb=\pm(e_1-e_3),-e_1-e_3$.  Hence there is an edge marked $1,3$  that connects $b$ and $d$. We can then replace the maximal tree $T$ with the one in which we keep this edge and remove the one marked $1,2$ and we find ourselves in the case treated in the previous paragraph.

3) $ \chi_{\bar{B}}\cong\chi_{\bar{E}}$ modulo ${\xi_1=\xi_3=0}$  is  similar to case 2),
changing the role of $k_2$ and $k_3$.

\subsection{Every index appears twice in the tree}
\begin{lemma}\label{dumt}
If $\chi_G$ is not irreducible the graph is a tree.
\end{lemma}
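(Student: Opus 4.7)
The plan is to argue by contrapositive: assuming $G$ strictly contains the maximal tree $T$, I will show that $\chi_G$ is irreducible. Since every index appears exactly twice in $T$ (case (i) of Lemma \ref{le4p}), any extra edge $e\in G\setminus T$ carries two distinct indices $i,j$ (the definitions of $X^0, X^{-2}$ force $i\neq j$) which then appear three times each in $G$, and $e$ closes a unique cycle with $T$.

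First I exploit Corollary \ref{laffa} at two different variables. Reducing modulo $\xi_i=0$ removes the three $i$-edges of $G$, namely the two $i$-edges of $T$ (which split $T$ into three subtrees $A,B,C$) together with $e$ itself; the resulting graph is exactly $T$ minus its two $i$-edges, so $\chi_G\equiv \chi_{\bar A}\chi_{\bar B}\chi_{\bar C}\pmod{\xi_i=0}$, a product of three irreducible factors by the inductive hypothesis. A parallel reduction modulo $\xi_j=0$ yields a second three-factor decomposition via subtrees $A',B',C'$. If $\chi_G=UV$ were a non-trivial factorization into irreducibles, then $U\bmod\xi_i=0$ would correspond to a strict subset of $\{\chi_{\bar A},\chi_{\bar B},\chi_{\bar C}\}$ and similarly for $\xi_j=0$. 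Matching the two partitions in the style of Lemmas \ref{angle}--\ref{angle1}, and applying the Separation Lemma \ref{seplem} together with Lemma \ref{lem0} to the components that become free of both indices after the double reduction modulo $\xi_i=\xi_j=0$, one is forced into a configuration where two components collapse to singletons $\{a\},\{b\}$ whose relative displacement is supported on $\{e_i,e_j\}$ and takes one of the values $\pm(e_i-e_j),\ -e_i-e_j,\ -2e_i,\ -2e_j$.

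This singleton structure then admits only three outcomes. Either the implied edge between $a$ and $b$ is already present in $G$, in which case we may replace $T$ by a maximal tree that uses this edge instead of $e$, thereby reducing to a previously treated case of Section \ref{sub2}; or the displacement forces $\sigma_a a+\sigma_b b=-2e_k$ for some $k$, rendering $G$ not allowable (Definition \ref{ilpunto1}) and contradicting the hypothesis by Proposition \ref{ilpunto0}; or the vertex $a$ together with the index $i$ (now of multiplicity three but incident only at edges meeting $a$) satisfies the hypotheses of Lemma \ref{supertest}, which gives the irreducibility of $\chi_G$ directly and contradicts the factorization assumption. The main obstacle is the combinatorial bookkeeping: the endpoints of $e$ can lie in any pair of subtrees among $\{A,B,C\}$ and $\{A',B',C'\}$, the $T$-edges carrying $i$ and $j$ can independently be black or red, and a small number of residual minimal configurations (a short tree plus one chord, analogous to those in figures \ref{pic48}--\ref{pic51}) will have to be checked by explicit computer-algebra calculation rather than by structural argument.
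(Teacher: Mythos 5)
Your proposal misses the one idea that makes this lemma a two--line reduction rather than a new analysis. The paper's proof is: the chord $e$ (marked $i,j$) closes a circuit with the maximal tree $T$; pick a tree edge of that circuit whose marking is disjoint from $\{i,j\}$, delete it and insert $e$. The result is another maximal tree of $G$ in which some index now has multiplicity $1$, so by Lemma \ref{le4p} one is back in a case already proved (Corollary \ref{duesoli} or \S\ref{sub2}). Nothing about the chord is analysed directly; the whole point is that the existence of a chord lets you \emph{change} the maximal tree so that the multiplicity pattern is no longer ``every index twice.'' Your plan instead keeps $T$ fixed and tries to rerun the two--variable specialization machinery on $G$ with the chord present, which is a genuinely different -- and much heavier -- route.

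As written, your route has a concrete error and a genuine gap. The error: setting $\xi_i=0$ removes the $i$--edges from $G=\bar T$, not from $T$, and the three subtrees $A,B,C$ need not remain separate -- two of them can be joined by a completion edge not containing $i$, so the specialization may factor as $\chi_{\overline{A\cup C}}\chi_{\bar B}$ rather than $\chi_{\bar A}\chi_{\bar B}\chi_{\bar C}$ (the paper handles exactly this dichotomy in \eqref{eq8} and \eqref{107}); your ``three irreducible factors'' starting point is therefore unjustified. The gap: the decisive step -- ``matching the two partitions \dots one is forced into a configuration where two components collapse to singletons'' -- is asserted, not proved, and you then concede that the endpoint placements, edge colors, and residual minimal configurations all still need to be enumerated, some by computer. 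That enumeration is essentially the entire content of \S\S 10.2--10.5 redone with an extra chord, and none of it is carried out. So the proposal is an outline of a harder argument, not a proof, and the short swap--the--tree argument is what you should have found.
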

\begin{proof}
Assume there is a an edge marked $i,j$  in the graph and not in the tree, then a segment in the tree together with this edge form a dependent circuit, thus  we can remove an edge marked $a,b$ in this segment and add the edge  $i,j$ in order to obtain another maximal tree. Clearly in a circuit there is at least an edge such that the indices $i,j$ are distinct fro the indices $a,b$. This means that in the new maximal tree  one of the indices $i,j$ appears with multiplicity 1 and we are back to a previous case.
\end{proof} From now on we thus assume that the graph is a tree $T$.
We start with
some special cases:\subsubsection{$n=2$}
 $$ T:\quad \xymatrix{ &-e_1-e_2 \ar@{=}[r]  &0\ar@{->}[r]&e_1-e_2 }$$\smallskip
is not allowable (but its characteristic polynomial is irreducible).
\subsubsection{$n=3$} Up to symmetry of the indices $T$ has the form as in   figure \eqref{pic55} or as in   figure \eqref{pic56}:\begin{figure}[H]\begin{center}$$\xymatrix{0\ar@{--}[r]^{1,  2}&b\ar@{--}[r]^{2,  3}&c\ar@{--}[r]^{1,  3}&d}$$\caption{}\label{pic55}\end{center}\end{figure}
\begin{figure}[H]\begin{center}$$\xymatrix{0\ar@{--}[r]^{1,  2}&b\ar@{--}[r]^{2,  3}\ar@{--}[d]^{1,  3}&c\\&d&&}$$\caption{}\label{pic56}\end{center}\end{figure}
\begin{remark}\label{rem2}  If all edges in $T$ are
black,  or there are exactly two red edges then    the edges are
linearly dependent.   \end{remark}1) When the graph $T$ has
the form as in figure \eqref{pic55}
\noindent a)\quad If all edges are red,   then $G=\bar{T}$ is not a tree:
\begin{figure}[H]\begin{center}\includegraphics[scale=0.8]{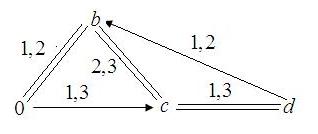}\caption{}\label{pic54}\end{center}\end{figure}
  We need to consider the cases,
when in $T$ there is one red   and two black edges. Up to symmetry we may assume the red edge is the first or the second.\medskip

\noindent b)\quad When the red
edge connects $0$ and $b$:\smallskip

 b1) When $T$ has the form:
$$\xymatrix{0\ar@{=}[r]^{1,  2}&b\ar@{->}[r]^{2,  3}&c\ar@{-}[r]^{1,  3}&d}$$ We have $$b=-e_1-e_2,  c-b=e_2-e_3\implies c=-e_1-e_2. $$ Hence  $G=\bar{T}$ is not a tree.
\smallskip

 b2)\quad  If $T$ has the form:
$$\xymatrix{0\ar@{=}[r]^{1,  2}&b\ar@{<-}[r]^{2,  3}&c\ar@{->}[r]^{1,  3}&d}$$
We have $b-c=e_1-e_3,  d-c=e_1-e_3\implies d-b=e_1-e_2$,   i. e.
in $G$ there is a black edge marked $  (1,  2)$ that
connects $b$ and $d$. Hence  $G=\bar{T}$ is not a tree. \smallskip

 b3) If $T$ has the
form:$$\xymatrix{0\ar@{=}[r]^{1, 2}&b\ar@{<-}[r]^{2,
3}&c\ar@{<-}[r]^{1,  3}&d}$$
$$\label{24}\chi_T\!=\det\!\left  (\!\!\!\!
\begin{array}{cccc}
  t & 2\sqrt{\xi_1\xi_2} & 0 & 0 \\
  -2\sqrt{\xi_1\xi_2} & t+\xi_1+\xi_2 & 2\sqrt{\xi_2\xi_3} & 0 \\
  0 & 2\sqrt{\xi_2\xi_3} & t+\xi_1+2\xi_2-\xi_3 & 2\sqrt{\xi_1\xi_3} \\
  0 & 0 & 2\sqrt{\xi_1\xi_3} & t+2\xi_1+2\xi_2-2\xi_3 \\
\end{array}%
\!\right)$$By using the program Mathematica we computed
$\chi_T$ and verified that it is irreducible.  \medskip

\noindent c) When
the red edge connects $b$ and $c$:\smallskip

 c1)\quad If $T$ has the
form:
$$\xymatrix{0\ar@{-}[r]^{1,  2}&b\ar@{=}[r]^{2,  3}&c\ar@{<-}[r]^{1,  3}&d}$$
we have $b+c=-e_2-e_3,  c-d=e_1-e_3\implies b+d=-e_1-e_2$,   i. e.
there is a red edge marked $  (1,  2)$ that connects $b$
and $d$. Hence  $G=\bar{T}$ is not a tree.
\smallskip

 c2)\quad If $T$ has the form $$\xymatrix{0\ar@{->}[r]^{1,
2}&b\ar@{=}[r]^{2, 3}&c\ar@{-}[r]^{1,  3}&d}$$ we have $b=e_1-e_2,
b+c=-e_2-e_3\implies c=e_1-e_3$,   i. e.  there is a black edge
marked $  (1,  3)$ that connects $0$ and $c$. Hence  $G=\bar{T}$ is not a tree. \smallskip

 c3)\quad If $T$ has
the form:$$\xymatrix{0\ar@{<-}[r]^{1,  2}&b\ar@{=}[r]^{2,
3}&c\ar@{->}[r]^{1,  3}&d}$$
we have $$\chi_T=\det\left  (%
\begin{array}{cccc}
  t & -2\sqrt{\xi_1\xi_2} & 0 & 0 \\
  -2\sqrt{\xi_1\xi_2} & t-\xi_1+\xi_2 & 2\sqrt{\xi_2\xi_3} & 0 \\
  0 & -2\sqrt{\xi_2\xi_3} & t-\xi_1+2\xi_2+\xi_3 & 2\sqrt{\xi_1\xi_3} \\
  0 & 0 & 2\sqrt{\xi_1\xi_3} & t-2\xi_1+2\xi_2+2\xi_3 \\
\end{array}%
\right)$$We used the program Mathematica to compute
$\chi_T$ and to verify that it is irreducible.
\medskip

2) When $T$ has the form as in figure \eqref{pic56}:\smallskip

\noindent a)\quad
When in $T$ there are 3 red edges, then $G=\bar{T}$ has the
form:\begin{figure}[H]\begin{center}\includegraphics{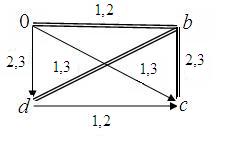}\caption{}\label{pic57}\end{center}\end{figure}
This figure can be obtained from figure \eqref{pic50} by
exchanging the role of indices  (i. e.  the role of variables
$\xi_1,  \xi_2,  \xi_3$). Hence $\chi_T$ is irreducible.
\smallskip

\noindent  b)\quad  When in $T$ there is only one red edge,   by the symmetry
property of $T$ we may suppose that this red edge connects $0$ and
$b$. \smallskip

 b1)\quad If $T$ has the form:$$\xymatrix{0\ar@{=}[r]^{1,
2}&b\ar@{->}[r]^{2, 3}\ar@{-}[d]^{1,  3}&c\\&d&&}$$  in
$G$ there is a red edge marked $  (1,  3)$ that connects
0 and $c$.  Hence $G=\bar T$ is not a tree. \smallskip

 b2)\quad If $T$ has the
form:$$\xymatrix{0\ar@{=}[r]^{1,  2}&b\ar@{-}[r]^{2,
3}\ar@{->}[d]^{1,  3}&c\\&d&&}$$ we have $b=-e_1-e_2,
d-b=e_1-e_3\implies d=-e_2-e_3$,   hence in $G$ there is a red
edge marked $  (2,  3)$ that connects 0 and $d$.  Hence $G=\bar T$ is not a tree. \smallskip

 b3)\quad  If $T$ has
the form:
$$\xymatrix{0\ar@{=}[r]^{1,  2}&b\ar@{<-}[r]^{2,  3}\ar@{<-}[d]^{1,  3}&c\\&d&&}$$
we have $b-c=e_2-e_3,  b-d=e_1-e_3\implies d-c=e_2-e_1$,   hence
there is a black edge marked $  (2,  1)$ that connects
$c$ and $d$.  Hence $G=\bar T$ is not a tree.

\subsection{\ $n\geq 4$}  At this point we are assuming that we
have $n\geq 4$ edges in a maximal tree $T$ and $n$ indices,  each
appearing twice. Thus given an index,  say 1,  it appears in two
edges paired with at most two other indices,  thus we can find
another index,  say 2 which is not in these two edges.    Up to
symmetry we may have  six cases displayed in figures  \eqref{pic37}-\ -\eqref{pic32}:
\begin{figure}[H]\begin{center}$$ \xymatrix{&&D&&\\A \ar@{--}[r]^{1,  h}  &B\ar@{--}[r]^{2,  k}  &C\ar@{--}[u]^{1,  i}  \ar@{--}[r]^{2,  j}  & E \\&&&&}$$\caption{}\label{pic37}\end{center}\end{figure}
 \begin{figure}[H]\begin{center}$$ \xymatrix{A \ar@{--}[r]^{1,  h}  &B\ar@{--}[r]^{2,  k}  &C\ar@{--}[r]^{2,  i}  &D\ar@{--}[r]^{1,  j}  & E
 }$$\caption{}\label{pic34}\end{center}\end{figure}
 \begin{figure}[H]\begin{center}$$ \xymatrix{&&D&&\\A \ar@{--}[r]^{1,  h}  &B\ar@{--}[r]^{1,  k}  &C\ar@{--}[u]^{2,  i}  \ar@{--}[r]^{2,  j}  & E
 \\&&&&}$$\caption{}\label{pic35}\end{center}\end{figure}
 \begin{figure}[H]\begin{center}$$ \xymatrix{&C&&\\A \ar@{--}[r]^{1,  h}  &B\ar@{--}[r]^{1,  k}
\ar@{--}[u]^{2,  i}\ar@{--}[d]^{2,  j}  & E
\\&D&&}$$\caption{}\label{pic36}\end{center}\end{figure}
 \begin{figure}[H]\begin{center}$$ \xymatrix{A \ar@{--}[r]^{1,  h}  &B\ar@{--}[r]^{2,  k}  &C\ar@{--}[r]^{1,  i}  &D\ar@{--}[r]^{2,  j}  & E
 }$$\caption{}\label{pic33}\end{center}\end{figure}
 \begin{figure}[H]\begin{center} $$\xymatrix{A \ar@{--}[r]^{1,  h}  &B\ar@{--}[r]^{1,  k}  &C\ar@{--}[r]^{2,  i}  &D\ar@{--}[r]^{2,  j}  & E
 }$$\caption{}\label{pic32}\end{center}\end{figure}

 When we put $\xi_1=0$  or $\xi_2=0$  we   have   3  connected components in the graph,   so by induction we deduce that,   if the characteristic polynomial is not irreducible it can factor in at most 3  factors.  We will perform a case analysis in order to produce  two pairs of  disjoint blocks  which give under specialization $\xi_1=\xi_2=0$ the same characteristic
polynomials and we   apply Lemma \ref{lem0}.   In this way we will prove the irreducibility of $\chi_T$ in each case,
displayed in figures
 \eqref{pic37}-\eqref{pic32}.
 \subsubsection {Figure \eqref{pic37}}$$ \xymatrix{&&D&&\\A \ar@{--}[r]^{1,  h}  &B\ar@{--}[r]^{2,  k}  &C\ar@{--}[u]^{1,  i}  \ar@{--}[r]^{2,  j}  & E \\&&&&}$$
  We have \begin{equation}\label{33}\chi_T \cong  \chi_{{A}}\chi_{{B\cup C\cup E}} \chi_{{D}}\ \text{mod.}\ {\xi_1=0}, \quad \chi_T \cong \chi_{{A\cup
B}}\chi_{{C\cup D}} \chi_{{E}}\ \text{mod.}\ {\xi_2=0},
\end{equation}   Suppose that $\chi_T$ is not irreducible,   then there is an irreducible factor $U$ congruent to either $\chi_{{A}}$ or $\chi_{{D}} $ or finally $\chi_{{A}} \chi_{{D}}$ modulo ${\xi_1=0}$.

Then   $U $ is  congruent to $\chi_{{E}}$ or  $\chi_{{A\cup
B}}$ or $\chi_{{C\cup D}}$ modulo ${\xi_2=0}$.

We now specialize $\xi_1=\xi_2=0$ and apply Lemma \ref{lem0} and we have several possibilities of two blocks giving the same characteristic polynomial. Of these possibilities some are excluded by the parity condition of  the indices 1,2 in the path joining them. 

We then see that we are left  with the ones listed which all produce an extra edge contradicting the assumption that $G=\bar T$ is a tree.   
$$ \xymatrix{&&D&&\\  &B\ar@{--}[r]^{2,  k}  &c\ar@{--}[u]^{1,  i}  \ar@{--}[r]^{2,  j}  & E \\&&0 \ar@{--}[lu]^{1,  h} \ar@{--}[u]_{1,  2}&&&}  \xymatrix{&&d&&\\A \ar@{--}[r]^{1,  h}  &b\ar@{--}[r]_{2,  k} \ar@{--}[ru]^{1,  2} &C\ar@{--}[u]_{1,  i}  \ar@{--}[r]_{2,  j}  & E \\&&&&}$$
$$ \xymatrix{&&d&&\\A \ar@{--}[r]^{1,  h}  &B\ar@{--}[r]^{2,  k}  &C\ar@{--}[u]^{1,  i}  \ar@{--}[r]^{2,  j}  & e\ar@{--}[lu]_{1,  2} \\&&&&}.$$ 
\qed  

\subsubsection{Figure
\eqref{pic34}}$$ \xymatrix{A \ar@{--}[r]^{1,  h}  &B\ar@{--}[r]^{2,  k}  &C\ar@{--}[r]^{2,  i}  &D\ar@{--}[r]^{1,  j}  & E
 }$$ 
 \begin{equation}\label{112}
\chi_T \cong  \chi_{{A}}\chi_{{B\cup
C\cup D}} \chi_{{E}}\ \text{mod.}\ {\xi_1=0}, \quad \chi_T \cong \chi_{{A\cup
B}}\chi_{{C}} \chi_{{D\cup
E}}\ \text{mod.}\ \xi_2=0 
\end{equation}  Suppose that $\chi_T$ is not irreducible,   then there is an irreducible factor $U$ such that $U $ is congruent, modulo $\xi_1=0$ to $\chi_{{A}}$ or $\chi_{{E}} $ or finally $\chi_{{A}} \chi_{{E}}.$ 

Then   $U $ is congruent, modulo $\xi_2=0$ to  either $\chi_{{C}}$ or  $\chi_{{A\cup
B}}$ or $\chi_{{D\cup E}}$. We reason as in previous cases, specializing $\xi_1=\xi_2=0$  we deduce that there are four possible applications of Lemma \ref{lem0} for the blocks $A,E$ and the blocks $C,B,D$. We exclude those for which an index 1,2 in the path connecting them occurs only once and the other 0 or 2. We then are left with the cases:
 \begin{equation}\label{114}
\chi_{{A}}\cong \chi_{{C}} 
,\ 
\chi_{{C}} \cong \chi_{{E}},\ \text{mod}\ {\xi_1=\xi_2=0}
\end{equation}

By  symmetry we need to consider only the first. \smallskip

Assume thus that $\chi_{{A}}\cong \chi_{{C}}$ modulo ${\xi_1=\xi_2=0}
$,  by lemma
\ref{lem0} we have $|C|=|A|=1,  C=\{c\},  A=\{0\}, c=\tau_{\pm
e_1\pm e_2} (0)$ ,    $  c=\pm
(e_1-e_2),-e_1-e_2$. Hence there is an edge marked $
(1, 2)$ connecting 0 and $c$. 
$$ \xymatrix{ &&&0 \ar@{--}[ld]_{1,  h}  \ar@{--}[d]^{1,2}\\&
&B \ar@{--}[r]^{2,  k}  &c\ar@{--}[r]^{2,
i} &D\ar@{--}[r]^{1, j}  & E
 } $$ and $G=\bar T$ is not a tree.
\qed         
\subsubsection{Figure \eqref{pic36}}   $$ \xymatrix{&C&&\\A \ar@{--}[r]^{1,  h}  &B\ar@{--}[r]^{1,  k}
\ar@{--}[u]^{2,  i}\ar@{--}[d]^{2,  j}  & E
\\&D&&}$$
We have:
\begin{gather}
\chi_T\cong\chi_{{A}}\chi_{{C\cup B\cup
D}} \chi_{{E}}\ \text{mod.}\ {\xi_1 =0},  \quad 
\chi_T\cong\chi_{{A\cup B\cup
E}}\chi_{{C}} \chi_{{D}}\ \text{mod.}\ { \xi_2=0}\label{132}.
\end{gather}
If $\chi_T$ is not irreducible by considering a suitable irreducible factor $U$ and   by a simple analysis we get   the following
subcases:$$
\chi_{{A}}\cong \chi_{{C}}, \,\chi_{{A}}\cong \chi_{{D}},\,\chi_{{E}} \cong \chi_{{D}} 
,\, \chi_{{E}}\cong \chi_{{C}} \ \text{mod.}\ {\xi_1=\xi_2=0}\label{133. 4}.
$$
By the symmetry of the tree in figure \eqref{pic36},   we need
consider only the first case.  We get easily by lemma
\ref{lem0} $|A|=|C|=1,  A=\{0\},   C=\{c\},  c=\pm
(e_1-e_2),-e_1-e_2$. So 0, $c$ are connected by an edge and $G=\bar T$ is not a tree..

$$ \xymatrix{&E  &&\\0 \ar@{--}[rd]_{1,  2} \ar@{--}[r]^{1,  h}  &B\ar@{--}[r]^{2, j}
\ar@{--}[u]^{1,  k}\ar@{--}[d]^{2,  i}  &D
\\&c&&} $$\qed
 
\subsubsection{Figure \eqref{pic35}}$$ \xymatrix{&&D&&\\A \ar@{--}[r]^{1,  h}  &B\ar@{--}[r]^{1,  k}  &C\ar@{--}[u]^{2,  i}  \ar@{--}[r]^{2,  j}  & E
 \\&&&&}$$
 We have:\begin{gather}
\chi_T|_{\xi_1=0}\cong \chi_{{A}}\chi_{{B}} \chi_{{C\cup
D\cup E}}\ \text{mod.}\ {\xi_1=0}, \quad \chi_T \cong \chi_{{A\cup
B\cup C}}\chi_{{D}} \chi_{{E}}\ \text{mod.}\ {\xi_2=0},
\label{119}
\end{gather}
Suppose that $\chi_T$ is not irreducible. The usual reasoning gives an irreducible factor $U$ so that $U\cong \chi_{ A},\chi_{ B},\chi_{ A}\chi_{ B}$ modulo $\xi_1=0$.

We may have $U\cong \chi_{ D},\chi_{ E},\chi_{ D}\chi_{ E},\chi_{{ D\cup E}}$ modulo $\xi_2=0$.

 Arguing as in the previous case we only have the possibility$$ \chi_{{B}}\cong \chi_{{D}},\quad \chi_{{B}}\cong \chi_{{E}}\quad\text{modulo}\quad \xi_1=\xi_2=0\label{123}.
$$
By  symmetry   we need to consider only the first case.  We
get by lemma \ref{lem0} $B=\{b\},   D=\{d\}, $ and $ d, b$ are joined by an edge $\pm (e_1-
e_2),-e_1-
e_2$. 

$$ \xymatrix{&&d\ar@{--}[ld]_{1,  2}&&\\A \ar@{--}[r]^{1,  h}  &b\ar@{--}[r]_{1,  k}  &C\ar@{--}[u]_{2,  i}  \ar@{--}[r]^{2,  j}  & E
 \\&&&&}$$ and $G=\bar T$ is not a tree.
\qed
\subsubsection{Figure \eqref{pic33},\eqref{pic32}}  We treat these two cases together.    $$I)\quad  \xymatrix{A \ar@{--}[r]^{1,  h}  &B\ar@{--}[r]^{2,  k}  &C\ar@{--}[r]^{1,  i}  &D\ar@{--}[r]^{2,  j}  & E
 }$$ 
$$II)\quad \xymatrix{A \ar@{--}[r]^{1,  h}  &B\ar@{--}[r]^{1,  k}  &C\ar@{--}[r]^{2,  i}  &D\ar@{--}[r]^{2,  j}  & E
 }$$  \begin{proof} I)  We have:\begin{gather}\chi_T\cong \chi_{{A}}\chi_{{B\cup C}} \chi_{{D\cup E}}\ \text{mod.}\ {\xi_1=0}, \quad \chi_T\cong\chi_{{A\cup B}}\chi_{{C\cup D}} \chi_{{E}}\ \text{mod.}\ {\xi_2=0}  \label{137. 2}. \end{gather}
Inspecting \eqref{137. 2}, by a simple analysis
we get the following possibilities:

If  $\chi_T$ is not irreducible it has a factor $U$  congruent, modulo 
$ {\xi_1=0}$ to i) $\chi_{ A}$ or  ii) 
$\chi_{B\cup C} $ or $  \chi_{{D\cup
E}} $. If $U\cong \chi_{ A}$ modulo 
$ {\xi_1=0}$ we must have
$U\cong \chi_{ E}$ modulo 
$ {\xi_2=0}$ and
\begin{equation}
\chi_{{A}}\cong \chi_{{E}}\ \text{mod.}\ {\xi_1=\xi_2=0}.\label{138. 4}
\end{equation}    Otherwise we have that $U $ is congruent to  $\chi_{{A\cup
B}} $ or $  \chi_{{C\cup D}} $ modulo 
$ {\xi_2=0}$.  
\begin{gather}\chi_{{A}} \cong \chi_{{C}},\ \chi_{{C}} \cong \chi_{{E}} ,\ \chi_{{B}}  \cong \chi_{{D}},\  \chi_{{D}}\cong 
\chi_{{A}},\
\chi_{{E}}\cong 
\chi_{{B}}\ \text{mod.}\ {\xi_1=\xi_2=0}. \end{gather} The last two can be excluded by parity   of occurrences of 1,2 in their path. The first two are symmetric. Therefore we are left to consider three  cases   $\chi_{{A}}\cong \chi_{{E}},$ $ \chi_{{A}} \cong \chi_{{C}} ,\ \chi_{{B}}  \cong \chi_{{D}}$.\smallskip

If we are in case $ \chi_{{A}} \cong \chi_{{C}} ,\ \chi_{{B}}  \cong \chi_{{D}}$ by lemma
\ref{lem0} we get $|A|=|C|=1,  A=\{0\},   C=\{c\} $ (resp. $|B|=|C|=1,  A=\{b\},   C=\{c\} $) are joined by an edge
$ \pm (e_1-e_2),-e_1-e_2$ and $G=\bar T$ is not a tree.

If we have  $\chi_{{A}}\cong \chi_{{E}}$  always by  lemma
\ref{lem0} we get $|A|=|E|=1,  A=\{0\},   E=\{e\} , e\in \{\pm 2(e_1-e_2),-2e_1,-2e_2\}.$ 
\begin{equation}
\label{Bad1} I)\quad \xymatrix{0 \ar@{--}[r]^{1,  h}  &B\ar@{--}[r]^{2,  k}  &C\ar@{--}[r]^{1,  i}  &D\ar@{--}[r]^{2,  j}  & e
 }
\end{equation} II)\begin{equation}\label{134.1}
\chi_T\cong \chi_{{A}}\chi_{{B}} \chi_{  {C\cup
D\cup E}}\ \text{mod.}\ {\xi_1=0}, \quad 
\chi_T\cong \chi_{ {A\cup B\cup
C}}\chi_{{D}} \chi_{{E}}\ \text{mod.}\ {\xi_2=0},  \end{equation} If $\chi_T$ is not
irreducible,   one easily sees that there is a factor $U$ congruent modulo $\xi_1=0$ to
$\chi_{{A}}$ or $\chi_{{B}} $ or finally $\chi_{{A}} \chi_{{B}} $. Then $U$  modulo $\xi_2=0$ is congruent either to $\chi_{{D}} $ or
$\chi_{{E}} $ or $\chi_{{D}} \chi_{{E}} $. Applying Lemma \ref{lem0} a priori there are 4 possibilities that a block $A,B$ specializes to a block $D,E$, but in that Lemma we also have the   parity of 1,2 in a path joining the two blocks must be the same hence we only have two cases.  

i) $\chi_{{B}}\cong\chi_{{D}}$ modulo ${\xi_1=\xi_2=0}, $ and $ |B|=|D|=1,
B=\{b\},  D=\{d\}, $ and     $d,
b$ are joined by an edge $\pm (e_1-e_2),-e_1-e_2 $.  
In this case we contradict the fact that $G=T$ is a tree.

ii) $ \chi_{{A}}=\chi_{{E}}$ modulo ${\xi_1=\xi_2=0}$ and $ |A|=|E|=1,  A=\{0\},
E=\{e\}$. 
By inspection since $e\neq 0$ we must have $e=\pm
(2e_1-2e_2),-2e_1,-2e_2$.
All indices in the path from 0 to $e$ appear twice.   
\begin{equation}
\label{Bad2}II)\quad \xymatrix{0 \ar@{--}[r]^{1,  h}  &B\ar@{--}[r]^{1,  k}  &C\ar@{--}[r]^{2,  i}  &D\ar@{--}[r]^{2,  j}  & e
 }
\end{equation}
We now have to exclude in both cases  the second possibility \eqref{Bad1},\eqref{Bad2}.  \smallskip  

I)\quad Start from the first case.
If $k=h$ we have $$\xymatrix{0 \ar@{--}[r]^{1,  h}  &B\ar@{--}[r]^{2,  h}  &C\ar@{--}[r]^{1,  i}  &D\ar@{--}[r]^{2,  j}  & e
 }$$   If $i\neq j$ we must have that $i$ appears in one of the blocks $B,C,D$. For instance if $i$ is in $D$ we have 
 $$\xymatrix{0 \ar@{--}[r]^{1,  h}  &B\ar@{--}[r]^{2,  h}  &C\ar@{--}[r]^{1,  i}  &D_1\ar@{--}[r]^{s,  i}  &D_2\ar@{--}[r]^{2,  j}  & e
 }$$ we apply the previous analysis to the pair $h,i$ and deduce that  $|D_2\cup e|=1$ a contradiction. 
 $$\xymatrix{&&& \ar@{--}[d]^{u,  i} D'\\ 0 \ar@{--}[r]^{1,  h}  &B\ar@{--}[r]^{2,  h}  &C\ar@{--}[r]^{1,  i}  &D\ar@{--}[r]^{2,  j}  & e
 }$$ is like Picture \eqref{pic37} for indices $2,i$.
 
  The other cases are similar to this or to the previous case of \eqref{pic33}. If $i=j$ we have 
$$\xymatrix{0 \ar@{--}[r]^{1,  h}  &B\ar@{--}[r]^{2,  h}  &C\ar@{--}[r]^{1,  i}  &D\ar@{--}[r]^{2,  i}  & e
 }.$$ We   apply the previous analysis to the pair $h,i$ deducing  $e=\pm 2(e_i- e_h),$$-2e_i,-2e_h$ clearly a contradiction since we already have $e=\pm
(2e_1-2e_2),-2e_1,-2e_2$.
\smallskip
 
 If $k\neq h$ consider the positions of $k$. If $k\in B\cup C$
 $$\xymatrix{0 \ar@{--}[r]^{1,  h}  &B_1\ar@{--}[r]^{s,  k}&B_2\ar@{--}[r]^{1,  k}  &C\ar@{--}[r]^{2,  i}  &D\ar@{--}[r]^{2,  j}  & e
 }$$ $$\xymatrix{0 \ar@{--}[r]^{1,  h}  &B \ar@{--}[r]^{1,  k}  &C_1\ar@{--}[r]^{s,  k}&C_2\ar@{--}[r]^{2,  i}  &D\ar@{--}[r]^{2,  j}  & e
 }$$ by the previous discussion applied  to $k,2$  we  have that $|0\cup B_1|=1$ or $|0\cup B |=1$ a contradiction.   
 If $k\in D$
 $$\xymatrix{0 \ar@{--}[r]^{1,  h}  &B \ar@{--}[r]^{1,  k}  &C\ar@{--}[r]^{2,  i}  &D_1\ar@{--}[r]^{s,  k}&D_2\ar@{--}[r]^{2,  j}  & e
 }$$ we are in the previous case of \eqref{pic33} for the indices $k,2$ deducing $|0\cup B|=1$ again a contradiction. \smallskip
 
II).\quad We now finish the second case.  If $k=h$ we have $$\xymatrix{0 \ar@{--}[r]^{1,  h}  &B\ar@{--}[r]^{1,  h}  &C\ar@{--}[r]^{2,  i}  &D\ar@{--}[r]^{2,  j}  & e
 }$$ we are in the same situation but for the pair $h,2$. We deduce that $e=-2e_2$.  Now if $i=j$  we are in the same situation for the pair $h,j$ (or $1,j$) and deduce that $e= \pm 2 (e_1-e_j),- 2e_1,-2e_j$ a contradiction.  If $i\neq j$ we must have that $i$ appears in one of the blocks $B,C,D$. For instance if $i$ is in $D$ we have 
 $$\xymatrix{0 \ar@{--}[r]^{1,  h}  &B\ar@{--}[r]^{1,  h}  &C\ar@{--}[r]^{2,  i}  &D_1\ar@{--}[r]^{s,  i}  &D_2\ar@{--}[r]^{2,  j}  & e
 }$$ we apply the previous analysis to the pair $1,i$ and deduce that  $|D_2\cup e|=1$ a contradiction.  The other cases are similar to this or to the previous case of \eqref{pic33}.
 
 If $k\neq h$ consider the positions of $k$. If $k\in B\cup C$
 $$\xymatrix{0 \ar@{--}[r]^{1,  h}  &B_1\ar@{--}[r]^{s,  k}&B_2\ar@{--}[r]^{1,  k}  &C\ar@{--}[r]^{2,  i}  &D\ar@{--}[r]^{2,  j}  & e
 }$$ $$\xymatrix{0 \ar@{--}[r]^{1,  h}  &B \ar@{--}[r]^{1,  k}  &C_1\ar@{--}[r]^{s,  k}&C_2\ar@{--}[r]^{2,  i}  &D\ar@{--}[r]^{2,  j}  & e
 }$$ we apply  the previous discussion  to $k,2$  and  have that $|0\cup B_1|=1$ or $|0\cup B |=1$ a contradiction.   
 If $k\in D$
 $$\xymatrix{0 \ar@{--}[r]^{1,  h}  &B \ar@{--}[r]^{1,  k}  &C\ar@{--}[r]^{2,  i}  &D_1\ar@{--}[r]^{s,  k}&D_2\ar@{--}[r]^{2,  j}  & e
 }$$ we are in the previous case of \eqref{pic33} for the indices $k,2$ and again have $|0\cup B|=1$ a contradiction.\end{proof}

  \bibliographystyle{plain}

\bibliography{/lavori-in-corso/bibliografia}

\def\cprime{$'$}
\begin{thebibliography}{10}

\bibitem{ArGm}
V.~I. Arnol{\cprime}d.
\newblock {\em Geometrical methods in the theory of ordinary differential
  equations}, volume 250 of {\em Grundlehren der Mathematischen Wissenschaften
  [Fundamental Principles of Mathematical Sciences]}.
\newblock Springer-Verlag, New York, second edition, 1988.
\newblock Translated from the Russian by Joseph Sz{\"u}cs [J{\'o}zsef M.
  Sz{\H{u}}cs].

\bibitem{A}
V.~I. Arnold.
\newblock {\em Hamiltonian systems}.
\newblock Birkh\"auser Advanced Texts: Basler Lehrb\"ucher. Birkh\"auser
  Verlag, Basel, 1994.

\bibitem{MAr}
M.~Artin.
\newblock {\em Algebra, second edition}.
\newblock Pearson, New York, 2010.

\bibitem{BG}
D.~Bambusi and B.~Gr\'ebert.
\newblock {B}irkhoff normal form for partial differential equations with tame
  modulus.
\newblock {\em Duke Math. J.}, 135 n. 3:507--567, 2006.

\bibitem{Bir}
George~D. Birkhoff.
\newblock {\em Dynamical systems}.
\newblock With an addendum by Jurgen Moser. American Mathematical Society
  Colloquium Publications, Vol. IX. American Mathematical Society, Providence,
  R.I., 1966.

\bibitem{KTa}
J.~Colliander, M.~Keel, G.~Staffilani, H.~Takaoka, and T.~Tao.
\newblock Transfer of energy to high frequencies in the cubic defocusing
  nonlinear {S}chr\"odinger equation.
\newblock {\em Invent. Math.}, 181(1):39--113, 2010.

\bibitem{GT}
B.~Gr\'ebert and Thomann L.
\newblock {KAM} for the quantum harmonic oscillator.
\newblock {\em Communications in Mathematical Physics}, 307(2):383--427, 2011.

\bibitem{HZ}
Helmut Hofer and Eduard Zehnder.
\newblock {\em Symplectic invariants and {H}amiltonian dynamics}.
\newblock Birkh\"auser Advanced Texts: Basler Lehrb\"ucher. Birkh\"auser
  Verlag, Basel, 1994.

\bibitem{MKS}
W.~Magnus, A.~Karrass, and D.~Solitar.
\newblock {\em Combinatorial group theory: presentations of groups in terms of
  generators and relations}.
\newblock Dover Publications, New York, 1976.

\bibitem{PoH}
Henri Poincar{\'e}.
\newblock {\em New methods of celestial mechanics. {V}ol. 3}, volume~13 of {\em
  History of Modern Physics and Astronomy}.
\newblock American Institute of Physics, New York, 1993.
\newblock Integral invariants and asymptotic properties of certain solutions,
  Translated from the French, Revised reprint of the 1967 English translation,
  With endnotes by G. A. Merman, Edited and with an introduction by Daniel L.
  Goroff.

\bibitem{PP2}
C.~Procesi and M.~Procesi.
\newblock A {KAM} algorithm for the resonant non--linear {S}chr\"odinger
  equation.
\newblock arXiv:1211.4242, 2012.

\bibitem{PP}
C.~Procesi and M.~Procesi.
\newblock A normal form for the {S}chr\"odinger equation with analytic
  non-linearities.
\newblock {\em Communications in Mathematical Physics}, 312(2):501--557, 2012.

\end{thebibliography}

 \end{document}